\theoremstyle{plain}      
\newtheorem{theorem}{Theorem}[section]      
\newtheorem{lemma}[theorem]{Lemma}
\newtheorem{corollary}[theorem]{Corollary}      
\newtheorem{proposition}[theorem]{Proposition}
\newtheorem{definition}[theorem]{Definition}
\theoremstyle{remark}      
\newtheorem{remark}[theorem]{Remark}
\newcommand{\Q}{{\mathbb{Q}}}        
\newcommand{\Z}{{\mathbb{Z}}}
\newcommand{\C}{{\mathbb{C}}}      
\newcommand{\R}{{\mathbb{R}}}
 \newcommand{\M}{{\mathcal{M}}}
\newcommand{\q}{{\mathfrak{q}}}    
\newcommand{\A}{{\mathfrak{A}}}
\begin{document}

\date{\today}

\title{Finite quotients of symplectic groups vs mapping class groups}

\author{\begin{tabular}{cc}      Louis Funar &  Wolfgang Pitsch\footnote{Supported by the FEDER/MEC grant MTM2016-80439-P.
}\\      
\small \em Institut Fourier BP 74, UMR 5582       &\small \em Departament de Matem\`atiques \\      
\small \em University Grenoble Alpes &\small \em Universitat Aut\`onoma de Barcelona    \\      
\small \em CS 40700, 38058 Grenoble cedex 9,  France      
&\small \em 08193 Bellaterra (Cerdanyola del Vall\`es), Espana  \\      
\small \em e-mail: {\tt louis.funar@univ-grenoble-alpes.fr}      
& \small \em e-mail: {\tt pitsch@mat.uab.es} \\      
\end{tabular}      
}


\maketitle

\begin{abstract}
We give alternative computations of the Schur multiplier of 
$Sp(2g,\Z/D\Z)$, when $D$ is divisible by 4 and $g\geq 4$: a first one 
using $K$-theory arguments based on the work of Barge and Lannes and a second one based on the  Weil representations of symplectic groups arising in 
abelian Chern-Simons theory.  We can also retrieve this way  
Deligne's non-residual finiteness of the universal central extension 
$\widetilde{Sp(2g,\Z)}$.
We prove then that the  image of the second homology into 
finite quotients of symplectic groups over a  
Dedekind domain of arithmetic type are torsion groups 
of uniformly bounded size. In contrast, quantum representations 
produce for every prime $p$, finite quotients of 
the mapping class group of genus $g\geq 3$ 
whose second homology image has $p$-torsion. 
We further derive that all central extensions of 
the mapping class group are residually finite and deduce that 
mapping class groups have Serre's property $A_2$ for 
trivial modules, contrary to symplectic groups. 
Eventually we compute the module of coinvariants  
$H_2(\mathfrak{sp}_{2g}(2))_{Sp(2g,\Z/2^k\Z)}=\Z/2\Z$.

\vspace{0.1cm}

\noindent 2000 MSC Classification: 57 M 50, 55 N 25, 19 C 09, 20 F 38.

\noindent Keywords: Symplectic groups, 
group homology,  mapping class group, central extension, 
quantum  representation, residually finite.

\end{abstract}

\section{Introduction and statements}
Let $\Sigma_{g,k}$ denote a connected oriented surface of genus $g$ 
with $k$ boundary components and  $M_{g,k}$ be its mapping class group, 
namely the group of isotopy classes of 
orientation preserving homeomorphisms that fix point-wise 
the boundary components. If $k=0$, we simply write $M_g$ for $M_{g,0}$. 
The action of $M_{g}$ on the integral homology of $\Sigma_{g}$ 
equipped with some symplectic basis gives a surjective 
homomorphism $M_{g} \rightarrow Sp(2g,\Z)$, and it is a natural 
and classical problem to compare the properties of these two groups.  
The present paper is concerned with the central extensions and 
2-homology groups of these two groups and their finite quotients and is a sequel 
to \cite{Fu3} and \cite{FP}.

\vspace{0.2cm}
\noindent 
Our first result is:  
\begin{theorem}\label{tors-sympl0}
The second homology group of finite principal congruence quotients 
of $Sp(2g,\Z)$, $g\geq 4$ is 
\[H_2(Sp(2g,\Z/D\Z))=\left\{\begin{array}{ll} 
\Z/2\Z,  & {\rm if }\; D\equiv 0 \; ({\rm mod} \; 4), \\
0, & {\rm otherwise}.
\end{array}\right.
\]  
\end{theorem}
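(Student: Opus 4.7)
The plan is to reduce to prime-power moduli via the Chinese Remainder Theorem and then treat the odd-prime contributions and the $2$-primary contribution separately. Writing $D=\prod_{p\mid D} p^{n_p}$, one has $Sp(2g,\Z/D\Z)\cong\prod_{p\mid D}Sp(2g,\Z/p^{n_p}\Z)$. Since $Sp(2g,\Z)$ is perfect for $g\geq 3$, the same is true for every finite principal congruence quotient, so $H_1$ vanishes on each factor and the Künneth formula collapses to
\[
H_2(Sp(2g,\Z/D\Z))\;\cong\;\bigoplus_{p\mid D}H_2(Sp(2g,\Z/p^{n_p}\Z)).
\]
It therefore suffices to prove $H_2(Sp(2g,\Z/p^n\Z))=0$ when $p$ is odd and when $(p,n)=(2,1)$, while $H_2(Sp(2g,\Z/2^n\Z))=\Z/2\Z$ for $n\geq 2$.

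For the vanishing statements I would analyze the congruence kernel $I(p^n)=\ker(Sp(2g,\Z/p^n\Z)\twoheadrightarrow Sp(2g,\F_p))$. This kernel is abelian and is isomorphic as an $Sp(2g,\F_p)$-module to $\mathfrak{sp}_{2g}(\Z/p^{n-1}\Z)$. Running the Hochschild--Serre spectral sequence of this extension, and plugging in Steinberg's classical computation $H_2(Sp(2g,\F_p))=0$ (for $p$ odd with $g\geq 2$, or $p=2$ with $g\geq 4$), reduces the question to controlling the coinvariants $H_1(I(p^n))_{Sp(2g,\F_p)}=\bigl(\mathfrak{sp}_{2g}(\Z/p^{n-1}\Z)\bigr)_{Sp(2g,\F_p)}$ and to checking that the transgression $d_2$ kills the surviving $H_0(H_2)$-term. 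In odd characteristic the coinvariants of the adjoint module vanish in a stable range, and an induction on $n$ using the short exact sequences linking consecutive congruence quotients then gives $H_2=0$.

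For $p=2$ and $n\geq 2$, the $\Z/2\Z$ is realized concretely via the Weil representation. I would build the finite Heisenberg group associated to the symplectic $\Z/2^n\Z$-module $(\Z/2^n\Z)^{2g}$ and invoke the Stone--von Neumann theorem to produce a projective representation of $Sp(2g,\Z/2^n\Z)$ whose obstruction class lives in $H^2(Sp(2g,\Z/2^n\Z);\U(1))$. Using Gauss-sum computations one shows that this cocycle has order exactly $2$; via universal coefficients, together with $H_1=0$, this identifies the Weil class with a non-zero element of $\mathrm{Hom}(H_2(Sp(2g,\Z/2^n\Z)),\U(1))$, giving the lower bound $\Z/2\Z\hookrightarrow H_2$. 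For the matching upper bound I would rely on the $K$-theoretic framework of Barge and Lannes: in the stable range $g\geq 4$, $H_2(Sp(2g,\Z/2^n\Z))$ agrees with $KSp_2(\Z/2^n\Z)$, whose $2$-torsion is known to be $\Z/2\Z$ uniformly in $n$.

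The hard part will be to show that the Weil cocycle stays non-trivial in $H^2(Sp(2g,\Z/2^n\Z);\U(1))$ uniformly in $n$, rather than becoming a coboundary once $n$ is large. Equivalently, one must verify that the projective Weil representation never lifts to a genuine linear representation of $Sp(2g,\Z/2^n\Z)$, while its pullback to $Sp(2g,\Z)$ realises the unique non-trivial class in $H^2(Sp(2g,\Z);\Z/2\Z)$ (the metaplectic/Maslov class). This requires a careful analysis of the quadratic refinement of the Maslov index modulo $2^n$ and of its behaviour under reduction $Sp(2g,\Z)\to Sp(2g,\Z/2^n\Z)$, and it dovetails with the retrieval of Deligne's non-residual-finiteness theorem for $\widetilde{Sp(2g,\Z)}$ that is mentioned in the abstract.
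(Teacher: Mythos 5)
Your decomposition via the Chinese Remainder Theorem and Künneth is exactly the paper's, and the two proof ideas you invoke for the $p=2$ case---detecting a $\Z/2\Z$ by showing the Weil representation never linearizes, and a Barge--Lannes $K$-theory computation---are precisely the two proofs the paper gives. However, the logical structure differs and there is a soft spot.

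For the vanishing cases (odd $p$, or $D\not\equiv 0\pmod 4$) the paper simply cites Stein's computations. Your proposed route---Hochschild--Serre over $Sp(2g,\F_p)$, using Steinberg's $H_2(Sp(2g,\F_p))=0$, then inducting on $n$ via the coinvariants of the adjoint module---is in fact the paper's Section~5 program. It does work in odd characteristic, where the ingredients you need are precisely $H_1(Sp(2g,\Z/p^k\Z);\mathfrak{sp}_{2g}(p))=0$ (Putman, quoted as Theorem~\ref{lem annulhomol}) and $H_2(\mathfrak{sp}_{2g}(p))_{Sp(2g,\Z/p^k\Z)}=0$ (Theorem~\ref{lem H2sp(p)}); but it is substantially more work than quoting Stein, and for $p=2$ the paper explicitly points out that the corresponding $H_1$-vanishing is unknown for $k\geq 2$, which is why the inductive approach cannot be pushed all the way in characteristic $2$.

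The real gap is in your upper bound for $H_2(Sp(2g,\Z/2^n\Z))$. You assert that the $2$-torsion of $KSp_2(\Z/2^n\Z)$ "is known to be $\Z/2\Z$ uniformly in $n$"; that is precisely the statement being proved, so invoking it is circular. The paper's $K$-theory proof genuinely establishes $KSp_2(\Z/4\Z)=\Z/2\Z$ by computing $W(\Z/4\Z)\cong\Z/8\Z$ and chasing the Barge--Lannes version of Sharpe's sequence; it then passes from $n=2$ to all $n\geq 2$ via Stein's isomorphism theorem $H_2(Sp(2g,\Z/2^k\Z))\cong H_2(Sp(2g,\Z/2^{k+1}\Z))$ for $k\geq 2$, and from a single $g\geq 4$ to all $g\geq 4$ via Stein's stability theorem. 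You do not mention either Stein theorem, and without them your argument must be run separately for each $(g,n)$, which is where the assertion about $KSp_2(\Z/2^n\Z)$ becomes unsupported. Concretely: either do the Witt-group computation for $\Z/4\Z$ and then quote Stein's isomorphism and stability theorems (the paper's first proof), or combine the Weil-representation lower bound with the alternative $H_2\in\{0,\Z/2\Z\}$ --- which the paper extracts either from Deligne's non-residual-finiteness theorem or from Theorem~\ref{lem H2sp(p)} together with Putman's Theorem~G for $k=1$ --- and again finish with Stein's isomorphism and stability. As stated, your proposal does neither cleanly; it has both proofs' ingredients but leaves the crucial upper-bound step as an assumed fact.
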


\vspace{0.2cm}
\noindent 
This result when $D$ is not divisible by 4 is an old theorem 
of Stein (see \cite[Thm. 2.13 and Prop. 3.3.a]{Stein}) while  
the case $D\equiv  0 \; ({\rm mod} \; 4)$ remained open for a while,  
as mentioned in \cite[remarks after Thm. 3.8]{Pu1},  
because the condition  $D\not\equiv 0 \; ({\rm mod} \; 4)$ 
seemed essential for all results in there.  
A short proof using geometric group theory was obtained by the authors in \cite{FP} and 
another proof was independently obtained in \cite{BCRR} (see also \cite{Be}). 
One of our aims here is to present alternative proofs based on mapping class group representations arising in the $U(1)$ Chern-Simons theory and $K$-theory, respectively.

\vspace{0.2cm}
\noindent 
The analogous result for special linear groups has long been known. The equality   $H_2(SL(2,\Z/D\Z))=\Z/2\Z$, for 
$D\equiv 0({\rm mod }\; 4)$ was proved by 
Beyl (see \cite{Beyl}) and for large $n$  
Dennis and Stein proved using $K$-theoretic methods that 
$H_2(SL(n,\Z/D\Z))=\Z/2\Z$, for 
$D\equiv 0({\rm mod }\; 4)$, while $H_2(SL(n,\Z/D\Z))=0$, otherwise, 
see \cite[Cor. 10.2]{DS1}. 

\vspace{0.2cm}
\noindent 
Our main motivation  for carrying the computation of Theorem \ref{tors-sympl0} 
was to better understand the (non-)residual finiteness of central extensions. 
The second result of this paper is the following:
\begin{theorem}\label{nonresid0} 
The universal central extension $\widetilde{Sp(2g,\Z)}$ is 
not residually finite when $g\geq 4$ since 
the image of the center under any homomorphism into 
a finite group has order at most two.
Moreover, the image of 
the center has order two under the 
natural homomorphism of $\widetilde{Sp(2g,\Z)}$  
into the universal central extension of $Sp(2g,\Z/D\Z)$, 
where $D$ is a multiple of $4$ and $g\geq 4$.  
\end{theorem}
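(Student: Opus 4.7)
\emph{Plan.} Both claims will follow from Theorem~\ref{tors-sympl0} combined with two ingredients: the congruence subgroup property (CSP) of $Sp(2g,\Z)$ due to Mennicke and Bass--Lazard--Serre, and the universal property of the universal central extension of a perfect group. Throughout I use that for $g\geq 3$ the group $Sp(2g,\Z)$ is perfect with $H_2(Sp(2g,\Z))=\Z$, so $\widetilde{Sp(2g,\Z)}$ has center $Z\cong\Z$ and is itself perfect.

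For the first statement, fix an arbitrary homomorphism $\varphi\colon\widetilde{Sp(2g,\Z)}\to F$ into a finite group and, replacing $F$ by $\operatorname{Im}\varphi$, note that $\varphi(Z)$ is central in $F$. Then $F/\varphi(Z)$ is a finite quotient of $Sp(2g,\Z)$, and by CSP there is a surjection $Sp(2g,\Z/D\Z)\twoheadrightarrow F/\varphi(Z)$ for some $D$. Pulling $F$ back along this surjection yields a central extension $E$ of $Sp(2g,\Z/D\Z)$ by $\varphi(Z)$, together with a canonical lift $\alpha\colon\widetilde{Sp(2g,\Z)}\to E$ of $\varphi$. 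On the other hand, the universal property of $\widetilde{Sp(2g,\Z/D\Z)}$ provides a second lift $\beta\colon\widetilde{Sp(2g,\Z)}\to\widetilde{Sp(2g,\Z/D\Z)}\to E$. The maps $\alpha$ and $\beta$ differ by a homomorphism from the perfect group $\widetilde{Sp(2g,\Z)}$ into the abelian kernel $\varphi(Z)$, hence $\alpha=\beta$. Restricting to centers yields the factorization
\[\varphi|_Z\colon Z=H_2(Sp(2g,\Z))\longrightarrow H_2(Sp(2g,\Z/D\Z))\longrightarrow\varphi(Z),\]
and Theorem~\ref{tors-sympl0} bounds the middle group by $2$. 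Hence $|\varphi(Z)|\leq 2$; since $Z\cong\Z$ is infinite, $\widetilde{Sp(2g,\Z)}$ is not residually finite.

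For the second statement, with $4\mid D$ one must upgrade the bound to an equality by showing that the induced homomorphism $\Z=H_2(Sp(2g,\Z))\to H_2(Sp(2g,\Z/D\Z))=\Z/2\Z$ is surjective. Dually, this amounts to exhibiting a non-trivial central $\Z/2$-extension of $Sp(2g,\Z/D\Z)$ whose pullback to $Sp(2g,\Z)$ is also non-trivial, namely the mod-$2$ reduction of the generator of $H^2(Sp(2g,\Z);\Z)=\Z$ (the Meyer/Maslov signature class). The plan is to take this extension to be the Schur cocycle of the Weil/metaplectic projective representation of $Sp(2g,\Z/D\Z)$ arising from the $U(1)$ Chern--Simons TQFT already constructed in the paper to give the quantum-representation proof of Theorem~\ref{tors-sympl0}; an explicit cocycle comparison then identifies its pullback to $Sp(2g,\Z)$ with the Maslov cocycle modulo $2$. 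The main obstacle is precisely this non-vanishing after pullback: the first part is essentially formal once CSP and Theorem~\ref{tors-sympl0} are granted, whereas matching the finite Weil cocycle to the mod-$2$ Maslov class on $Sp(2g,\Z)$ requires the explicit cocycle machinery of the Chern--Simons construction.
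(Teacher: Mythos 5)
Your treatment of the first claim is correct and is essentially the paper's argument, just repackaged: the paper reduces to a congruence quotient $Sp(2g,\Z/D\Z)$ via CSP and chains lifts to the universal central extensions $\widetilde{Sp(2g,\Z/D\Z)}$ and $\widetilde{F}$, while you replace the chain of lifts by a single pullback extension $E$ of $Sp(2g,\Z/D\Z)$; in both cases the decisive step is that two lifts of the same map out of a perfect group differ by a homomorphism into a central (abelian) subgroup, hence coincide, so the restriction of $\varphi$ to the center factors through $H_2(Sp(2g,\Z/D\Z))$, which has order at most two.

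For the second claim you are on the right track but the proof is incomplete, and it bypasses the tool the paper actually uses. You correctly observe that the statement is equivalent to the surjectivity of the induced map $p_*\colon H_2(Sp(2g,\Z))\cong\Z\to H_2(Sp(2g,\Z/D\Z))\cong\Z/2\Z$, and you propose to establish this by comparing the Weil/metaplectic cocycle on $Sp(2g,\Z/D\Z)$ with the mod-$2$ Maslov class on $Sp(2g,\Z)$. That comparison is indeed done in the paper (it is the substance of the mapping-class-group proof of Theorem~\ref{tors-sympl0}, via Propositions~\ref{trivialmeta} and~\ref{classmcg}), but you leave it as a plan and explicitly flag it as the "main obstacle"; as written, the second claim is therefore not proved. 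The paper's own proof of this part does not re-run the cocycle computation: it invokes Proposition~\ref{lem H2epi} — the general fact, quoted from \cite{FP}, that $H_2(Sp(2g,\A))\to H_2(Sp(2g,\A/\q))$ is surjective for $g\geq 3$ — and then Theorem~\ref{tors-sympl0} identifies the target as $\Z/2\Z$ when $4\mid D$. So to close the gap you should either actually carry out the Weil-cocycle computation (which would recapitulate Section~2.3 of the paper) or cite the surjectivity of $p_*$ as a separate input.
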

\noindent 
The first part of this result is the statement of 
{\em Deligne's non-residual finiteness 
theorem} from \cite{De}, which was stated for $g\geq 2$. 
In what concerns the sharpness statement, 
Putman in \cite[Thm. F]{Pu1} has previously obtained the existence 
of finite index subgroups of $\widetilde{Sp(2g,\Z)}$ which contain 
$2\Z$ but not $\Z$. We provide some explicit constructions of such finite index 
normal subgroups. 
The relation between Theorems \ref{tors-sympl0} and \ref{nonresid0}
is somewhat intricate. For instance, the statement 
$H_2(Sp(2g,\Z/D\Z))\in\{0,\Z/2\Z\}$, for $g\geq 4$ is a consequence 
of Deligne's theorem. This statement and the second part 
of Theorem \ref{nonresid0} actually imply Theorem \ref{tors-sympl0} and 
this is our first proof of the latter. 
However, we can reverse all implications and using now a different proof 
of Theorem \ref{tors-sympl0}, based on $K$-theory arguments,  
we derive from it another proof of Theorem 
\ref{nonresid0}. In particular, this provides a new proof of Deligne's 
theorem, independent of Moore's theory of topological central extensions 
from  \cite{Moore}. 
 
\begin{remark}
Deligne proved that the image of twice the generator of the center of $\widetilde{Sp(2g,\Z)}$ 
under any homomorphism into a finite group is trivial, for any $g\geq 2$. 
Moreover, the intersection of the finite index subgroups of $\widetilde{Sp(2g,\Z)}$ 
is precisely the subgroup generated by twice the center generator, when $g\geq 4$. 
Theorem \ref{nonresid0} provides explicit morphisms into finite groups for which 
the generator of the center maps into a nontrivial element.   
\end{remark}

\begin{remark}
Note that $H_2(Sp(6,\Z))=\Z\oplus \Z/2\Z$, according to \cite{Stein3}, while $Sp(4,\Z/4\Z)$ is not perfect. Thus the central extension by $\Z$ considered in \cite{De} is not the 
universal central extension of $Sp(2g,\Z)$, when $g\in\{2,3\}$.   
The computation of the Schur multiplier for small $g$ was completed in \cite{BCRR}:  
$H_2(Sp(2g,\Z/D\Z))= \Z/2\Z\oplus \Z/2\Z$, for $g\in\{2,3\}$ and 
$D\equiv 0 \; ({\rm mod} \; 4)$. This corrects a misprint in \cite{FP}, where 
for $g=3$ we only proved that the Schur multiplier is nontrivial. 
\end{remark}

\vspace{0.2cm}
\noindent 
The theorem stated in \cite{De} is much more general 
and covers higher rank Chevalley groups over number fields. 
The proof of Theorem \ref{nonresid0} also shows that 
the residual finiteness of central extensions is directly related 
to the torsion of the second homology of finite quotients 
of the group. Then the general form of Deligne's theorem can be used 
to obtain bounds for the torsion arising in the second homology of  finite quotients 
of symplectic groups over more general rings. 
First, we have: 

\begin{theorem}\label{boundtorsion0}
Let $\A$ be the ring of $S$-integers of a number field which is 
not totally imaginary and $g \geq 3$ an integer. 
Then there is a uniform bound (independent of $g$ and $F$) 
for the order of the torsion group
$p_*(H_2(Sp(2g,\A))) \subseteq H_2(F)$
for any surjective homomorphism $p:Sp(2g,\A)\to F$ onto a finite group. Here 
$p_*:H_2(Sp(2g,\A))\to H_2(F)$ denotes the map induced in homology and all homology groups are considered with (trivial) integral coefficients. 
\end{theorem}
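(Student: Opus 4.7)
The plan is to reduce the statement to the general form of Deligne's theorem by lifting $p$ to a homomorphism between universal central extensions, in the same spirit as the proof of Theorem \ref{nonresid0}. For $g\geq 3$ the group $G=Sp(2g,\A)$ is perfect, so it admits a universal central extension $\widetilde{G}\to G$ with kernel $Z(\widetilde{G})=H_2(G,\Z)$. Since $p:G\to F$ is surjective, $F$ is itself perfect and, being finite, admits its own universal central extension $\widetilde{F}\to F$ whose kernel is the finite Schur multiplier $H_2(F,\Z)$; in particular $\widetilde{F}$ is a finite group.

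The cover $\widetilde{G}$ is superperfect (i.e.\ $H_2(\widetilde{G},\Z)=0$), so the pullback of the central extension $\widetilde{F}\to F$ along $\widetilde{G}\to G\xrightarrow{p} F$ splits, producing a compatible lift $\widetilde{p}:\widetilde{G}\to\widetilde{F}$. A diagram chase then identifies the restriction $\widetilde{p}|_{Z(\widetilde{G})}:H_2(G)\to H_2(F)$ with $p_*$, so that $p_*(H_2(G))$ coincides with the image of the centre $Z(\widetilde{G})$ under the homomorphism $\widetilde{p}$ from $\widetilde{G}$ into the finite group $\widetilde{F}$. The original question about $p_*$ is thus converted into one about the image of $Z(\widetilde{G})$ in a finite quotient of $\widetilde{G}$.

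To conclude I invoke the general form of Deligne's theorem, itself a consequence of Moore's description of the topological central extensions of simply-connected Chevalley groups of rank $\geq 2$ over rings of $S$-integers: the image of $Z(\widetilde{Sp(2g,\A)})$ in any finite quotient is a subquotient of the metaplectic kernel, which canonically embeds into the group of roots of unity $\mu(k)$ of the ambient number field $k$. Since $k$ has at least one real embedding, $\mu(k)=\{\pm 1\}$, and hence $|p_*(H_2(G))|\leq 2$, a bound that depends neither on $g$ nor on $F$. The one delicate point is precisely the invocation of Deligne's theorem at this level of generality --- not merely for $\widetilde{Sp(2g,\Z)}$ as in Theorem \ref{nonresid0}, but for $Sp_{2g}$ over an arbitrary non-totally-imaginary ring of $S$-integers, together with the identification of the bound with the (uniformly controlled) metaplectic kernel; the hypothesis $g\geq 3$ places us comfortably inside the required rank range.
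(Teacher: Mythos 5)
Your reduction to the universal central extension is clean and correct as far as it goes: $Sp(2g,\A)$ is perfect, $\widetilde{F}$ is a finite group, and the unique lift $\widetilde{p}:\widetilde{G}\to\widetilde{F}$ restricts on centres to $p_*$, so bounding $p_*(H_2(G))$ is equivalent to bounding the image of $Z(\widetilde{G})$ in finite quotients of $\widetilde{G}$. The gap is the last step. Deligne's theorem is a statement about the extension $\widetilde{Sp(2g,\A)}^D$ (the preimage of $Sp(2g,\A)$ in the product $\prod_{v\in S}\widetilde{Sp(2g,\mathbb K_v)}$ of Moore's topological universal covers), whose kernel is $\pi_1(Sp(2g)_S)$; this is \emph{not} in general the universal central extension of $Sp(2g,\A)$, and its kernel is \emph{not} $H_2(Sp(2g,\A))$. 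The paper flags this explicitly and must do real work to pass between the two: one identifies the free parts via Borel's stable cohomology, establishes an embedding $E\hookrightarrow D$ of the torsion-free quotients (Lemma \ref{univDeligne}, using Garland--Matsushima vanishing), and then separately controls the finite torsion subgroup $T(H_2(Sp(2g,\A)))$ via Stein's surjective stability and finite generation of $H_2$ (Lemmas \ref{finitelygenerated} and \ref{bound}). Your sentence ``the image of $Z(\widetilde{Sp(2g,\A)})$ in any finite quotient is a subquotient of the metaplectic kernel'' is exactly what these lemmas are needed to justify, and it does not follow from Deligne's statement alone.

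A concrete symptom of the gap is your final bound $|p_*(H_2(G))|\leq 2$. Even once the Deligne machinery is correctly brought to bear, it only controls the image of the \emph{free} part of $H_2(Sp(2g,\A))$, through the homomorphism $R_S$ to the roots of unity $\mu$; the torsion part $T(H_2(Sp(2g,\A)))$ contributes its own (bounded) image, and there is an extra factor of $|\mu|$ hiding in the passage between elementary and full congruence subgroups (see the converse direction of Proposition \ref{symplgen}). The theorem therefore only asserts a uniform bound depending on $\A$, not the sharp value $2$. You also bypass the congruence subgroup property; the paper uses it to factor an arbitrary $p:Sp(2g,\A)\to F$ through a congruence quotient $Sp(2g,\A/\q)$ before comparing with Deligne's extension — in your route this step gets absorbed into the (unjustified) blanket appeal to ``Deligne's theorem for all finite quotients of $\widetilde{G}$''. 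In short, the skeleton of your argument matches the paper's strategy, but the crucial comparison between the universal central extension and Deligne's extension, together with the control of the torsion part of the Schur multiplier, is missing.
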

\noindent 
The result is a rather immediate consequence of the general 
Deligne theorem from \cite{De}, 
along with classical results of Borel and Serre \cite{BoSe} 
and Bass, Milnor and Serre \cite{BMS} on the congruence subgroup problem.

\begin{remark}
With slightly more effort we can show that this holds also when 
the number field is totally imaginary due to the finiteness 
of the congruence kernel.  Furthermore, the result holds for any Chevalley group 
instead of the symplectic group, with a similar proof. 
More generally it holds under the conditions of \cite{De}, namely 
for every absolutely simple  simply connected algebraic group  $\mathbb G$ 
over a number field $\mathbb K$, $S$ a finite set of places of $\mathbb K$ 
containing all archimedean ones and such that 
$\sum_{v\in S}{\rm rank}\: \mathbb G(\mathbb K_v) \geq 2$ and 
$\A$ the associated ring of $S$-integers. The quasi-split assumption 
in \cite{De} was removed in \cite{Rag}.  
However, for the sake of simplicity we will only consider symplectic groups 
in the sequel. 
\end{remark}

\vspace{0.2cm}\noindent 
Theorem \ref{boundtorsion0} contrasts with the abundance of  
finite quotients of mapping class groups:
 
\begin{theorem}\label{res0}
For any prime $p$ and $g\geq 3$ there exist surjective homomorphisms $\pi:M_g\to F$ 
onto finite groups $F$  
such that  $\pi_*(H_2(M_g)) \subseteq H_2(F)$
has $p$-torsion elements, and in particular is not trivial.
\end{theorem}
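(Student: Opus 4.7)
The plan is to exploit the $SO(3)$ Reshetikhin--Turaev quantum representations of the mapping class group at an odd prime level $p$. Such a representation is naturally a \emph{projective} representation $\bar\rho_p\colon M_g\to PGL(V_p)$ whose image lies in a subgroup defined over the cyclotomic ring $R_p=\Z[\zeta_{4p}]$, together with a linear lift $\rho_p\colon \widetilde{M}_g^{(p)}\to GL(V_p,R_p)$ on a central extension
\[
1\to C_p\to \widetilde{M}_g^{(p)}\to M_g\to 1
\]
whose kernel $C_p$ is cyclic and contains an element of order $p$ coming from the framing anomaly of the TQFT. The first step is to identify the associated cohomology class $[\bar\rho_p]\in H^2(M_g;C_p)$ and verify that its $\Z/p$-component is a non-zero multiple of the mod-$p$ reduction of a generator of $H^2(M_g;\Z)$, using Harer's computation $H_2(M_g;\Z)=\Z$ for $g\geq 4$ (the low genus case $g=3$ handled via embedding into a surface of higher genus).

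Next, I would reduce the infinite image of $\rho_p$ to a finite image. Choose a maximal ideal $\q\subset R_p$ of residue characteristic $\ell\neq p$ such that the residue field $R_p/\q$ still contains a primitive $p$-th root of unity, which happens for any rational prime $\ell\equiv 1\pmod{p}$ split in $R_p$. Reducing $\rho_p$ modulo $\q$ produces a homomorphism $\widetilde{M}_g^{(p)}\to GL(V_p,R_p/\q)$ whose restriction to $C_p$ remains non-trivial on the $\Z/p$-factor. Let $\pi\colon M_g\to F$ be the induced homomorphism to the finite image of $\bar\rho_p$ inside $PGL(V_p,R_p/\q)$; by construction, the projective representation $\bar\rho_p$ factors through $\pi$.

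To conclude, since $\bar\rho_p$ factors through $\pi$, the non-trivial class $[\bar\rho_p]$ in $H^2(M_g;\Z/p)$ lies in the image of $\pi^*\colon H^2(F;\Z/p)\to H^2(M_g;\Z/p)$. By the universal coefficient theorem this is equivalent to $\pi_*(c)\in H_2(F)$ having order divisible by $p$, where $c$ generates $H_2(M_g;\Z)$; hence $\pi_*(H_2(M_g))\subseteq H_2(F)$ contains non-trivial $p$-torsion. The main obstacle is step two: one must control the reduction carefully so that the ideal $\q$ can be chosen to preserve the $\Z/p$-summand of $C_p$, i.e.\ so that the image of $\rho_p$ in $GL(V_p,R_p/\q)$ retains the required non-trivial central character. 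This is guaranteed by Zariski density / strong approximation results for the image of the quantum representation established in earlier work (cf.\ \cite{Fu3}), which ensure that genuinely finite residual images can be produced without collapsing the anomaly.
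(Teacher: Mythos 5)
Your approach uses the same machinery as the paper---$SO(3)$ quantum representations at odd prime level, the framing anomaly as a central element of order $p$ coming from Masbaum--Roberts' computation $\widetilde\rho_p(c)=\zeta_p^{-6}$, and a lemma connecting nontrivial central image to $p$-torsion in $H_2$ of the finite quotient. However, the paper's actual argument is both cleaner and strictly more general, and your version has concrete gaps that prevent it from proving the theorem as stated.

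First, instead of committing to a specific reduction ideal, the paper invokes Malcev's theorem: the finitely generated group $\widetilde\rho_p(M_g(1))\subset GL(N(p,g),\mathcal O_p)$ is residually finite, so \emph{some} finite quotient detects the central scalar; combined with Lemma \ref{key lemma}, this already finishes the proof without having to exhibit a particular ideal. Your explicit choice of an unramified prime $\q$ of residue characteristic $\ell\equiv 1\pmod p$ is precisely the case the paper singles out as problematic: Proposition \ref{unequal} (which uses the Masbaum--Reid density theorem) asserts that for unequal-characteristic ideals $\mathfrak n$ the image $(\psi_{p,\mathfrak n,n})_*(H_2(M_g))\subseteq H_2(F_{p,\mathfrak n,n})$ is trivial, and for generic $\mathfrak n$ one even gets $H_2(F_{p,\mathfrak n,1})=0$. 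The paper's explicit construction instead reduces modulo powers $\mathfrak m^n$, $n\geq 2$, of the \emph{ramified} prime $\mathfrak m=(1-\zeta_p)$, where the anomaly scalar survives as a nontrivial unit of order $p$. At minimum your step 2 needs to resolve this tension; "Zariski density / strong approximation" is not a substitute for the required check.

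Second, Theorem \ref{res0} covers every prime $p$, including $p=2$ and $p=3$, whereas the $SO(3)$ theory at odd prime level $p$ is degenerate for $p<5$. The paper's Lemma \ref{lem detect} is exactly the bookkeeping you are missing: it chooses the TQFT level according to the target prime power ($p=q^s$ for $q\geq5$, $p=2$, $p=12\cdot 2^s$, $p=12\cdot 3^s$) so that $\widetilde\rho_p(c)$ has the required order. Your proposal has no substitute for $p\in\{2,3\}$.

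Third, the aside "the low genus case $g=3$ handled via embedding into a surface of higher genus" is not a valid reduction: a subsurface inclusion $\Sigma_3\hookrightarrow\Sigma_g$ does not let you transport a finite quotient of $M_g$ back to a finite quotient of $M_3$ in the direction needed. The paper instead runs the argument directly for $g=3$, observing that $M_3(1)$ remains perfect even though $H_2(M_3)=\Z\oplus\Z/2\Z$ and so $M_3(1)$ is not the full universal central extension; Lemma \ref{key lemma} still applies to the lift through the universal central extension.
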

\noindent 
We prove this result  by exhibiting explicit finite quotients 
of the universal central extension of a mapping class group that arise 
from the so-called quantum representations. We refine here the 
approach in \cite{Fu3} where the first author proved that central 
extensions of $M_g$ by $\Z$ are residually finite. In the meantime, it was  proved in \cite{F4,MR} by more sophisticated tools that the set of  
quotients of mapping class groups contains arbitrarily large rank 
finite groups of Lie type. Notice however that the  family of 
quotients obtained in Theorem \ref{res0} are  different in nature than those 
obtained in \cite{F4,MaR}, although their source is the same (see Proposition 
\ref{unequal} for details).  

\vspace{0.2cm}\noindent 
Theorem \ref{res0} shows that in the case of 
non-abelian quantum representations of mapping class groups there is 
no finite central extension for which all projective representations 
could be lifted to linear representations, when the genus is $g\geq 2$ 
(see Corollary \ref{nontrivial} for the precise statement).

\vspace{0.2cm}\noindent 
When $G$ is a discrete group we denote by $\widehat{G}$ its {\em profinite 
completion}, i.e. the projective limit of the directed system of all 
its finite quotients. There is a natural homomorphism $i:G\to \widehat{G}$ 
which is injective if and only if $G$ is residually finite. 
A {\em discrete $\widehat{G}$-module} is an abelian group endowed with a 
{\em continuous} action of $\widehat{G}$. We will simply call them 
$\widehat{G}$-modules in the sequel. We say that a $\widehat{G}$-module is {\em trivial} if  
the $\widehat{G}$-action is trivial.  Recall 
following \cite[I.2.6]{Serre} that:
\begin{definition}
A  discrete group $G$ has 
property $A_n$ for the finite $\widehat{G}$-module $M$ if the homomorphism
$H^k(\widehat{G}, M)\to H^k(G,M)$ is an isomorphism for
$k\leq n$ and injective for $k=n+1$. 
Furthermore $G$ is called {\em good} if it has property $A_n$ for all $n$ 
and for all finite $\widehat{G}$-modules.
\end{definition}
\noindent 
It is known, for instance, that all groups have property $A_1$.

\vspace{0.2cm}\noindent
Now, Deligne's theorem  
on the non-residual finiteness
of the universal central extension of $Sp(2g,\Z)$ 
actually is equivalent to the fact that 
$Sp(2g,\Z)$ has not property $A_2$ for the trivial 
$Sp(2g,\Z)$-modules (see also \cite{GJZ}).

\vspace{0.2cm}\noindent
Our next result is: 
\begin{theorem}\label{a2}
For $g \geq 4$ the mapping class group $M_g$ has property $A_2$ 
for the trivial $\widehat{M_g}$-modules.
\end{theorem}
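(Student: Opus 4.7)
By definition, property $A_2$ for trivial $\widehat{M_g}$-modules requires that the comparison map $\phi_k\colon H^k(\widehat{M_g},A)\to H^k(M_g,A)$ be an isomorphism for $k\leq 2$ and injective for $k=3$, for every finite abelian group $A$. The cases $k\leq 1$ are automatic from the universal property of the profinite completion, so the task reduces to showing that $\phi_2$ is bijective and $\phi_3$ is injective. The strategy is to derive both from the residual finiteness of the universal central extension $\widetilde{M_g}$, which itself follows from Theorem~\ref{res0}.

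For $\phi_2$: since $H_1(M_g;\Z)=0$ (Powell) and $H_2(M_g;\Z)=\Z$ (Harer) for $g\geq 4$, universal coefficients identify $H^2(M_g,A)\cong A$ and realize every class as a quotient of $\widetilde{M_g}$ along a homomorphism $\Z\to A$. By Theorem~\ref{res0}, for every integer $n$ there is a finite quotient of $M_g$ through which the generator of $H_2(M_g)=\Z$ acquires an image of order divisible by $n$; lifting to $\widetilde{M_g}$ yields a finite quotient in which the central generator has order exactly $n$. Hence $\widetilde{M_g}$, and therefore every central extension $E_c$ of $M_g$ by a finite abelian group $A$, is residually finite. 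This forces $A\hookrightarrow \widehat{E_c}$ and produces a continuous central extension of $\widehat{M_g}$ by $A$ restricting to $E_c$, which is the surjectivity of $\phi_2$. Injectivity of $\phi_2$ is formal: two continuous cocycles on $\widehat{M_g}$ that agree on the dense residually finite subgroup $M_g$ (Grossman) must coincide.

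For $\phi_3$, the main obstacle, I would compare the Lyndon--Hochschild--Serre spectral sequences of the central extension $1\to \Z\to \widetilde{M_g}\to M_g\to 1$ and of its profinite completion $1\to \widehat{\Z}\to \widehat{\widetilde{M_g}}\to \widehat{M_g}\to 1$; the latter is a genuine central extension with the expected kernel precisely because of the residual finiteness of $\widetilde{M_g}$ established above. Reducing the coefficient module to $A=\Z/n\Z$ via primary decomposition and the Bockstein from $0\to \Z\xrightarrow{n}\Z\to \Z/n\Z\to 0$, and using that $\Z$ has cohomological dimension one, the $E_2$-pages are concentrated in two rows; the $d_2$-differentials are cup product with the extension class, and bijectivity of $\phi_2$ identifies these classes on the nose between the discrete and profinite settings. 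Combined with the residual finiteness of $\widetilde{M_g}$ --- which lets one apply $\phi_2$ bijectivity to $\widetilde{M_g}$ as well --- and a diagram chase on the $E_\infty$-filtrations in total degree $3$ via the associated Gysin-type long exact sequence, one obtains injectivity of $\phi_3$ on $M_g$. The technical heart of the argument, and the main obstacle, lies in this spectral sequence comparison: verifying that the comparison maps on the two-row abutment remain well-behaved in degree $3$ is the step requiring care, but all the necessary ingredients are supplied by the residual finiteness of $\widetilde{M_g}$ extracted from Theorem~\ref{res0}.
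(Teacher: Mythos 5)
The overall strategy---reduce property $A_2$ to the residual finiteness of central extensions of $M_g$, which in turn follows from the quantum-representation input of Theorem~\ref{res0}---is the same as the paper's. But there is a genuine gap in the execution, and it is exactly at the point you yourself single out as ``the technical heart''.

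Your treatment of $\phi_2$ is essentially sound in outline (surjectivity via residual finiteness of the central extension and Serre's description of $\widehat{E}$ as an extension of $\widehat{G}$ by $\widehat{A}$ when $A$ is finite, from \cite[Ex.~2, Ch.~I.2.6]{Serre}; injectivity is in fact part of property $A_1$, which holds for all groups, so your ``dense subgroup'' remark, while slightly off-target at the cohomology level, is dispensable). Be a bit careful with the invocation of Theorem~\ref{res0}: as stated it produces $p$-torsion for each prime $p$, whereas for residual finiteness of $\widetilde{M_g}$ one needs detection at arbitrary prime powers, which is exactly Lemma~\ref{lem detect}. Also, reducing a general central extension by a finite $A$ to one of the $M_g(1)_k$ requires a short argument (the extension contains $M_g(1)_k$ or $M_g\times 0$ as a finite index subgroup, and a group with a residually finite finite-index subgroup is residually finite); this is routine but should be said.

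The real problem is $\phi_3$. You never prove it: the paragraph ends by asserting that ``all the necessary ingredients are supplied'' without carrying out the spectral-sequence comparison. This comparison is far from automatic. You would need (i) that the profinite completion of $\widetilde{M_g}$ really does sit in a central extension of $\widehat{M_g}$ by $\widehat{\Z}$, which requires exactly that $\Z$ have dense image in $\widehat{\Z}\subset\widehat{\widetilde{M_g}}$ (this is the content of Theorem~\ref{res0}/Lemma~\ref{lem detect}, and is precisely what Deligne's theorem \emph{forbids} in the symplectic case---so one cannot wave at it); (ii) a comparison morphism of two-row Lyndon--Hochschild--Serre spectral sequences compatible with the $d_2$-differentials, together with the identification of the edge maps; and (iii) a careful chase in total degree $3$ where both the $E_\infty^{3,0}$ and $E_\infty^{1,1}$ pieces contribute and the relevant maps must be tracked through the filtration. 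None of this is done. The paper avoids the whole issue: instead of proving $\phi_3$ injectivity directly, it invokes Serre's equivalence of properties $A_n$, $B_n$, $C_n$, $D_n$. Property $D_2$ only concerns classes in $H^1$ and $H^2$ (it asks that each such class die on a finite-index subgroup), and residual finiteness of central extensions delivers $D_2$ essentially by the argument you already give for $\phi_2$; the injectivity of $\phi_3$ then comes \emph{for free} from $D_2\Rightarrow A_2$. In short: your $\phi_2$ discussion is the right raw material, but the path you choose for $\phi_3$ leads into an unresolved spectral-sequence comparison that the paper's use of Serre's $D_2\Leftrightarrow A_2$ equivalence is specifically designed to bypass.
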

\vspace{0.2cm}\noindent
Our proof also yields the following: 
\begin{corollary}\label{vtrivialmcg}
Central extensions of $M_g$, $g\geq 4$, by finite abelian groups are virtually trivial and 
can be obtained as pull-backs from central extensions of finite quotients of $M_g$. 
\end{corollary}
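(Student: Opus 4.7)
The plan is to deduce Corollary \ref{vtrivialmcg} directly from Theorem \ref{a2} via the classical dictionary between degree-two cohomology classes and central extensions, together with the fact that a continuous cocycle on a profinite group with values in a finite discrete module always factors through a finite quotient.

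Concretely, given a central extension
\[
0 \to A \to E \to M_g \to 1
\]
by a finite abelian group $A$ (on which $M_g$ acts trivially, as is automatic for central extensions), I would represent it by a class $c \in H^2(M_g; A)$. By Theorem \ref{a2} applied to the trivial $\widehat{M_g}$-module $A$, the inflation map $H^2(\widehat{M_g}; A) \to H^2(M_g; A)$ is an isomorphism, so $c$ is the image of a continuous class $\tilde c \in H^2(\widehat{M_g}; A)$. Since $A$ is finite and discrete, any continuous $2$-cocycle representing $\tilde c$ is invariant under translation by a sufficiently small open normal subgroup $U \triangleleft \widehat{M_g}$, hence is inflated from a cocycle on the finite quotient $\widehat{M_g}/U$. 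Setting $N = p^{-1}(U) \triangleleft M_g$, where $p: M_g \to \widehat{M_g}$ is the canonical map, one has $M_g/N \cong \widehat{M_g}/U$ finite, and $c$ is pulled back along the projection $M_g \twoheadrightarrow M_g/N$ from a central extension of $M_g/N$ by $A$. This will prove the pull-back assertion.

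For virtual triviality, I would then observe that $N \subseteq M_g$ has finite index and its image in $M_g/N$ is trivial, so the restriction of the pull-back extension to $N$ is split: equivalently, $c|_N \in H^2(N; A)$ vanishes. I expect no substantive obstacle beyond invoking Theorem \ref{a2}; the remainder is a standard application of continuous cohomology of profinite groups, the only mild point being that continuous cocycles factor through an open subgroup, which is immediate from continuity and the discreteness of $A$.
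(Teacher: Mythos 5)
Your proposal is correct, and the underlying logic is sound, but it routes through a slightly different part of the paper's toolkit than the proof the authors actually give. The paper deduces Corollary~\ref{vtrivialmcg} directly from Proposition~\ref{thm centextresfini} (residual finiteness of any central extension $E$ of $M_g$ by a finite abelian group) together with the two elementary Lemmas~\ref{vtrivial} and~\ref{finitepullback}: residual finiteness of $E$ over a residually finite base is equivalent to virtual triviality, and virtual triviality is equivalent to being a pull-back from a finite quotient. You instead start from Theorem~\ref{a2}, lift the class $c \in H^2(M_g;A)$ to $H^2(\widehat{M_g};A)$, and invoke the standard fact that continuous cohomology of a profinite group with finite discrete coefficients is the colimit over open normal subgroups, so the class descends to a finite quotient $\widehat{M_g}/U \cong M_g/N$; vanishing of the restriction to $N$ then gives virtual triviality. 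Both arguments rest on the same ultimate input, but note that Theorem~\ref{a2} is itself derived in the paper from Proposition~\ref{thm centextresfini} via the equivalence of Proposition~\ref{extresfinite}, which already encodes the ``descend to a finite index subgroup'' step (property $D_2$). So your route effectively travels up to $A_2$ and then back down through profinite cohomology, whereas the paper short-circuits this by working at the level of residual finiteness of $E$ itself and only quoting the two elementary lemmas. What your version buys is a cleaner conceptual derivation from the headline $A_2$ statement and the general profinite machinery; what the paper's version buys is self-containment at the group-theoretic level, avoiding the detour through $\widehat{M_g}$ and the colimit description of continuous cohomology, and exhibiting the pull-back and virtual triviality as two reformulations (Lemma~\ref{finitepullback}) of the same residual finiteness phenomenon rather than proving one and deducing the other.
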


\vspace{0.2cm}\noindent
The last part of this article is devoted to a partial extension of the method 
used by Putman in \cite{Pu1} to compute $H_2(Sp(2g,\Z/D\Z))$, when $D\equiv 0\; ({\rm mod}\; 4)$, 
using induction. One key point is to show that there is a 
potential $\Z/2\Z$ factor that appears for 
$H_2(Sp(2g,\Z/4\Z))$. Although we couldn't complete the 
proof of Theorem \ref{tors-sympl0} this way, 
our main result in this direction may be of independent interest. 
Set $\mathfrak{sp}_{2g}(p)$ for  the additive group of those  
$2g$-by-$2g$ matrices $M$ with entries in $\Z/p\Z$ that satisfy the 
equation $M^{\top} J_g + J_g M \equiv 0 ({\rm mod } \; p)$, where 
$J_g  = \left( \begin{matrix} 
0 & \mathbf 1_g \\ 
-\mathbf 1_g & 0      
\end{matrix}
\right)$ is the symplectic form. Then $\mathfrak{sp}_{2g}(p)$ is endowed with 
a natural  $Sp(2g,\Z/p\Z)$-action.

\begin{theorem}\label{lem H2sp(p)}
For any integers $g \geq 4$, $k\geq 1$ and prime $p$,  the space of 
co-invariants in homology is:
 \[
H_2(\mathfrak{sp}_{2g}(p))_{Sp(2g,\Z/p^k\Z)} = \left\{  \begin{array}{ll} 0, & \text{if } p \text{ is odd,} \\ \Z/2\Z, & \text{if } p= 2. \end{array}\right.
 \]
\end{theorem}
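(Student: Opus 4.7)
The strategy is to reduce the problem to a representation-theoretic computation over $\mathbb{F}_p$ and then analyze the resulting module via the classical structure of the adjoint representation of the symplectic group.

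First, since $\mathfrak{sp}_{2g}(p)$ is an elementary abelian $p$-group, the standard computation of the second homology of an abelian group gives a canonical identification
\[
H_2\bigl(\mathfrak{sp}_{2g}(p);\Z\bigr) \;\cong\; \Lambda^2_{\mathbb{Z}}\,\mathfrak{sp}_{2g}(p) \;=\; \Lambda^2_{\mathbb{F}_p}\,\mathfrak{sp}_{2g}(p),
\]
the last equality holding because the group has exponent $p$. The $Sp(2g,\Z/p^k\Z)$-action is by conjugation of matrices with entries in $\Z/p\Z$, so it factors through the reduction $Sp(2g,\Z/p^k\Z)\twoheadrightarrow Sp(2g,\mathbb{F}_p)$. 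The task thus reduces to computing the $Sp(2g,\mathbb{F}_p)$-coinvariants of $\Lambda^2_{\mathbb{F}_p}\mathfrak{sp}_{2g}(\mathbb{F}_p)$. To organize the analysis I would use the $Sp$-equivariant isomorphism $\mathfrak{sp}_{2g}(\mathbb{F}_p)\cong S^2V^*$, where $V=\mathbb{F}_p^{2g}$ is the standard representation, given by $M\mapsto\bigl((v,w)\mapsto\omega(Mv,w)\bigr)$; this remains valid in characteristic $2$ because alternating forms are symmetric there.

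For odd $p$, the adjoint representation $\mathfrak{sp}_{2g}(\mathbb{F}_p)$ is absolutely irreducible and self-dual via the Killing form. Schur's lemma gives $\dim_{\mathbb{F}_p}(\mathfrak{sp}_{2g}\otimes\mathfrak{sp}_{2g})^{Sp}=1$, with the unique invariant being the Killing form, which is symmetric. Hence $(\Lambda^2\mathfrak{sp}_{2g})^{Sp}=0$, and since $\Lambda^2\mathfrak{sp}_{2g}$ is self-dual in characteristic $\neq 2$, the classical identity $\dim W^{Sp}=\dim W_{Sp}$ for finite-dimensional self-dual $\mathbb{F}_p[Sp]$-modules forces the coinvariants to vanish as well.

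For $p=2$ the adjoint representation is no longer semisimple: the identity matrix $I\in\mathfrak{sp}_{2g}(\mathbb{F}_2)$ (the defining equation $I^{\top}J_g+J_gI=2J_g$ is automatic modulo $2$) is central and spans an $Sp$-invariant line, which under $\mathfrak{sp}_{2g}(\mathbb{F}_2)\cong S^2V^*$ corresponds to the symplectic form itself. Its image in the quotient $\Lambda^2V^*$ of the characteristic-$2$ short exact sequence $0\to V^{(1)}\to S^2V^*\to\Lambda^2V^*\to 0$ is the canonical $Sp$-invariant $2$-form. The lower bound $\dim(\Lambda^2\mathfrak{sp}_{2g})_{Sp}\geq 1$ would come from constructing an explicit $Sp$-equivariant surjection $\Lambda^2\mathfrak{sp}_{2g}(\mathbb{F}_2)\twoheadrightarrow\mathbb{F}_2$: in characteristic $2$ the Lie bracket factors through $\Lambda^2\mathfrak{sp}_{2g}$ (as $[A,A]=2A^2=0$), and composing it with the $Sp$-equivariant quotient $\mathfrak{sp}_{2g}(\mathbb{F}_2)\twoheadrightarrow\Lambda^2V^*$ and the canonical invariant functional $\Lambda^2V^*\to\mathbb{F}_2$ yields such a map, whose non-triviality can be verified on a standard symplectic basis. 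The matching upper bound would come from analyzing the composition factors of $\Lambda^2\mathfrak{sp}_{2g}(\mathbb{F}_2)$ using the above filtration together with the known modular representation theory of $Sp(2g,\mathbb{F}_2)$ for $g\geq 4$, showing that only one trivial composition factor survives in the coinvariants.

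The main obstacle will be the $p=2$ case, and specifically the upper bound. The characteristic-$2$ representation theory of $Sp(2g,\mathbb{F}_2)$ acting on $\Lambda^2(S^2V^*)$ is intricate and non-semisimple: Frobenius twists and the failure of complete reducibility produce several candidate trivial composition factors, and one must control them carefully to show that exactly one survives taking coinvariants.
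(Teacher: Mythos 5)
Your identification $H_2(\mathfrak{sp}_{2g}(p)) \cong \Lambda^2 \mathfrak{sp}_{2g}(p)$ and the reduction of the action to $Sp(2g,\mathbb{F}_p)$ agree with the paper. For odd $p$, your argument is a genuinely different and more conceptual route than the paper's: you appeal to absolute irreducibility and self-duality of the adjoint module (so the unique invariant bilinear form is symmetric, hence $(\Lambda^2\mathfrak{sp})^{Sp}=0$, hence the coinvariants vanish by duality), whereas the paper explicitly enumerates a generating set $\{r_{ij},n_{ii},u_{ij},u_{ii},l_{ij},l_{ii}\}$, splits the wedge generators into three families $S\wedge S$, $S\wedge M$, $M\wedge M$, kills the first two by hitting them with carefully chosen unipotent and permutation matrices, and reduces the last to the classification of $GL(g,\mathbb{F}_p)$-invariant bilinear forms on $\mathfrak{M}_g(\mathbb{F}_p)$. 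Assuming you justify simplicity of $\mathfrak{sp}_{2g}$ as a Lie algebra in odd characteristic (which gives irreducibility of the adjoint module over the algebraic closure), your odd-$p$ argument is correct and arguably cleaner.

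The real problem is $p=2$, which you yourself flag as ``the main obstacle.'' You sketch the lower bound (a surjection onto $\Z/2\Z$ via the Lie bracket and the invariant functional on $\Lambda^2 V^*$), but the upper bound --- showing the coinvariants are \emph{at most} $\Z/2\Z$ --- is not proved; it is only described as requiring a ``careful'' analysis of composition factors of $\Lambda^2(S^2 V^*)$ for $Sp(2g,\mathbb{F}_2)$, which is precisely the hard part. This is not a routine filling-in: in characteristic $2$ the module is far from semisimple, and the existence of several trivial composition factors does not by itself control the coinvariants (one needs to locate them in the filtration and show all but one is killed). The paper avoids this modular-representation-theory problem entirely by reducing, after the explicit elimination step, to a $GL(g)$-equivariance statement: the space of $GL(g,\mathbb{K})$-invariant bilinear forms on $\mathfrak{M}_g(\mathbb{K})$ is two-dimensional (spanned by $\mathrm{tr}(A)\mathrm{tr}(B)$ and $\mathrm{tr}(AB)$), and its alternating part is one-dimensional exactly when $\mathrm{char}(\mathbb{K})=2$, which gives the upper bound immediately. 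Without the elimination of $S\wedge S$ and $S\wedge M$ (or an equivalent control of the $u$- and $l$-terms), your outline does not yield the bound, so as written the $p=2$ case has a genuine gap.
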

\noindent 
An immediate consequence of this result is the alternative 
$H_2(Sp(2g,\Z/4\Z))\in\{0,\Z/2\Z\}$, without use of Deligne's theorem.

\vspace{0.2cm}\noindent 
The plan of this article is the following. 

\vspace{0.2cm}\noindent 
In Section \ref{compute} we prove Theorem \ref{tors-sympl0}.  
Although it is easy to show that the groups $H_2(Sp(2g,\Z/2^k\Z))$ 
are cyclic for $g\geq 4$, their non-triviality is much more involved. That this group 
is trivial for $k=1$ is a known fact, for instance by Stein's  
results \cite{Stein2}. 
We give two  different proofs of the non-triviality, each one of them having 
its advantages and disadvantages in terms of bounds for detections 
or sophistication. 
The first proof is $K$-theoretical in nature and uses a generalization 
of Sharpe's exact sequence relating $K$-theory to symplectic $K$-theory due 
to Barge and Lannes \cite{BaLa}. Indeed, by the stability results, this 
$\Z/2\Z$ should correspond to a class in $KSp_2(\Z/4\Z)$. There is a 
natural map from this group to a Witt group of symmetric non-degenerate 
bilinear forms on free $\Z/4\Z$-modules, and it turns out that the class 
is detected by the class of the bilinear map of 
matrix $\left( \begin{matrix} 2 & 1 \\ 1 & 2  \end{matrix} \right)$. 
The second proof uses mapping class groups. We show that there 
is a perfect candidate to detect this $\Z/2\Z$ that comes from a Weil 
representation of the symplectic group. This is, by construction,  a 
representation of $Sp(2g,\Z)$ into a projective unitary group that factors 
through $Sp(2g,\Z/4n\Z)$. To show that it detects the factor $\Z/2\Z$ it is 
enough to show that this representation does not lift to a linear 
representation.  We will show that the pull-back of the representation 
on the mapping class group $M_g$ does not linearize. This proof 
relies on deep results of Gervais \cite{Ge}.
The projective representation that we use is related to the theory of 
theta functions on symplectic groups, this relation is explained in 
an appendix to this article.

\vspace{0.2cm}\noindent 
In Section~\ref{residual} we first state the relation 
between the torsion in the second homology of a perfect group and 
residual finiteness of its universal central extension. 
We then prove Theorem \ref{boundtorsion0} for Dedekind 
domains by analyzing Deligne's central extension. We further  
specify our discussion to the group $Sp(2g,\Z)$, and show 
how the result stated in Theorem \ref{tors-sympl0} allows
 to show that Deligne's result is sharp.

\vspace{0.2cm}\noindent 
Finally in Section \ref{mcg} we discuss the  
case of the mapping class groups and prove Theorem \ref{res0} 
and Theorem \ref{a2} using the quantum representations that arise from 
the $SU(2)$-TQFT's. These representations are the non-abelian counterpart 
of the Weil representations of symplectic groups, which might be described 
as the quantum representations that arise from the $U(1)$-TQFT. 

\vspace{0.2cm}\noindent 
Finally, in appendix A we give a small overview of the 
relation between Weil representations and extensions of the symplectic group.

\vspace{0.2cm}\noindent 
In all this work, unless otherwise specified, all (co)homology groups 
are with coefficients in $\Z$, and we drop it from the notation so 
that for a group $G$, $H_\ast(G) = H_\ast(G;\Z)$ and $H^\ast(G) = H^\ast(G;\Z)$.

\vspace{0.2cm}\noindent 
{\bf Acknowledgements.} We are thankful to  Jean Barge, Nicolas Bergeron, 
Will Cavendish, Florian Deloup, Philippe Elbaz-Vincent, Richard Hain, Pierre Lochak, Greg McShane, Ivan Marin, Gregor Masbaum, Andy Putman,  Alexander Rahm and Alan Reid for helpful 
discussions and suggestions. We are grateful to the referee for his/her thorough
screening of the manuscript and  for the comments which considerably improved the presentation. 

\section{Proof of Theorem \ref{tors-sympl0}}\label{compute}
\subsection{Preliminaries}
Let $D=p_1^{n_1}p_2^{n_2}\cdots p_s^{n_s}$ be the prime decomposition of 
an integer $D$.  Then, according to  \cite[Thm. 5]{NS}  we have 
$Sp(2g,\Z/D\Z)=\oplus_{i=1}^s Sp(2g,\Z/p_i^{n_i}\Z)$.
Since symplectic groups are perfect for $g\geq3$, see e.g. 
\cite[Thm. 5.1]{Pu1},  from the K\"unneth formula, we  derive: 
\[
H_2(Sp(2g,\Z/D\Z))=\oplus_{i=1}^s H_2(Sp(2g,\Z/p_i^{n_i}\Z)).
\] 
Then, from Stein's computations for $D\not\equiv 0\; ({\rm mod}\; 4)$,  
see \cite{Stein,Stein2}, Theorem \ref{tors-sympl0} is 
equivalent to the  statement: 
\[ H_2(Sp(2g,\Z/2^k\Z)) = \Z/2\Z, \; {\rm for \; all \; } g \geq 4, k \geq 2.  
\] 
We will freely use in the sequel two classical results due to Stein.   
{\em Stein's isomorphism theorem}, see \cite[Prop. 3.3.(a)]{Stein},  
states that there is an isomorphism:  
\[
H_2(Sp(2g,\Z/2^k\Z))\simeq H_2(Sp(2g,\Z/2^{k+1}\Z)), \; 
{\rm for \; all \; } g \geq 3, k \geq 2. 
\]
Further, {\em Stein's stability theorem} from \cite[Thm. 2.13]{Stein} states that 
the stabilization homomorphism $Sp(2g,\Z/2^k\Z)\hookrightarrow 
Sp(2g+2,\Z/2^k\Z)$ induces an isomorphism: 
\[
H_2(Sp(2g,\Z/2^k\Z))\simeq H_2(Sp(2g+2,\Z/2^{k}\Z)), \; 
{\rm for \; all \; } g \geq 4, k \geq 1. 
\]
Therefore, to prove Theorem \ref{tors-sympl0}  it suffices to show that: 
\[ H_2(Sp(2g,\Z/2^k\Z))=\Z/2\Z, \; {\rm for \; some \; } g \geq 4, k \geq 2. 
\]
We provide hereafter two different proofs of this statement, each having its 
own advantage.

\vspace{0.2cm}\noindent   
The first proof, based on an extension of 
Sharpe's sequence in symplectic $K$-theory due to Barge and Lannes,   
see \cite{BaLa}, gives the result already for $Sp(2g,\Z/4\Z)$. 
Moreover, this proof does not rely on  Deligne's theorem.

\vspace{0.2cm}\noindent   
For the second proof, 
the starting point is the following intermediary result: 
\begin{proposition}\label{alternative}
We have $H_2(Sp(2g,\Z/2^k\Z))\in \{0, \Z/2\Z\}$, when $g\geq 4$ and $k \geq 2$.
\end{proposition}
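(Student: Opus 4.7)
The plan is to derive the proposition from Deligne's non-residual finiteness theorem (the first statement of Theorem \ref{nonresid0}), by realizing $H_2(Sp(2g,\Z/2^k\Z))$ as a cyclic image of the center of the universal central extension $\widetilde{Sp(2g,\Z)}$ inside a suitable finite group.

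First I would build the finite target. Because $g\geq 3$, both $Sp(2g,\Z)$ and $Sp(2g,\Z/2^k\Z)$ are perfect and admit universal central extensions $\widetilde{Sp(2g,\Z)}$ and $\widetilde{Sp(2g,\Z/2^k\Z)}$, with kernels equal to their respective Schur multipliers. The group $\widetilde{Sp(2g,\Z/2^k\Z)}$ is finite, as $H_2$ of a finite group is finite. Pulling back the universal central extension of $Sp(2g,\Z/2^k\Z)$ along the reduction morphism $Sp(2g,\Z)\to Sp(2g,\Z/2^k\Z)$ yields a central extension of $Sp(2g,\Z)$ by $H_2(Sp(2g,\Z/2^k\Z))$. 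The universal property of $\widetilde{Sp(2g,\Z)}$, combined with the projection from this pullback, produces a canonical homomorphism
\[
\Phi:\widetilde{Sp(2g,\Z)}\longrightarrow\widetilde{Sp(2g,\Z/2^k\Z)}
\]
whose restriction to centers coincides with the map $H_2(Sp(2g,\Z))\to H_2(Sp(2g,\Z/2^k\Z))$ induced by functoriality of the reduction.

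Next I would verify that this center-to-center map is surjective. Applying the five-term Hochschild-Serre exact sequence to
\[
1\longrightarrow\Gamma_g(2^k)\longrightarrow Sp(2g,\Z)\longrightarrow Sp(2g,\Z/2^k\Z)\longrightarrow 1,
\]
where $\Gamma_g(2^k)$ denotes the level $2^k$ principal congruence subgroup, and invoking perfectness at both ends to kill the $H_1$ terms, reduces the surjectivity to showing that the $Sp(2g,\Z/2^k\Z)$-coinvariants of $H_1(\Gamma_g(2^k))$ vanish. Via the classical description of abelianizations of principal congruence subgroups in terms of the symplectic Lie algebra $\mathfrak{sp}_{2g}(\Z/2^k\Z)$ equipped with the adjoint action, the coinvariant vanishing should follow in the stable range $g\geq 4$.

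With surjectivity in hand, $H_2(Sp(2g,\Z/2^k\Z))$ is a cyclic quotient of $H_2(Sp(2g,\Z))=\Z$ equal to $\Phi(Z(\widetilde{Sp(2g,\Z)}))$ sitting inside the finite group $\widetilde{Sp(2g,\Z/2^k\Z)}$. Deligne's theorem bounds the order of any image of $Z(\widetilde{Sp(2g,\Z)})$ in a finite group by $2$, forcing $H_2(Sp(2g,\Z/2^k\Z))\in\{0,\Z/2\Z\}$. The main obstacle I expect is the careful verification of the coinvariant vanishing at the prime $p=2$, where the adjoint representation of $\mathfrak{sp}_{2g}$ is not semisimple and can harbor residual $2$-torsion; a cleaner alternative bypassing this subtlety is to invoke Stein's stability (recalled in the Preliminaries) together with Matsumoto-type symbol computations in stable symplectic $K$-theory to exhibit a generator of $H_2(Sp(2g,\Z/2^k\Z))$ directly as the image of a Steinberg symbol coming from $H_2(Sp(2g,\Z))$.
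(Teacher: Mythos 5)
Your proposal follows essentially the same route the paper takes: the result is cited from \cite[Prop.~3.1]{FP} and is there deduced from Deligne's theorem combined with the surjectivity of $p_*:H_2(Sp(2g,\Z))\to H_2(Sp(2g,\Z/2^k\Z))$. Your use of Lemma~\ref{lift} (uniqueness of the lift to universal central extensions, identifying the restriction to kernels with the induced map on Schur multipliers), the finiteness of $\widetilde{Sp(2g,\Z/2^k\Z)}$, $H_2(Sp(2g,\Z))=\Z$ for $g\ge 4$, and Deligne's bound of $2$ on the image of the central kernel in any finite quotient are exactly the ingredients the paper assembles, so the skeleton is right.

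The one place where you should be more careful is the surjectivity step, which in the paper is isolated as Proposition~\ref{lem H2epi} and proved in \cite{FP}. Your reduction via the five-term exact sequence for $1\to\Gamma_g(2^k)\to Sp(2g,\Z)\to Sp(2g,\Z/2^k\Z)\to 1$, using $H_1(Sp(2g,\Z))=0$, correctly shows surjectivity is equivalent to $H_1(\Gamma_g(2^k))_{Sp(2g,\Z/2^k\Z)}=0$. However, the abelianization $H_1(\Gamma_g(2^k))$ is not simply the Lie algebra $\mathfrak{sp}_{2g}(\Z/2^k\Z)$; the map $\Gamma_g(2^k)\twoheadrightarrow\Gamma_g(2^k)/\Gamma_g(2^{2k})$ gives one piece, but for $p=2$ and $k\ge 1$ the abelianization can carry extra $2$-torsion not visible in the Lie algebra (this is precisely the sort of subtlety that required extra work in \cite{FP} and motivates Section~\ref{computed} of the paper). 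So "the coinvariant vanishing should follow in the stable range" is a genuine claim needing proof, not a formality; the paper handles it by citing \cite{FP} rather than by the route you sketch. Note also that the paper uses a different exact sequence in its own inductive attempt, namely $1\to\mathfrak{sp}_{2g}(p)\to Sp(2g,\Z/p^{k+1}\Z)\to Sp(2g,\Z/p^k\Z)\to 1$, whose coinvariant computation (Theorem~\ref{lem H2sp(p)} and Putman's Theorem~\ref{lem annulhomol}) only yields the alternative directly for $k=2$; this is a useful sanity check on how delicate the $p=2$ coinvariants really are.
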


\vspace{0.2cm}\noindent 
This was obtained in 
\cite[Prop. 3.1]{FP} as an immediate consequence of Deligne's theorem. 
A direct proof of the alternative  
$H_2(Sp(2g,\Z/4\Z))\in \{0, \Z/2\Z\}$ when $g\geq 4$ will be given 
in Section \ref{motiv}, under the form of 
Corollary \ref{inductive} of Theorem \ref{lem H2sp(p)}. 
When $g=3$ our arguments only provide a weaker statement, namely that  
 $H_2(Sp(6,\Z/2^k\Z))\in \{0, \Z/2\Z,\Z/2\Z\oplus\Z/2\Z\}$.

\vspace{0.2cm}\noindent 
Then it will be enough to find  
a non-trivial extension of $Sp(2g,\Z/2^k\Z)$ by $\Z/2\Z$ for some 
$g\geq 4$, $k\geq 2$. 

\vspace{0.2cm}\noindent   
Our second  proof seems more elementary and provides  
an explicit non-trivial central extension of $Sp(2g,\Z/4n\Z)$ by $\Z/2\Z$, for all 
integers $n\geq 1$. Moreover, it does not use 
Stein's isomorphism theorem and relies instead on the study of 
the Weil representations of symplectic groups, or equivalently 
abelian quantum representations of mapping class groups.
Since these representations come from theta functions  
this approach is deeply connected to Putman's  
approach. In fact the proof of the Theorem F in \cite{Pu1} is based   
on his Lemma 5.5 whose proof 
needed the transformation formulas for the classical theta nulls.


\subsection{A $K$-theory computation of  $H_2(Sp(2g, \Z/4\Z) )$}
The proof below uses slightly 
more sophisticated tools which were developed by Barge and 
Lannes in \cite{BaLa} and allow us to bypass 
Deligne's theorem. According to Stein's stability theorem 
\cite{Stein} it is enough to prove that 
$H_2(Sp(2g,\Z/4\Z))=\Z/2\Z$, for $g$ large.
It is well-known that the second homology of the 
linear and symplectic groups can be interpreted in terms 
of the $K$-theory group $K_2$. Denote by $K_1(\A),K_2(\A)$ 
and $KSp_1(\A)$, $KSp_2(\A)$  the groups of algebraic $K$-theory of 
the stable linear groups and symplectic groups over the 
commutative ring $\A$, respectively. See \cite{HO} for definitions. 
Our claim is equivalent to the fact that 
$ KSp_2(\Z/4\Z)=\Z/2\Z$. 

\vspace{0.2cm}\noindent  For an arbitrary ring $R$, the group $
V(R)$  is defined as follows (see \cite[Section 4.5.1]{BaLa}). 
Consider the set of triples $(L;q_0,q_1)$, where $L$ is a free $R$-module of 
finite rank and $q_0$ and $q_1$ are non-degenerate symmetric bilinear forms. 
Two such triples $(L;q_0,q_1)$ and $(L';q'_0,q'_1)$ are equivalent, if there 
exists an $R$-linear isomorphism $a:L \rightarrow L'$ such that 
$ a^\ast \circ q'_0 \circ a = q_0$ and $a^\ast \circ q'_1 \circ a = q_1$. 
Under orthogonal sum these triples form a monoid. The group  $V(R)$ is by definition the 
quotient of the Grothendieck group associated to the monoid of such triples  by the subgroup generated 
by Chasles' relations, that is the subgroup generated by the elements of 
the form:
\[
[L;q_0,q_1] + [L;q_1,q_2] - [L;q_0,q_2]. 
\]
  
\vspace{0.2cm}\noindent  
Our key ingredient is the exact sequence from \cite[Thm. 5.4.1]{BaLa}, 
which is a generalization of Sharpe's exact sequence (see \cite[Thm. 5.6.7]{HO}) 
in $K$-theory: 

\begin{equation}
K_2(\Z/4\Z)\to KSp_2(\Z/4\Z) \to V(\Z/4\Z)\to K_1(\Z/4\Z)\to 1.
\end{equation}
\vspace{0.2cm}\noindent  
We first show:

\begin{lemma}\label{rezs} 
The homomorphism 
$K_2(\Z/4\Z)\to KSp_2(\Z/4\Z)$ is trivial. 
\end{lemma}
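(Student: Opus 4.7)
The plan is to exploit the fact that $K_2(\Z/4\Z)$ is cyclic of order two, generated by a single nontrivial Steinberg symbol, and to show that this symbol dies under the hyperbolic map. By the computation of Dennis--Stein \cite{DS1}, $K_2(\Z/4\Z)\cong\Z/2\Z$ is generated by the Steinberg symbol $\{-1,-1\}$, as $-1$ is the only nontrivial unit of $\Z/4\Z$; moreover this class is the reduction modulo $4$ of the generator $\{-1,-1\}\in K_2(\Z)\cong\Z/2\Z$. By naturality of the K-theoretic hyperbolic map $H$ under the surjection $\Z\twoheadrightarrow\Z/4\Z$, one obtains a commutative square
\[
\xymatrix{
K_2(\Z) \ar@{->>}[r] \ar[d]_H & K_2(\Z/4\Z) \ar[d]^H \\
KSp_2(\Z) \ar[r] & KSp_2(\Z/4\Z)
}
\]
in which the top arrow is surjective, so the claim reduces to showing that the composite $K_2(\Z)\to KSp_2(\Z)\to KSp_2(\Z/4\Z)$ kills $\{-1,-1\}$.

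To establish this vanishing I would pursue one of two routes. The more conceptual route is to show that $H(\{-1,-1\})$ already vanishes in $KSp_2(\Z)=H_2(Sp_\infty(\Z))$: by Stein's stability theorem recalled in the preliminaries, $H_2(Sp(2g,\Z))$ stabilizes for $g\geq 4$, and while the unstable value $H_2(Sp(6,\Z))=\Z\oplus\Z/2\Z$ quoted in the excerpt contains $2$-torsion, one expects the stable value to be torsion-free, in which case the torsion class $H(\{-1,-1\})$ must be trivial there. The more hands-on route is to manipulate the hyperbolic image of $\{-1,-1\}$ directly inside the symplectic Steinberg group $StSp_n(\Z/4\Z)$: using the additional long-root generators present in type $C_n$ (absent in $St_{SL}$), together with the identity $(-1)^2=1$ in $\Z/4\Z$, one should rewrite this image as a product of commutators of elementary symplectic matrices, and hence as the trivial class in $KSp_2(\Z/4\Z)$.

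The main obstacle will be isolating a clean, self-contained proof of the vanishing. The first route is conceptually appealing but depends on a precise identification of $H_2(Sp_\infty(\Z))$ as torsion-free, a statement not recorded explicitly in the excerpt; the second route is explicit but requires careful bookkeeping of the $C_n$-type relations in the symplectic Steinberg group. In either case one should be able to invoke results from Barge--Lannes \cite{BaLa}, whose machinery already captures the fine behavior of Steinberg symbols under the hyperbolic map and whose generalized Sharpe sequence drives the whole computation.
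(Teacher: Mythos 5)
Your Route 1 is exactly the paper's argument: it sets up the same naturality square for hyperbolization versus mod-$4$ reduction, uses surjectivity of $H_2(SL(g,\Z))\to H_2(SL(g,\Z/4\Z))$, and concludes because the top map $\Z/2\Z\to H_2(Sp(2g,\Z))=\Z$ must be zero; the paper confirms the torsion-freeness you were unsure about ($H_2(Sp(2g,\Z))=\Z$ for $g\geq 4$, citing Matsumoto and Milnor), so that step is solid. Your Route 2 coincides with the paper's alternative remark, which sends the Dennis--Stein symbol $\{r,s\}$ to the symplectic symbol $[r^2,s]$ and notes $\{-1,-1\}\mapsto[1,-1]=0$.
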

\begin{proof}[Proof of Lemma \ref{rezs}]
Recall from \cite{BaLa} that this homomorphism 
is induced by the hyperbolization inclusion 
$SL(g,\Z/4\Z)\to Sp(2g,\Z/4\Z)$, 
which sends the matrix $A$ to $
A\oplus (A^{-1})^{\top}$.  By stability of the homology groups of the special linear and symplectic  groups it is enough to show that the induced map
\[
H: H_2(SL(g,\Z/4\Z)) \rightarrow H_2(Sp(2g,\Z/4\Z))
\]
is trivial for $g \geq 5$.

\vspace{0.2cm}\noindent 
Together with the hyperbolization, mod $4$ reduction of the coefficients gives us a commutative diagram:
\[
\xymatrix{
H_2(SL(g,\Z)) \ar[d] \ar[r]^- h &  H_2(Sp(2g,\Z)) \ar[d] \\
H_2(SL(g,\Z/4\Z)) \ar[r]^- H &  H_2(Sp(2g,\Z/4\Z)) 
}
\]

\vspace{0.2cm}\noindent 
It is known that $H_2(SL(g,\Z))=\Z/2\Z$, when $g\geq 5$ and $H_2(Sp(2g,\Z))=\Z$, when $g\geq 4$, 
see e.g. \cite{Mat} and \cite[Thm. 10.1, Thm. 5.10, Remark]{Milnor}. 
Moreover, the homomorphism induced in homology $H_2(SL(g,\Z)) \rightarrow H_2(SL(g,\Z/4\Z))$ by the reduction mod $4$ of coefficients is surjective, see \cite[section 10 pp. 92]{Milnor}. 
Alternatively, we can infer it from the proof \cite[Prop. 3.2]{FP}. 
Actually, Dennis proved that this map is an isomorphism and hence both groups are isomorphic to $\Z/2\Z$, see \cite{DS2}. It follows that the hiperbolization map $h$ is trivial, 
and so is $H$.

\end{proof}

\begin{remark}
An alternative argument is as follows.   
The hyperbolization homomorphism $H: K_2(\Z/4\Z)\to KSp_2(\Z/4\Z)$  
sends the Dennis-Stein symbol $\{r,s\}$ to the Dennis-Stein 
symplectic symbol $[r^2,s]$, see e.g. \cite[Paragraph 5.6.2]{HO}.  
According to \cite[Prop. 3.3 (b)]{Stein} the group $K_2(\Z/4\Z)$ is generated 
by $\{-1,-1\}$ and thus its image by $H$ is generated by 
$[1,-1]=0$.
\end{remark}

\vspace{0.2cm}
\noindent
Going back to Sharpe's exact sequence, 
it is known that: 
\begin{equation} 
K_1(\Z/4\Z)\cong (\Z/4\Z)^*\cong \Z/2\Z,
\end{equation}
and the problem is to compute the discriminant map $V(\Z/4\Z) \rightarrow K_1(\Z/4\Z)$.

\vspace{0.2cm}
\noindent
Recall from \cite[Section 4.5.1]{BaLa} that there is a canonical map from $V(R)$ to  the Grothendieck-Witt group of 
symmetric non-degenerate bilinear forms   over free modules that 
sends the class $[L;q_0,q_1]$ to $q_1 - q_0$. Since 
$\Z/4\Z$ is a local ring, we know  that $SK_1(\Z/4\Z)=1$ and hence by
 \cite[Corollary 4.5.1.5]{BaLa} we have a pull-back square of abelian groups:
\[
 \begin{array}{ccc}
  V(\Z/4\Z) & \longrightarrow & I(\Z/4\Z) \\
  \downarrow & & \downarrow \\
  (\Z/4\Z)^\ast & \longrightarrow & (\Z/4\Z)^\ast/((\Z/4\Z)^\ast)^2,
 \end{array}
\]
where $I(\Z/4\Z)$ is a the augmentation ideal of the Grothendieck-Witt ring 
of $\Z/4\Z$. But $(\Z/4\Z)^\ast =\{1,3\}$, and only  $1$ is a square, hence the bottom  arrow in the  square is an isomorphism 
$\Z/2\Z \cong \Z/2\Z$. Thus $V(\Z/4\Z) \cong I(\Z/4\Z)$ and the kernel of 
$V(\Z/4\Z) \rightarrow (\Z/4\Z)^\ast \cong K_1(\Z/4\Z)$ is the kernel of 
the discriminant homomorphism 
$I(\Z/4\Z) \rightarrow (\Z/4\Z)^\ast/((\Z/4\Z)^\ast)^2$. 
To compute  $V(\Z/4\Z)$ it is therefore enough to compute  the 
 Witt ring $W(\Z/4\Z)$. Recall that this is the 
quotient of the monoid of symmetric non-degenerate bilinear forms 
over finitely  generated projective modules 
modulo the sub-monoid of \emph{split} 
forms. A bilinear form is split if the underlying free module  contains a 
direct summand $N$ such that $N = N^\bot$. By a classical result of 
Kaplansky,  finitely generated projective modules over $\Z/4\Z$ are free.  
Then, from  \cite[Lemma 6.3]{MH} any split form can be written in matrix form as:
\[ 
\left(
\begin{matrix}
0 & \mathbf 1 \\
\mathbf 1 & A
\end{matrix}
\right),
\]
for some symmetric matrix $A$, where $\mathbf 1$ denotes the identity matrix. Isotropic 
submodules form an inductive system, 
and therefore any isotropic submodule is contained in a maximal one.  
These have all the same rank. In the case of a split form this rank is 
necessarily half of the rank of the underlying free module, which is 
therefore even. The main difficulty in the following computation is 
due to the fact that  $2$ is not a unit in $\Z/4\Z$, so that the classical 
Witt cancellation lemma is not true. As usual, for any invertible 
element  $u$ of $\Z/4\Z$ we denote by $\langle u \rangle $ the 
non-degenerate symmetric bilinear form on $\Z/4\Z$  of determinant $u$.

\begin{proposition}\label{prop wittz4}
The Witt ring $W(\Z/4\Z)$ is isomorphic to $\Z/8\Z$, and it is generated 
by the class of $\langle -1\rangle$.
\end{proposition}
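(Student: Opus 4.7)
The plan is to show $W(\Z/4\Z)$ is cyclic of order $8$ generated by $[\langle-1\rangle]$, in three steps: classify the non-degenerate forms over $\Z/4\Z$, reduce $W(\Z/4\Z)$ to a cyclic group via explicit relations, and pin down the order with a Gauss sum invariant together with an explicit Lagrangian. For the classification, I would reduce a non-degenerate form $b$ on a free $\Z/4\Z$-module $L$ modulo $2$. If the reduction is non-alternating, then some $v\in L$ has $b(v,v)\in(\Z/4\Z)^\times$ and we split off $\langle\pm 1\rangle$ from $L=\Z/4\Z v\oplus v^\perp$, the complement being free by Kaplansky. If the reduction is alternating, every $b(v,v)$ is even and one finds a pair $e,f$ with $b(e,f)$ a unit, spanning a rank-$2$ direct summand on which $b$ is isomorphic, after basis change, to either $H=\left(\begin{matrix}0 & 1\\1 & 0\end{matrix}\right)$ or $B=\left(\begin{matrix}2 & 1\\1 & 2\end{matrix}\right)$; the two are distinct because on $B$ every self-product $b(v,v)=2(a^2+ab+b^2)$ lies in $\{0,2\}$ and no basis change clears both diagonal $2$'s simultaneously. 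Iterating gives an orthogonal decomposition of $b$ into $\langle\pm 1\rangle$, $H$ and $B$ pieces.

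Next I would establish three relations in $W(\Z/4\Z)$: (i) $[\langle 1\rangle]+[\langle-1\rangle]=0$, because the primitive isotropic vector $(1,1)$ in $\langle 1\rangle\oplus\langle-1\rangle$ spans a direct summand equal to its own orthogonal complement; (ii) $[H]=0$ by definition; and (iii) $[B]=4[\langle-1\rangle]$. For (iii), in $B\oplus\langle-1\rangle$ the vector $(1,0,1)$ has unit self-product $2-1=1$ and splits off a $\langle 1\rangle$; the rank-$2$ orthogonal complement, in the basis $\{(1,0,2),(0,1,1)\}$, has Gram matrix $\left(\begin{matrix}2 & 3\\3 & 1\end{matrix}\right)$, whose second diagonal entry is a unit, so it splits further as $\langle 1\rangle^{\oplus 2}$. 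This yields $B\oplus\langle-1\rangle\cong\langle 1\rangle^{\oplus 3}$ and hence $[B]=3[\langle 1\rangle]-[\langle-1\rangle]=4[\langle 1\rangle]=4[\langle-1\rangle]$ in $W(\Z/4\Z)$. Together (i)--(iii) show that $W(\Z/4\Z)$ is cyclic, generated by $[\langle-1\rangle]$.

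To fix the order, I would introduce the Gauss sum $G(b)=\sum_{x\in L}e^{2\pi i b(x,x)/4}$, which is multiplicative under $\oplus$. Direct calculation gives $G(\langle\pm 1\rangle)=2(1\pm i)$, while for any split form of shape $\left(\begin{matrix}0 & I_n\\I_n & A\end{matrix}\right)$ of rank $2n$ one obtains $G=8^n>0$ (summing first over the second group of variables forces the first to be even, which kills the contribution of $A$). Thus the argument $\sigma(b)\in\Z/8\Z$ defined by $G(b)/|G(b)|=e^{2\pi i\sigma(b)/8}$ descends to a well-defined homomorphism $\sigma:W(\Z/4\Z)\to\Z/8\Z$ with $\sigma([\langle-1\rangle])=-1$, forcing the order of $[\langle-1\rangle]$ to be divisible by $8$. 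For the matching upper bound I would exhibit a Lagrangian inside $\langle 1\rangle^{\oplus 8}$: the $\Z/4\Z$-span of
\[
v_1=(1,1,1,1,0,0,0,0),\; v_2=(0,0,0,0,1,1,1,1),\; v_3=(1,3,0,0,1,3,0,0),\; v_4=(1,0,3,0,3,0,1,0)
\]
is totally isotropic (a short check of self- and pairwise products modulo $4$), and the $4\times 4$ minor of the concatenation matrix $(v_1\,|\,v_2\,|\,v_3\,|\,v_4)$ formed from rows $4,5,7,8$ has determinant $1$, so the span is a free rank-$4$ direct summand of $(\Z/4\Z)^8$ contained in and, by cardinality, equal to its orthogonal complement. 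Hence $8[\langle 1\rangle]=0$ and $W(\Z/4\Z)\cong\Z/8\Z$ generated by $[\langle-1\rangle]$. The chief obstacle throughout is the failure of classical Witt cancellation over $\Z/4\Z$: both the relations above and the upper bound on the order must be obtained by producing concrete isomorphisms and an explicit primitive Lagrangian, and the delicate point in the last step is to choose the $v_i$ so that their $\Z/4\Z$-span avoids the $2$-torsion that would prevent it from being a free direct summand.
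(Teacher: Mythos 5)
Your proposal is correct, and while it starts from the same classification and the same two identities $[\langle 1\rangle]=-[\langle -1\rangle]$ and $[B]=\pm 4[\langle -1\rangle]$ as the paper, the way you pin down the order of the cyclic group is genuinely different. The paper proves $2\omega = 0$ (an isotropic plane in $\omega\oplus\omega$) and combines this with $\omega=-4\langle -1\rangle$ to get $8\langle -1\rangle=0$, then uses a Nakayama--lemma argument to rule out the existence of a split complement for $\omega$, giving $\omega\neq 0$ and hence order exactly $8$. You instead get the upper bound $8[\langle 1\rangle]=0$ by exhibiting an explicit free rank-$4$ Lagrangian in $\langle 1\rangle^{\oplus 8}$ (I verified your four vectors are pairwise orthogonal and self-orthogonal, and the $\{4,5,7,8\}$ minor of the $8\times 4$ matrix is $1$, so the span is a free direct summand that equals its orthogonal complement by a cardinality count), and get the lower bound via the Gauss-sum phase $\sigma:W(\Z/4\Z)\to\Z/8\Z$, which is essentially the Weil--Milgram invariant. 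This is a more structural route: $\sigma$ is a homomorphism that immediately certifies order $\geq 8$, at the modest cost of needing the explicit Lagrangian and of verifying (which you leave tacit) that $G(b)\neq 0$ for every non-degenerate form so that the phase is actually defined; this follows from your own classification, since $G(\langle\pm 1\rangle)=2(1\pm i)$, $G(H)=8$, $G(B)=-8$ are all non-zero and $G$ is multiplicative. Two small points to tighten: (a) at the stage where you write $[B]=3[\langle 1\rangle]-[\langle -1\rangle]=4[\langle 1\rangle]=4[\langle -1\rangle]$, the last equality only holds once the order $8$ is established, so you should write $[B]=-4[\langle -1\rangle]$ there and note that the sign is immaterial after the fact; (b) you should state explicitly that $G$ is nonvanishing on non-degenerate forms so that $\sigma$ is well-defined. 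Neither of these affects the substance of the argument.
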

\noindent 
The computation of $W(\Z/4\Z)$ was 
obtained independently by Gurevich and Hadani in \cite{GH}.

\vspace{0.2cm}\noindent 
\vspace{0.2cm}\noindent  
The discriminant of $\omega=\left(
 \begin{matrix}
  2 & 1 \\
  1 & 2 
  \end{matrix}
\right)$ is $-1$ and in the proof of Proposition \ref{prop wittz4} below 
we show that its class is non-trivial in $W(\Z/4\Z)$ and  
hence it represents a non-trivial element in  
the kernel of the discriminant map 
$I(\Z/4\Z)\to (\Z/4\Z)^*/(\Z/4\Z)^{*2}$.
From the Cartesian diagram above we get that it also represents a non-trivial element in   the kernel of the leftmost vertical homomorphism 
$V(\Z/4\Z)\to K_1(\Z/4\Z)$. In particular  $ KSp_2(\Z/4\Z)$ is $\Z/2\Z$.  
%




\begin{proof}[Proof of Proposition \ref{prop wittz4}] 
According to \cite[I.3.3]{MH} every symmetric bilinear form 
is equivalent to a  direct sum of a diagonal form (with 
invertible entries) and a bilinear form $\beta$ on a submodule $N\subseteq V$ 
such that $\beta(x,x)$ is not a unit, for every $x\in N$. 
Thus, given a free $\Z/4\Z$-module $L$,  any non-degenerate 
symmetric bilinear form on  $L$ 
is an orthogonal sum of copies of $\langle 1 \rangle $, of 
$\langle -1 \rangle$ and of a bilinear 
form $\beta$ on a free summand $N$ such that for all 
$x \in N$ we have $\beta(x,x)\in\{0,2\}$. 
Fix a basis $e_1, \cdots, e_n$ of $N$. Let $B$ denote the matrix of $\beta$ in 
this basis. Expanding the determinant of $\beta$ along the first column we see 
that there must be an index $i \geq 2$ such that $\beta(e_1,e_i) =\pm1$, for 
otherwise the determinant would not be invertible. Without loss of generality 
we may assume that $i=2$ and that $\beta(e_1,e_2)=1$. Replacing if necessary 
$e_j$ for $j \geq 3$ by $e_j - \frac{\beta(e_1,e_j)} {\beta(e_1,e_2)}e_2$, we 
may assume that $B$ is of the form:
\[
\left(
\begin{matrix}
 s & 1& 0 \\
 1 & t & c\\
 0 & {c}^{\top}& A
\end{matrix}
\right)
\]
where $A$ and $c$ are a square matrix and a row matrix  respectively, 
of the appropriate sizes, and $s,t \in \left\{ 0,2\right\}$. Since  $st=0$,  the form $\beta$ 
restricted to the submodule generated by $e_1$ and $e_2$ defines a non-singular 
symmetric bilinear form and therefore 
$N = \langle e_1,e_2 \rangle \oplus \langle e_1,e_2 \rangle^\bot$, where on the first summand the bilinear form is either split (if  at least one of $s$ or $t$ is $0$) or is $\omega$.  
By induction we have that any symmetric bilinear form is an orthogonal
sum of copies of $\langle 1 \rangle, \langle -1 \rangle$, of 
\[
\omega =\left(
\begin{matrix}
 2 & 1 \\
 1 & 2 
 \end{matrix}
\right)
\]
and split spaces.

\vspace{0.2cm}
\noindent
It's a classical fact (see \cite[Chapter I]{MH}) that in $W(\Z/4\Z)$ 
one has $\langle 1 \rangle = - \langle -1 \rangle$.
Also $ \langle -1 \rangle \oplus \langle -1 \rangle\oplus 
\langle -1 \rangle\oplus \langle -1 \rangle$ is isometric to 
$-\omega \oplus \langle -1 \rangle\oplus \langle 1 \rangle$. 
To see this notice that,  if $e_1,\dots, e_4$ 
denotes the preferred basis for the former bilinear form, then  the matrix 
in the basis $e_1+e_2, e_1+e_3, e_1-e_2-e_3,e_4$ is 
precisely $-\omega \oplus \langle -1 \rangle \oplus\langle 1 \rangle$. Also, in the Witt ring $\langle 1 \rangle \oplus \langle -1 \rangle =0$, 
so $4 \langle -1 \rangle =-\omega$. Finally, if we denote by $e_1,e_2,e_3,e_4$ the preferred basis of the orthogonal sum $\omega \oplus \omega$, then a direct computation shows that the subspace generated by $e_1+e_3$ and $e_2+e_4$ is isotropic, hence $2\omega =0$ and in particular $\omega$ has order at most $2$. 
All these show that $W(\Z/4\Z)$ is generated by 
$\langle -1 \rangle $ and that this form is of order at 
most $8$. It remains to show that $\omega$ is a non-trivial 
element to finish the proof. 

\vspace{0.2cm}
\noindent
Assume the contrary, namely that there is a 
split form $\sigma$ such that $\omega \oplus \sigma$ 
is split. We denote by $A$ the underlying module of $\omega$ and by  $\{a,b\}$ 
its preferred basis. Similarly, we denote by $S$ the 
underlying space of $\sigma$ of dimension $2n$  and  
by $\{e_1, \dots,e_n, f_1, \dots,f_n\}$ a basis that exhibits 
it as a split form. Let $F$ denote the submodule generated by $ f_1, \dots,f_n$. 
By construction $e_1, \dots, e_n$ generate a totally isotropic submodule $E$ 
of rank $n$ in $A \oplus S$, and since it is included into a maximal 
isotropic submodule, we can adjoin to it a new element $v$ such that 
$v, e_1, \dots ,e_n$ is a totally isotropic submodule of $A \oplus S$, 
and hence has  rank $n+1$. By definition there are unique elements 
$x,y \in \Z/4\Z$ and elements $\varepsilon \in E$ and $\phi \in F$ such 
that $v = xa +yb + \varepsilon + \phi$. Since $v$ is isotropic we have: 
\[
 2x^2+ 2xy +2y^2 + 2\sigma(\varepsilon, \phi) +\sigma(\phi, \phi) \equiv 0 \; (\text{mod } 4).
\]
Since $E\oplus  \Z/4\Z v$ is totally isotropic, then  
$(\omega\oplus\sigma)(e_i,v) = 0$, for every 
$1 \leq i \leq n$. Now, $\sigma(e_i,\varepsilon)=0$, since $E$ is isotropic in $S$,  
$(\omega\oplus\sigma)(e_i,a)=0$ and  $(\omega\oplus\sigma)(e_i,b)=0$, so that 
$\sigma(e_i,\phi)=0$. 
In particular, 
since $\phi$ belongs to the dual module to $E$ with respect to $\sigma$, 
$\phi=0$, so the above equation implies:             
\[
 2x^2+ 2xy +2y^2  \equiv 0 \; (\text{mod } 4).
\]
But now this can only happen when $x$ and $y$ are multiples of $2$ in $\Z/4\Z$. 
Therefore reducing mod $2$, we find that $v \text{ mod } 2$ belongs to the 
$\Z/2\Z$ -vector space generated by the mod $2$ reduction of the elements 
$e_1, \dots, e_n$, and by Nakayama's lemma this contradicts the fact that 
the $\Z/4\Z$-module generated by $v,e_1,\dots, e_n$ has rank $n+1$. 
\end{proof}

\begin{remark}
Dennis and Stein proved in \cite{DS2} that $K_2(\Z/2^k\Z)=\Z/2\Z$ 
for any $k\geq 2$. If we had proved directly 
that the class of the symmetric bilinear form  
$\left(\begin{array}{cc} 
2^{k-1} & 1 \\
1 & 2^{k-1} \\
\end{array}\right)$ generates the kernel of 
the homomorphism $I(\Z/2^k\Z)\to (\Z/2^k\Z)^*/(\Z/2^k\Z)^{*2}$, which is 
of order two  for all $k\geq 2$, then the   Sharpe-type 
exact sequence of Barge and Lannes would yield $KSp_2(\Z/2^k\Z)=\Z/2\Z$, for 
any $k\geq 2$. This would permit us to do without Stein's stability results. 
However the description of the Witt group $W^f(\Z/2^k\Z)$ seems 
more involved for $k\geq 3$ and it seems more cumbersome than  
worthy to fill in all the details.  
\end{remark}

\subsection{Detecting the non-trivial class via Weil representations}\label{weilrep}
\subsubsection{Preliminaries on Weil representations}
The projective representation that we use is related to the theory of theta functions on 
symplectic groups, this relation being briefly explained in an appendix to this article.
Although the Weil representations 
of symplectic groups over finite fields of characteristic different from 2 
is a classical subject present in many textbooks, 
the slightly more general Weil representations associated to 
finite rings of the form $\Z/k\Z$ received less consideration until recently. 
They first appeared in print  as 
representations associated to finite abelian groups in 
\cite{Klo} for genus $g=1$  and were extended to 
locally compact abelian groups in \cite[Chapter I]{Weil}, 
and independently in the work of Igusa and Shimura  on theta functions 
\cite{Han,Igu,Shi} and in the physics literature \cite{HB}. They were rediscovered as monodromies of generalized  
theta functions arising in the $U(1)$ Chern-Simons theory 
in \cite{F1,F2,Go} and then in finite-time 
frequency analysis, see \cite{KN} and references 
from there. 
In \cite{F1,F2,Go} these are projective representations of the symplectic group 
factorizing through the  finite congruence quotients 
$Sp(2g, \Z/2k\Z)$, which are only defined for even $k\geq 2$.  
However, for odd $k$ the monodromy of theta functions leads to 
representations of the theta subgroup  of $Sp(2g, \Z)$.  
These also factor through the image of the theta group 
into the finite congruence quotients $Sp(2g, \Z/2k\Z)$. 
Notice however that the original Weil construction works as well for 
$\Z/k\Z$ with odd $k$, see e.g. \cite{GHH,KN}.

\vspace{0.2cm}
\noindent 
 It is well-known, see \cite[sections 43,44]{Weil} or \cite[Prop. 5.8]{RR},  
 that these projective  Weil representations lift 
 to linear representations of the integral metaplectic group, 
 which is the pull-back of the symplectic group in a double 
 cover of $Sp(2g,\R)$. The usual 
 way to resolve the projective ambiguities is to use the 
 Maslov cocycle (see e.g. \cite{Tu}). Moreover, it is known that 
 the Weil representations over finite fields of odd characteristic 
 and over $\C$ actually are linear representations. In fact 
 the vanishing of the second power 
 of the augmentation ideal of the Witt ring of such fields (see e.g. 
 \cite{Sus,Lam}) implies that the corresponding metaplectic extension 
 splits. This contrasts  with the fact 
 that Weil representations over $\R$ (or any local field different from 
 $\C$) are true representations of the real metaplectic group and cannot 
 be linearized (see e.g. \cite{Lam}). The Weil representations over local 
 fields of characteristic 2 are subtler as they are rather representations 
 of a double cover of the so-called pseudo-symplectic group 
 (see \cite{Weil} and \cite{GH} for recent work).

\vspace{0.2cm}
\noindent 
Let $k \geq 2$ be an integer, and denote by  
$\langle , \rangle$ the standard bilinear form on 
$(\Z/k\Z)^g \times (\Z/k\Z)^g\to  \Z/k\Z$. The Weil
 representation we consider is a representation in the 
unitary group of the 
complex vector space $\C^{(\Z/k\Z)^g}$ endowed with its 
standard Hermitian 
form. Notice that the canonical basis of this vector space 
is canonically labeled by elements in $(\Z/k\Z)^g$.

\vspace{0.2cm}
\noindent 
It is well-known (see e.g. \cite{Igusa}) that $Sp(2g,\Z)$ is generated by the matrices having one
of the following forms:
$
\left ( \begin{array}{cc}
    \mathbf 1_g & B \\
    0 & \mathbf 1_g
\end{array}  \right)$
where $B=B^{\top}$ has integer entries,
$
\left ( \begin{array}{cc}
    A & 0       \\
    0 & (A^{\top})^{-1}
\end{array}  \right)$
where $A\in GL(g,\Z)$ and  
$
\left ( \begin{array}{cc}
    0 & -\mathbf 1_g \\
    \mathbf 1_g & 0
\end{array}  \right)$. 

\vspace{0.2cm}\noindent
We can now define the Weil representations 
(see the Appendix for more details) 
on these generating matrices as follows:
\begin{equation}
\rho_{g,k}
\left ( \begin{array}{cc}
     \mathbf 1_g & B\\
     0 & \mathbf 1_g
\end{array}  \right)
= {\rm diag}\left(\exp\left(\frac{\pi \sqrt{-1}}{k}\langle m,Bm\rangle\right)\right)_{ m\in(\Z/k\Z)^{g}},
\end{equation}
where ${\rm diag}$ stands for diagonal matrix with given entries;

\begin{equation}
\rho_{g,k}
\left ( \begin{array}{cc}
   A & 0\\
   0 & (A^{\top})^{-1}
\end{array}  \right)
= (\delta_{A^{\top}m,n})_{m,n\in(\Z/k\Z)^{g}},
\end{equation}
where $\delta$ stands for the Kronecker symbol;
\begin{equation}
\rho_{g,k}
\left ( \begin{array}{cc}
    0 & -\mathbf 1_g\\
   \mathbf  1_g & 0
\end{array}  \right)
=k^{-g/2}\exp\left(-\frac{2\pi\sqrt{-1}\langle m,n\rangle}{k}\right)_{m,n\in({\Z}/k{\Z})^{g}}.
\end{equation}

\vspace{0.2cm}\noindent
It is proved in \cite{F2,Go}, that for even $k$ these formulas define a unitary  
representation $\rho_{g,k}$ of $Sp(2g,\Z)$ in 
$U(\C^{(\Z/k\Z)^g})/R_8$.  Here $U(\C^N)=U(N)$ denotes the unitary group 
of dimension $N$ and $R_8\subset U(1)\subset U(\C^N)$ is the subgroup of scalar 
matrices whose entries are roots of unity of order $8$.  
For odd $k$ the same formulas define representations of 
the theta subgroup $Sp(2g, 1,2)$ (see \cite{Igusa,Igu,F2}). 
Notice that by construction $\rho_{g,k}$ factors through 
$Sp(2g, \Z/2k\Z)$ for even 
$k$ and through the image of the theta subgroup in $Sp(2g,\Z/k\Z)$ 
for odd $k$. 

\vspace{0.2cm}\noindent
Our definition gives us a map 
$\rho_{g,k}:Sp(2g,\Z)\to U(\C^{(\Z/k\Z)^g})$ 
satisfying the cocycle condition: 
\[ \rho_{g,k}(AB)=\eta(A,B) \rho_{g,k}(A) \rho_{g,k}(B)\]
for all $A,B\in Sp(2g, \Z)$ and some 
$\eta(A,B)\in R_8$.

\subsubsection{Outline of the proof of Theorem \ref{tors-sympl0}}

The key step is to establish the following: 

\begin{proposition}\label{trivialmeta}
The projective Weil  representation $\rho_{g,k}$ of 
$Sp(2g,\Z)$, for $g\geq 3$ and  even $k$ does not lift to 
linear representations of $Sp(2g,\Z)$, namely it determines  
a generator of the group $H^2(Sp(2g,\Z/2k\Z); \Z/2\Z)$. 
\end{proposition}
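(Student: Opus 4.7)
The plan is to prove that $\eta$ defines a non-trivial class in $H^2(Sp(2g,\Z/2k\Z);\Z/2\Z)$. Since $Sp(2g,\Z/2k\Z)$ is perfect for $g \geq 3$, the universal coefficient theorem identifies $H^2(Sp(2g,\Z/2k\Z);\Z/2\Z)$ with $\mathrm{Hom}(H_2(Sp(2g,\Z/2k\Z)),\Z/2\Z)$; combined with Proposition \ref{alternative}, non-triviality of this class simultaneously forces $H_2 = \Z/2\Z$ and shows that the class is a generator. The assertion that $\rho_{g,k}$ does not lift to a linear representation is precisely the non-triviality of this class.

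First, I translate the statement into cohomology. The cocycle relation $\rho_{g,k}(AB) = \eta(A,B)\rho_{g,k}(A)\rho_{g,k}(B)$ expresses $\eta$ as a $2$-cocycle on $Sp(2g,\Z)$ with values in $R_8$, classifying a central $R_8$-extension of $Sp(2g,\Z)$ whose associated linear representation covers $\rho_{g,k}$. Because $\rho_{g,k}$ factors through $Sp(2g,\Z/2k\Z)$, so does the class $[\eta]\in H^2(Sp(2g,\Z/2k\Z);R_8)$. Composing with the coefficient map $R_8 \to \Z/2\Z$ (squaring) produces the mod-$2$ class whose non-triviality is equivalent to the failure of $\rho_{g,k}$ to admit a linear lift to $Sp(2g,\Z/2k\Z)$.

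Next, I pull the class back along $M_g \to Sp(2g,\Z) \to Sp(2g,\Z/2k\Z)$. Pullback preserves triviality, so it suffices to show that the pulled-back mod-$2$ class in $H^2(M_g;\Z/2\Z)$ is non-trivial. For $g\geq 4$ the classical computation gives $H_2(M_g;\Z) = \Z$, hence $H^2(M_g;\Z/2\Z) = \Z/2\Z$, and the problem reduces to checking that the induced projective representation of $M_g$ does not linearize. Using Gervais's Dehn-twist presentation \cite{Ge} of $M_g$, together with the explicit matrices (3)--(5) for $\rho_{g,k}$ on the symplectic images of Dehn twists (which are transvections), I compute the product of unitary lifts along a concrete Gervais relation such as a chain or lantern relation. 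The resulting scalar in $R_8$ is the value of the pulled-back cocycle on that relation, and its mod-$2$ reduction is the obstruction we seek.

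The main obstacle is this final explicit computation: a symplectic transvection must first be expressed as a word in the three generator types of (3)--(5), and the product of the corresponding unitary matrices along a full Gervais relation contributes Gauss-sum phases and Maslov-index corrections that must be tracked carefully around the entire relation. The key will be to select a relation on which all these phases accumulate to an eighth root of unity whose square is $-1$, thereby unambiguously exhibiting the generator of $H^2(Sp(2g,\Z/2k\Z);\Z/2\Z)$ and completing the proof.
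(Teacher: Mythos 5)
Your overall strategy is essentially the one the paper uses: pull the extension class back to $M_g$ via $M_g \to Sp(2g,\Z) \to Sp(2g,\Z/2k\Z)$, invoke Gervais's presentation of $M_g$ by Dehn twists, and compute the lift of a distinguished relation in the unitary group. However, there is a genuine gap in your final step. Evaluating the cocycle $\eta$ on one chosen relation does \emph{not}, by itself, compute the obstruction: the scalar that a single relation picks up depends on the choice of lifts $\widetilde D_a$, and replacing $\widetilde D_a \mapsto z_a \widetilde D_a$ changes it. Thus showing that the phases ``accumulate to an eighth root of unity whose square is $-1$'' along some relation is not enough; one could a priori trade that defect against defects in other relations. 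What is needed is the stronger fact that after normalizing the lifts so that all braid relations and one lantern relation lift trivially (which is possible, and which pins down the lifts up to a global central factor), the value of the cocycle on the chain relation is \emph{forced} and well defined. The reason this normalization is decisive is Gervais's result (\cite[Thm.~C, Cor.~4.3]{Ge}) that the universal central extension of $M_g$ is presented by lifting the braid and lantern relations trivially and inserting a central $z$ only in the chain relation; so the chain relation is the unique witness of the extension class. Without that ingredient the argument would fail.

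Two smaller remarks. First, your appeal to $H_2(M_g;\Z)=\Z$ and hence $H^2(M_g;\Z/2\Z)=\Z/2\Z$ is valid only for $g\ge 4$; the proposition asserts $g\ge 3$, and for $g=3$ one has $H^2(M_3;\Z/2\Z)=\Z/2\Z\oplus\Z/2\Z$, so the non-vanishing statement still follows from the explicit computation but the identification of the class as a generator requires a separate remark. Second, note that the explicit computation actually turns out to be cleaner if one does not try to decompose each transvection as a word in the generators~(3)--(5): it suffices to conjugate by a fixed $S$-matrix so that $R_a, R_c, R_e$ are diagonal and $R_b, R_f$ are conjugates $S^{-1}R_a S$, $S^{-1}R_e S$, and then compute $(R_aR_bR_c)^4$ directly as a Gauss sum. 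This is the form the computation takes in the paper's proof of Lemma~\ref{chainrelation}, and it avoids having to track Maslov corrections relation by relation.
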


\begin{remark}
For odd $k$ it was already known that Weil representations 
did not detect any non-trivial element, i.e. that the projective 
representation $\rho_{g,k}$  lifts to a linear representation \cite{AR}. We will
give a very short outline of this at the end of  the  Appendix.
\end{remark}

\vspace{0.2cm}\noindent Proposition \ref{alternative} states that the 
Schur multiplier $H_2(Sp(2g,\Z/D\Z))$ is either trivial or $\Z/2\Z$, while 
Proposition \ref{trivialmeta} provides an explicit nontrivial central  extension 
of   $Sp(2g,\Z/D\Z)$ by $\Z/2\Z$, when $D\equiv 0\,({\rm mod} \; 4)$. 
Therefore  $H_2(Sp(2g,\Z/D\Z))$ is nontrivial, and hence isomorphic to 
$\Z/2\Z$, thereby proving Theorem \ref{tors-sympl0}. 

\vspace{0.2cm}\noindent 
To prove Proposition \ref{trivialmeta} we first note that the projective  Weil representation $\rho_{g,k}$ determines a central 
extension of $Sp(2g, \Z/2k\Z)$ by $\Z/2\Z$, since it factors through the 
integral metaplectic group, by \cite{Weil}. 
We will prove that this central extension is non-trivial thereby proving the claim.  
The pull-back of this central extension along the homomorphism 
$Sp(2g,\Z)\to Sp(2g,\Z/2k\Z)$ is a central extension 
of $Sp(2g,\Z)$ by $\Z/2\Z$ and  it is enough to prove that this 
last extension is non-trivial. 

\vspace{0.2cm}\noindent 
It turns out to be easier to describe 
the pull-back of this central extension over the mapping class group  
$M_g$ of the 
genus $g$ closed orientable surface. 
\begin{definition}\label{theta}
Let $\widetilde{M}_g$ be the pull-back of the above
central extension,  associated to the projective Weil representation $\rho_{g,k}$, along the homomorphism 
$M_g\to Sp(2g,\Z)$. 
\end{definition}

\vspace{0.2cm}\noindent 
By the stability results of 
Harer (see \cite{Harer}) for $g \geq 5$, and the low dimensional computations in \cite{P} 
and \cite{KS} for $g \geq 4$,  the natural homomorphism $M_g\to Sp(2g,\Z)$, obtained 
by choosing a symplectic  basis in the surface homology, induces   
isomorphisms $H_2(M_g; \Z)\to H_2(Sp(2g,\Z); \Z)$ and 
$H^2(Sp(2g,\Z); \Z)\to H^2(M_g; \Z)$ for $g \geq 4$.  In particular in this
 range  the class of the central extension $\widetilde{M}_g$ is a generator 
of $ H^2(M_g;\Z/2\Z)$. 

\vspace{0.2cm}\noindent 
In contrast for $g = 3$, there is an element of 
infinite order in $H^2(M_3; \Z)$ such that its reduction mod $2$ is the 
class of the central extension $\widetilde{M}_3$, however   
$H^2(M_3; \Z/2\Z) = \Z/2\Z \oplus \Z/2\Z$. This follows from the 
computation $H_2(M_3; \Z)=\Z\oplus \Z/2\Z$, see e.g. \cite[Thm. 4.9, Cor. 4.10]{Sak}. Note that 
$H_2(Sp(6,\Z); \Z)=\Z\oplus \Z/2\Z$, according to Stein \cite{Stein3}. 

\vspace{0.2cm}\noindent 
Therefore, we can reformulate Proposition~\ref{trivialmeta} at least for $ g \geq 4$  
 in equivalent  form in terms of the 
mapping class group:

\begin{proposition}\label{classmcg}
If $g\geq 4$ then the class of the central extension $\widetilde{M}_g$ is 
a generator  of the group  
$H^2(M_g; \Z/2\Z) \simeq \Z/2\Z$. 
\end{proposition}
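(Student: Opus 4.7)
The plan is to show that the pulled-back Weil $2$-cocycle defining $\widetilde{M}_g$ represents a non-zero class in $H^2(M_g; \Z/2\Z)$. Since this group is cyclic of order two for $g \geq 4$, it suffices to produce a single $2$-cycle on which the cocycle evaluates non-trivially, or equivalently to exhibit a relation in $M_g$ which, after lifting the Dehn-twist generators to $U(\C^{(\Z/k\Z)^g})$ via the explicit formulas (3)--(5), fails to hold by a non-trivial scalar in $\{\pm 1\}$.

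First, I would fix a standard geometric symplectic basis of $H_1(\Sigma_g)$ and work with Gervais' finite presentation of $M_g$ in terms of Dehn twists $\tau_c$ about a concrete system of simple closed curves realising this basis, together with an explicit list of relations (disjointness commutations, braid, chain, star, lantern). The image of each generating twist in $Sp(2g,\Z)$ is a symplectic transvection, whose action under $\rho_{g,k}$ is read off directly from formulas (3)--(5), giving explicit matrix lifts $\widetilde{\tau}_c \in U(\C^{(\Z/k\Z)^g})$. The obstruction to linearity then becomes a $\Z/2\Z$-valued $2$-cocycle on $M_g$ whose value on each defining relation can in principle be computed by multiplying out the lifts.

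The main computation I would concentrate on is a chain relation: for consecutive non-separating curves $c_1,\dots,c_{2h+1}$ bounding a subsurface of genus $h$, one has $(\tau_{c_1}\cdots\tau_{c_{2h+1}})^{2h+2}= \tau_{\delta_1}\tau_{\delta_2}$ (with the appropriate modification when the subsurface has one boundary component). Using the action of these transvections on the canonical basis of $\C^{(\Z/k\Z)^g}$, the product of the lifts $\widetilde{\tau}_{c_i}$ raised to the $(2h+2)$-nd power can be evaluated up to an $8$-th-root-of-unity scalar, reducing the question to a finite sum of quadratic Gauss sums modulo these scalars. A careful bookkeeping---invoking Gervais' identification of the chain relation with a specific generator of $H_2(M_g)$---should yield a non-trivial sign in $\Z/2\Z$, so the generator of $H_2(M_g)$ pairs non-trivially with our cocycle.

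The main obstacle is precisely the explicit tracking of the $8$-th-root-of-unity phases along this long product of Weil matrices: one must show that they do not collectively cancel modulo $\pm 1$ once the chain relation is imposed. This is where Gervais' results enter, providing the structural link between the combinatorial chain relation and the cohomology class it represents, and trading a potentially brute-force calculation for the verification of a single sign. Once this sign is identified as $-1$, the pulled-back Weil cocycle represents a generator of $H^2(M_g; \Z/2\Z)\simeq \Z/2\Z$ for $g\geq 4$, establishing Proposition \ref{classmcg}.
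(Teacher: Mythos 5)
Your proposal follows essentially the same route as the paper: starting from Gervais' finite presentation of $M_g$ by Dehn twists (braid, lantern, chain), normalizing the Weil-representation lifts $\widetilde{\tau}_c$ so that the braid and lantern relations hold on the nose, and then computing the central defect of the chain relation $(D_a D_b D_c)^4 = D_e D_d$ via quadratic Gauss sums, which comes out to $-1$ for even level $k$. The paper's Lemma 5 (Gervais' Theorem C) then identifies $\widetilde{M}_g$ with the mod-$2$ reduction of the universal central extension, which is the structural step you gesture at when you speak of "Gervais' identification of the chain relation with a specific generator of $H_2(M_g)$"; in both cases the chain relation is the unique relation carrying the nontrivial sign.
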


\vspace{0.2cm}\noindent 
The proof is an explicit computation, which turns out to be  also valid when $g=3$. This proves that  
the central extension coming from the Weil representation 
(and hence its cohomology class with $\Z/2\Z$ coefficients) 
for $g=3$ is non-trivial. This implies that $H_2(Sp(6,\Z/D\Z))\neq 0$, when  
$D\equiv 0\,({\rm mod} \; 4)$. In fact in \cite{BCRR} the authors proved 
that  $H_2(Sp(6,\Z/4\Z); \Z)=\Z/2\Z\oplus \Z/2\Z$. 

\vspace{0.2cm}\noindent 
The rest of this section is devoted to the proof of Proposition \ref{trivialmeta}. 

\subsubsection{A presentation of  $\widetilde{M}_g$ and the proof of Proposition  \ref{trivialmeta}}
The method we use is due to Gervais (see \cite{Ge}) and was already used 
in \cite{FK} for computing central extensions arising in quantum 
Teichm\"uller space. 
We start with a number of notations and definitions. Recall that  
$\Sigma_{g,r}$ denotes the orientable surface of genus $g$ with $r$ boundary components. If $\gamma$ is a curve on a surface, then $D_{\gamma}$ denotes the 
right Dehn twist along the curve $\gamma$.  

\begin{definition}
A chain relation $C$ on the surface $\Sigma_{g,r}$
is given by an embedding $\Sigma_{1,2}\subseteq \Sigma_{g,r}$
and the  standard chain relation on this 2-holed torus, namely
\[ (D_aD_bD_c)^4=D_eD_d,\]
where $a,b,c,d,e$ are the following curves of the embedded 2-holed torus:

\vspace{0.2cm}
\begin{center}
\includegraphics[scale=0.4]{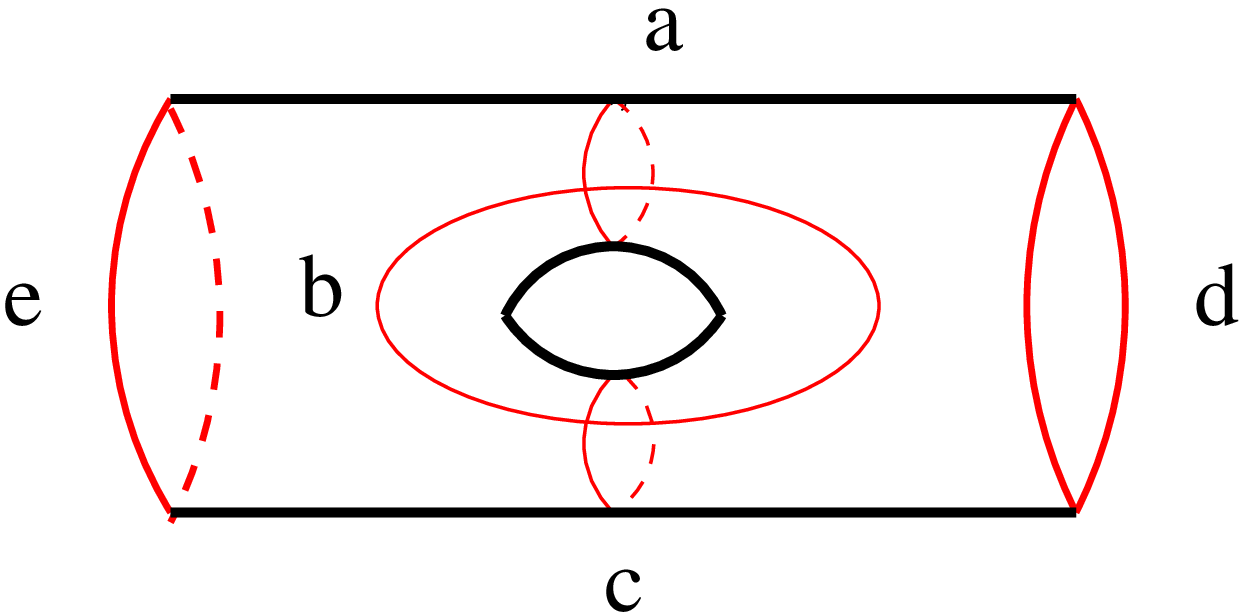}
\end{center}
\end{definition} 

\begin{definition}
A lantern relation $L$ on the surface $\Sigma_{g,r}$
is given by an embedding $\Sigma_{0,4}\subseteq \Sigma_{g,r}$
and the  standard lantern relation on this 4-holed sphere, namely
\begin{equation}D_{a_0}D_{a_1}D_{a_2}D_{a_3}= D_{a_{12}}D_{a_{13}}D_{a_{23}},
\end{equation}
where $a_0,a_1,a_2,a_3,a_{12},a_{13},a_{23}$
are the following curves of the embedded 4-holed sphere:

\vspace{0.2cm}
\begin{center}
\includegraphics[scale=0.4]{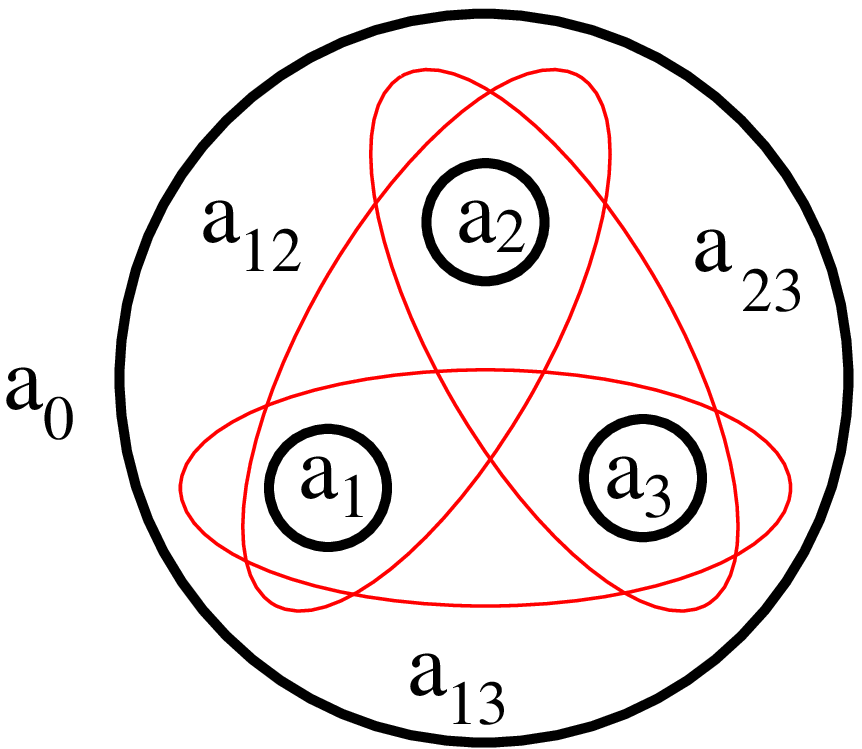}
\end{center}
\end{definition}

\vspace{0.1cm}\noindent 
The following lemma is a simple consequence of a deep result of Gervais~\cite[Thm. B]{Ge}:

\begin{lemma}\label{gervais}
Let $g\geq 3$, then the 
group $M_{g}$ has the following presentation:
\begin{enumerate}
\item Generators are all Dehn twists $D_{a}$ along all  non-separating
simple closed curves $a$ on $\Sigma_{g}$.
\item Relations:
\begin{enumerate}
\item Braid type 0 relations:
\[ D_aD_b=D_bD_a,\]
for each pair of disjoint non-separating simple closed curves $a$ and $b$;
\item   Braid type 1 relations:
\[ D_aD_bD_a=D_bD_aD_b,\]
for each pair of non-separating simple closed curves
$a$ and $b$ which intersect transversely in one point;
\item One lantern relation for a $4$-holed sphere embedded in $\Sigma_{g}$ so that all
boundary curves are non-separating;
\item One chain relation for a 2-holed torus embedded in $\Sigma_{g}$ so that all
boundary curves are non-separating.
\end{enumerate}
\end{enumerate}
\end{lemma}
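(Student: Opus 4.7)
The plan is to start from Gervais's presentation in \cite[Thm. B]{Ge} and reduce the (a priori infinite) family of lantern and chain relations appearing there to a single relation of each kind, using the change-of-coordinates principle and the braid relations already present in the presentation.

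Gervais's theorem gives $M_g$ the following presentation: the generators are Dehn twists along all non-separating simple closed curves, and the relations are all braid type 0 and braid type 1 relations, together with one lantern relation for \emph{every} embedded four-holed sphere (with non-separating boundary curves) and one chain relation for \emph{every} embedded two-holed torus (with non-separating boundary curves). Call this presentation $\mathcal P$ and the one claimed in the statement $\mathcal P'$. Clearly $\mathcal P'$ is obtained from $\mathcal P$ by keeping all braid relations and discarding all but one lantern and one chain relation. It therefore suffices to prove that the discarded relations are consequences of the retained ones.

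First I would invoke the change-of-coordinates principle for surfaces: for $g\geq 3$, the group $M_g$ acts transitively on isotopy classes of embeddings $\Sigma_{0,4}\hookrightarrow \Sigma_g$ whose four boundary components are non-separating (and satisfy the topological type of the lantern configuration), and likewise transitively on isotopy classes of embeddings $\Sigma_{1,2}\hookrightarrow \Sigma_g$ whose two boundary components are non-separating. This is standard and relies on the classification of surface subsurfaces up to homeomorphism together with the fact that $g\geq 3$ leaves enough genus outside the embedded subsurface. Given any two lantern configurations $L$ and $L'$ (respectively chain configurations $C$ and $C'$) there thus exists $\varphi\in M_g$ carrying one to the other; conjugating the relation $R_L$ by $\varphi$ yields $R_{L'}$, since Dehn twists transform by $\varphi D_a \varphi^{-1}=D_{\varphi(a)}$.

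Next I would observe that such a $\varphi$ may be written as a product of Dehn twists along non-separating curves (since $M_g$ is itself generated by such twists for $g\geq 1$), so the conjugation is performed \emph{within} the group presented by $\mathcal P'$. The braid type 0 and type 1 relations suffice to carry out this conjugation on each twist generator one at a time, because they fully determine the action of $\varphi$ on the generating Dehn twists. In other words, for every non-separating simple closed curve $a$ and every word $\varphi$ in the generators of $\mathcal P'$, the identity $\varphi D_a\varphi^{-1}=D_{\varphi(a)}$ is derivable in $\mathcal P'$. This step is the one that requires the most care: one has to check that the relations of type (a) and (b) really do encode all the braid-theoretic behavior of Dehn twists, and this is exactly what justifies the reduction in Gervais's original argument, so it can be quoted from there.

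The main obstacle is this last verification, namely that a single lantern relation and a single chain relation, together with all braid relations, suffice to recover all the others by conjugation that is itself expressible inside the presentation. Once this is established, every relation in $\mathcal P$ is a consequence of the relations in $\mathcal P'$, so the two presentations define the same group, completing the proof.
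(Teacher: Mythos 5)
The paper itself offers no proof of this lemma: it simply asserts that the statement is ``a simple consequence of a deep result of Gervais~\cite[Thm. B]{Ge}''. There is therefore no argument in the paper to compare your proposal against, and the real question is whether your reduction argument is itself sound.

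The strategy you outline --- start from a larger Gervais presentation with infinitely many lantern and chain relations, then use transitivity of the $M_g$-action on embedded $\Sigma_{0,4}$'s and $\Sigma_{1,2}$'s together with conjugation to collapse these families to a single representative each --- is natural, but it hinges entirely on the step you yourself flag as requiring the most care: that for every word $w$ in the twist generators of the abstract group $\mathcal P'$ and every non-separating curve $a$, the identity $w D_a w^{-1} = D_{\bar w(a)}$ is derivable \emph{inside} $\mathcal P'$. This is not a formality. The braid type $0$ and type $1$ relations give the conjugation identity $D_b D_a D_b^{-1} = D_{D_b(a)}$ only when $a$ and $b$ are disjoint or cross once; when $i(a,b)\geq 2$ there is no relation in $\mathcal P'$ that directly identifies the word $D_b D_a D_b^{-1}$ with the separate generator $D_{D_b(a)}$. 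Indeed, in Luo-type infinite presentations the \emph{full} conjugation relation $D_a D_b D_a^{-1} = D_{D_a(b)}$ (for all pairs) is imposed as an explicit relation precisely because it does not come for free. Deferring this step to Gervais with the remark that it ``can be quoted from there'' is therefore unsatisfying: if Gervais's Theorem~B already establishes that the conjugation identity follows from the type $0$/$1$ braid relations plus one lantern and one chain, then his theorem \emph{is} essentially the statement of the lemma and the reduction you describe is redundant; if on the other hand his theorem imposes the full conjugation relation (or infinitely many lantern/chain relations) and derives the rest, then you still owe a proof that the general conjugation identity is a consequence of the type $0$/$1$ relations plus a single lantern and a single chain, and that is a substantive argument, not a quotation. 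In short, the proposal correctly identifies the key obstacle but does not close the gap, and it also attributes to Gervais's Theorem~B a specific ``all lanterns, all chains'' form without verification; as it stands the argument is either circular or incomplete at exactly the step on which the whole reduction rests.
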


\vspace{0.2cm}\noindent
The key step in proving Proposition \ref{classmcg} and hence  
Proposition \ref{trivialmeta} is to find an explicit presentation 
for the central extension $\widetilde{M}_{g}$ from  Definition \ref{theta}. 
If we choose arbitrary lifts $\widetilde{D}_a \in \widetilde{M}_g$  
for each of the Dehn twists $D_{a} \in M_g$, then $\widetilde{M}_g$ is 
generated by the elements $\widetilde{D}_a$ plus a central element 
$z$ of order  at most $2$.

\begin{proposition}\label{pres}
Suppose that $g\geq 3$. Then 
the group $\widetilde{M}_{g}$ has the following presentation.
\begin{enumerate}
\item Generators:
\begin{enumerate}
\item With each non-separating  simple closed curve $a$
in $\Sigma_{g}$ is associated a generator  $\widetilde{D}_{a}$;
\item One (central) element $z$, $z\neq 1$.
\end{enumerate}
\item Relations:
\begin{enumerate}
\item Centrality:
\begin{equation}
z \widetilde{D}_a=\widetilde{D}_az,
\end{equation}
for any non-separating  simple closed curve $a$
on $\Sigma_{g}$;
\item Braid type $0$ relations:
\begin{equation}\label{braid0rel}
\widetilde{D}_a\widetilde{D}_b=\widetilde{D}_b\widetilde{D}_a,\end{equation}
for each pair of disjoint non-separating simple closed curves $a$ and $b$;
\item   Braid type $1$ relations:
\begin{equation}\label{braid1rel} \widetilde{D}_a\widetilde{D}_b\widetilde{D}_a=\widetilde{D}_b\widetilde{D}_a
\widetilde{D}_b,
\end{equation}
for each pair of non-separating simple closed curves $a$ and $b$ which intersect transversely at one point;
\item One lantern relation for a $4$-holed sphere embedded in $\Sigma_{g}$ so that all
boundary curves are non-separating:
\begin{equation}\label{lanternrel}
 \widetilde{D}_{a_0}\widetilde{D}_{a_1}\widetilde{D}_{a_2}\widetilde{D}_{a_3}=
\widetilde{D}_{a_{12}}\widetilde{D}_{a_{13}}\widetilde{D}_{a_{23}},
\end{equation}
\item One chain relation for a 2-holed torus embedded in $\Sigma_{g}$ so that all
boundary curves are non-separating:
\begin{equation}\label{chainrel} (\widetilde{D}_a\widetilde{D}_b\widetilde{D}_c)^4=z
\widetilde{D}_e\widetilde{D}_d. \end{equation}
\item Scalar equation: 
\begin{equation}
z^2=1. 
\end{equation} 
\end{enumerate}
\end{enumerate}
\end{proposition}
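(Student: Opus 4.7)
The plan is to combine Gervais's presentation (Lemma \ref{gervais}) with the standard Schreier-style lifting machinery for central extensions, along the lines of \cite{FK}. The central extension $1\to\langle z\mid z^2\rangle\to\widetilde{M}_g\to M_g\to 1$ is by $\Z/2\Z$ by construction, so any set of lifts $\widetilde{D}_a$ of the Gervais generators, together with $z$, generates $\widetilde{M}_g$, and every Gervais relation $R=1$ lifts to an identity $\widetilde{R}=z^{\epsilon(R)}$ for some $\epsilon(R)\in\{0,1\}$. Adjoining $z^2=1$ and the centrality of $z$ then yields a complete presentation, and the task reduces to determining the $\epsilon(R)$.

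Next, I would exploit transitivity of the $M_g$-action on configurations of curves: $M_g$ acts transitively on non-separating simple closed curves, on ordered pairs of disjoint non-separating curves, on ordered pairs meeting transversally once, on embedded four-holed spheres $\Sigma_{0,4}\hookrightarrow\Sigma_g$ with non-separating boundary, and on embedded two-holed tori $\Sigma_{1,2}\hookrightarrow\Sigma_g$ with non-separating boundary. Choosing the lifts equivariantly (set $\widetilde{D}_{\phi a}:=\tilde\phi\,\widetilde{D}_a\,\tilde\phi^{-1}$) forces $\epsilon(R)$ to depend only on the type of relation, collapsing everything to four scalars $\epsilon_{B_0},\epsilon_{B_1},\epsilon_L,\epsilon_C\in\Z/2\Z$.

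The third step is to normalize these four scalars. By globally rescaling $\widetilde{D}_a\mapsto z^{\eta}\widetilde{D}_a$ by a single power of $z$ (constant along the MCG-orbit of non-separating curves), one can absorb $\epsilon_{B_0}$. The braid type 1 exponent is then tied to $\epsilon_{B_0}$ through the chain-of-two identity $(D_aD_b)^6=D_c$ (with $c$ bounding a regular neighborhood of $a\cup b$), which after rescaling forces $\epsilon_{B_1}=0$. The lantern relation is supported inside a planar subsurface $\Sigma_{0,4}$ whose mapping class group is generated by half-twists subject only to braid relations of types 0 and 1; hence $\epsilon_L$ is determined by $\epsilon_{B_0}$ and $\epsilon_{B_1}$ and also vanishes. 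What remains is precisely the single parameter $\epsilon_C\in\{0,1\}$, exactly as stated.

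The main obstacle is the verification that $\epsilon_C=1$, that is, that $z$ genuinely appears in the chain relation. This is really the content of the non-splitting statement of Proposition \ref{trivialmeta} and must be proved by an explicit matrix computation using the Weil representation formulas (4)--(6): one picks a concrete chain $a,b,c,d,e$ in $\Sigma_g$ (conveniently located in a standard embedded $\Sigma_{1,2}$) and evaluates the scalar $\eta$ such that $\rho_{g,k}(D_aD_bD_c)^4 = \eta\,\rho_{g,k}(D_eD_d)$, showing that $\eta\equiv -1\pmod{R_8^2}$. This hands-on computation with the explicit Weil matrices, rather than the structural presentation-theoretic part, is the genuine technical core; the rest of the argument is essentially formal once Gervais's theorem and the transitivity properties of $M_g$ are in place.
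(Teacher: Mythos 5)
Your overall strategy mirrors the paper's: reduce to Gervais's presentation, choose lifts compatible with the transitivity of the $M_g$-action, normalize the unavoidable scalar defects $\epsilon(R)$, and then compute the one remaining scalar $\epsilon_C$ explicitly from the Weil representation matrices to show $\epsilon_C=1$. The final step is indeed the technical core of the paper (its Lemma \ref{chainrelation}), and you correctly isolate it. However, your normalization steps all contain substantive errors, even if by coincidence the conclusions $\epsilon_{B_0}=\epsilon_{B_1}=\epsilon_L=0$ are right.

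First, a \emph{global} rescaling $\widetilde{D}_a\mapsto z^{\eta}\widetilde{D}_a$ changes the defect of a relation $R$ by $z^{\eta\cdot n(R)}$, where $n(R)$ is the total exponent sum of the generators in $R$. For the braid $0$ relation $D_aD_bD_a^{-1}D_b^{-1}$ each generator has exponent $0$, so $\epsilon_{B_0}$ is \emph{unchanged} by any rescaling; it cannot be absorbed. Rather, $\epsilon_{B_0}$ is an invariant of the extension and must be verified to be zero: since the commutator $[\widetilde{D}_a,\widetilde{D}_b]$ in a central extension is independent of the choice of lifts, one only needs that the Weil matrices of two disjoint twists commute, which the paper records as the braid-$0$ lemma. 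Second, for braid $1$ each generator appears with exponent $\pm1$, so a global rescaling (the same $\eta$ for $a$ and $b$) again leaves $\epsilon_{B_1}$ unchanged. What works, and what the paper does, is an \emph{asymmetric} adjustment: fix one fundamental relation $\widetilde{D}_a\widetilde{D}_b\widetilde{D}_a = z^k\widetilde{D}_b\widetilde{D}_a\widetilde{D}_b$, replace $\widetilde{D}_b$ by $z^k\widetilde{D}_b$ (this shifts the defect by $-k$ since the exponent sums of $a$, $b$ differ), and then propagate by conjugation via the transitivity of $M_g$. Your appeal to a ``chain-of-two identity $(D_aD_b)^6=D_c$'' is not legitimate here: $c$ bounds a once-holed torus so $D_c$ is a twist about a \emph{separating} curve, which is not a Gervais generator, and moreover that identity is not one of Gervais's defining relations; invoking it to constrain $\epsilon_{B_1}$ would presuppose knowledge of how separating twists lift. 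Third, your claim that $\epsilon_L$ is determined by $\epsilon_{B_0},\epsilon_{B_1}$ because the lantern is supported in a planar subsurface whose MCG is ``subject only to braid relations'' is false: the lantern relation is \emph{not} a consequence of the braid relations (this is precisely why Gervais lists it as an independent defining relation, and indeed it carries the central $\Z$-charge in his universal presentation). The correct reason $\epsilon_L$ is removable is that the lantern word $D_{a_0}D_{a_1}D_{a_2}D_{a_3}D_{a_{23}}^{-1}D_{a_{13}}^{-1}D_{a_{12}}^{-1}$ has total exponent sum $4-3=1$, so the one remaining degree of freedom in the normalized lifts (a global rescaling) kills $\epsilon_L$ while leaving $\epsilon_{B_0},\epsilon_{B_1}$ untouched and, because the chain word has exponent sum $12-2=10\equiv0\pmod 2$, leaving $\epsilon_C$ untouched as well. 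You should also make explicit the observation the paper uses to conclude: the relations pass to the pull-back if and only if they hold in both $M_g$ and the unitary group, and since $M_g$ is Hopfian the only two candidate groups satisfying (a)--(e) with $z^2=1$ are $M_g\times\Z/2\Z$ and the mod-$2$ reduction of the universal central extension; the scalar equation $z\neq1$ (i.e.\ $\epsilon_C=1$) then pins down the latter.
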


\noindent 
Gervais \cite[Thm. C and Cor. 4.3]{Ge} proved that the universal central extension of the mapping class group
has the presentation given in Proposition \ref{pres} except for the relation (f) 
reading $z^2=1$. 
Therefore our group $\widetilde{M}_{g}$ from Definition~\ref{theta}
will be the non-trivial central extension of $M_{g}$ by $\Z/2\Z$ 
obtained from the universal central extension of $M_g$ by reducing mod $2$ 
its  kernel.  

\vspace{0.1cm}\noindent 
This will prove Proposition \ref{classmcg} and hence Proposition 
\ref{trivialmeta}.



\subsubsection{Proof of Proposition \ref{pres}}
By a slight abuse of language, we still denote  $\rho_{g,k}$ the  projective representation of 
$\M_g$ obtained from the Weil representation $\rho_{g,k}$ 
by the composition with the projection $M_g\to Sp(2g,\Z)$. 
Let then $\widetilde{\rho_{g,k}(M_g)}\subset  U(\C^{(\Z/k\Z)^g})$ be the pull-back of 
$\rho_{g,k}(M_g)\subset U(\C^{(\Z/k\Z)^g})/R_8$ along this composition.

\vspace{0.2cm}\noindent
By definition $\widetilde{M}_g$ fits into a commutative diagram:
\[
 \xymatrix{
0 \ar[r] & \Z/2\Z \ar@{=}[d] \ar[r] &  \widetilde{M}_g \ar[d] \ar[r] &  M_g \ar[d]^{\rho_{g,k}} \ar[r] &  1 \\
0 \ar[r] & \Z/2\Z \ar[r] &   \widetilde{\rho_{g,k}(M_g)}\ar[r] & \rho_{g,k}(M_g) \ar[r] &  1.
}
\]
This 
presents $\widetilde{M}_g$ as a pull-back and therefore the relations claimed 
in Proposition \ref{pres} will be satisfied if and only if they are satisfied 
when we project them both into $M_g$ and 
$\widetilde{\rho_{g,k}(M_g)} \subset U(\C^{(\Z/k\Z)^g})$. If this is the 
case then $\widetilde{M}_g$ will be a quotient of  the group obtained from the 
universal central extension by reducing mod $2$ the center and that 
surjects onto $M_g$. But, as the mapping class group is Hopfian there are  
only two  such groups: first, $M_g \times \Z/2\Z$ with the obvious 
projection on $M_g$ and second, the mod $2$ reduction of the 
universal central extension. Then relation $(e)$ 
shows that we are in the latter case. 

\noindent 
The projection on $M_g$ is obtained by killing the center $z$, and by 
construction the projected relations are satisfied in $M_g$ and we only need 
to check them in the unitary group.

\begin{lemma}
For any lifts  $\widetilde{D}_a$ of the Dehn twists $D_a$ we have
$\widetilde{D}_a\widetilde{D}_b=\widetilde{D}_b\widetilde{D}_a$ and
thus the braid type $0$ relations (b) are satisfied.
\end{lemma}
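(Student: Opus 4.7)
The plan is to exploit the fact that the commutator $[\widetilde{D}_a,\widetilde{D}_b]$ in $\widetilde{M}_g$ is independent of the chosen lifts: replacing $\widetilde{D}_a$ by $z\widetilde{D}_a$ (with $z$ central of order $2$) does not change the commutator. Since $D_aD_b=D_bD_a$ in $M_g$ for disjoint $a,b$, this commutator a priori lies in the kernel $\{1,z\}$, so the task reduces to ruling out the value $z$. By the pull-back construction defining $\widetilde{M}_g$, it suffices to exhibit one pair of lifts that commute on the nose in $U(\C^{(\Z/k\Z)^g})$, not merely up to an eighth root of unity.

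Next I would reduce to a standard topological representative by change of coordinates. Pick $\phi\in M_g$ carrying $(a,b)$ to a chosen pair $(a_0,b_0)$ in the same $M_g$-orbit and any lift $\widetilde{\phi}\in\widetilde{M}_g$. Since $\widetilde\phi\widetilde{D}_a\widetilde\phi^{-1}$ is a lift of $D_{a_0}$, it equals $z^{\varepsilon_a}\widetilde{D}_{a_0}$ for some $\varepsilon_a\in\{0,1\}$, and similarly for $b$. Centrality of $z$ absorbs these ambiguities, and centrality of $[\widetilde{D}_a,\widetilde{D}_b]$ makes it $\widetilde\phi$-invariant, so
\[
[\widetilde{D}_a,\widetilde{D}_b]
\;=\;\widetilde\phi\,[\widetilde{D}_a,\widetilde{D}_b]\,\widetilde\phi^{-1}
\;=\;[\widetilde\phi\widetilde{D}_a\widetilde\phi^{-1},\widetilde\phi\widetilde{D}_b\widetilde\phi^{-1}]
\;=\;[\widetilde{D}_{a_0},\widetilde{D}_{b_0}].
\]
Hence one may work with any convenient representative of the orbit.

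Since $a$ and $b$ are disjoint, their homological intersection number vanishes, so $[a],[b]$ generate an isotropic subspace of $H_1(\Sigma_g;\Z)$. Using the change-of-coordinates principle (coming from the surjection $M_g\twoheadrightarrow Sp(2g,\Z)$ together with transitivity of $Sp(2g,\Z)$ on isotropic configurations of a given rank), one can pick $(a_0,b_0)$ so that both $[a_0]$ and $[b_0]$ lie in the standard Lagrangian $\mathrm{span}(e_1,\ldots,e_g)$ of the reference symplectic basis used to define $\rho_{g,k}$. The induced transvections then have the upper unitriangular shape
\[
T_{a_0}=\begin{pmatrix}\mathbf 1_g & B_{a_0}\\ 0 & \mathbf 1_g\end{pmatrix},
\qquad
T_{b_0}=\begin{pmatrix}\mathbf 1_g & B_{b_0}\\ 0 & \mathbf 1_g\end{pmatrix},
\]
with symmetric $B_{a_0}$ and $B_{b_0}$. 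Formula~(3) for $\rho_{g,k}$ on such matrices sends each of them to a \emph{diagonal} unitary matrix indexed by $(\Z/k\Z)^g$, and two diagonal unitary matrices commute exactly. Therefore $[\widetilde{D}_{a_0},\widetilde{D}_{b_0}]=1$ in $\widetilde{\rho_{g,k}(M_g)}$, and consequently also in $\widetilde{M}_g$.

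The only step carrying any real content is the change-of-coordinates reduction: one must verify that every $M_g$-orbit of disjoint pairs of non-separating simple closed curves admits a representative whose homology classes both lie in the chosen Lagrangian, so that formula~(3) applies directly and no conjugation cocycle needs to be tracked. Once this is granted, the conclusion is the trivial commutativity of two explicitly diagonal unitary matrices, and relation~\eqref{braid0rel} follows.
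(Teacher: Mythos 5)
Your proof is correct but far more elaborate than what the paper does: the paper's entire proof is the one-sentence observation that since the commutator of any two lifts of commuting elements is central and independent of the chosen lifts, it suffices to check commutativity for \emph{some} particular choice, which it asserts holds. You supply that missing check explicitly, and in doing so you isolate the key structural fact (centrality $\Rightarrow$ lift-independence \emph{and} conjugation-invariance of the commutator), then reduce by a mapping class to a representative pair $(a_0,b_0)$ whose homology classes both lie in the reference Lagrangian, so that both $R$-matrices are diagonal via formula~(3) and commute on the nose. This is a genuinely different, more hands-on route, and its value is that it pins down a concrete verification rather than leaving ``particular lifts'' unspecified; the paper's terse version is most naturally read as citing Gervais' presentation of the universal central extension (where the braid-$0$ relations hold trivially) and the surjection onto $\widetilde{M}_g$.

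The one step you leave open is the realizability claim: that every $M_g$-orbit of disjoint non-separating pairs contains a representative with both classes in the chosen Lagrangian. You flag this honestly, and it is indeed true, but it does require a short case analysis. If $a\cup b$ is non-separating, $[a],[b]$ are independent and one takes $(a_0,b_0)=(\alpha_1,\alpha_2)$. If $a\cup b$ separates $\Sigma_g$ into $\Sigma_{h,2}\sqcup\Sigma_{g-1-h,2}$, then $[a]=\pm[b]$, and one takes $a_0=\alpha_1$ together with a curve $b_0$ disjoint from $\alpha_1$, homologous to it, and cutting off a genus-$h$ piece on the appropriate side; both classes are then $e_1$. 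Once this is in place the argument is complete; without it the reduction to the Lagrangian case is asserted rather than proved, which is exactly the sort of gap your proof was set up to close.
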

\begin{proof}
The commutativity relations are satisfied
for particular lifts and hence for arbitrary lifts. 
\end{proof}

\begin{lemma}\label{braid1}
There are lifts $\widetilde{D}_a$ of the Dehn twists $D_a$, for each
non-separating simple closed curve $a$  such that we have

\[\widetilde{D}_a\widetilde{D}_b\widetilde{D}_a=\widetilde{D}_b\widetilde{D}_a\widetilde{D}_b\]
for any simple closed curves $a,b$  with one intersection point and thus 
the braid type $1$ relations (c) are satisfied.
\end{lemma}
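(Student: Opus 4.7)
The plan is to choose lifts of the Dehn twists explicitly, reducing the verification of the braid type $1$ relation to a single model computation in the Weil representation and using the transitive $M_g$-action on admissible curve pairs to propagate the relation.

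First, I would fix a standard pair $(a_0, b_0)$ of non-separating simple closed curves intersecting transversely in one point, adapted to a symplectic basis of $H_1(\Sigma_g;\Z)$. Under the projection $M_g\to Sp(2g,\Z)$, the twists $D_{a_0}$ and $D_{b_0}$ become the standard transvections of the first symplectic plane, generating a copy of $SL(2,\Z)$ in which the braid identity is classical. The explicit formulas defining $\rho_{g,k}$ applied to these transvections yield concrete unitary matrices on $\C^{(\Z/k\Z)^g}$, and a direct calculation shows that the two products $\rho_{g,k}(D_{a_0})\rho_{g,k}(D_{b_0})\rho_{g,k}(D_{a_0})$ and $\rho_{g,k}(D_{b_0})\rho_{g,k}(D_{a_0})\rho_{g,k}(D_{b_0})$ agree up to a definite scalar in $R_8$. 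Consequently, for any lifts $\widetilde{D}_{a_0}, \widetilde{D}_{b_0}\in\widetilde{M}_g$ the ratio $\widetilde{D}_{a_0}\widetilde{D}_{b_0}\widetilde{D}_{a_0}\cdot (\widetilde{D}_{b_0}\widetilde{D}_{a_0}\widetilde{D}_{b_0})^{-1}$ is central, hence lies in $\{1,z\}$; if it equals $z$, replacing $\widetilde{D}_{a_0}$ by $z\widetilde{D}_{a_0}$ flips it to $1$, so the braid identity $\widetilde{D}_{a_0}\widetilde{D}_{b_0}\widetilde{D}_{a_0}=\widetilde{D}_{b_0}\widetilde{D}_{a_0}\widetilde{D}_{b_0}$ holds on the nose in $\widetilde{M}_g$.

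Next, by the change of coordinates principle, $M_g$ acts transitively on ordered pairs of non-separating simple closed curves with a single transverse intersection point. For each non-separating curve $c$, I would fix some $\phi_c \in M_g$ with $\phi_c(a_0) = c$, pick any lift $\widetilde{\phi_c} \in \widetilde{M}_g$, and set $\widetilde{D}_c := \widetilde{\phi_c}\widetilde{D}_{a_0}\widetilde{\phi_c}^{-1}$; since $z$ is central, this is independent of the chosen lift of $\phi_c$. For any admissible pair $(a, b)$, selecting $\phi \in M_g$ with $\phi(a_0, b_0) = (a, b)$ and conjugating the already-verified braid identity for the standard pair by any lift $\widetilde{\phi}$ gives the braid identity for the conjugates $\widetilde{\phi}\widetilde{D}_{a_0}\widetilde{\phi}^{-1}$ and $\widetilde{\phi}\widetilde{D}_{b_0}\widetilde{\phi}^{-1}$.

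The main obstacle, and the real content of the lemma, is the coherence of these lifts: the conjugate $\widetilde{\phi}\widetilde{D}_{a_0}\widetilde{\phi}^{-1}$ arising from the pair $(a, b)$ need only match the independently defined $\widetilde{D}_a$ up to a central factor $z^\alpha$, with an analogous factor $z^\beta$ for $\widetilde{D}_b$, and such discrepancies alter the braid identity by $z^{\alpha+\beta}$. Two conjugators mapping $a_0$ to $a$ differ by an element of the stabiliser $\mathrm{Stab}_{M_g}(a_0)$, so the ambiguity is encoded by a character $\delta\colon\mathrm{Stab}_{M_g}(a_0)\to\Z/2\Z$ recording how this stabiliser acts on $\widetilde{D}_{a_0}$ by conjugation. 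Since $\mathrm{Stab}_{M_g}(a_0)$ is closely related to the mapping class group of the cut surface $\Sigma_{g-1,2}$, whose abelianisation is $2$-torsion free for $g$ large enough, the character $\delta$ vanishes and yields a coherent choice of lifts for which the braid type $1$ relation holds simultaneously for every admissible pair.
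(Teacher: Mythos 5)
Your overall strategy mirrors the paper's: fix a fundamental pair of curves, arrange lifts so the braid relation holds exactly, then propagate lifts to all curves by conjugation. The initial detour through an explicit Weil-representation computation is unnecessary, though harmless: for \emph{any} lifts $\widetilde{D}_{a_0},\widetilde{D}_{b_0}$ the ratio $\widetilde{D}_{a_0}\widetilde{D}_{b_0}\widetilde{D}_{a_0}\cdot(\widetilde{D}_{b_0}\widetilde{D}_{a_0}\widetilde{D}_{b_0})^{-1}$ is automatically central, simply because both products project to the same element of $M_g$ and the kernel is $\Z/2\Z$; no comparison of unitary matrices is needed, and this is exactly how the paper handles it.

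The genuinely valuable part of your write-up is that you notice the coherence issue the paper leaves implicit: a curve $x$ can be reached from $(a_0,b_0)$ by many conjugators, so the candidate lift $\widetilde{\phi}\widetilde{D}_{a_0}\widetilde{\phi}^{-1}$ is a priori only well defined up to a central factor recorded by a character $\delta\colon\mathrm{Stab}_{M_g}(a_0)\to\Z/2\Z$. But your resolution of this is incorrect: the abelianisation of $\mathrm{Stab}_{M_g}(a_0)$ is \emph{not} $2$-torsion free for $g$ large. The oriented stabiliser $\mathrm{Stab}^+(a_0)$ is a quotient of $M_{g-1,2}$, hence perfect for $g\geq 4$, and $\mathrm{Stab}_{M_g}(a_0)/\mathrm{Stab}^+(a_0)\cong\Z/2\Z$, so $\mathrm{Stab}_{M_g}(a_0)^{\mathrm{ab}}\cong\Z/2\Z$. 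Thus a nonzero character $\delta$ is not ruled out by the abelianisation alone, and your argument as stated does not close the gap.

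The gap can be closed, but by a different argument. One checks $\delta=0$ on generators: $\mathrm{Stab}^+(a_0)$ is generated by $D_{a_0}$ and Dehn twists along curves disjoint from $a_0$, for which conjugation of $\widetilde{D}_{a_0}$ is trivial by the braid type $0$ relations (which hold for \emph{arbitrary} lifts). For the orientation-reversing coset, take the representative $(D_{a_0}D_{b_0})^3$, which sends $a_0\mapsto a_0^{-1}$; the braid relation you have just arranged for the fundamental pair puts $\widetilde{D}_{a_0},\widetilde{D}_{b_0}$ into a quotient of the braid group $B_3$, in which $(\widetilde{D}_{a_0}\widetilde{D}_{b_0})^3$ is central and in particular commutes with $\widetilde{D}_{a_0}$, so $\delta$ vanishes on this coset as well. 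This repairs both your argument and the slight gap in the paper's own exposition.
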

\begin{proof}
Consider an arbitrary lift of one braid type $1$ relation (to be called the fundamental one),
which has the form
$\widetilde{D}_a\widetilde{D}_b\widetilde{D}_a=z^k\widetilde{D}_b\widetilde{D}_a\widetilde{D}_b
$.   Change then  the lift $\widetilde{D}_b$ to $z^k\widetilde{D}_b$. With the new lift the
relation above becomes $\widetilde{D}_a\widetilde{D}_b\widetilde{D}_a=\widetilde{D}_b\widetilde{D}_a\widetilde{D}_b$.

\vspace{0.2cm}\noindent
Choose now an arbitrary braid type $1$ relation of $M_{g}$, say
$D_xD_yD_x=D_yD_xD_y$.   There exists a
1-holed torus $\Sigma_{1,1}\subset \Sigma_{g}$ containing $x,y$, namely a neighborhood of $x\cup y$.
Let $T$ be the similar torus containing $a,b$.  Since $a,b$ and $x,y$ are
non-separating there exists a
homeomorphism $\varphi:\Sigma_{g,r}\to \Sigma_{g,r}$ such that
$\varphi(a)=x$ and $\varphi(b)=y$. We have then
\[ D_x=\varphi D_a \varphi^{-1},\quad  D_y= \varphi D_b \varphi^{-1}.\]
Let us consider now an arbitrary lift $\widetilde{\varphi}\in\widetilde M_g$
of $\varphi$, which is well-defined only up to a central element, and set
\[ \widetilde{D}_x=\widetilde{\varphi} \widetilde{D}_a\widetilde{\varphi}^{-1},\quad
\widetilde{D}_y=\widetilde{\varphi} \widetilde{D}_b\widetilde{\varphi}^{-1}. \]
These lifts are well-defined since they do not depend on the choice of $\widetilde{\varphi}$ (the central elements  coming from $\widetilde{\varphi}$ and
$\widetilde{\varphi}^{-1}$ mutually cancel). Moreover, we have then
\[ \widetilde{D}_x\widetilde{D}_y\widetilde{D}_x=\widetilde{D}_y\widetilde{D}_x\widetilde{D}_y
\]
and so the braid type $1$ relations  (c) are all satisfied.
\end{proof}

\begin{lemma}
The choice of lifts of all $\widetilde{D}_x$, with $x$ non-separating,
satisfying the requirements of Lemma~\ref{braid1} is uniquely defined by
fixing the lift $\widetilde{D}_{a}$ of one particular Dehn twist.
\end{lemma}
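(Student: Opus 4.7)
The plan is to propagate the fixed lift $\widetilde{D}_a$ along chains of non-separating curves intersecting transversely in one point, using the braid type 1 relations already established in Lemma~\ref{braid1} as a rigidifying device. Existence of at least one consistent family of lifts is given; the content of the claim is the uniqueness of such a family once $\widetilde{D}_a$ is specified.

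First I would establish the following edge-wise rigidity inside the central extension $1\to\Z/2\Z\to\widetilde{M}_g\to M_g\to 1$: given two non-separating simple closed curves $x,y$ meeting transversely in exactly one point, and lifts $\widetilde{D}_x,\widetilde{D}_y$ of $D_x,D_y$ satisfying $\widetilde{D}_x\widetilde{D}_y\widetilde{D}_x=\widetilde{D}_y\widetilde{D}_x\widetilde{D}_y$, the lift $\widetilde{D}_y$ is completely determined by $\widetilde{D}_x$. This is a short computation: replacing $\widetilde{D}_y$ by $z\widetilde{D}_y$ and using that $z$ is central of order $2$ converts the two sides of the braid relation asymmetrically, multiplying the left-hand side by $z$ but the right-hand side by $z^2=1$, contradicting $z\ne 1$.

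Next I would appeal to the classical connectivity of the graph $\Gamma$ whose vertices are isotopy classes of non-separating simple closed curves on $\Sigma_g$ and whose edges join pairs intersecting transversely in one point. This can be deduced from the change-of-coordinates principle, or from the connectivity of the complex of cut systems. Consequently every non-separating curve $c$ is joined to $a$ by a finite chain $a=c_0,c_1,\dots,c_n=c$ with consecutive members meeting in one point.

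Combining these ingredients, the edge-wise rigidity applied inductively along such a chain propagates the equality $\widetilde{D}'_{c_0}=\widetilde{D}_{c_0}$ to $\widetilde{D}'_{c_n}=\widetilde{D}_{c_n}$ for any two consistent families agreeing at $a$. The main point where a subtlety could arise is path-independence: two different chains from $a$ to $c$ in $\Gamma$ might a priori produce different lifts. I expect this to be dispatched not by a direct cycle argument but indirectly, by invoking the existence statement of Lemma~\ref{braid1}, which already produces a globally consistent family $\{\widetilde{D}_x\}$; the edge-wise uniqueness along any path must agree with this globally consistent family, so the endpoint $\widetilde{D}_c$ depends only on $c$, not on the chain. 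This is the cleanest route and avoids any delicate bookkeeping with relations in $\Gamma$.
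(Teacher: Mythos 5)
Your proof is correct and follows essentially the same idea as the paper's, though you make explicit what the paper compresses. The edge-wise rigidity computation is exactly right: since $z$ is central of order two, replacing $\widetilde{D}_y$ by $z\widetilde{D}_y$ multiplies the left side of $\widetilde{D}_x\widetilde{D}_y\widetilde{D}_x=\widetilde{D}_y\widetilde{D}_x\widetilde{D}_y$ by $z$ and the right side by $z^2=1$, which forces $z=1$ if both were to hold. Combined with the standard connectivity of the graph of isotopy classes of non-separating simple closed curves joined when they intersect transversely in one point, this propagates uniqueness from $a$ to every other vertex. The paper's proof is one sentence longer than a gesture: it notes $\widetilde{D}_a$ fixes $\widetilde{D}_b$, then says that any non-separating $x$ admits a partner $y$ meeting it once and so, ``by Lemma~\ref{braid1},'' the lift $\widetilde{D}_x$ is unique --- referring to the conjugation construction $\widetilde{D}_x=\widetilde{\varphi}\widetilde{D}_a\widetilde{\varphi}^{-1}$. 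Your version is cleaner about where the connectivity of the curve graph actually enters.

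One small remark: the path-independence issue you flag at the end is in fact a non-issue for a uniqueness statement, and not merely because the existence part of Lemma~\ref{braid1} can be cited. You are given two families $\{\widetilde{D}_x\}$ and $\{\widetilde{D}'_x\}$, both already satisfying every braid type~1 relation and agreeing at $a$. Edge-wise rigidity applied inductively along any single chain from $a$ to $c$ already yields $\widetilde{D}_c=\widetilde{D}'_c$; since both families are fixed in advance, there is no danger that a second chain yields a conflicting answer. Path-dependence would only be a concern if you were trying to construct the family by propagation (that is, for the existence argument), not when comparing two pre-existing consistent families.
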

\begin{proof}
In fact the choice of $\widetilde{D}_{a}$ fixes the choice of  $\widetilde{D}_{b}$.
If $x$ is a non-separating  simple closed curve on $\Sigma_{g}$, then
there exists another non-separating curve $y$ which intersects it in one point.
Thus, by Lemma~\ref{braid1}, the choice
of $\widetilde{D}_{x}$ is unique.
\end{proof}

\begin{lemma}
One can choose the lifts of Dehn twists in
$\widetilde{M}_{g}$ so that all braid type relations are satisfied and
the lift of the lantern relation is
trivial, namely
 \[ \widetilde{D}_a\widetilde{D}_b\widetilde{D}_c\widetilde{D}_d=\widetilde{D}_u
 \widetilde{D}_v\widetilde{D}_w,\]
for the non-separating curves in the fixed embedded $\Sigma_{0,4}\subset \Sigma_{g}$.
\end{lemma}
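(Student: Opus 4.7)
The plan is to start from the coherent family of lifts $\{\widetilde{D}_x\}$ for Dehn twists along non-separating simple closed curves produced by the previous two lemmas. This family satisfies the braid type 0 and braid type 1 relations (b) and (c), and by the uniqueness statement it is completely determined by the choice of a single initial lift $\widetilde{D}_{a}$ of one particular Dehn twist $D_{a}$. Since the lantern relation holds in $M_g$ and the kernel of $\widetilde{M}_g \to M_g$ is the central subgroup $\langle z\rangle$ of order $2$, any lift must satisfy
\[
\widetilde{D}_{a_0}\widetilde{D}_{a_1}\widetilde{D}_{a_2}\widetilde{D}_{a_3} = z^k \,\widetilde{D}_{a_{12}}\widetilde{D}_{a_{13}}\widetilde{D}_{a_{23}}
\]
for some $k\in\{0,1\}$. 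My goal is to show that by adjusting the single free parameter $\widetilde{D}_a$ we can arrange $k=0$.

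The key observation is that the freedom in choosing lifts is governed by exactly one central element. If we replace the initial lift $\widetilde{D}_{a}$ by $z\widetilde{D}_{a}$, then by the construction used in Lemma \ref{braid1}, where for any other non-separating curve $x$ one takes $\widetilde{D}_x = \widetilde{\varphi}\,\widetilde{D}_a\,\widetilde{\varphi}^{-1}$ with $\widetilde{\varphi}$ any lift of a homeomorphism carrying $a$ to $x$, the centrality of $z$ forces the rescaling to propagate uniformly: $\widetilde{D}_x \mapsto z\widetilde{D}_x$ for every non-separating simple closed curve $x$. One must verify that this rescaled family still obeys the braid relations; this is immediate because equations \eqref{braid0rel} and \eqref{braid1rel} have the same number of Dehn twists on each side (two and three respectively), so the factors $z^2$ and $z^3$ appearing on both sides cancel.

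The lantern relation, in contrast, carries four lifts on the left and three on the right, so under the rescaling $\widetilde{D}_x \mapsto z\widetilde{D}_x$ the central defect is multiplied by $z^{4-3}=z$. Consequently, if the initial choice produces $k=1$, replacing $\widetilde{D}_{a}$ by $z\widetilde{D}_{a}$ yields a family of lifts still satisfying all braid relations and now satisfying the lantern relation with trivial central factor. This completes the choice of lifts needed. The main point requiring care is simply the bookkeeping of how the adjustment to a single generator propagates through the construction of Lemma \ref{braid1}; there is no genuine obstruction because $z$ is central and of order $2$, and the parity mismatch between the two sides of the lantern is exactly what allows the normalisation. The remaining nontriviality of the central extension will then have to be forced through the chain relation \eqref{chainrel}, which is the content of the subsequent step.
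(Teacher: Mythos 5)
Your proof is correct and follows the same strategy as the paper: replace the single free parameter $\widetilde{D}_a$ by a multiple of $z$, observe via the uniqueness lemma and centrality that this propagates to a uniform rescaling of the whole coherent family, and conclude that the lantern defect shifts by $z^{4-3}=z$ while the braid relations (having equal word lengths on both sides) are unaffected. You make explicit the bookkeeping that the paper states tersely, but the argument is the same.
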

\begin{proof}
An arbitrary lift of that lantern relation is of the form
$\widetilde{D}_a\widetilde{D}_b\widetilde{D}_c\widetilde{D}_d
=z^k\widetilde{D}_u\widetilde{D}_v\widetilde{D}_w$. In this case, we change the lift $\widetilde{D}_a$ to $z^{-k}\widetilde{D}_a$ and adjust the lifts of all other Dehn twists along non-separating  curves in the unique way such that all braid type $1$-relations are satisfied. Then, the required form of the lantern relation is satisfied too.
\end{proof}

\vspace{0.2cm}\noindent
We say that the lifts of the Dehn twists are \emph{normalized} if all braid type
relations and one  lantern relation are lifted in a trivial way.

\vspace{0.2cm}\noindent
Now Proposition \ref{pres} follows from the following lemma, whose proof is rather calculatory 
and is postponed to the next subsection.  

\begin{lemma}\label{chainrelation}
If all lifts of the Dehn twist generators are normalized 
then $(\widetilde{D}_a\widetilde{D}_b\widetilde{D}_c)^4=z
\widetilde{D}_e\widetilde{D}_d$, where $z^2= 1$ and $z\neq 1$. 
\end{lemma}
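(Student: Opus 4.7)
The plan is to reduce the chain relation to an explicit matrix identity in $U(\C^{(\Z/k\Z)^g})$ and verify it by direct computation. This is ultimately what must be done since the whole point of the lemma is to witness the non-trivial $\Z/2\Z$ class arising from the Weil representation, and the chain relation is the only relation in the presentation of $\widetilde{M}_g$ that is allowed to pick up a scalar.

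First, I fix an embedding $\Sigma_{1,2}\hookrightarrow\Sigma_g$ realizing the chain relation so that the boundary curves $d$ and $e$ are non-separating in $\Sigma_g$ and the homology classes of $a,b,c,d,e$ lie in an explicit symplectic basis. Each Dehn twist $D_\gamma$ acts on $H_1(\Sigma_g;\Z)$ by the transvection $v\mapsto v+\langle v,[\gamma]\rangle[\gamma]$, so its image in $Sp(2g,\Z)$ is an explicit matrix. Decomposing each such matrix as a product of the three standard generator types (symmetric upper-triangular block, block-diagonal, and the $J$-matrix), the three defining formulas of $\rho_{g,k}$ yield concrete unitary lifts $\widetilde{D}_a,\widetilde{D}_b,\widetilde{D}_c,\widetilde{D}_d,\widetilde{D}_e\in U(\C^{(\Z/k\Z)^g})$.

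Next I apply the normalization built up in the preceding lemmas: after rescaling each lift by a central element, the braid type $0$ and type $1$ relations and one chosen lantern relation hold on the nose, and the lifts are then unique up to a global sign. One must check that the formula-based lifts admit such a normalization and carefully track the accumulated normalizing scalars. With the normalized lifts in hand, the chain relation becomes the matrix identity
\[
(\widetilde{D}_a\widetilde{D}_b\widetilde{D}_c)^4 \;=\; z\,\widetilde{D}_e\widetilde{D}_d
\]
in $U(\C^{(\Z/k\Z)^g})$, where $z\in\{\pm 1\}$ because both sides project to the same element of $M_g$ and the central extension has $\Z/2\Z$ kernel. The goal is to exhibit $z=-1$. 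The cleanest way is to reduce to the genus-$2$ case, with $d,e$ running around the two handles and $a,b,c$ supported in the first handle together with a connecting arc; the Weil representation then factors tensorially, reducing the twelve-fold product on the left-hand side to a calculation with $k\times k$ diagonal phase, permutation, and Fourier pieces that can be evaluated by elementary Gauss-sum identities.

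The main obstacle is the sign bookkeeping through the twelve-fold product, combined with the propagation of the normalization scalars from the braid and lantern adjustments. An indirect route that avoids the raw matrix algebra is to identify $z$ with the value of the Maslov-type $2$-cocycle on the pair of symplectic matrices representing $(D_aD_bD_c)^2$; the non-triviality of this cocycle is classical in the theory of the metaplectic cover, see \cite{Weil}, and produces the value $z=-1$. Either route forces $z\neq 1$, which by the scalar equation $z^2=1$ completes the presentation and hence the proof of Proposition~\ref{pres}.
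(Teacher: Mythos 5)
Your outline matches the strategy of the paper's proof almost exactly: restrict the chain relation to a genus-$2$ (rank-$4$ symplectic) subspace where the Weil representation localizes, write the five Dehn twists as explicit $4\times 4$ symplectic matrices, feed them through the three defining formulas of $\rho_{g,k}$, and verify that the chain relation picks up the scalar $-1$ via a Gauss-sum evaluation. So the approach is not in question.

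The gap is that the proof is never actually carried out. Everything after ``reduce to the genus-$2$ case'' is a to-do list. The substance of Lemma~\ref{chainrelation} is precisely the explicit computation of $(\widetilde{D}_a\widetilde{D}_b\widetilde{D}_c)^4$: in the paper one computes matrix entries of $X=R_aR_bR_c$, then of $X^2$ and $X^4$, finds $X^4=k^{-2}\omega^4 R_e^2$ with $\omega=\tfrac12 G(1,2k)$, and then evaluates $\omega^4=-k^2$ for even $k$ by the quadratic Gauss sum formula to get $\mu=-1$. Your proposal flags this exact calculation as ``the main obstacle'' and ``sign bookkeeping,'' but does not perform it; asserting that it ``can be evaluated by elementary Gauss-sum identities'' is not a proof of the sign. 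A secondary inaccuracy: the scalar relating the two sides is a priori only an $8$-th root of unity (the cocycle $\eta$ takes values in $R_8$), not automatically $\pm 1$; its being $\pm1$ is part of what the Gauss-sum computation establishes.

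The ``indirect route'' via the Maslov cocycle does not repair this. Non-triviality of the Maslov cocycle as a class in $H^2(Sp(2g,\R);\Z/2\Z)$ is indeed classical, but what is needed here is the value of the accumulated cocycle along the specific word expressing the chain relation, and moreover that this value survives in the pull-back over the finite quotient $Sp(2g,\Z/2k\Z)$. Whether the real metaplectic class restricts non-trivially there is essentially equivalent to the statement of Proposition~\ref{trivialmeta}, so invoking it as a known fact would be circular. To make the cocycle route rigorous you would have to compute Maslov (or Leray/Kashiwara) indices for all the relevant Lagrangian triples appearing in the twelve-fold product -- a computation of comparable weight to the direct Gauss-sum evaluation, and one you likewise do not perform. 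In short, the strategy is sound and agrees with the paper's, but the proof is missing its central computation and the proposed shortcut is either circular or unexecuted.
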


\subsubsection{Proof of Lemma \ref{chainrelation}}
We denote by $T_{\gamma}$  the action of $D_{\gamma}$ in homology. 
Moreover we denote by $R_{\gamma}$ the matrix in $U(\C^{(\Z/k\Z)^g})$ 
corresponding to the  prescribed lift $\rho_{g,k}(T_{\gamma})$ 
of the projective representation.
The level $k$ is fixed through  this section and we drop 
the subscript $k$ from now on.   

\vspace{0.2cm}\noindent  
Our strategy is as follows. We show that the braid relations 
are satisfied by the matrices $R_{\gamma}$. It remains to compute the defect of the chain relation 
in the matrices $R_{\gamma}$. 

\vspace{0.2cm}\noindent 
Consider an embedding of $\Sigma_{1,2}\subset \Sigma_g$ such that 
all curves from the chain relation are non-separating, and thus 
like in the figure below:

\vspace{0.2cm}
\begin{center}
\includegraphics[scale=0.4]{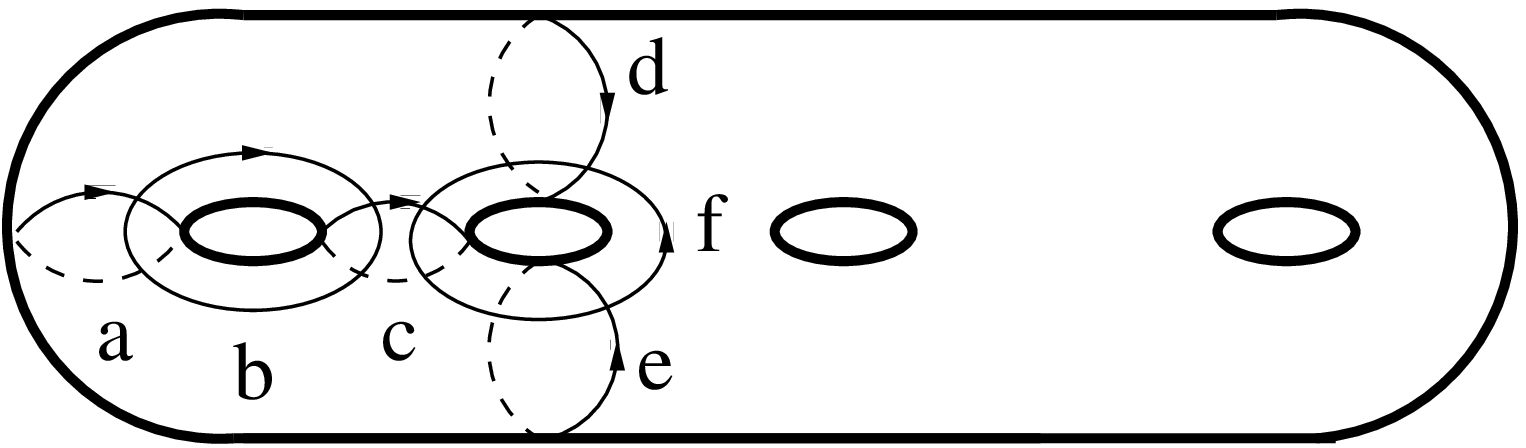}
\end{center}

\vspace{0.2cm}\noindent 
By construction, the action of the subgroup generated by $D_a,D_b,D_c,D_d,D_e$ and $D_f$ 
 on the homology of the surface $\Sigma_g$ preserves 
the symplectic subspace generated by the homology classes of 
$b, f, a, e$ and acts trivially on its orthogonal complement. 
Now the Weil representation behaves well 
with respect to the direct sum of symplectic matrices and this 
enables us to focus our attention on  the action of this subgroup 
on the 4-dimensional symplectic subspace generated by $b, f, a, e$ 
and to use  the representation $\rho_{2}$.  
In this basis the symplectic matrices associated to the above Dehn twists 
are: 
 
\[ T_a=\left(\begin{array}{cccc}
1 & 0 & 0 & 0 \\
0 & 1 & 0 & 0 \\
-1 & 0 & 1 & 0 \\
0 & 0 & 0 & 1\\
\end{array}
\right), \; 
 T_b=\left(\begin{array}{cccc}
1 & 0 & 1 & 0 \\
0 & 1 & 0 & 0 \\
0 & 0 & 1 & 0 \\
0 & 0 & 0 & 1\\
\end{array}
\right), \; 
T_c=\left(\begin{array}{cccc}
1 & 0 & 0 & 0 \\
0 & 1 & 0 & 0 \\
-1 & 1 & 1 & 0 \\
1 & -1 & 0 & 1\\
\end{array}
\right), \; 
\]
\[ T_d=T_e= \left(\begin{array}{cccc}
1 & 0 & 0 & 0 \\
0 & 1 & 0 & 0 \\
0 & 0 & 1 & 0 \\
0 & -1 & 0 & 1\\
\end{array}
\right), \; T_f= 
\left(\begin{array}{cccc}
1 & 0 & 0 & 0 \\
0 & 1 & 0 & 1 \\
0 & 0 & 1 & 0 \\
0 & 0 & 0 & 1\\
\end{array}
\right). \]
Notice that $T_b=J^{-1}T_aJ$, where $J$ is the matrix of the
standard symplectic structure. 

\vspace{0.2cm}\noindent 
Set $q=\exp\left(\frac{\pi i}{k}\right)$, which is a $2k$-th root of unity. 
We will change slightly  the basis $\{\theta_m, m\in(\Z/k\Z)^g\}$ 
of our representation vector space  
in order to exchange the two obvious parabolic subgroups of $Sp(2g,\Z)$. 
 
 \vspace{0.2cm}\noindent
The element 
$
 \rho_{g}
\left ( \begin{array}{cc}
    0 & -\mathbf 1_g\\
   \mathbf  1_g & 0
\end{array}  \right)
$
will be central in our argument, we will denote it by $S$.
Specifically we fix the basis given 
by $-S\theta_m$, with $m\in (\Z/k\Z)^g$.
We have then: 

\[ R_a = {\rm diag }(q^{\langle L_a x, x\rangle})_{x\in (\Z/k\Z)^2}, \; 
{\rm where} \;  
L_a =\left(\begin{array}{cc}
1 & 0  \\
0 & 0  \\
\end{array}
\right),\]
\[ R_c = {\rm diag }(q^{\langle L_c x, x\rangle})_{x\in (\Z/k\Z)^2}, \; 
{\rm where} \;  
L_c =\left(\begin{array}{cc}
1 & 1  \\
1 & 1  \\
\end{array}
\right),\]
and 
\[ R_e=R_d = {\rm diag }(q^{\langle L_e x, x\rangle})_{x\in (\Z/k\Z)^2}, \; 
{\rm where} \;  
L_e =\left(\begin{array}{cc}
0 & 0  \\
0 & 1  \\
\end{array}
\right).\]
We set now:  
\[ R_b=S^3R_aS \textrm{ and } R_f=S^3R_eS.\] 

\begin{lemma}\label{normalize}
The matrices $R_a,R_b,R_c,R_f,R_e$ are normalized lifts, namely 
the braid relations are satisfied. 
\end{lemma}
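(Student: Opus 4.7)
My plan is to verify each braid-type relation among the lifts $R_a,R_b,R_c,R_e,R_f$ directly from the explicit matrix formulas, separating commutativity (type $0$) from true braid relations (type $1$).

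For the commutativity relations, note that among the curves appearing in the chain configuration, the disjoint pairs drawn from $\{a,c,d,e\}$ correspond to matrices $R_a$, $R_c$, $R_d=R_e$ which are all diagonal in the basis $\{-S\theta_m\}$, hence mutually commute. The remaining disjoint pairs involve $b$ or $f$; since $R_b=S^3R_aS$ and $R_f=S^3R_eS$, commutativity here is inherited from commutativity of the underlying diagonal matrices after conjugation by $S$, together with the fact that the chosen lift satisfies $S^4\equiv I$ modulo a central scalar that cancels out of any commutator.

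For the braid relations, the key case is $R_aR_bR_a=R_bR_aR_b$. Substituting $R_b=S^3R_aS$ and rearranging, this reduces to the identity
\[
(R_aS)^3=\mu\cdot S
\]
with $\mu=1$, which is the Weil-representation lift of the classical $SL(2,\Z)$ relation $(TS)^3=-I$. The scalar $\mu$ is an explicit Gauss sum of the form $k^{-g/2}\sum_{x\in(\Z/k\Z)^g}q^{\langle L_a x,x\rangle}$, and the normalization of $S$ by $k^{-g/2}$ together with the choice $q=e^{\pi i/k}$ force this sum to equal $+1$. The relation $R_eR_fR_e=R_fR_eR_f$ is verified identically. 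The braid relations involving $R_c$, such as $R_bR_cR_b=R_cR_bR_c$, reduce to the previous case by decomposing $L_c=L_a+L_e+E$, where $E$ is the symmetric matrix with zero diagonal and off-diagonal entries equal to $1$; this exhibits $R_c$ as an explicit product of $R_a$, $R_e$ and a diagonal shearing matrix whose braid behavior with $S$ is again governed by Gauss-sum identities of the same family.

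The main obstacle is the precise bookkeeping of the scalars. The Weil representation is only projective, so the relations hold a priori only modulo $R_8$; forcing the scalars to be exactly $+1$ in the chosen lifts is where the normalization factor $k^{-g/2}$ in front of $S$ and the use of the $2k$-th root of unity $q=e^{\pi i/k}$ (rather than a mere $k$-th root) enter essentially, as they guarantee that the relevant Gauss sums evaluate to $+1$ rather than to higher roots of unity. Pushing this bookkeeping uniformly through the four families of braid relations above is where the bulk of the computation lies.
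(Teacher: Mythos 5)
Your reduction of the braid relation $R_aR_bR_a=R_bR_aR_b$ to the identity $(R_aS)^3=\mu S$ with $\mu=1$ is where the argument breaks down, and the failure is not just one of bookkeeping: the $S$ that appears in $R_b=S^3R_aS$ is the rank-$2$ Fourier matrix $\rho_2\!\left(\begin{smallmatrix}0&-\mathbf 1_2\\\mathbf 1_2&0\end{smallmatrix}\right)$, the lift of the full $4\times4$ matrix $J_2$, whereas the classical identity $(TS)^3=-I$ you are invoking lives in the rank-$1$ copy of $SL(2,\Z)$ acting on the plane spanned by the homology classes of $b$ and $a$. One checks directly on the symplectic matrices that $(T_aJ_2)^3$ is not a scalar, nor a scalar multiple of $J_2$, so the projective identity $(R_aS)^3\equiv S$ already fails in $Sp(4,\Z)$ before any question of Gauss-sum normalization arises. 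For the same reason your secondary claim that the normalization forces the relevant Gauss sum to equal $+1$ cannot be right: the quantity $\omega=\tfrac12 G(1,2k)$ that appears has $|\omega|^2=k$, and indeed the paper's proof of the subsequent Lemma~\ref{chainrelation} uses $k^{-2}\omega^4=-1$ for even $k$.

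The paper's proof takes no shortcut of this kind. It writes out the matrix entries of $R_b=S^3R_aS$ explicitly, namely $(R_b)_{m,n}=k^{-1}\omega\,\delta_{m_2,n_2}q^{-(n_1-m_1)^2}$, and then computes both $(R_aR_bR_a)_{m,n}$ and $(R_bR_aR_b)_{m,n}$ directly, finding that each equals $k^{-1}\omega\,\delta_{m_2,n_2}q^{2n_1m_1}$; the Gauss sum $\omega$ appears symmetrically and drops out without ever needing to be evaluated. Your observation that $R_a,R_c,R_e$ are simultaneously diagonal (so the type~$0$ relations among them are immediate) is fine and agrees with the spirit of the paper, but the braid-$1$ relations do need the entry-by-entry computation; if you want to economize, the right observation is that all braid-$1$ pairs are $Sp$-conjugate to $(R_a,R_b)$, not that they reduce to a $(R_aS)^3$ identity.
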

\vspace{0.2cm}\noindent
We postpone the proof of this lemma a few lines. 

\vspace{0.2cm}\noindent
For $u,v\in \mathbb N$, 
let us denote by $G(u,v)$ the Gauss sum: 
\[ G(u,v)=\sum_{x\in {\Z}/v{\Z}}\exp\left(\frac{2\pi \sqrt{-1}ux^2}{v}\right).\]
According to \cite[pp. 85-91]{Lang}  the value of the Gauss sum is 
\[ G(u,v) = d G\left(\frac{u}{d},\frac{v}{d}\right), \mbox{ if } {\rm g.c.d.}(u,v)=d,  \] 
and for  ${\rm g.c.d.}(u,v)=1$ we have:
\[ G(u,v)= \left\{
\begin{array}{ll}
\varepsilon(v)\left(\frac{u}{v}\right)\sqrt{v}, & \mbox{ for {\rm odd} } v, \\
0,                                              & \mbox{ for } v=2({\rm mod} ~ 4), \\
\overline{\varepsilon(u)}\left(\frac{v}{u}\right)\left(\frac{1+\sqrt{-1}}{\sqrt{2}}\right)\sqrt{2v}, & 
\mbox{ for } v=0({\rm mod} ~ 4),
\end{array}
\right. 
\]
where $\left (\frac{u}{v}\right)$ is the Jacobi symbol and 
\[ \varepsilon(a)= \left\{
\begin{array}{ll}
1, & \mbox{ if } a=1({\rm mod} ~ 4), \\
\sqrt{-1}, & \mbox{ if } a=3({\rm mod} ~ 4).
\end{array}
\right.
\]
Remember that the Jacobi (or the quadratic) symbol $\left(\frac{P}{Q}\right)$ 
is defined only for odd $Q$ by the  formula:
\[ \left(\frac{P}{Q}\right)=\prod_{i=1}^{s}\left(\frac{P}{q_i}\right) \]
where $Q=q_1q_2...q_s$ is the prime decomposition of $Q$, and for prime 
$q$ the quadratic symbol (also called the Legendre symbol in this setting) is 
given by: 
\[\left(\frac{P}{q}\right)=\left\{
\begin{array}{ll}
0, & \mbox{ if } P\equiv 0 ({\rm mod}~q)\\
1, & \mbox{ if } P=x^2({\rm mod}~q) \text{ and } P\not\equiv 0 ({\rm mod}~q),\\
-1, & \mbox{ otherwise}.
\end{array}
\right.
\]
while 
\[ \left(\frac{P}{1}\right)=1.\]

\vspace{0.2cm}\noindent 
The quadratic symbol satisfies the following reciprocity law 
\[ \left(\frac{P}{Q}\right)\left(\frac{Q}{P}\right)=(-1)^{\frac{P-1}{2}\frac{Q-1}{2}},\]
when both $P$ and $Q$ are odd. 

\vspace{0.2cm}\noindent 
Denote by $\omega = \frac{1}{2}G(1,2k)$. 
The lift of the chain relation is of the form: 
\[ (R_aR_bR_c)^4= \mu R_eR_d,\]
for some $\mu\in U(1)$. Our aim now is to compute the value of 
$\mu$. Set $X=R_aR_bR_c$, $Y=X^2$ and $Z=X^4$.
Let $m,n\in(\Z/k\Z)^2$, $m=\left(\begin{array}{c}
m_1 \\
m_2   \\
\end{array}
\right)$ and $n=\left(\begin{array}{c}
n_1 \\
n_2   \\
\end{array}
\right)$. 

The entry $X_{m,n}$ of the matrix $X$ is given by: 
\[ X_{m,n}=k^{-1}\omega \delta_{n_2,m_2}q^{-(n_1-m_1)^2+m_1^2+(n_1+n_2)^2}.\]
This implies $ Y_{m,n}=0$ if $\delta_{m_2,n_2}=0$. 
If $m_2=n_2$ then: 
\[ Y_{m,n}=
k^{-2}\omega^2 \sum_{r_1\in \Z/k\Z}q^{-(m_1-r_1)^2+m_1^2+(r_1+n_2)^2
-(n_1-r_1)^2+r_1^2+(n_1+n_2)^2}=\]
\[ = k^{-2}\omega^2 \sum_{r_1\in \Z/k\Z}q^{m_2^2+n_2^2+2n_1n_2+2r_1(m_1+m_2+n_1)}.\]
Therefore $Y_{m,n}=0$, unless $m_1+m_2+n_1=0$. Assume that 
 $m_1+m_2+n_1= 0$. Then: 
\[ Y_{m,n}= k^{-1}\omega^2 q^{m_2^2+n_2^2+2n_1n_2}=
  k^{-1}\omega^2 q^{-2m_1m_2}.\]
It follows that: 
$Z_{m,n}=\sum_{r\in (\Z/k\Z)^2} Y_{m,r}Y_{r,n}$
vanishes, except when $m_2=r_2=n_2$ and $r_1=-(m_1+m_2)$, 
$n_1=-(r_1+r_2)=m_1$. Thus $Z$ is a diagonal matrix. 
If $m=n$ then: 
\[ Z_{m,n}=   Y_{m,r}Y_{r,n}=
 k^{-2}\omega^4 q^{-2m_1m_2-2r_1r_2}= k^{-2}\omega^4 q^{2m_2^2}.\]
We have therefore obtained: 
\[ (R_aR_bR_c)^4=k^{-2}\omega^4 R_e^2\]
and thus $\mu=k^{-2}\omega^4=\frac{1}{k^2}\left(\frac{G(1,2k)}{2}\right)^4$. 
This proves that whenever $k$ is even we have $\mu=-1$. Since 
this computes the action of the central element $z$, it follows that $z \neq 1$. 
This ends the proof of Lemma \ref{chainrelation}.

\vspace{0.2cm}\noindent 
\begin{proof}[Proof of Lemma \ref{normalize}]
We know that  $R_b$ is  $S^3 R_a S$, where $S$ is 
the $S$-matrix, up to an eighth root of unity. 
The normalization of this root of unity is given by 
the braid relation: 
\[ R_a R_b R_a= R_b R_a R_b. \]
We have therefore: 
\[
(R_b)_{m,n}= k^{-2} \sum_{x\in (\Z/k\Z)^2} q^{\langle L_a x, x\rangle+ 2
\langle n-m, x\rangle}.\]
This entry vanishes except when $m_2=n_2$.  Assume  that $n_2=m_2$. Then: 
\[
(R_b)_{m,n}= k^{-1} \sum_{x_1\in \Z/k\Z} q^{x_1^2+ 2(n_1-m_1)x_1}=
k^{-1}q^{-(n_1-m_1)^2}\sum_{x_1\in \Z/k\Z} q^{(x_1+ n_1-m_1)^2}=
k^{-1}q^{-(n_1-m_1)^2} \omega\]
where $\omega=\frac{1}{2}\sum_{x\in \Z/2k\Z}q^{x^2}$ is 
a Gauss sum.
We have first: 
\[ (R_aR_bR_a)_{m,n}=k^{-1}\omega \delta_{m_2,n_2}q^{-(n_1-m_1)^2+m_1^2+n_1^2}=
k^{-1}\omega \delta_{m_2,n_2}q^{2n_1m_1}.\]
Further 
\[ (R_bR_a)_{m,n}=k^{-1}\omega \delta_{m_2,n_2}q^{-(n_1-m_1)^2+n_1^2}\]
so that: 
\[ (R_bR_aR_b)_{m,n}=k^{-2}\omega^2\sum_{r\in (\Z/k\Z)^2}\delta_{m_2,r_2}
 \delta_{n_2,r_2}q^{-(m_1-r_1)^2+r_1^2-(r_1-n_1)^2}= \]
\[=k^{-1}\omega^2 \delta_{m_2,n_2}q^{2m_1n_1}\sum_{r_1\in \Z/k\Z} 
q^{-(r_1-m_1+n_1)^2}=k^{-1}\omega \delta_{m_2,n_2}q^{2n_1m_1}.\]
Similar computations hold for the other pairs of non-commuting 
matrices in the set $R_b,R_c, R_f, R_e$.  
This ends the proof of Lemma \ref{normalize}.
\end{proof}

\section{Residual finiteness, finite quotients and their second homology}\label{residual}

\subsection{Residual finiteness for perfect groups}

Perfect groups have a universal central extension with 
kernel canonically isomorphic to their second integral 
homology group. In this section we show how to translate 
the residual finiteness problem for the universal central 
extension for a perfect group $\Gamma$ into a homological 
problem about $H_2(\Gamma)$. 
We will need the following lemmas from  \cite[ Lemma 2.1 \& Lemma 2.2]{FP}: 
\begin{lemma}\label{lift}
Let $\Gamma$ and $F$ be perfect groups, $\widetilde{\Gamma}$ and 
$\widetilde{F}$ their universal central extensions and 
$p: \Gamma \rightarrow F$ be a group homomorphism. Then there 
exists a unique homomorphism 
$\widetilde{p}:\widetilde{\Gamma}\to \widetilde{F}$ lifting $p$ such 
that the following diagram is commutative: 
\[ \begin{array}{clcrclc}
1 \to& H_2(\Gamma)&\to& \widetilde{\Gamma}&\to& \Gamma& \to 1 \\
     & p_*\downarrow & & \widetilde{p} \downarrow & & \downarrow p & \\
1 \to& H_2(F)&\to& \widetilde{F}&\to& F & \to 1  \\
\end{array}
\]
where $p_\ast= H_2(p)$ is the map induced by $p$ in homology.
\end{lemma}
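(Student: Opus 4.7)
The plan is to invoke the universal property that defines the universal central extension of a perfect group: namely, if $G$ is perfect then $\widetilde{G} \to G$ is perfect, and for every central extension $1 \to A \to E \to G \to 1$ there exists a unique homomorphism $\widetilde{G} \to E$ covering the identity of $G$. Both existence and uniqueness of $\widetilde{p}$, as well as the identification of its restriction to the kernels with $p_\ast$, will be extracted from this property together with naturality of the isomorphism $H_2(G) \cong \ker(\widetilde{G} \to G)$.

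First I would form the pullback $E = p^{\ast}\widetilde{F} = \{(\gamma, \tilde{f}) \in \Gamma \times \widetilde{F} \mid p(\gamma) = \pi_F(\tilde{f})\}$. The first projection $\pi_E: E \to \Gamma$ realises $E$ as a central extension of $\Gamma$ with kernel $H_2(F)$, while the second projection $q: E \to \widetilde{F}$ is a homomorphism over $p$, i.e. $\pi_F \circ q = p \circ \pi_E$. Applying the universal property of $\widetilde{\Gamma}$ to the extension $1 \to H_2(F) \to E \to \Gamma \to 1$ yields a unique homomorphism $\varphi : \widetilde{\Gamma} \to E$ with $\pi_E \circ \varphi = \pi_\Gamma$, and I would then set $\widetilde{p} := q \circ \varphi$. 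Commutativity of the right-hand square of the diagram is immediate, and uniqueness of $\widetilde{p}$ follows from uniqueness of $\varphi$ together with the defining property of the pullback: any $\widetilde{p}$ lifting $p$ factors uniquely through $E$.

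The remaining and only delicate point is to check that the restriction of $\widetilde{p}$ to the kernel $H_2(\Gamma)$ coincides with $H_2(p) = p_\ast$. I would handle this by naturality: via the Hopf formula $H_2(G) = (R \cap [F,F])/[F,R]$ attached to a free presentation $1 \to R \to F \to G \to 1$, the identification $H_2(G) \cong \ker(\widetilde{G} \to G)$ is functorial in the perfect group $G$, and the morphism $\widetilde{p}$ constructed above is precisely the one induced by $p$ under this functoriality. Alternatively, one can lift a free presentation of $\Gamma$ to one of $F$ along $p$ and read off the induced map on the Hopf quotients directly. The main obstacle is really only this naturality verification; the existence and uniqueness portions are a formal consequence of the universal property, so no substantial computation is required.
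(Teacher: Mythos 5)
The paper does not prove this lemma itself---it cites Lemmas~2.1 and~2.2 of the authors' earlier paper \cite{FP}---so there is no in-text argument to compare against. That said, your proof is correct and follows a standard route. Your pullback construction for existence, and the use of the universal property for uniqueness, are clean; one can also get uniqueness more directly by observing that two lifts of $p$ differ by a homomorphism $\widetilde{\Gamma} \to H_2(F)$ into an abelian group, which must vanish since $\widetilde{\Gamma}$ is perfect. For the delicate point you correctly isolate---that $\widetilde{p}|_{H_2(\Gamma)} = p_*$---your appeal to naturality of the Hopf formula is sound but left as a sketch; a slightly tighter argument is to apply the naturality of the transgression in the five-term exact sequence of the two extensions. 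For a universal central extension $1 \to H_2(G) \to \widetilde{G} \to G \to 1$ of a perfect group, the five-term sequence collapses (since $H_1(\widetilde{G}) = H_2(\widetilde{G}) = 0$) to an isomorphism $H_2(G) \to \ker(\widetilde{G}\to G)$, which is exactly the canonical identification; naturality of this isomorphism with respect to the morphism of extensions $(\widetilde{p}, p)$ then forces the left square to commute with $p_* = H_2(p)$. Either route closes the gap you flagged, so your proposal is essentially complete.
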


\begin{lemma}\label{kernel}
Let $\Gamma$ be a finitely generated  perfect group and 
$\widetilde{\Gamma}$ be its universal central extension. 
We denote by $C$ the central group $\ker(\widetilde{\Gamma}\to\Gamma)$ of 
$\widetilde{\Gamma}$. 
\begin{enumerate}
\item Suppose that the finite index (normal) subgroup 
$H\subseteq \Gamma$ has the property that 
the image of $H_2(H)$ into $H_2(\Gamma)$ contains $dC$, for some $d\in\Z$. 
Let $F=\Gamma/H$ be the corresponding finite quotient 
of $\Gamma$ and $p:\Gamma\to F$ the quotient map. 
Then $d\cdot p_*(H_2(\Gamma))=0$, where $p_*:H_2(\Gamma)\to H_2(F)$ 
is the homomorphism induced by $p$. 
In particular, if $p_*:H_2(\Gamma)\to H_2(F)$ is surjective, then 
$d\cdot H_2(F)=0$. 
\item Assume that $F$ is a finite quotient of $\Gamma$ and let $d\in\Z$ such that   
$d\cdot p_*(H_2(\Gamma))=0$. For instance, this is satisfied when $d \cdot 
H_2(F)=0$. 
Let $\widetilde{F}$ denote the universal central extension of $F$. 
Then the homomorphism $p:\Gamma\to F$ has a unique lift  
$\widetilde{p}:\widetilde{\Gamma}\to \widetilde{F}$ and the kernel of 
$\widetilde{p}$  contains $d\cdot C$.  Observe that since $F$ is finite, $H_2(F)$ is also finite, hence we can take $d= |H_2(F)|$.
\end{enumerate} 
\end{lemma}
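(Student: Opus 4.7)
The plan is to prove both parts by diagram-chasing around the universal central extension, using nothing beyond the functoriality of $H_2$ together with Lemma~\ref{lift}. The unifying observation is that via the canonical identification $C = H_2(\Gamma)$, the restriction of the lift $\widetilde{p}:\widetilde{\Gamma}\to\widetilde{F}$ to the center is exactly the induced map $p_*:H_2(\Gamma)\to H_2(F)$ on second homology.

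For part (1), I would first observe that since $H = \ker p$, the composition $H\hookrightarrow\Gamma\xrightarrow{p}F$ is the trivial homomorphism and hence factors through the trivial group. By functoriality of $H_2$, the composite
\[
H_2(H)\longrightarrow H_2(\Gamma)\xrightarrow{\;p_*\;} H_2(F)
\]
vanishes. The hypothesis asserts precisely that $dC$ lies in the image of $H_2(H)\to H_2(\Gamma)=C$, so applying $p_*$ yields $p_*(dC)=0$, that is, $d\cdot p_*(H_2(\Gamma))=0$. If moreover $p_*$ is surjective, then the conclusion $d\cdot H_2(F)=0$ follows immediately.

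For part (2), I would apply Lemma~\ref{lift} to obtain the canonical lift $\widetilde{p}:\widetilde{\Gamma}\to\widetilde{F}$; this is legitimate because $F$, being a quotient of the perfect group $\Gamma$, is itself perfect and so admits a universal central extension. The commutative diagram in Lemma~\ref{lift} shows that the restriction of $\widetilde{p}$ to $C\subseteq\widetilde{\Gamma}$ coincides with $p_*:H_2(\Gamma)\to H_2(F)$. The hypothesis $d\cdot p_*(H_2(\Gamma))=0$ then translates directly to $d\cdot C\subseteq \ker(p_*)\subseteq \ker(\widetilde{p})$. The final sentence follows because the Schur multiplier $H_2(F)$ of a finite group $F$ is finite by a classical theorem of Schur, so $|H_2(F)|\cdot H_2(F)=0$ and one can always take $d=|H_2(F)|$. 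There is no substantive obstacle here: the statement is essentially a bookkeeping consequence of Lemma~\ref{lift} and the functoriality of $H_2$, the only mildly subtle point being the functorial identification of the kernel $C$ of the universal central extension with $H_2(\Gamma)$, which is built into Lemma~\ref{lift}.
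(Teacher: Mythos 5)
Your proof is correct, and it is exactly the argument one would expect: part (1) follows from the fact that $H\hookrightarrow\Gamma\xrightarrow{p}F$ is the trivial homomorphism, so the induced composite on $H_2$ kills the image of $H_2(H)$ and in particular $dC$; part (2) follows by applying Lemma~\ref{lift} together with the fact that the restriction of $\widetilde p$ to $C\cong H_2(\Gamma)$ is $p_*$, noting that $F$ is perfect as a quotient of the perfect group $\Gamma$. The paper itself does not reprove this lemma (it cites Lemmas 2.1 and 2.2 of \cite{FP}), but your argument is the natural one underlying those references, so this is the same approach.
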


\subsection{Proof of Theorem \ref{boundtorsion0}}
Let $\mathbb K$ be a {\em number field}, ${\mathcal R}$ be the set of 
inequivalent valuations of $\mathbb K$ and 
$S\subset \mathcal R$ be a finite set of valuations of $\mathbb K$ 
including all the Archimedean (infinite) ones. 
Let 
\[ O(S)=\{x\in \mathbb K\colon\ v(x)\leq 1, \:\:  
{\rm for \:\: all } \: v\in \mathcal R\setminus S\}\]  
be the ring of $S$-integers in $\mathbb K$ and 
$\q\subset O(S)$ be a nonzero ideal. 
By $\mathbb K_v$ we denote the completion of $\mathbb K$ 
with respect to $v\in \mathcal R$. Following \cite{BMS}, we call
a domain $\A$ which arises as $O(S)$ above a 
{\em Dedekind domain of arithmetic type}.

\vspace{0.2cm}
\noindent
Let $\A=O(S)$ be a Dedekind domain of arithmetic type and 
$\q$ be an ideal of $\A$.  Denote by $Sp(2g,\A,\q)$ the kernel of the 
surjective homomorphism $p:Sp(2g,\A) \to Sp(2g, \A/\q)$. 
 The surjectivity is not a purely formal fact and  follows from 
the fact that in these cases the symplectic group coincides with 
the so-called "elementary symplectic group", and that it is trivial 
to  lift elementary generators of $Sp(2g, \A/\q)$ to $Sp(2g, \A)$;  
for a proof of this fact when $\A=\Z$ see \cite[Thm. 9.2.5]{HO}.
The restriction to $\mathbb K$ which are not totally imaginary 
comes from the  result of \cite{BMS} which states that symplectic groups 
$Sp(2g,\A)$, for $g\geq 3$ have the congruence subgroup property.

\vspace{0.2cm}
\noindent
Consider the central extension of $Sp(2g,\A)$  constructed by 
Deligne in \cite{De}, as follows. 
Let $\mu$ (respectively $\mu_v$ for a non-complex place $v$) 
be the group of roots of unity in $\mathbb K$ 
(and respectively $\mathbb K_v$). By convention one sets $\mu_v=1$ 
for a complex  place $v$.  
Moore showed in \cite{Moore} that there exists a universal topological 
central extension  $\widetilde{Sp(2g, \mathbb K_v)}$ of  
$Sp(2g, \mathbb K_v)$ by a discrete group $\pi_1(Sp(2g, \mathbb K_v))$.
When $v$ is a non real place Moore proved that there is an isomorphism 
between $\pi_1(Sp(2g, \mathbb K_v))$  and $\mu_v$. For a real 
place $v$, it is well-known that  $\pi_1(Sp(2g, \mathbb K_v))=\Z$.  
Set also $Sp(2g)_S=\prod_{v\in S}Sp(2g, \mathbb K_v)$ and recall that 
$Sp(2g,\A)$ is a subgroup of $Sp(2g)_S$. Then the universal 
topological central extension $\widetilde{Sp(2g)_S}$ of  
$Sp(2g)_S$ is isomorphic to the universal covering 
$\prod_{v\in S}\widetilde{Sp(2g, \mathbb K_v)}$ by the abelian group
$\pi_1(Sp(2g)_S)=\prod_{v\in S}\pi_1(Sp(2g, \mathbb K_v))$. 
Denote then by $\widetilde{Sp(2g,\A)}^D$ the inverse image of 
$Sp(2g,\A)$ in the universal covering  $\widetilde{Sp(2g)_S}$ of  
$Sp(2g)_S$. Then $\widetilde{Sp(2g,\A)}^D$ is a central extension 
of $Sp(2g,\A)$ which fits in an exact sequence: 

\begin{equation}
1\to \pi_1(Sp(2g)_S) \to \widetilde{Sp(2g,\A)}^D\to Sp(2g,\A) \to 1.
\end{equation}
There is a natural surjective homomorphism 
$\pi_1(Sp(2g, \mathbb K_v))\to \mu_v$, for all places $v$.  When composed 
with the map $\mu_v\to \mu$ sending $x$ to $x^{[\mu_v:\mu]}$ we 
obtain a homomorphism: 
\begin{equation}
R_S: \pi_1(Sp(2g)_S) \to \mu.
\end{equation}

\vspace{0.2cm}
\noindent
Deligne's theorem from \cite{De} states that the intersection of all finite index 
subgroups of $\widetilde{Sp(2g,\A)}^D$ coincides with $\ker R_S$, when $g\geq 2$. Then, 
Lemmas \ref{lift} and \ref{kernel} would prove the statement of Theorem 
\ref{boundtorsion0} if we knew that $\widetilde{Sp(2g,\A)}^D$ is the 
universal central extension of  $Sp(2g,\A)$. This is, for instance, the 
case when $\A=\Z$ and $g\geq 4$, but not true in full generality. 
In order to circumvent this difficulty, we drop out the torsion part of the kernels 
of the two central extensions.

\vspace{0.2cm}
\noindent
The key step in proving Theorem \ref{boundtorsion0} is a result stating 
the equivalence between the non-residual finiteness of Deligne's central extension 
and the existence of a uniform bound for the 2-homology of the 
finite congruence quotients, whose proof is postponed a few lines later.   

\begin{proposition}\label{symplgen}
The following statements are equivalent: 
\begin{enumerate}
\item There exists a homomorphism $R: \prod_{v\in S} \pi_1(Sp(2g, \mathbb K_v)) \to G$, where $G$ is a finite group such that every finite index subgroup of the Deligne central extension  
$\widetilde{Sp(2g,\A)}^D$, for $g\geq 3$, contains $\ker R$. 
\item  For fixed $\A$ and $g\geq 3$ 
there exists some uniform (independent on $g$ and $\q$) bound  for the 
size of the finite torsion groups $H_2(Sp(2g,\A/\q))$, 
for any  nontrivial ideal $\q$ of $\A$. 
\end{enumerate}
\end{proposition}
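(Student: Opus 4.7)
The plan is to translate the question about finite quotients of Deligne's central extension $\widetilde{Sp(2g,\A)}^D$ into one about central extensions of the principal congruence quotients $F_\q = Sp(2g,\A/\q)$. The essential ingredients are the congruence subgroup theorem of Bass--Milnor--Serre \cite{BMS}, which forces every finite quotient of $Sp(2g,\A)$ to factor through some $F_\q$, together with Lemmas \ref{lift} and \ref{kernel}, which exchange bounds on $H_2$ of finite quotients for bounds on kernels of maps between universal central extensions. An auxiliary observation is that $\pi_1(Sp(2g)_S)=\prod_{v\in S} \pi_1(Sp(2g,\mathbb K_v))$ is a finitely generated abelian group, since each real place contributes a $\Z$, each non-archimedean place contributes the finite cyclic group $\mu_v$, and complex places are trivial.

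For $(1)\Rightarrow(2)$: given $R:\pi_1(Sp(2g)_S)\to G$ with finite $G$ and $\ker R$ contained in every finite-index subgroup of $\widetilde{Sp(2g,\A)}^D$, fix a nontrivial ideal $\q$ and let $\widetilde{F_\q}\to F_\q$ be its universal central extension, which is itself a finite group. By Lemma \ref{lift} the lift $\widetilde{Sp(2g,\A)}\to \widetilde{F_\q}$ exists and restricts to $(p_\q)_\ast:H_2(Sp(2g,\A))\to H_2(F_\q)$ on kernels. The crucial step is then to show that this lift factors through the comparison $\widetilde{Sp(2g,\A)}\to \widetilde{Sp(2g,\A)}^D$ to produce a morphism $\widetilde{Sp(2g,\A)}^D\to \widetilde{F_\q}$; equivalently, that the central extension of $Sp(2g,\A)$ pulled back from $\widetilde{F_\q}$ is trivialized over Deligne's extension. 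This is precisely the statement that the arithmetic central extensions of $Sp(2g,\A)$ are all dominated by $\widetilde{Sp(2g,\A)}^D$, as in \cite{De}. Once such a factorization exists, its kernel has finite index in $\widetilde{Sp(2g,\A)}^D$ and hence contains $\ker R$, so the image of $\pi_1(Sp(2g)_S)$ inside $H_2(F_\q)$ has cardinality at most $|G|$. Combined with a surjectivity statement identifying this image with all of $H_2(F_\q)$---analogous to the classical one of Stein and Milnor \cite{Milnor} already used earlier in the paper---this yields the uniform bound $|H_2(F_\q)|\le |G|$.

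For $(2)\Rightarrow(1)$: assume $|H_2(F_\q)|\le N$ uniformly in $g$ and $\q$. Set $G = \pi_1(Sp(2g)_S)/N!\,\pi_1(Sp(2g)_S)$, which is finite by the finite generation noted above, and let $R$ be the natural projection. Any finite-index subgroup of $\widetilde{Sp(2g,\A)}^D$ corresponds via the congruence subgroup property to a finite quotient factoring through some $F_\q$, and Lemma \ref{kernel}(ii) shows that the image of $\pi_1(Sp(2g)_S)$ inside this quotient is annihilated by $|H_2(F_\q)|\le N$, hence by $N!$, so the finite-index subgroup contains $\ker R$. The main obstacle throughout is the gap between $\widetilde{Sp(2g,\A)}^D$ and the universal central extension of $Sp(2g,\A)$---their kernels $\pi_1(Sp(2g)_S)$ and $H_2(Sp(2g,\A))$ are related only via Moore's comparison map---and establishing the factorization $\widetilde{Sp(2g,\A)}^D\to \widetilde{F_\q}$ needed in $(1)\Rightarrow(2)$ is precisely the place where the arithmetic content of Deligne's construction is invoked.
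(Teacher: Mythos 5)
Your outline correctly identifies the ingredients at play (the congruence subgroup property, Lemmas \ref{lift} and \ref{kernel}, and the finite generation of $\pi_1(Sp(2g)_S)$), but there is a genuine gap exactly at the step you flag as the ``crucial'' one, and the appeal you make to \cite{De} does not close it. You assert that the lift $\widetilde{Sp(2g,\A)} \to \widetilde{F_\q}$ ``factors through the comparison $\widetilde{Sp(2g,\A)}\to \widetilde{Sp(2g,\A)}^D$ to produce a morphism $\widetilde{Sp(2g,\A)}^D\to \widetilde{F_\q}$,'' and you attribute this to Deligne. But Deligne's theorem concerns which finite-index subgroups of $\widetilde{Sp(2g,\A)}^D$ exist (they all contain $\ker R_S$); it does not say that $\widetilde{Sp(2g,\A)}^D$ dominates the universal central extension, nor that $\tilde p$ factors through it. In fact the comparison map $H_2(Sp(2g,\A))\to \pi_1(Sp(2g)_S)$ induced by universality is generally neither surjective nor an isomorphism on torsion; its free parts coincide (Borel--Serre and Garland--Matsushima), but the torsion summands can differ. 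When it fails to be surjective, $\widetilde{Sp(2g,\A)}^D$ is not a quotient of $\widetilde{Sp(2g,\A)}$, and the factorization you need simply does not exist. The paper itself warns of this right before the proposition: ``Lemmas \ref{lift} and \ref{kernel} would prove the statement \dots if we knew that $\widetilde{Sp(2g,\A)}^D$ is the universal central extension of $Sp(2g,\A)$ \dots but [this is] not true in full generality.''

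The paper's actual workaround is different from what you propose and is the real content of the argument: it quotients both extensions by the torsion of their kernels, getting $E=\widetilde{Sp(2g,\A)}/T(H_2(Sp(2g,\A)))$ and $D=\widetilde{Sp(2g,\A)}^D/T(\pi_1(Sp(2g)_S))$, and then proves (Lemma \ref{univDeligne}) that the universality map embeds $E$ into $D$ as a \emph{finite-index normal subgroup} --- this is where Borel's stable cohomology and the Garland--Matsushima vanishing theorem enter. With that embedding in hand, the homomorphism $\tilde p$ from $E$ to a finite group is promoted to a homomorphism of $D$ (hence of $\widetilde{Sp(2g,\A)}^D$) by taking the induced representation $\mathrm{Ind}_E^D\tilde p$; this sidesteps any need to extend $\tilde p$ across the (possibly nonexistent) quotient map. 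Your $(2)\Rightarrow(1)$ direction has a milder form of the same issue: Lemma \ref{kernel}(ii) speaks about the center $H_2(Sp(2g,\A))$ of the \emph{universal} extension, whereas the conclusion concerns $\pi_1(Sp(2g)_S)$ inside the \emph{Deligne} extension; passing between them again needs the torsion-quotient/finite-index comparison (plus the explicit role of the congruence kernel $\mu$ in the imaginary case, which the paper tracks via the factor $|\mu|$). So while your architecture points in the right direction, the plan as written does not close; the missing piece is precisely Lemma \ref{univDeligne} together with the induced-representation trick.
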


\begin{proof}[Proof of Theorem \ref{boundtorsion0}]
Deligne's result from \cite{De} yields an effective uniform bound for 
the size of the torsion group of $H_2(Sp(2g,\A/\q))$, 
since the first statement of Proposition \ref{symplgen} holds for  
$R=R_S$. Eventually, $Sp(2g,\A)$ has the congruence subgroup property, for $g\geq 2$, according to \cite{BMS}, namely any surjective homomorphism 
$Sp(2g,\A)\to F$ onto a finite group $F$ factors through some finite congruence 
quotient $Sp(2g,\A/\q)$. This proves our claim. 
\end{proof}

\vspace{0.2cm}
\noindent 
The main steps in the proof of Proposition \ref{symplgen} are the following.  
First, the natural homomorphism  $H_2(Sp(2g,\A)) \to H_2(Sp(2g,\A/\q))$ is 
surjective, for any $g\geq 3$.  Further, the   
groups $H_2(Sp(2g,\A))$ stabilize for large enough $g$ (depending on $\A$) 
so that there are upper bounds (independent on $g$) on the number of 
its generators and on the size of its torsion part. Therefore, the  finite abelian group 
$H_2(Sp(2g,\A/\q))$ has uniformy bounded size if and only if there is some 
uniform bound for the orders of the images of the generators of the 
free part of $H_2(Sp(2g,\A))$. The homomorphism 
between the universal central extension of $Sp(2g,\A)$ and the Deligne extension 
induces a map between their kernels 
$H_2(Sp(2g,\A))\to \prod_{v\in S} \pi_1(Sp(2g, \mathbb K_v))$.
This map is an isomorphism at the level of their free parts. 
Therefore $H_2(Sp(2g,\A/\q))$ has uniformly bounded size if and only if
the image of $\prod_{v\in S} \pi_1(Sp(2g, \mathbb K_v))$  through homomorphisms 
of the Deligne central extension into finite groups has uniformly bounded 
size.

\vspace{0.2cm}
\noindent 
Before proceeding, we  collect some of the results involved in the proof of Proposition \ref{symplgen}.  
We first need  the following technical proposition, whose proof can be found in \cite{FP}: 
\begin{proposition}\label{lem H2epi}
Given an ideal $\q \in \A$, for any $g \geq 3$, the homomorphism  $p_*:H_2(Sp(2g,\A)) \to H_2(Sp(2g,\A/\q))$ is 
surjective. 
\end{proposition}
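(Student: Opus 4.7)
The plan is to apply the five-term exact sequence associated with the Lyndon--Hochschild--Serre spectral sequence for the short exact sequence of groups
\[
1 \to Sp(2g,\A,\q) \to Sp(2g,\A) \xrightarrow{p} Sp(2g,\A/\q) \to 1,
\]
where $Sp(2g,\A,\q)$ denotes the principal congruence subgroup of level $\q$, whose existence rests on the elementary-symplectic lift already invoked (a version of \cite[Thm. 9.2.5]{HO}). The resulting exact sequence reads
\[
H_2(Sp(2g,\A)) \xrightarrow{p_*} H_2(Sp(2g,\A/\q)) \to H_1(Sp(2g,\A,\q))_{Sp(2g,\A/\q)} \to H_1(Sp(2g,\A)) \to H_1(Sp(2g,\A/\q)) \to 0.
\]
Since $Sp(2g,\A)$ and $Sp(2g,\A/\q)$ are perfect for $g \geq 3$, the last two terms vanish, so proving surjectivity of $p_*$ is equivalent to establishing that the module of coinvariants $H_1(Sp(2g,\A,\q))_{Sp(2g,\A/\q)}$ is trivial.

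Next I would reformulate this vanishing as a \emph{mixed commutator} identity. Because $Sp(2g,\A)$ acts on $H_1(Sp(2g,\A,\q))$ by conjugation and any conjugate $hgh^{-1}$ is identified with $g$ in the coinvariants, the class $[h,g]=hgh^{-1}g^{-1}$ is automatically zero there whenever $h \in Sp(2g,\A)$ and $g \in Sp(2g,\A,\q)$. Consequently, the coinvariants vanish as soon as one proves the mixed commutator equality
\[
Sp(2g,\A,\q) \;=\; [\,Sp(2g,\A)\,,\,Sp(2g,\A,\q)\,].
\]

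The third step is to verify this commutator equality using the elementary symplectic generators. For $g \geq 3$, the group $Sp(2g,\A,\q)$ is generated by the elementary symplectic transvections $e_{ij}(q)$ with $q \in \q$ (this is essentially the content of the surjectivity statement used in the construction of $Sp(2g,\A,\q)$). The Steinberg-type commutator identities $[e_{ij}(q),e_{jk}(1)] = e_{ik}(q)$, valid whenever a third symplectic index $k$ distinct from $i,j$ is available, exhibit each generator of $Sp(2g,\A,\q)$ as a commutator of an element of $Sp(2g,\A)$ with an element of $Sp(2g,\A,\q)$, proving the claim.

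The main obstacle will be handling the pairs of indices $(i,j)$ for which the clean Steinberg identity does not apply, namely those involving long-root generators where the symplectic incidence forces coincidences among the indices. One needs the hypothesis $g \geq 3$ to guarantee enough ``room'' to always find a third admissible symplectic index; this is precisely the stable-rank input that mirrors the corresponding arguments for $SL_n$ in algebraic $K$-theory. Beyond that, the ``long-root'' transvections $e_{ii}(q)$ require a separate but analogous commutator expression (using a symplectic reflection and a short-root generator) to be written as mixed commutators. This case analysis, standard in the Dedekind setting, is carried out in detail in \cite{FP} and its references; it completes the proof.
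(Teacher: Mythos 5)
Your overall strategy is the natural one and is almost certainly the same route taken in \cite{FP}: the Lyndon--Hochschild--Serre five-term sequence, perfectness of the two outer groups, and the reduction of surjectivity of $p_*$ to the vanishing of the coinvariants $H_1(Sp(2g,\A,\q))_{Sp(2g,\A/\q)}$, which (since the action factors through conjugation by $Sp(2g,\A)$) is equivalent to the mixed commutator identity $Sp(2g,\A,\q)=[Sp(2g,\A),Sp(2g,\A,\q)]$. Steps (1) and (2) are correct.

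The gap is in step (3), and it is a genuine one. You assert that ``the group $Sp(2g,\A,\q)$ is generated by the elementary symplectic transvections $e_{ij}(q)$ with $q\in\q$'' and attribute this to ``the surjectivity statement used in the construction of $Sp(2g,\A,\q)$.'' That surjectivity statement (the one the paper cites from \cite[Thm.~9.2.5]{HO}) says the \emph{full} group $Sp(2g,\A)$ is generated by elementary symplectic matrices, which is why $Sp(2g,\A)\to Sp(2g,\A/\q)$ is onto. It says nothing about the kernel: the assertion that the principal congruence subgroup $Sp(2g,\A,\q)$ equals the normal elementary subgroup $ESp(2g,\A,\q)$ generated by level-$\q$ transvections is a far deeper fact --- it is (essentially) the Bass--Milnor--Serre congruence subgroup theorem \cite{BMS}, whose proof requires Mennicke symbols, power reciprocity, and the arithmetic of $\mathbb K$, not Steinberg identities. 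What your Steinberg-identity argument actually establishes is only the inclusion $ESp(2g,\A,\q)\subseteq[Sp(2g,\A),Sp(2g,\A,\q)]$: each $e_{ij}(q)$ is a commutator $[h,n]$ with $h\in Sp(2g,\A)$ and $n\in Sp(2g,\A,\q)$, and $[G,N]$ is normal in $G$, so it swallows the normal closure. To upgrade this to the needed equality $Sp(2g,\A,\q)=[Sp(2g,\A),Sp(2g,\A,\q)]$ one must know $Sp(2g,\A,\q)=ESp(2g,\A,\q)$, and this is exactly where \cite{BMS} has to be invoked. Moreover, for $S$-integer rings of a \emph{totally imaginary} number field the Mennicke quotient $Sp(2g,\A,\q)/ESp(2g,\A,\q)$ can be a nontrivial finite cyclic group, so the elementary-generation claim simply fails without a hypothesis excluding that case; your write-up should at minimum state the reliance on \cite{BMS} and flag the scope of validity.

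A secondary, smaller issue: even after granting elementary generation, the long-root generators do not satisfy a clean identity of the form $[e_{ij}(q),e_{jk}(1)]=e_{ik}(q)$ --- the symplectic Steinberg relations for a short root against a short root summing to a long root produce extra quadratic terms. You acknowledge this but defer the case analysis to ``\cite{FP} and its references.'' Since that case is precisely where the naive $SL_n$ intuition breaks, it deserves either an explicit identity (e.g.\ expressing $e_{ii}(q)$ using a Weyl element conjugate of a short-root transvection together with a short-root commutator) or a precise citation, rather than a gesture.
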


\vspace{0.2cm}
\noindent 
Although the Deligne central extension is not in general the 
universal central extension, it is not far from it. 
The free part of the kernel of the universal central extension 
can be determined by means of:  

\begin{lemma}\label{BorSer}
We have: 
\[H_2(Sp(2g,\A); \R)= \R\otimes _{\Z}\prod_{v\in S\cap \mathbb R(S)} 
\pi_1(Sp(2g, \mathbb K_v)),\] 
where $\mathbb R(S)$ denotes the real 
Archimedean places in $S$. 
\end{lemma}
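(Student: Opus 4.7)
My approach is to reduce the computation to the stable real cohomology of $Sp(2g,\A)$ computed by Borel, and then compare with the topological fundamental group of the archimedean factors via Moore's theory.

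\textbf{Step 1 (Reduction to relative Lie algebra cohomology).} For $g$ large with respect to $\A$, the stability theorems of Borel--Serre (together with Borel's regulator computations) identify
\[
H^{\ast}(Sp(2g,\A);\R) \cong H^{\ast}(\mathfrak{g}_{\infty}, K_{\infty};\R),
\]
where $\mathfrak{g}_{\infty} = \bigoplus_{v\in S,\, v\mid \infty} \mathfrak{sp}_{2g}(\mathbb K_v)$ and $K_{\infty}$ is a maximal compact of the Archimedean factor $\prod_{v\mid\infty}Sp(2g,\mathbb K_v)$. By K\"unneth this decomposes as a tensor product over Archimedean places. Since $H_{2}(Sp(2g,\A);\R)$ is dual to $H^{2}$, it suffices to compute the degree $2$ contribution of each Archimedean factor.

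\textbf{Step 2 (Contribution of each Archimedean place).} For a \emph{real} place $v$, the symmetric space $Sp(2g,\R)/U(g)$ is Hermitian and its compact dual $Sp(2g)/U(g)$ (the Lagrangian Grassmannian) has primitive rational cohomology in degrees $2, 6, 10, \dots, 4g-2$. Hence $H^{2}\bigl(\mathfrak{sp}_{2g}(\R), U(g);\R\bigr)$ is one-dimensional, generated (up to scale) by the K\"ahler class, which is precisely the Maslov/universal symplectic class dual to a generator of $\pi_{1}(Sp(2g,\R))=\Z$. For a \emph{complex} place $v$, viewing $Sp(2g,\C)$ as a real Lie group, the compact dual identifies with the compact symplectic group $Sp(g)$, whose primitive rational cohomology lives in odd degrees $3, 7, \dots, 4g-1$; in particular the degree $2$ part is trivial. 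Non-Archimedean places do not contribute to the Archimedean Lie algebra cohomology at all.

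\textbf{Step 3 (Identification with $\pi_{1}$).} Combining Steps 1 and 2, $\dim_{\R} H_{2}(Sp(2g,\A);\R)$ equals the number of real places in $S$. Moreover, the K\"ahler/Maslov generator attached to a real place $v$ is the pull-back of the generator of $H^{2}(Sp(2g,\mathbb K_v);\R)$ classifying Moore's universal topological central extension, whose kernel $\pi_{1}(Sp(2g,\mathbb K_v))=\Z$ is by Moore's theorem. By naturality of the comparison between the algebraic universal central extension of $Sp(2g,\A)$ and Deligne's extension $\widetilde{Sp(2g,\A)}^{D}$, the natural map
\[
H_{2}(Sp(2g,\A)) \longrightarrow \prod_{v\in S}\pi_{1}(Sp(2g,\mathbb K_v))
\]
sends the class dual to the K\"ahler generator for $v$ onto a generator of the corresponding $\Z$-factor, while the complex and non-Archimedean factors (finite, contributing $\mu_v$ or $1$) become torsion and vanish after $-\otimes_{\Z}\R$. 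This produces the asserted identification with $\R\otimes_{\Z}\prod_{v\in S\cap\R(S)}\pi_{1}(Sp(2g,\mathbb K_v))$.

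\textbf{Main obstacle.} The routine matching of dimensions is easy once Borel's theorem is in hand; the delicate point is the \emph{canonical} identification in Step 3, namely checking that the map from algebraic $H_{2}$ to the topological $\pi_{1}$'s, coming from comparing the universal central extension with Deligne's extension, is the one realizing the K\"ahler class as the Maslov class on each real factor. This requires either an explicit cocycle description of Deligne's extension at each Archimedean place or invoking Moore's uniqueness statement for the universal topological central extension.
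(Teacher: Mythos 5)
Your proof is correct and, in essence, unpacks exactly the citation the paper gives: the paper's proof is a one-line appeal to Borel's stable real cohomology computation (\cite{Bo}, with \cite{BoSe,Bo2} for the $S$-integer case), and your Steps~1--2 reconstruct what that computation yields in degree $2$ --- namely one copy of $\R$ per real place (Hermitian symmetric space, compact dual $Sp(g)/U(g)$ with $H^2\cong\R$) and nothing from complex places (compact dual $Sp(g)$ with cohomology concentrated in odd degrees $3,7,\dots$). The only point where you go beyond what the lemma asks is Step~3: as stated the lemma is merely an isomorphism of $\R$-vector spaces, so matching dimensions with $\pi_1(Sp(2g,\R))=\Z$ for real $v$ and $\pi_1(Sp(2g,\C))=1$ for complex $v$ already suffices; the finer claim that this isomorphism agrees with the natural comparison map between the universal central extension and Deligne's extension is genuinely needed later (in Lemma~\ref{univDeligne}), where the paper invokes the Garland--Matsushima vanishing theorem to get injectivity on free parts, so your flagging of this as the delicate point is accurate but belongs to that later argument rather than to this lemma.
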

\begin{proof}
In the case where $\A$ is the ring of integers of a number field 
this is basically the result of Borel computing the stable 
cohomology of arithmetic groups from \cite[pp. 276]{Bo}. 
For the general case see \cite{BoSe,Bo2}. 
\end{proof}

\vspace{0.2cm}
\noindent 
Furthermore, we have the following general statement: 

\begin{lemma}\label{finitelygenerated}
The group $H_2(Sp(2g,\A))$ is finitely generated. 
\end{lemma}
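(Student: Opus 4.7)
The plan is to reduce the claim to the finiteness properties of arithmetic subgroups of reductive algebraic groups, as established by Borel and Serre in \cite{BoSe}. Since $\A$ is a Dedekind domain of arithmetic type, the group $Sp(2g,\A)$ is an $S$-arithmetic subgroup of the semisimple algebraic group $Sp_{2g}$ defined over the number field $\mathbb{K}$. By the main results of \cite{BoSe} (using the Borel--Serre compactification at archimedean places together with the action on the Bruhat--Tits buildings at the non-archimedean places in $S$), there exists a torsion-free subgroup $\Gamma \subseteq Sp(2g,\A)$ of finite index which admits a compact $K(\Gamma,1)$. In particular $\Gamma$ is a group of type $F$, and hence $H_n(\Gamma;\Z)$ is a finitely generated $\Z$-module for every $n \geq 0$.

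Next, I would transfer this finite generation to the full group $Sp(2g,\A)$. Writing $F = Sp(2g,\A)/\Gamma$, which is a finite group, the Hochschild--Serre spectral sequence of the extension
\[
1 \to \Gamma \to Sp(2g,\A) \to F \to 1
\]
takes the form
\[
E^2_{p,q} = H_p(F; H_q(\Gamma;\Z)) \Longrightarrow H_{p+q}(Sp(2g,\A);\Z).
\]
Each $H_q(\Gamma;\Z)$ is a finitely generated $\Z$-module on which $F$ acts, and since $F$ is finite, $H_p(F;M)$ is finitely generated for every finitely generated $\Z[F]$-module $M$ (by Maschke/Shapiro or simply because it is computed by a finite free resolution of $\Z$ over $\Z[F]$ applied to $M$). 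Consequently all $E^2_{p,q}$, and hence all abutments $H_n(Sp(2g,\A);\Z)$, are finitely generated; specializing to $n = 2$ gives the lemma.

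The only non-formal input is the existence of the compact classifying space for $\Gamma$, which I would expect to be the main step to cite carefully: for $\A = \Z$ this is the classical arithmetic case of \cite{BoSe}, while for general rings of $S$-integers one needs the $S$-arithmetic extension, also carried out in \cite{BoSe} (and reviewed in \cite{Bo2}). Once that geometric input is in place, the spectral sequence argument is completely routine and presents no real obstacle. As a sanity check, this is consistent with the stable value computed via Stein's stability theorem, which identifies $H_2(Sp(2g,\A))$ for large $g$ with $KSp_2(\A)$, a finitely generated group by Quillen--Borel--Serre type finite generation of algebraic $K$-theory of rings of $S$-integers.
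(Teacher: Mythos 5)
Your proposal is correct and follows the same route as the paper, which simply cites the existence of the Borel--Serre compactification; you have merely spelled out the standard transfer from a torsion-free finite-index subgroup $\Gamma$ (of type $F$ thanks to the compact quotient of the bordified symmetric space times the Bruhat--Tits buildings) to the full group via the Hochschild--Serre spectral sequence. One small correction to an aside: $\Z$ has no finite-length free resolution over $\Z[F]$ for a nontrivial finite group $F$, but $\Z[F]$ is Noetherian, so any finitely generated $\Z[F]$-module $M$ admits a resolution by finitely generated free $\Z[F]$-modules, which is what actually guarantees that each $H_p(F;M)$ is finitely generated.
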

\begin{proof}
This  follows from the existence of the Borel-Serre 
compactification \cite{BoSe} associated to an arithmetic group. 
\end{proof}
\vspace{0.2cm}
\noindent 
For a finitely generated abelian group $A$ we denote by $F(A)$ its free part and by $T(A)$ its torsion subgroup, so that $A=F(A)\oplus T(A)$. 

\vspace{0.2cm}
\noindent 
It is known that $Sp(2g,\A)$ is perfect, when $g\geq 3$. It has then a 
universal central extension $\widetilde{Sp(2g,\A)}$ by $H_2(Sp(2g,\A))$. 
Consider the quotients 
$E=\widetilde{Sp(2g,\A)}/T(H_2(Sp(2g,\A)))$ 
and $D=\widetilde{Sp(2g,\A)}^D/T(\pi_1(Sp(2g)_S))$. 
Then $E$ and $D$ are central extensions of $Sp(2g,\A)$ by torsion-free groups. 

\begin{lemma}\label{univDeligne}
There is a natural embedding of central extensions $E\to D$ 
which lifts the identity of $Sp(2g,\A)$ and identifies $E$ with a finite index normal subgroup of $D$. 
\end{lemma}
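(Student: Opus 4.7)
The plan is to produce the map $E \to D$ by invoking the universal property of $\widetilde{Sp(2g,\A)}$ and then to verify injectivity and finite index using Borel's computation of stable cohomology. Since $Sp(2g,\A)$ is perfect for $g \geq 3$ and $\widetilde{Sp(2g,\A)}^D$ is a central extension of $Sp(2g,\A)$, the universal property of the universal central extension yields a canonical homomorphism $\Phi : \widetilde{Sp(2g,\A)} \to \widetilde{Sp(2g,\A)}^D$ lifting the identity of $Sp(2g,\A)$. Its restriction to central kernels is a homomorphism of abelian groups $\phi : H_2(Sp(2g,\A)) \to \pi_1(Sp(2g)_S)$. Since $\phi$ automatically sends $T(H_2(Sp(2g,\A)))$ into $T(\pi_1(Sp(2g)_S))$, the map $\Phi$ descends to $\bar\Phi : E \to D$ lifting the identity of $Sp(2g,\A)$, and on central kernels it induces a well-defined map $\bar\phi : F(H_2(Sp(2g,\A))) \to F(\pi_1(Sp(2g)_S))$.

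Next I would show that $\bar\phi$ is injective with finite cokernel. By Moore's theorem and the description recalled in the excerpt, $\pi_1(Sp(2g,\mathbb K_v))$ equals the finite group $\mu_v$ at every non-real place and equals $\Z$ at every real Archimedean place, so $F(\pi_1(Sp(2g)_S))$ is free abelian of rank $|\mathbb R(S)|$. By Lemma \ref{finitelygenerated}, $H_2(Sp(2g,\A))$ is finitely generated, and Lemma \ref{BorSer} asserts that tensoring $\phi$ with $\R$ produces the natural isomorphism $H_2(Sp(2g,\A);\R) \cong \R \otimes_\Z \prod_{v \in S \cap \mathbb R(S)} \pi_1(Sp(2g,\mathbb K_v))$. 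Since the non-real factors of $\pi_1(Sp(2g)_S)$ are finite, this implies that $\bar\phi \otimes \R$ is an isomorphism between $\R$-vector spaces of dimension $|\mathbb R(S)|$. A homomorphism between finitely generated free abelian groups of the same rank is injective with finite cokernel if and only if it becomes an isomorphism after $\otimes \R$, so $\bar\phi$ enjoys both properties.

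A short diagram chase, or equivalently the snake lemma applied to the defining central extensions of $E$ and $D$, then shows that $\ker \bar\Phi = \ker \bar\phi = 0$ and $D/\bar\Phi(E) \cong \mathrm{coker}\,\bar\phi$ is finite, so $\bar\Phi$ embeds $E$ as a finite-index subgroup of $D$. Normality is immediate: $\bar\Phi(E)$ surjects onto $Sp(2g,\A)$, so $D = \bar\Phi(E) \cdot \ker(D \to Sp(2g,\A))$ with the second factor central in $D$, and conjugation by a central element fixes $\bar\Phi(E)$ elementwise. The main subtlety I foresee is verifying that $\bar\phi \otimes \R$ really coincides with the identification provided by Lemma \ref{BorSer}, rather than merely being a map between two spaces of the same dimension; this compatibility is intrinsic to Deligne's construction of $\widetilde{Sp(2g,\A)}^D$ as the pull-back of the Moore topological universal cover of $Sp(2g)_S$, through which Borel's comparison isomorphism factors.
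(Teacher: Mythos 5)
Your construction of $\bar\Phi:E\to D$ via the universal property and your reduction of injectivity and finite index to the claim that $\bar\phi\otimes\R$ is an isomorphism follow the paper's route exactly. However, there is a genuine gap at precisely the point you flag as ``the main subtlety.'' You write that ``Lemma~\ref{BorSer} asserts that tensoring $\phi$ with $\R$ produces the natural isomorphism,'' but Lemma~\ref{BorSer} is only an abstract computation of $H_2(Sp(2g,\A);\R)$; it says nothing about the specific map $\phi$. Your subsequent remark that the compatibility ``is intrinsic to Deligne's construction, through which Borel's comparison isomorphism factors'' identifies where the work must happen but does not do it. Without knowing that this particular $\bar\phi\otimes\R$ is an isomorphism (rather than, say, the zero map between two vector spaces that happen to share a dimension), the conclusion that $\bar\phi$ is injective with finite cokernel does not follow.

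The paper closes this gap by a concrete two-step argument you do not supply: first, it identifies $\iota\otimes 1_\R : H_2(Sp(2g,\A))\otimes\R\to\pi_1(Sp(2g)_S)\otimes\R$ with the map induced in degree-two homology by the inclusion of the discrete subgroup $Sp(2g,\A)$ into the locally compact group $Sp(2g)_S$; second, it invokes the Garland--Matsushima vanishing theorem (in the form of \cite[Thm.~6.4]{Bo4}) to conclude that this inclusion induces an isomorphism on $H_2$ for $g$ large. This is the substantive input your argument is missing; the rest of your proof --- the snake-lemma identification $\ker\bar\Phi\cong\ker\bar\phi$, $\operatorname{coker}\bar\Phi\cong\operatorname{coker}\bar\phi$, and the observation that $D=\bar\Phi(E)\cdot Z$ with $Z$ central forces normality --- is correct and in fact more explicit than the paper, which only argues injectivity and leaves finite index and normality implicit.
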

\begin{proof}
From Lemma \ref{BorSer} and the above description of fundamental groups  
$\pi_1(Sp(2g, \mathbb K_v))$ due to Moore \cite{Moore}, both $F(H_2(Sp(2g,\A)))$ and  $F(\pi_1(Sp(2g)_S))$ are isomorphic to the abelian group $\prod_{v\in S\cap \mathbb R(S)} \pi_1(Sp(2g, \mathbb K_v))$, because the non real places only provide torsion factors of $\pi_1(Sp(2g)_S)$. 

\vspace{0.2cm}
\noindent 
By universality of the central extension $\widetilde{Sp(2g,\A)}$, there is a homomorphism 
$\widetilde{Sp(2g,\A)}\to  \widetilde{Sp(2g,\A)}^D$ lifting the identity 
of $Sp(2g,\A)$; it  induces a homomorphism between the quotients  
$\iota:E\to D$ sending the kernel of the first extension into the kernel of the second one, namely  
such that $\iota(F(H_2(Sp(2g,\A))))\subseteq F(\pi_1(Sp(2g)_S))$. 
The linear map  induced between the associated real vector spaces 
$\iota\otimes 1_{\mathbb R}:H_2(Sp(2g,\A))\otimes_{\Z} \R\to \pi_1(Sp(2g)_S)\otimes_{\Z} \R$ 
can be identified with the map induced in homology by the 
inclusion $Sp(2g,\A)\to Sp(2g)_S$. Recall that $Sp(2g,\A)$ is a discrete subgroup of 
the locally compact group $Sp(2g)_S$. A consequence of the Garland-Matsushima vanishing theorem from \cite[Thm. 6.4]{Bo4} states that the map induced by the inclusion at the level of $H_2$ is an isomorphism for large enough $g$. This implies that the restriction 
$\iota:F(H_2(Sp(2g,\A)))\to F(\pi_1(Sp(2g)_S))$ is  injective. Thus $\iota$ should be injective, 
as well, because it is a lift of the identity of $Sp(2g,\A)$.   
\end{proof}

\begin{lemma}\label{bound}
The orders of the groups $T(H_2(Sp(2g,\A)))$, $g\geq 2$, are bounded from above by a constant which only depends on $\A$. 
\end{lemma}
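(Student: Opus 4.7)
The plan is to combine finite generation, already established in Lemma~\ref{finitelygenerated}, with homological stability for symplectic groups over $\A$. Since each $H_2(Sp(2g,\A))$ is finitely generated, its torsion subgroup $T(H_2(Sp(2g,\A)))$ is automatically a finite abelian group for every $g\geq 2$; the only issue is to extract a bound that does not depend on $g$.

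The central ingredient I would invoke is homological stability for the symplectic groups $Sp(2g,\A)$. For a Dedekind domain of arithmetic type, the general results of Charney, later sharpened by Panin and by Mirzaii--van der Kallen, assert that the stabilization map
\[
H_2(Sp(2g,\A))\longrightarrow H_2(Sp(2g+2,\A))
\]
is an isomorphism as soon as $g\geq g_0(\A)$, for some threshold $g_0(\A)$ depending only on the ring $\A$ (for instance on the stable rank of $\A$, which is bounded in terms of its Krull dimension and class number). Consequently, for all $g\geq g_0(\A)$ the group $H_2(Sp(2g,\A))$ is canonically isomorphic to a fixed finitely generated abelian group $H_2^{\mathrm{st}}(\A)$, and its torsion subgroup stabilizes to a fixed finite abelian group $T^{\mathrm{st}}(\A)$ whose order depends only on $\A$.

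It then remains to deal with the finitely many values $2\leq g<g_0(\A)$. For each of these $g$, Lemma~\ref{finitelygenerated} provides that $T(H_2(Sp(2g,\A)))$ is finite, and there are only finitely many such groups to consider. Taking the maximum of $|T^{\mathrm{st}}(\A)|$ and the orders of these finitely many torsion groups produces a constant $C=C(\A)$ bounding $|T(H_2(Sp(2g,\A)))|$ uniformly in $g\geq 2$.

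The main obstacle is a bookkeeping one rather than a substantial mathematical difficulty: namely, to cite an off-the-shelf stability statement for $H_2$ with integral coefficients that applies to an \emph{arbitrary} Dedekind domain of arithmetic type $\A$, and not merely to $\Z$ or $O_K$. In the cases of principal interest in this paper the required stability is classical (Stein for $\A=\Z$, Charney in the general Dedekind setting), so this hurdle is essentially resolved in the literature.
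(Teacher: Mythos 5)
Your proof is correct and follows the same strategy as the paper (homological stability for the symplectic groups plus finite generation from Lemma~\ref{finitelygenerated}), but you invoke a stronger stability input than the paper does. The paper cites only \emph{surjective} stability (Stein~\cite{Stein2}): for $g\geq g_0(\A)$ the maps $H_2(Sp(2g,\A))\to H_2(Sp(2g+2,\A))$ are onto. This is a weaker hypothesis, but it makes the conclusion slightly less immediate --- one must observe that in the resulting chain of surjections of finitely generated abelian groups the ranks are non-increasing and hence stabilize, after which each surjection has finite (hence torsion) kernel $K$, giving $T(A_{g+1})\cong T(A_g)/K$ and therefore non-increasing torsion orders from that point on. You instead invoke \emph{isomorphism} stability (Charney et al.), so that $H_2(Sp(2g,\A))$ is literally a fixed group for $g\geq g_0(\A)$ and the bound is immediate. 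You correctly flag that one must have such a stability theorem for the class of rings $\A$ considered here; the paper's choice of Stein's surjective stability is the more economical citation, since surjectivity suffices. In short: your argument is valid and slightly more transparent in its final step, at the cost of appealing to a stronger (and slightly more delicate to cite in this generality) stability statement, while the paper's argument uses the weaker input but leaves a small finite-generation bookkeeping step implicit.
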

\begin{proof}
From Stein's  surjective stability \cite{Stein2} the 
embeddings $Sp(2g,\A)\to Sp(2g+2,\A)$  induce surjective maps 
$H_2(Sp(2g,\A))\to H_2(Sp(2g+2,\A))$, for  $g$ larger than a constant depending on $\A$. 
The claim follows now, because the abelian groups $H_2(Sp(2g,\A))$ are finitely generated,
by Lemma \ref{finitelygenerated}.  
\end{proof}

\vspace{0.2cm}
\noindent 
\begin{proof}[Proof of Proposition \ref{symplgen}]
Recall that $p:Sp(2g,\A)\to Sp(2g,\A/q)$ is the surjective homomorphism 
induced by the reduction mod $q$. 
Let $p_*: H_2(Sp(2g,\A))\to H_2(Sp(2g,\A/q))$ be the corresponding 
homorphism in homology, which is surjective according to Proposition \ref{lem H2epi}. 
By Lemma \ref{lift} there exists a lift 
$\tilde p: \widetilde{Sp(2g,\A)}\to \widetilde{Sp(2g,\A/q)}$ of $p$, whose restriction 
to the kernel is precisely $p_*$. 

\vspace{0.2cm}
\noindent
Bounding  the size of $H_2(Sp(2g,\A/q))$  
is equivalent to bounding the size of  
the finite group $F=H_2(Sp(2g,\A/q))/p_*(T(H_2(Sp(2g,\A))))$,  according to Lemma \ref{bound}. 
Note that $\tilde p$ induces a  surjective homomorphism  
$\tilde p: E\to F$. 
 
\vspace{0.2cm}
\noindent 
Assume now that the Deligne central extension has the property from the 
first statement of the proposition. 
Lemma \ref{univDeligne} identifies $E$ 
with a  finite index subgroup of $D$. Consider the
representation ${\rm Ind}_E^D\tilde p:D\to \tilde F$ induced from $\tilde p: E\to F$. 
Its image is a finite group $\tilde F$, which is a quotient of the Deligne extension 
$\widetilde{Sp(2g,\A)}^D$. By our  hypothesis, we have $\ker {\rm Ind}_E^D\tilde p \supseteq \ker R$,  
and hence there is some surjective homomorphism $\lambda:G \to \tilde F$ so that: 
\[  {\rm Ind}_E^D\tilde p|_{\pi_1(Sp(2g)_S)}= \lambda \circ R.\]
In particular the image $\tilde{p}(H_2(Sp(2g,\A)))\subseteq \tilde F$ 
is covered by $G$ and hence has uniformly bounded size.

\vspace{0.2cm}
\noindent
Conversely, assume that there exists some $k(\A)\in \Z\setminus\{0\}$ such that 
 $k(\A)\cdot H_2(Sp(2g,\A/\q))=0$, for every ideal $\q \subset \A$.  Then 
the surjectivity of $p_*$ implies that 
$p_*(k(\A)\cdot c)=0$, for every ideal $\q$ and $c\in H_2(Sp(2g,\A))$. 

\vspace{0.2cm}
\noindent
This implies that $f_*(|\mu|\cdot k(\A)\cdot c)=0$, 
for every morphism $f:Sp(2g,\A)\to F$
onto a finite group $F$, where $|\mu|$ is the cardinal of the finite group $\mu$. 
In fact,  by  \cite{BMS} the congruence subgroup kernel is 
the finite cyclic group $\mu$, 
when $\mathbb K$ is totally imaginary (i.e. it has no non-complex 
places), and is trivial if $\mathbb K$ has 
at least one non-complex place. This means that for any 
finite index normal subgroup $H$ (e.g. $\ker f$) there exists 
an elementary subgroup $ESp(2g,\A,\q)$ contained in $H$, where 
$ESp(2g,\A,\q)\subseteq Sp(2g,\A,\q)$ is  a normal subgroup 
of finite index dividing $|\mu|$. Therefore $f(-)$ factors through 
 the quotient $Sp(2g,\A)/ESp(2g,\A,\q)$ and the composition 
$f({|\mu|}\cdot -)$ factors through $Sp(2g,\A/\q)$. 
Since $H_2(Sp(2g,\A/\q))$ is $k(\A)$-torsion 
we obtain $f_*(|\mu|\cdot k(\A)\cdot c)=0$, as claimed. 

\vspace{0.2cm}
\noindent
In particular, we can apply this equality to the morphism $f$ 
between the universal central extensions of $Sp(2g,\A)$ and $F$. 
The restriction of  $f_*$ to the free part 
of $H_2(Sp(2g,\A))$ is then trivial on multiples of 
$|\mu|\cdot k(\A)$. Thus these multiple elements lie in 
the kernel of any homomorphism of $\widetilde{Sp(2g,\A)}$ into a finite group. 
This proves Proposition \ref{symplgen}. 
\end{proof}

\subsection{Proof of Theorem \ref{nonresid0}}
Consider an arbitrary surjective homomorphism 
$\widehat{q}:\widetilde{Sp(2g,\Z)}\to \widehat{F}$ onto some finite 
group $\widehat{F}$. We set $F=\widehat{F}/\widehat{q}(C)$, where 
$C=\ker(\widetilde{Sp(2g,\Z)}\to Sp(2g,\Z))$. Then there is 
an induced homomorphism $q:Sp(2g,\Z)\to F$. Since $F$ is finite the 
congruence subgroup property for the symplectic groups 
(see \cite{BMS,Me1,Me2}) implies that there is some $D$ such that $q$ 
factors as $s\circ p$, where $p:Sp(2g,\Z)\to Sp(2g,\Z/D\Z)$ is the 
reduction mod $D$ and $s: Sp(2g,\Z/D\Z)\to F$ is a surjective homomorphism. 
\[
\xymatrix{
1 \ar[r] & C \ar[r] \ar[dd]& \widetilde{Sp(2g,\Z)} \ar[r] \ar[dd]^{\widehat{q}} &  Sp(2g,\Z) \ar[r] \ar[dd]^q \ar[dr]^p& 1 \\
 & & & & Sp(2g,\Z/D\Z) \ar[dl]^s \\
 1 \ar[r] & \widehat{q}(C) \ar[r] & \widehat{F} \ar[r] & F \ar[r] & 1
} 
\]
\vspace{0.2cm}\noindent 
Since $F$, a quotient of a symplectic group with $g\geq 4$  , is perfect it has a universal central extension $\widetilde{F}$. 
By Lemma \ref{lift}, there exists then unique lifts 
$\widetilde{p}: \widetilde{Sp(2g,\Z)}\to  \widetilde{Sp(2g,\Z/D\Z)}$, 
$\widetilde{q}: \widetilde{Sp(2g,\Z)}\to \widetilde{F}$ and 
$\widetilde{s}:\widetilde{Sp(2g,\Z/D\Z)}\to \widetilde{F}$ of 
the homomorphisms $p,q$ and $s$, respectively such that 
$\widetilde{q}=\widetilde{s}\circ 
\widetilde{p}$.
\[
\xymatrix{
1 \ar[r] & H_2F \ar[r] \ar[d] & \widetilde {F} \ar[r] \ar[d]^\theta & F \ar@{=}[d] \ar[r]& 1 \\
1 \ar[r] & \widehat{q}(C) \ar[r] & \widehat{F} \ar[r] & F \ar[r] & 1
}
\]
Since $\widetilde{F}$ is universal there is a unique  
homomorphism $\theta:\widetilde{F}\to \widehat{F}$ which lifts the identity 
of $F$.  We claim that $\theta\circ \widetilde{q}=\widehat{q}$. 
Both homomorphisms are lifts of $q$ and $\widetilde{q}=\widetilde{s}\circ\widetilde{p}$, by  the uniqueness claim of Lemma \ref{lift}. 
Thus  $\theta\circ \widetilde{q}=\alpha\cdot \widehat{q}$, where 
$\alpha$ is a homomorphism on 
$\widetilde{Sp(2g,\Z)}$ with target 
$\widehat{q}(C)=\ker(\widehat{F}\to F)$, which is central in $\widehat{F}$. 
Since $\widetilde{Sp(2g,\Z)}$ is universal we have 
$H_1(\widetilde{Sp(2g,\Z)})=0$ and thus $\alpha$ is trivial, as claimed. 
Summing up, we have 
$\widehat{q}=\theta\circ\widetilde{q}=\theta\circ \widetilde{s}\circ\widetilde{p}$.

\vspace{0.2cm}\noindent 
We know that $H_2(Sp(2g,\Z/D\Z))\in \{0,\Z/2\Z\}$, when $g\geq 4$,  and  from
Lemma \ref{lift} we obtain that  $\widetilde{p}(2c)=2\cdot p_*(c)=0$, 
where $c$ is the generator of $H_2(Sp(2g,\Z))$. In particular, 
$2\cdot \widehat{q}(c)=0\in \widehat{F}$, as claimed.  
 
\vspace{0.2cm}\noindent
Moreover, Lemma \ref{lift} along with  Proposition \ref{lem H2epi} provide a   
surjective homomorphism onto the universal central extension of $Sp(2g,\Z/D\Z)$, 
when $D\equiv 0\; ({\rm mod}\; 4)$. 
Thus, by Theorem  \ref{tors-sympl0} 
the image of the center of $\widetilde{Sp(2g,\Z)}$  has order two, as claimed.

\begin{remark}
Note that
 $H_2(Sp(6,\Z/D\Z))\in \{0,\Z/2\Z,\Z/2\Z\oplus \Z/2\Z\}$, extending \cite[Prop. 3.1]{FP}.
This is a consequence of Proposition  \ref{lem H2epi} and 
the fact that $H_2(Sp(6,\Z); \Z)=\Z\oplus \Z/2\Z$, according to \cite{Stein3}. 
Then Proposition  \ref{trivialmeta} shows that the image of some element of the center of $\widetilde{Sp(6,\Z)}$ in $\widetilde{Sp(6,\Z/D\Z)}$ has order two, when $D\equiv 0\; ({\rm mod}\; 4)$. 
\end{remark}

\begin{remark}
Using the results in \cite{BM1,BM2} we can obtain 
that $H_2(Sp(4,\Z/D\Z))=0$, if $D\neq 2$ is prime and 
 $H_2(Sp(4,\Z/2\Z))=\Z/2\Z$. 
Notice that $Sp(4,\Z/2\Z)$ is not perfect, but the extension 
$\widetilde{Sp(4,\Z)}$ still makes sense. 
\end{remark}

\vspace{0.2cm}\noindent 
An immediate corollary  of Theorem  \ref{tors-sympl0}  is the following 
$K$-theory result:

\begin{corollary}
For $g\geq 4$ we have 
\[KSp_{2,2g}(\Z/D\Z)=
\left\{\begin{array}{ll} 
\Z/2\Z,  & {\rm if }\; D\equiv 0({\rm mod} \; 4), \\
0, & {\rm otherwise}.
\end{array}\right.
\]  
\end{corollary}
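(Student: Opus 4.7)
The plan is to recognize this corollary as a straightforward $K$-theoretic reformulation of Theorem \ref{tors-sympl0}. The key identification is that the unstable symplectic $K$-group $KSp_{2,2g}(R)$ coincides with the Schur multiplier $H_2(Sp(2g,R))$ once $g$ is large enough that $Sp(2g,R)$ is perfect and agrees with the elementary symplectic group $ESp(2g,R)$, so the universal central extension exists with kernel naturally identified with $KSp_{2,2g}(R)$ via the Steinberg-type presentation.

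First, I would invoke from \cite{HO} the general fact that for a commutative ring $R$ with $ESp(2g,R) = Sp(2g,R)$ perfect, one has $KSp_{2,2g}(R) \cong H_2(Sp(2g,R))$, this being the definition via the symplectic Steinberg group $StSp(2g,R)$ whose kernel onto $ESp(2g,R)$ is precisely the Schur multiplier in the perfect range. Second, I would verify both hypotheses for $R = \Z/D\Z$ and $g \geq 4$: perfectness of $Sp(2g,\Z/D\Z)$ for $g \geq 3$ is standard (as used already in the excerpt, cf.\ \cite[Thm.\ 5.1]{Pu1}), and the equality $ESp(2g,\Z/D\Z) = Sp(2g,\Z/D\Z)$ follows since the reduction $Sp(2g,\Z) \to Sp(2g,\Z/D\Z)$ is surjective and preimages of elementary symplectic generators can be chosen to be elementary, as already invoked in the proof of Theorem \ref{boundtorsion0}.

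Third, having made this identification, the values of $KSp_{2,2g}(\Z/D\Z)$ for $g \geq 4$ are read off directly from Theorem \ref{tors-sympl0}: the group is $\Z/2\Z$ when $4 \mid D$ and trivial otherwise.

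I do not anticipate any real obstacle here: once the identification $KSp_{2,2g} = H_2(Sp(2g,-))$ is in place in the stable/perfect range, the corollary is an immediate consequence of the main computation. The only point worth being careful about is that the indexing $KSp_{2,2g}$ refers to the unstable group at genus $g$ and not the stable $KSp_2$; but by Stein's stability theorem recalled in the preliminaries, these coincide for $g \geq 4$, so the distinction is immaterial.
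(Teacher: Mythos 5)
Your proof is correct and takes essentially the same route as the paper: both identify $KSp_{2,2g}(\Z/D\Z)$ with $H_2(Sp(2g,\Z/D\Z))$ for $g\geq 4$ (the paper by citing Stein's stability theorems \cite{Stein,Stein2}, you via the symplectic Steinberg group description from \cite{HO}) and then read off the answer from Theorem~\ref{tors-sympl0}. One minor caveat worth recording: perfectness of $Sp(2g,\Z/D\Z)$ together with $ESp(2g,\Z/D\Z)=Sp(2g,\Z/D\Z)$ does not by itself give $KSp_{2,2g}\cong H_2(Sp(2g,\cdot))$ --- one also needs that the symplectic Steinberg group is centrally closed, so that the Steinberg extension is the universal central extension rather than merely a quotient of it; that extra input is exactly what the stability results in Stein's range supply, so the citation you and the paper both lean on is doing the same work.
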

\begin{proof}
According to Stein's stability results (\cite{Stein}, 
Thm 2.13, and \cite{Stein2}) we have 
\begin{equation} 
KSp_{2,2g}(\Z/D\Z)\cong KSp_{2}(\Z/D\Z), \: {\rm  for } \:\: g\geq 4,
\end{equation} 
and 
\begin{equation}
KSp_{2,2g}(\Z/D\Z)\cong H_2(Sp(2g,\Z/D\Z)), \: {\rm  for } \:\: g\geq 4.
\end{equation}
\end{proof}

\begin{remark}
The analogous result that  $K_{2,n}(\Z/D\Z)\in\{0,\Z/2\Z\}$, 
if $n\geq 3$ was proved a long time ago (see \cite{DS2}).
\end{remark}

\section{Mapping class group quotients}\label{mcg}

\subsection{Preliminaries on quantum representations}
The results of this section are the counterpart of those 
obtained in Section \ref{weilrep}, by considering $SU(2)$ 
instead of abelian quantum representations. 
The first author proved in \cite{Fu3} that central extensions of 
the  mapping class group $M_g$ by $\Z$ are residually finite. The 
same method actually  can be used to show the abundance of finite 
quotients with large torsion in their  second homology and to prove that the mapping class group has  property $A_2$  trivial modules. 

\vspace{0.2cm}\noindent 
A  quantum representation is a projective 
representation, depending on an integer $k$,  which lifts to a linear
representation $\widetilde{\rho}_k: M_g(12)\to U(N(k,g))$ of the 
central extension $M_g(12)$ of the mapping class group 
$M_g$ by $\Z$. 
The latter representation corresponds to invariants of 3-manifolds 
with a $p_1$-structure. Masbaum, Roberts (\cite{MR}) and 
Gervais (\cite{Ge}) gave a precise  description 
of this extension.  Namely, the 
cohomology class $c_{M_g(12)}\in H^2(M_g,\Z)$  associated 
to this extension equals 12 times the signature class $\chi$. It is 
known (see \cite{KS}) that the group $H^2(M_g)$ is generated by $\chi$, 
when $g\geq 3$. Recall that $\chi$ is one fourth  of the Meyer 
signature class. We denote more generally by $M_g(n)$ the central extension 
by $\Z$ whose class is $c_{M_g(n)}=n\chi$.

\vspace{0.2cm}\noindent 
It is known that $M_g$ is perfect and  $H_2(M_g)=\Z$, when $g\geq 4$ 
(see \cite{P}, for instance). Thus, for $g\geq 4$, 
$M_g$ has a universal central extension by $\Z$, 
which can be identified with the central extension 
$M_g(1)$. This central extension makes sense for 
$M_3$, as well, although it is not the universal central extension since 
$H_2(M_3)=\Z\oplus \Z/2\Z$. However, using its explicit presentation 
for $g=3$ we derive that $M_g(1)$ is perfect for $g\geq 3$.

\vspace{0.2cm}\noindent 
Let $c$ be the generator of the center of 
$M_g(1)$, which is 12 times the generator of the 
center of $M_g(12)$.  Denote by $M_g(1)_n$ the quotient of $M_g(1)$ 
obtained by imposing $c^n =1$; this is a non-trivial central extension 
of the mapping class group by $\Z/n\Z$. We will say that a quantum 
representation $\widetilde{\rho_p}$ \emph{detects}  the center of 
$M_g(1)_n$ if it factors through $M_g(1)_n$ and is injective on its center.

\vspace{0.2cm}\noindent 
Consider  the SO(3)-TQFT  with parameter  
$A=-\zeta_{p}^{(p+1)/2}$, where $\zeta_p$ is a primitive $p$-th root of unity, 
so that $A$ is a primitive $2p$ root of unity  with $A^2=\zeta_p$.  
This data leads to a quantum representation $\widetilde{\rho}_p$ for which  
Masbaum and Roberts computed in \cite{MR} that
$\widetilde{\rho}_p(c)=A^{-12-p(p+1)}$.

 \begin{lemma}\label{lem detect}
For each prime power $q^s$ there exists 
some quantum representation $\widetilde{\rho_p}$ 
which detects the center of $M_g(1)_{q^s}$.
\end{lemma}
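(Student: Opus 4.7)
The plan is to prove Lemma~\ref{lem detect} by a direct computation starting from the Masbaum--Roberts formula $\widetilde\rho_p(c) = A^{-12-p(p+1)}$, where $A$ is a primitive $2p$-th root of unity with $A^2 = \zeta_p$. The first step is a simplification: for any odd $p$, the integer $p(p+1)$ is divisible by $2p$ (since $(p+1)/2 \in \Z$), so that $A^{-p(p+1)} = 1$ and
\[
\widetilde\rho_p(c) = A^{-12} = (A^2)^{-6} = \zeta_p^{-6}.
\]
Hence $\widetilde\rho_p(c)$ lies in $\mu_p$ with order exactly $p/\gcd(p,6)$.

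With this identity in hand, the strategy is to choose the level $p$ appropriately for the given prime power $q^s$. When $q\ge 5$ is an odd prime, taking $p = q^s$ gives $\gcd(q^s,6)=1$, so the order is exactly $q^s$ and $\widetilde\rho_p$ factors through $M_g(1)_{q^s}$ and restricts injectively to its center $\Z/q^s\Z$, giving the required detection. For $q=3$, one takes $p = 3^{s+1}$ and obtains order $3^{s+1}/3 = 3^s$. For $q=2$, which falls outside the natural odd-level SO(3) setting, one switches to an auxiliary quantum representation: either the SU(2)-variant of the Witten--Reshetikhin--Turaev TQFT at a suitable level, or an abelian Weil representation from Section~\ref{weilrep} whose projective class maps the generator of the center of $M_g(12)$ to an element of $2$-power order, combined so as to land in the central extension $M_g(12)$.

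The verification that the chosen representation $\widetilde\rho_p$ indeed factors through $M_g(1)_{q^s}$ is then formal, since $\widetilde\rho_p(c)^{q^s} = 1$ by the order computation, and the factored map is injective on the central cyclic subgroup by the same computation; this is what ``detects the center'' means.

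The main obstacle is the case $q=2$, where the SO(3)-TQFT framework does not directly apply and the auxiliary construction must be spelled out, with particular care that its projective class corresponds to the same extension $M_g(12)$ (or a compatible pullback), so that the notion of ``the generator $c$'' is unambiguous across the different quantum representations involved. Once this compatibility is in place, the computation of the order of the image of $c$ proceeds as in the SO(3) case via the explicit Masbaum--Roberts-type formula for the central charge.
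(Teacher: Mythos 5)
Your computation for odd primes is correct and for $q=3$ it is in fact a bit cleaner than the paper's: you keep $p=3^{s+1}$ odd and read off $\operatorname{ord}(\zeta_p^{-6})=p/\gcd(p,6)=3^s$, staying entirely within the SO(3)-TQFT framework, whereas the paper's Lemma~\ref{lem detect} handles $q=3^s$ via the even level $p=12r$ with $r=3^s$. Likewise $q\ge 5$ with $p=q^s$ coincides with the paper.

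The genuine gap is the $q=2$ case, and your own analysis shows it cannot be patched by the same odd-$p$ computation: for odd $p$ the order $p/\gcd(p,6)$ is always \emph{odd}, so no odd-level SO(3) representation can ever detect the center of $M_g(1)_{2^s}$ for $s\ge 1$. You flag this and gesture at an ``auxiliary'' SU(2) or abelian construction, but you do not carry out the computation. The paper's proof does, by invoking the Masbaum--Roberts formula $\widetilde\rho_p(c)=\zeta_{2p}^{-12-p(p+1)}$ also at \emph{even} levels: $p=2$ gives $\widetilde\rho_2(c)=\zeta_2$ (hence $M_g(1)_2$), and $p=12r$ yields $\widetilde\rho_p(c)=\zeta_{2r}^{-1-r}$ of order $2r/\gcd(1+r,2r)$, which with $r=2^s$ ($s\ge 1$) is even and equals $2^{s+1}$. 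Without this even-level computation (or an explicit Weil-representation substitute with the compatibility of central extensions you mention verified), the lemma is not proved for $q=2$.
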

\begin{proof}
We noted that $\widetilde{\rho}_p(c)=\zeta_{2p}^{-12- p(p+1)}$, 
where $\zeta_{2p}$ is a $2p$-root of unity. 
\begin{enumerate}
\item If  $q$ is a prime number $q\geq 5$ we let $p=q^s$. 
Then $2p$ divides $p(p+1)$ and  
$\widetilde{\rho}_p(c)=\zeta_{2p}^{-12} = \zeta_{p}^{-6}$ 
is of order $p = q^s$. Thus the representation $\widetilde{\rho}_p$
detects the  center of $M_g(1)_{q^s}$.
\item If $q=2$, we set  $p=2$. Then  
$\widetilde{\rho}_2(c)=\zeta_2$, and $\widetilde{\rho_2}$ detects the  center of
 $M_g(1)_2$.
\item Set now $p= 12r$ for some integer $r>1$ to be fixed later. 
Then $\widetilde{\rho}_p(c)=\zeta_{24r}^{-12- 12r(12r+1)}= \zeta_{2r}^{-1 -r(12r+1)} = \zeta_{2r}^{-1-r}$. 
This $2r$-th root of the unit has order ${\rm l.c.m.}(1+r,2r)/(1+r) = 2r/{\rm g.c.d.}(1+r,2r)$. An 
elementary computation shows that ${\rm g.c.d.}(1+r,2r) =1$ or 
$2$ depending on whether $r$ is even or odd. 
\begin{itemize}
\item If $q=2^{s+1}$ with $s \geq 1$, we set $r= 2^s$. Then $\zeta_{2\cdot 2^s}^{-1-2^s}$ is of order $2^{s+1}$, 
the representation  $\widetilde{\rho}_p$ detects the  center of  
 $M_g(1)_{2^{s+1}}$.   
\item If $g = 3^s$ with $s\geq 1$, we set  $r= 3^s$. Then $\zeta_{2\cdot 3^s}^{-1-3^s}$ is of order $3^{s}$, 
the representation  $\widetilde{\rho}_p$ detects the  center of 
 $M_g(1)_{3^{s}}$. 
\end{itemize}
\end{enumerate}
\end{proof}


\subsection{Proof of Theorem \ref{res0}} 
We have first the following  lemma from (\cite{FP}, Lemma 2.3): 

\begin{lemma}\label{key lemma}
Let $G$ be a perfect finitely presented group, 
$\widetilde{G}$ denote its universal central extension, 
$p:G\to F$ be a surjective homomorphism onto a finite group $F$ and 
$\widehat{p}:\widetilde{G}\to \Gamma$ be some lift of $p$ to 
a finite central extension $\Gamma$ of $F$. 
Assume that the image $C=\widehat{p}(Z(\widetilde{G}))$ 
of the center $Z(\widetilde{G})$ of $\widetilde{G}$ 
contains an element of order $q$. Then there exists an element 
of $p_*(H_2(G))\subseteq H_2(F)$ of order $q$. 
\end{lemma}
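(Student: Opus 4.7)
The plan is to relate the given lift $\widehat{p}$ to the universal central extension of $F$ and then transport the order-$q$ element of $C$ backwards along that relation. Since $G$ is perfect and $p:G\to F$ is surjective, the finite group $F$ is also perfect, so it admits a universal central extension $\widetilde{F}\to F$ with kernel $H_2(F)$. By Lemma \ref{lift} there is a unique morphism of central extensions $\widetilde{p}:\widetilde{G}\to \widetilde{F}$ lifting $p$, whose restriction to the kernels is exactly $p_*:H_2(G)\to H_2(F)$. On the other hand, writing $K=\ker(\Gamma\to F)$, the extension $1\to K\to \Gamma\to F\to 1$ is a central extension of the perfect group $F$, so the universal property of $\widetilde{F}\to F$ furnishes a unique morphism of extensions $\psi:\widetilde{F}\to \Gamma$ which restricts to a homomorphism $\phi:H_2(F)\to K$ on the kernels.

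Next I would argue that $\widehat{p} = \psi\circ \widetilde{p}$. Both are maps $\widetilde{G}\to \Gamma$ lifting $p$, so they agree modulo $K$, and since $K$ is central in $\Gamma$ their ``difference'' is a well-defined group homomorphism $\widetilde{G}\to K$. The target is abelian while $\widetilde{G}$ is perfect—this is the standard fact that the universal central extension of a perfect group is itself perfect—so this homomorphism must be trivial, and the two lifts coincide. Restricting to the center $Z(\widetilde{G})=H_2(G)$ gives $\widehat{p}|_{H_2(G)}=\phi\circ p_*$, and in particular $C=\phi(p_*(H_2(G)))$.

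Finally, if $C$ contains an element $y$ of order $q$, I would choose $x\in p_*(H_2(G))$ with $\phi(x)=y$. Then the order $n$ of $x$ is divisible by $q$, and the multiple $(n/q)\cdot x$ lies in the abelian subgroup $p_*(H_2(G))\subseteq H_2(F)$ and has order exactly $q$, proving the lemma. The main obstacle—or rather the only delicate step—is the uniqueness of the lift in paragraph two, which hinges on the twin facts that $\widetilde{G}$ is perfect and that $K$ is central in $\Gamma$; the rest is bookkeeping using Lemma \ref{lift} and the universal property of $\widetilde{F}$.
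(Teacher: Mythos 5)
Your proof is correct and follows what is essentially the standard argument: factor $\widehat{p}$ through the universal central extension $\widetilde{F}$ of the perfect group $F$, use perfectness of $\widetilde{G}$ together with centrality of $K$ to get uniqueness of the lift (so $\widehat{p}=\psi\circ\widetilde{p}$), and then read off that $C=\phi(p_*(H_2(G)))$, from which the order-$q$ element in the finite group $p_*(H_2(G))$ is produced by the usual multiple-of-a-generator trick. This is the same route the paper (via \cite{FP}) takes. One small caveat worth flagging: you identify $Z(\widetilde{G})$ with $H_2(G)$ without comment. This identity holds precisely when $Z(G)$ is trivial (in general $Z(\widetilde{G})$ is the full preimage of $Z(G)$, so it can be strictly larger than $\ker(\widetilde{G}\to G)\cong H_2(G)$). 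In the paper's applications $G$ is a mapping class group with trivial center, so nothing is lost, but strictly speaking the lemma should be read with ``$Z(\widetilde{G})$'' interpreted as the kernel of the universal central extension, and you should state that reading explicitly rather than asserting equality of center and kernel as a general fact.
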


\begin{remark}\label{caution}
A cautionary remark is in order. Assume that $H_2(G)=\Z$ in the 
lemma above and that let $H = \ker p$. 
If the image of $H_2(H)$ in $H_2(G)$ is $d\Z$ then 
we can only assert that the image $p_*(H_2(G))$ in $H_2(F)$ 
is of the form $\Z/d'\Z$, for some divisor $d'$ of $d$, which might 
be proper divisor.  For instance taking $G=Sp(2g,\Z)$ and 
$F=Sp(2g,\Z/D\Z)$, where $D$ is even and not multiple of $4$, 
then $d=2$ by \cite[Thm. F]{Pu1}, while $d'=1$ as 
$H_2(Sp(2g,\Z/D\Z))=0$. The apparent contradiction with 
Lemma \ref{key lemma} comes from the fact that we required the existence of 
a lift $\widehat{p}: \widetilde{G}\to \Gamma$ for which the image of the center has order $d$. 
 If $H$ were perfect, as it is the case 
when $F=Sp(2g,\Z/2\Z)$ then the 
universal central extension $\widetilde{H}$ would come with a 
homomorphism $\widetilde{H}\to \widetilde{G}$. Although the image 
of $\widetilde{H}$ is a finite index subgroup, it is not, in general, a normal 
subgroup of $\widetilde{G}$. Passing to a finite index 
normal subgroup of $\widetilde{H}$ would amount to change 
$H$ into a smaller subgroup $H'$ and hence $F$ is replaced by a larger  
quotient $F'$.  
\end{remark}

\noindent 
\begin{proof}[Proof of Theorem \ref{res0}] By a classical 
result of Malcev \cite{Mal40}, finitely generated subgroups of linear 
groups over a commutative unital ring are residually finite. This applies 
to the images of quantum representations. Hence there are finite quotients 
$\widetilde{F}$ of these for which the image of the generator of the center 
is not trivial. By Lemma \ref{lem detect} we may find quantum 
representations for which the order of the image of the center    
can have arbitrary prime power order $p$. Hence, for any prime $p$ 
there are  finite quotients $\widetilde{F}$ of $M_g(1)$  
in which the image of the center has an element of order $p$. 
We apply Lemma \ref{key lemma} to the quotient 
$F$ of $\widetilde{F}$ by the image of the center to get 
finite quotients of the mapping class group with 
arbitrary prime order elements in the second homology.  
\end{proof}

\subsection{Some finite quotients of mapping class groups}
 
\vspace{0.2cm}\noindent Concrete finite quotients with arbitrary torsion 
in their homology can be constructed from mapping class groups 
as follows. Let $p$ be a prime different from $2$ and $3$.  
Recall that we have a linear representation $\widetilde\rho_p:M_g(12)\to U(N(p,g))$ 
which lifts a projective  representation $\rho_p:M_g(12)\to PU(N(p,g))$. 
Moreover, $M_g(1)$ is naturally embedded in $M_g(12)$, by sending the generator of the center 
into 12 times the generator of the center of $M_g(12)$, see \cite{Fu3}.   
According to 
Gilmer and Masbaum \cite{GM} for prime $p$, we have that 
$\widetilde{\rho}_p(M_g(1))\subseteq U(N(p,g))\cap GL(\mathcal O_p)$, where $\mathcal O_p$ is the following ring of cyclotomic integers  
\[
\mathcal O_p=\left\{\begin{array}{ll}
\Z[\zeta_p], & {\rm if} \, p\equiv - 1 ({\rm mod } \, 4)\\
\Z[\zeta_{4p}], & {\rm if} \, p\equiv 1 ({\rm mod } \, 4).\\
\end{array}\right.
\]

\vspace{0.2cm}
\noindent 
Let us then consider the principal ideal $\mathfrak m=(1-\zeta_p)$ 
which is a prime ideal of $\mathcal O_p$.  
It is known that prime ideals of $\mathcal O_p$ are maximal and then 
$\mathcal O_p/{\mathfrak m}^n$ is a finite ring for every $n$. Let then 
$\Gamma_{p, {\mathfrak m}, n}$ be the image of 
$\widetilde{\rho}_{p}(M_g(1))$ into 
the finite group $GL(N(p,g),\mathcal O_p/{\mathfrak m}^n)$ and 
$F_{p, {\mathfrak m}, n}$ be the quotient 
$\Gamma_{p, {\mathfrak m}, n}/\langle \widetilde{\rho}_p(c)\rangle$ by the 
image of the center of $M_g(1)$. We derive a surjective homomorphism 
$\psi_{p,{\mathfrak m}, n}:M_g\to F_{p, {\mathfrak m}, n}$ and a lift 
$\widetilde\psi_{p,{\mathfrak m}, n}:M_g(1)\to \Gamma_{p, {\mathfrak m}, n}$.

\vspace{0.2cm}\noindent
The image $\widetilde{\rho}_p(c)$ of the generator 
$c$ into $\Gamma_{p, {\mathfrak m}, n}$ is the 
scalar root of unity $\zeta_p^{-6}$,  
which is a non-trivial element of  
order $p$ in the ring $\mathcal O_p/{\mathfrak m}^n$ 
and hence an element of order $p$ into 
$GL(N(p,g),\mathcal O_p/{\mathfrak m}^n)$.
Notice that this is a rather exceptional situation, which 
does not occur for other prime ideals in unequal characteristic (see Proposition \ref{unequal}).

\vspace{0.2cm}\noindent
Lemma \ref{key lemma} implies then that the image $(\psi_{p,{\mathfrak m}, n})_*(H_2(M_g))$ within 
$H_2(F_{p, {\mathfrak m}, n})$ contains an element of order $p$. 
This result also shows the contrast between  mapping class group 
representations and  Weil representations: 
\begin{corollary}\label{nontrivial}
If $g\geq 3$, $p$ is prime and $p\not\in\{2,3\}$ (or more generally, 
$p$ does not divide $12$ and not necessarily prime), then 
$\widetilde{\rho}_p({M_g}(1))$ is a non-trivial central extension 
of $\rho_p({M_g})$. Furthermore, under the same hypotheses on $g$ and $p$, 
if $\mathfrak m=(1-\zeta_p)$, then the extension $\Gamma_{p, {\mathfrak m}, n}$ 
of the finite 
quotient $F_{p, {\mathfrak m}, n}$ is non-trivial. 
\end{corollary}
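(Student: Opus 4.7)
The plan is to derive both non-triviality statements from a single explicit computation: pinning down the scalar $\widetilde{\rho}_p(c)\in U(N(p,g))$ via the Masbaum--Roberts formula and then tracking its image along $\mathcal{O}_p\twoheadrightarrow \mathcal{O}_p/\mathfrak{m}^n$.

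First I would prove the first assertion. Recall from the discussion preceding the corollary that $\widetilde{\rho}_p(c)=A^{-12-p(p+1)}$ with $A^2=\zeta_p$. For an odd prime $p$ the exponent $-12-p(p+1)$ is even, so that $\widetilde{\rho}_p(c)=\zeta_p^{-6-p(p+1)/2}$. Since $p\mid p(p+1)/2$, this reduces to $\widetilde{\rho}_p(c)=\zeta_p^{-6}$ in $U(N(p,g))$. The hypothesis $p\nmid 12$ is equivalent to $\gcd(6,p)=1$, so $\zeta_p^{-6}$ is a primitive $p$-th root of unity and the scalar matrix $\widetilde{\rho}_p(c)\cdot\mathbf{1}$ has exact order $p$. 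The image of the center of $M_g(1)$ in $\widetilde{\rho}_p(M_g(1))$ therefore contains a cyclic subgroup of order $p$, which is precisely the kernel of $\widetilde{\rho}_p(M_g(1))\twoheadrightarrow \rho_p(M_g)$; this kernel being non-trivial gives the first assertion.

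For the second assertion, I would reduce the question to showing that $\zeta_p^{-6}$ remains non-trivial in $(\mathcal{O}_p/\mathfrak{m}^n)^*$. The key cyclotomic input is standard: $p$ is totally ramified in $\mathcal{O}_p$, $p\mathcal{O}_p=\mathfrak{m}^{p-1}$, and $1-\zeta_p$ is a uniformizer at $\mathfrak{m}$ of valuation $1$. Consequently $\zeta_p\not\equiv 1 \pmod{\mathfrak{m}^n}$ for $n\geq 2$. Since $\zeta_p^p=1$ and $p$ is prime, $\zeta_p$ has order exactly $p$ in $(\mathcal{O}_p/\mathfrak{m}^n)^*$, and the same holds for $\zeta_p^{-6}$ because $\gcd(6,p)=1$. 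Thus the scalar matrix $\zeta_p^{-6}\cdot\mathbf{1}$ has order $p$ in $GL(N(p,g),\mathcal{O}_p/\mathfrak{m}^n)$, so the image $\widetilde{\psi}_{p,\mathfrak{m},n}(c)$ generates a non-trivial cyclic subgroup $\Z/p\Z$ sitting inside $\Gamma_{p,\mathfrak{m},n}$ as the kernel of the projection onto $F_{p,\mathfrak{m},n}$, proving non-triviality.

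The argument is essentially a one-line combination of Masbaum--Roberts's explicit formula with a ramification calculation, so there is no serious obstacle. The two delicate points worth double-checking are: (a) the parity and mod-$p$ reduction of the exponent $-12-p(p+1)$, which works precisely because $p$ is odd and $p\nmid 6$, so the hypothesis $p\nmid 12$ enters in an essential way; and (b) the implicit requirement $n\geq 2$, since for $n=1$ one has $\mathcal{O}_p/\mathfrak{m}\cong\mathbb{F}_p$ and $\zeta_p\equiv 1$, making the central extension trivially split in that degenerate case.
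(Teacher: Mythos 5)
Your computation of the scalar $\widetilde{\rho}_p(c)=\zeta_p^{-6}$, the observation that $p\nmid 12$ forces this to have exact order $p$, and the ramification calculation showing $\zeta_p^{-6}$ stays of order $p$ in $(\mathcal{O}_p/\mathfrak{m}^n)^\ast$ (for $n\geq 2$) are all correct, and your caveat about the degenerate case $n=1$ is a genuine improvement in precision over the paper's phrasing. However, there is a gap in the logic that carries you from ``the kernel is non-trivial'' to ``the central extension is non-trivial.'' A central extension $1\to A\to E\to G\to 1$ with $A\neq 1$ can perfectly well be \emph{split}: for instance $E=G\times \Z/p\Z$. What you have shown is that the kernel of $\widetilde{\rho}_p(M_g(1))\twoheadrightarrow\rho_p(M_g)$ (respectively of $\Gamma_{p,\mathfrak m,n}\to F_{p,\mathfrak m,n}$) is a cyclic group of order at least $p$; you have \emph{not} shown that the extension does not split, which is what ``non-trivial central extension'' means here (this is how the paper's proof reads it, and it matches the intended contrast with Proposition~\ref{trivialmeta}, where ``does not linearize'' is precisely the failure to split).

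The missing ingredient is perfectness. Since $g\geq 3$, the group $M_g$ is perfect and, as noted in the paper, so is $M_g(1)$. Therefore its quotients $\widetilde{\rho}_p(M_g(1))$ and $\Gamma_{p,\mathfrak m,n}$ are perfect. If the extension $1\to\nu\to\Gamma_{p,\mathfrak m,n}\to F_{p,\mathfrak m,n}\to 1$ were split, we would have an isomorphism $\Gamma_{p,\mathfrak m,n}\cong F_{p,\mathfrak m,n}\times\nu$, and the projection onto the abelian factor $\nu$ would give a non-trivial abelian quotient of the perfect group $\Gamma_{p,\mathfrak m,n}$ as soon as $\nu\neq 1$ -- a contradiction. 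Your order-$p$ computation is exactly what certifies $\nu\neq 1$, so the perfectness step is the one thing needed to close the argument; the same reasoning applied to $\widetilde{\rho}_p(M_g(1))$ handles the first assertion. Once you insert this, your proof matches the paper's route.
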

\begin{proof}
The kernel of the homomorphism 
$\widetilde{\rho}_p(\widetilde{M_g}(1))\to \rho_p(M_g)$ is a finite cyclic group of 
$2p$-th roots of unity. Therefore $\ker(\Gamma_{p, {\mathfrak m}, n}\to F_{p, {\mathfrak m}, n})$ is also some finite cyclic group $\nu$.
If the latter extension were trivial then we could find an isomorphism  
$\Gamma_{p, {\mathfrak m}, n}\to F_{p, {\mathfrak m}, n}\times \nu$. Since 
$g\geq 3$ the group $M_g$ is perfect and thus $M_g(1)$ and hence  $\Gamma_{p, {\mathfrak m}, n}$ are also perfect. Thus the projection on the second factor $\Gamma_{p, {\mathfrak m}, n}\to \nu$ 
must be trivial. This contradicts the fact that the image of the generator $c$ of 
$\ker(M_g(1)\to M_g)$ is an element of order $p$ in $\Gamma_{p, {\mathfrak m}, n}$ and hence 
the group of roots of unity $\nu$ has at least order $p$. Thus the 
extension $\Gamma_{p, {\mathfrak m}, n}\to F_{p, {\mathfrak m}, n}$ is nontrivial. 
This argument implies the first claim, as well.  
\end{proof}

\begin{remark}
Although the group $M_2$ is not perfect, because $H_1(M_2)=\Z/10\Z$, 
it still makes sense to consider the central 
extension $\widetilde{M_2}$ arising from the TQFT. 
Then the computations above show that the results of Theorem  \ref{res0} 
and Corollary \ref{nontrivial} hold  for $g=2$ if $p$ is a  prime  and 
$p\not\in\{2,3,5\}$. 
\end{remark}

\begin{remark}
The finite quotients $F_{p, {\mathfrak m}, n}$ associated to 
the ramified principal ideal ${\mathfrak m}=(1-\zeta_p)$ were 
previously considered by Masbaum in \cite{M}. 
\end{remark}

\noindent 
When  $p\equiv -1 ({\rm mod } \, 4)$ the authors of \cite{F4,MaR} 
found many finite  quotients of $M_g$ by using more sophisticated means.
However, the results of \cite{F4,MaR} and the present ones are of a rather different 
nature.  Assume that ${\mathfrak n}$ is a prime ideal of $\mathcal O_p$ such that 
$\mathcal O_p/{\mathfrak n}$ is the finite field
$\mathbb F_q$ with $q$ elements. 
In fact the case of equal characteristics 
$\mathfrak n=\mathfrak m=(1-\zeta_p)$ 
is the only case where non-trivial torsion can arise, according to:  

\begin{proposition}\label{unequal}
If $\mathfrak n$ is a prime ideal of unequal 
characteristic (i.e. such that ${\rm g.c.d.}(p,q)=1$) and $p,q\geq 5$ then  
the image $(\psi_{p,{\mathfrak m}, n})_*(H_2(M_g))$ within 
$H_2(F_{p, {\mathfrak m}, n})$ is trivial. Moreover, for 
all but finitely many prime ideals $\mathfrak n$ of unequal 
characteristic both groups $\Gamma_{p,\mathfrak n, 1}$ and $F_{p,\mathfrak n, 1}$ 
coincide with $SL(N(p,g), \mathbb F_q)$ and hence  $H_2(F_{p,\mathfrak n, 1})=0$.
\end{proposition}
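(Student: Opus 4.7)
The plan is to address both assertions together: the second, stronger one implies the first on all but finitely many prime ideals, and the residual cases are then handled by a short, separate argument. The key input is a strong approximation result applied to the quantum representation $\widetilde{\rho}_p$.

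First, I would normalize so that the image $\widetilde{\rho}_p(M_g(1))$ sits inside $SL(N(p,g),\mathcal O_p)$: the determinants of the matrices $\widetilde{\rho}_p(D_\gamma)$ for Dehn twists are explicit roots of unity that can be computed from the formulas for the representation, and after twisting by a character one may assume the image lies in $SL$. The next step, and the main technical obstacle, is to show that this image is Zariski dense in $SL_{N(p,g)}$ viewed as an algebraic group over the trace field of the representation (a subfield of $\mathbb Q(\zeta_p)$). For the range $g\geq 3$ and prime $p\geq 5$ considered here, Zariski density follows from the known algebraic structure of $SO(3)$-TQFT representations, building on work of Freedman--Larsen--Wang, Masbaum, and others; this is the one nontrivial ingredient that needs to be invoked, rather than proved.

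Once Zariski density is in hand, the Nori--Weisfeiler strong approximation theorem guarantees that for all but finitely many prime ideals $\mathfrak n\subseteq\mathcal O_p$ of characteristic $\ell\neq p$, the reduction modulo $\mathfrak n$ of $\widetilde{\rho}_p(M_g(1))$ is exactly $SL(N(p,g),\mathbb F_q)$, where $\mathbb F_q=\mathcal O_p/\mathfrak n$. The central element $\widetilde{\rho}_p(c)=\zeta_p^{-6}$ is then absorbed into the center of this $SL$ (verifying $p\mid N(p,g)$ from the Verlinde dimension formula), so $F_{p,\mathfrak n,1}$ is identified with a central quotient of $SL(N(p,g),\mathbb F_q)$. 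By the classical vanishing $H_2(SL(N,\mathbb F_q))=0$ for $N\geq 3$ and $q\geq 4$ outside a short list of small exceptions (Steinberg), the same triviality transfers to the relevant central quotients in the generic range; since $N(p,g)\to\infty$ with $g$ and $p,q\geq 5$, our parameters avoid all the exceptions. This proves $H_2(F_{p,\mathfrak n,1})=0$ for such generic $\mathfrak n$, establishing the second assertion and, in particular, the first assertion for these primes.

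For the remaining finitely many prime ideals $\mathfrak n$ of unequal characteristic one argues directly. The image $F_{p,\mathfrak n,1}$ is a finite subgroup of $PGL(N(p,g),\mathbb F_q)$ whose Schur multiplier can be inspected case by case; alternatively, since the kernel of $\Gamma_{p,\mathfrak n,1}\to F_{p,\mathfrak n,1}$ is cyclic of order $p$, the contrapositive of Lemma~\ref{key lemma} reduces the triviality of $(\psi_{p,\mathfrak n,1})_\ast(H_2(M_g))$ to checking that the central element $\widetilde{\rho}_p(c)$ lifts through a section over a finite index subgroup, which one verifies using the same density-plus-Steinberg input applied to a well-chosen congruence cover. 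The main obstacle throughout is the Zariski density step; granting it, the strong approximation theorem and the Schur multiplier computation are essentially off-the-shelf.
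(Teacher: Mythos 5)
Your argument goes wrong at the point where you compare $H_2$ of $SL(N(p,g),\mathbb F_q)$ with $H_2$ of the quotient $F_{p,\mathfrak n,1}$. You correctly note that $\widetilde{\rho}_p(c)$ is a scalar that, after reduction, lies in the center of $SL(N(p,g),\mathbb F_q)$, and you identify $F_{p,\mathfrak n,1}$ as a central quotient $SL(N(p,g),\mathbb F_q)/D$. You then claim that the Steinberg vanishing $H_2(SL(N,\mathbb F_q))=0$ ``transfers to the relevant central quotients.'' This is false, and in fact the opposite holds: for $N\geq 4$ and $q\geq 5$ the group $SL(N,\mathbb F_q)$ is superperfect ($H_1=H_2=0$), hence is the universal central extension of each of its perfect central quotients, and the five-term exact sequence for $1\to D\to SL(N,\mathbb F_q)\to SL(N,\mathbb F_q)/D\to 1$ with $D$ central gives $H_2\bigl(SL(N,\mathbb F_q)/D\bigr)\cong D$. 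If $D\neq 0$ then $H_2$ of the quotient is nontrivial, which is the reverse of what you need, so your route cannot reach $H_2(F_{p,\mathfrak n,1})=0$.

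The paper's proof is organized around a different claim. It asserts at the outset that the $p$-th root of unity scalar $\zeta_p^{-6}$ reduces to the identity modulo a prime $\mathfrak n$ of unequal characteristic, so that $\Gamma_{p,\mathfrak n,n}\to F_{p,\mathfrak n,n}$ is an isomorphism and there is no central quotient to analyze: once \cite{MaR} identifies $\Gamma_{p,\mathfrak n,1}$ with $SL(N(p,g),\mathbb F_q)$, the vanishing of its $H_2$ is immediate, and the first assertion already follows for every such $\mathfrak n$ and every $n$ from the identification $F=\Gamma$ alone, since the image of the center of $M_g(1)$ in $\Gamma_{p,\mathfrak n,n}$ is then trivial. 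Your proposal makes the opposite assumption (that $\zeta_p^{-6}$ survives reduction as a nontrivial central element of order $p$, ``absorbed into the center of $SL$''), and that is exactly what leads you into the false transfer step; whichever reading of the scalar is adopted, the $H_2$-of-central-quotient claim must be repaired.

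Two smaller points. The paper does not re-derive the surjectivity $\Gamma_{p,\mathfrak n,1}=SL(N(p,g),\mathbb F_q)$: it simply cites Masbaum and Reid \cite{MaR}, so your sketch of Zariski density plus Nori--Weisfeiler strong approximation reproves a cited theorem rather than uses it. And your handling of the finitely many exceptional primes, and of the passage from $n=1$ to general $n$, is too vague to stand on its own; in the paper's route neither step is needed, because the first assertion is settled for all $\mathfrak n$ of unequal characteristic directly from $F=\Gamma$, without appealing to \cite{MaR}.
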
 
\begin{proof}
The image of a $p$-th root of unity scalar 
in $SL(N(p,g), \mathbb F_q)$ is trivial, as soon as ${\rm g.c.d.}(p,q)=1$.
Thus  $\Gamma_{p, {\mathfrak n}, n}\to  F_{p,\mathfrak n, n}$ is an isomorphism 
and hence the image of $H_2(M_g)$ into $H_2(F_{p,\mathfrak n, n})$ must be 
trivial. A priori this does not mean that  $H_2(F_{p,\mathfrak n, n})=0$. 
However, 
Masbaum and Reid proved in \cite{MaR} that for all but finitely many  
prime ideals $\mathfrak n$ in  $\mathcal O_p$
the image $\Gamma_{p, {\mathfrak n}, 1}\subseteq  GL(N(p,g),\mathbb F_q)$ 
is the whole group $SL(N(p,g), \mathbb F_q)$.  
It follows that the projection homomorphism 
${M_g}(1)\to SL(N(p,g), \mathbb F_q)$ factors through 
$M_g\to SL(N(p,g), \mathbb F_q)$. But 
$H_2(SL(N, \mathbb F_q))=0$, for $N\geq 4, q\geq 5$, as 
$SL(N, \mathbb F_q)$ itself is the universal central extension 
of $PSL(N, \mathbb F_q)$. 
\end{proof}

\subsection{Property $A_2$ and the proof of Theorem \ref{a2}}
Recall that an equivalent formulation of Serre's property $A_2$ is

\begin{definition}
Let $G$ be a discrete group and $\widehat{G}$ its profinite completion. Then $G$ has  property $A_2$ for the finite $\widehat{G}$-module $M$ if the homomorphism $H^k(\widehat{G}, M)\to H^k(G,M)$ is an isomorphism for
$k\leq 2$ and injective for $k=3$. 
\end{definition}

\begin{proposition}\label{thm centextresfini}
Let $g \geq 4$ be an integer. For any finitely generated  abelian group $A$ and any central extension
\[
 1 \rightarrow A \rightarrow E \rightarrow M_g \rightarrow 1
\]
the group $E$ is residually finite.
\end{proposition}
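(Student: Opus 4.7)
The plan is to classify central extensions of $M_g$ by $A$ cohomologically, decompose $A$ into its free and torsion summands, and reduce to two special cases each handled by a different tool established earlier in the paper. Since $M_g$ is perfect with $H_2(M_g)\cong\Z$ for $g\geq 4$, the universal coefficient theorem gives a canonical isomorphism $H^2(M_g;A)\cong A$. Thus $E$ corresponds to a class $\alpha\in A$. Writing $A=\Z^r\oplus T$ with $T$ finite and $\alpha=(m_1,\ldots,m_r,\tau)$, the coordinate projections $\pi_i\colon A\to\Z$ and $\pi_T\colon A\to T$ give surjections $E\to\pi_{i,\ast}E$ and $E\to\pi_{T,\ast}E$ onto smaller central extensions. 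Together with the quotient $E\to M_g$, these maps separate all nonidentity elements of $E$: an element mapping nontrivially to $M_g$ is handled by the residual finiteness of $M_g$, while a nontrivial element of the kernel $A$ has some nonzero coordinate and so survives in the corresponding pushout. It therefore suffices to prove residual finiteness of each pushout.

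For $\pi_{T,\ast}E$, a central extension of $M_g$ by the finite group $T$, Corollary~\ref{vtrivialmcg} (which follows from property $A_2$ in Theorem~\ref{a2}) asserts that such an extension is virtually trivial: it is pulled back from a finite central extension of a finite quotient of $M_g$, and hence contains a finite-index subgroup of $M_g$ as a finite-index normal subgroup. Since finite-index subgroups of $M_g$ are residually finite, so is $\pi_{T,\ast}E$.

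For $\pi_{i,\ast}E$, a central extension of $M_g$ by $\Z$ with class $m_i$, we use the notation $M_g(m_i)$ introduced earlier. If $m_i=0$ the extension is $M_g\times\Z$, trivially residually finite. If $m_i\neq 0$, the universal property of the universal central extension produces a canonical homomorphism $\widetilde{M_g}=M_g(1)\to M_g(m_i)$ whose restriction to the center $\Z\subset\widetilde{M_g}$ is multiplication by $m_i$; hence this homomorphism is injective, and its image is a normal subgroup of index $|m_i|$ in $M_g(m_i)$. So it suffices to show that $\widetilde{M_g}$ itself is residually finite. Nontrivial elements with nontrivial image in $M_g$ are separated by residual finiteness of $M_g$; a nontrivial central element $c^k$ with $k\neq 0$ is separated by choosing a prime $p$ not dividing $k$ and invoking Theorem~\ref{res0} (more precisely the quantum representation $\widetilde{\rho}_p$ from its proof) to produce a finite quotient of $\widetilde{M_g}$ in which the image of $c$ has order exactly $p$, whence $c^k$ has nontrivial image.

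The main obstacle is this last point, namely the residual finiteness of the center of $\widetilde{M_g}$, which is not purely formal and depends essentially on the quantum-representation construction of Theorem~\ref{res0} together with Lemma~\ref{lem detect}, which guarantees that the image of $c$ under a suitable quantum representation can realize any prescribed prime order. Everything else is elementary bookkeeping: classifying the extension, pushing out along the coordinate projections of $A$, and transferring residual finiteness across finite-index inclusions.
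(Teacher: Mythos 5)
Your overall strategy---classify the extension by its class $\alpha \in H^2(M_g;A) \cong A$, decompose $A = \Z^r \oplus T$, push out along coordinate projections, and separate elements through these pushouts and the projection to $M_g$---is a legitimate alternative to the paper's route (which instead uses the five-term exact sequence, separates elements of $A$ into those inside and outside the image of $H_2(M_g)$, and applies a lemma realizing that image as a finite-index subgroup of a direct summand of $A$). Your treatment of the free coordinates is correct: for $m_i \neq 0$, the canonical map $M_g(1) \to M_g(m_i)$ is injective with finite-index image, and the residual finiteness of $M_g(1)$ is either quoted from \cite{Fu3} or recovered from the quantum representations and Lemma~\ref{lem detect}.

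However, the treatment of the torsion coordinate is circular. You invoke Corollary~\ref{vtrivialmcg} to dispose of the central extension $\pi_{T,*}E$ of $M_g$ by the finite group $T$; but in the paper Corollary~\ref{vtrivialmcg} is \emph{deduced} from Proposition~\ref{thm centextresfini} (together with Lemmas~\ref{vtrivial} and \ref{finitepullback}), and Theorem~\ref{a2} is likewise deduced from Proposition~\ref{thm centextresfini} via Proposition~\ref{extresfinite}. Appealing to either to prove Proposition~\ref{thm centextresfini} begs the question. The correct ingredient here, and the one the paper actually establishes independently beforehand, is Proposition~\ref{prop modnisRF}: the mod-$n$ reduction $M_g(1)_n$ of the universal central extension is residually finite, a fact proved directly from Lemma~\ref{lem detect} and Malcev. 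Your argument is easily repaired by invoking this instead: decompose $T$ further into cyclic summands $\Z/n_j\Z$, observe that the pushout of $M_g(1)$ along $\Z \to \Z/n_j\Z$, $1 \mapsto \tau_j$, contains $M_g(1)_{n_j/\gcd(\tau_j,n_j)}$ as a finite-index subgroup (or equals $M_g \times \Z/n_j\Z$ when $\tau_j = 0$), and conclude from Proposition~\ref{prop modnisRF}. With that substitution your proof is sound.
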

\noindent 
The key result that  interlocks between Proposition \ref{thm centextresfini} and property $A_2$ is:

\begin{proposition}\label{extresfinite}
\begin{enumerate}
\item A residually finite  group $G$ has property $A_2$ 
for all finite  $\widehat{G}$-modules $M$ if and only if any 
extension by a finite abelian group is residually finite. 
\item Moreover for trivial $\widehat{G}$-modules it is enough to consider central extensions of $G$.
\end{enumerate}
\end{proposition}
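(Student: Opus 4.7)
The plan is to use the interpretation of $H^2(G,M)$ as the set of equivalence classes of extensions of $G$ by $M$, where $M$ is regarded as a $G$-module via restriction of its $\widehat{G}$-action, and to read the comparison map $\iota^{*}:H^2(\widehat{G},M)\to H^2(G,M)$, induced by the canonical $\iota:G\to \widehat{G}$, as pullback of continuous extensions of $\widehat{G}$ along $\iota$. The key assertion I would establish is that the image of $\iota^{*}$ coincides with the classes of extensions whose middle group is residually finite.

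One inclusion is immediate: the pullback $1\to M\to E\to G\to 1$ of a continuous extension $1\to M\to \widehat{E}\to \widehat{G}\to 1$ embeds $E$ into the profinite group $\widehat{E}$, so $E$ is residually finite. For the reverse inclusion, starting with an RF extension $E$ of $G$ by $M$, the group $M$ is finite and hence closed in $\widehat{E}$, and normal there since $E$ is dense. The quotient $\widehat{E}/M$ is then profinite and contains $E/M=G$ as a dense subgroup, so the universal property of the profinite completion yields mutually inverse continuous homomorphisms between $\widehat{G}$ and $\widehat{E}/M$, both compositions being identity on the dense subgroup $G$. This realizes $\widehat{E}$ as a continuous extension of $\widehat{G}$ by $M$ whose pullback is $E$. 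Together with the standard injectivity of $\iota^{*}$ in degree $2$, in which a section $G\to \widehat{E}$ extends uniquely to a continuous section $\widehat{G}\to \widehat{E}$ by the universal property, this shows that $\iota^{*}$ is an isomorphism in degree $2$ for every finite $\widehat{G}$-module $M$ if and only if every extension of $G$ by a finite abelian group is residually finite.

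To complete property $A_2$, the injectivity of $\iota^{*}$ in degree $3$ will be obtained by dimension shifting. Embed $M$ into the coinduced module $L=C(\widehat{G},M)$ of continuous maps, which is a filtered colimit of finite submodules $M^{\widehat{G}/N}$ indexed by open normal subgroups $N$ of $\widehat{G}$. Shapiro's lemma gives $H^k(\widehat{G},L)=0$ for $k\geq 1$; combining the degree-$2$ isomorphism already established with the commutation of group cohomology with filtered colimits of discrete coefficients, one obtains $H^k(G,L)=0$ for $k\leq 2$ as well. The long exact cohomology sequences attached to $0\to M\to L\to L/M\to 0$ then identify $H^3(\widehat{G},M)$ and $H^3(G,M)$ with subgroups of $H^2(\widehat{G},L/M)$ and $H^2(G,L/M)$, respectively, and a parallel colimit argument applied to $L/M$ yields the required injectivity.

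Part (2) reduces to the observation that for a trivial $\widehat{G}$-module $M$ the group $H^2(G,M)$ classifies exactly the central extensions of $G$ by $M$, so the residual finiteness hypothesis need only be verified on central extensions. I expect the main obstacle to be the dimension-shifting step for $H^3$: one needs to propagate the hypothesis, stated only for finite modules, through a filtered colimit to the infinite coinduced module $L$, which requires enough finiteness of $G$ (e.g. of type $FP_\infty$, as is the case for the mapping class group) in order for cohomology to commute with filtered colimits in the coefficients.
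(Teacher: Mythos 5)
Your proof is essentially correct, but it takes a genuinely different route from the paper's. The paper does not directly establish the degree-$2$ isomorphism and degree-$3$ injectivity; instead it proves property $D_2$ explicitly (given an extension $1\to A\to E\to G\to 1$ representing $x\in H^2(G;A)$, it uses residual finiteness of $E$ to find a finite quotient of $E$ injecting on $A$, whose kernel in $G$ is a finite-index normal subgroup $K$ on which $x$ restricts to zero), and then invokes Serre's equivalence $A_n\Leftrightarrow B_n\Leftrightarrow C_n\Leftrightarrow D_n$ as a black box. Your approach is more self-contained: the characterization of $\mathrm{im}(\iota^*)\subseteq H^2(G,M)$ as exactly the classes of residually finite extensions is a clean and illuminating reformulation that the paper does not state, and your argument that $\widehat E$ itself furnishes the continuous extension of $\widehat G$ pulling back to $E$ (with $\widehat E/M\cong \widehat G$ via density) is correct. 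The dimension-shifting step for $H^3$-injectivity is, in effect, an unfolding of the $B_n\Rightarrow A_n$ implication in Serre's chain. The caveat you flag yourself is real: commuting $H^\bullet(G,-)$ with the filtered colimit of finite submodules of the coinduced module requires $G$ to be of type $FP$ in the relevant degrees, whereas the proposition is stated for arbitrary residually finite $G$. However, this is not a defect specific to your argument: the paper's reliance on Serre's equivalence hides exactly the same issue (and the paper itself acknowledges a related finiteness subtlety in its discussion of property $E_n$). For the intended application ($G=M_g$, which has a finite $K(\pi,1)$), the condition is satisfied. Part (2) is handled the same way in both.
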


\noindent 
Then Theorem \ref{a2} is a consequence of the two propositions above.

\subsubsection{Proof of Proposition \ref{thm centextresfini}}
First we treat the following special case:

\begin{proposition}\label{prop modnisRF}
For any integer  $n \geq 2$, the group $M_g(1)_n$ 
obtained by reducing mod $n$ a generator of the center 
of $M_g(1)$ is residually finite.
\end{proposition}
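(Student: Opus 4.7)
The plan is to prove residual finiteness of $M_g(1)_n$ by distinguishing two classes of nontrivial elements and detecting each by a different family of finite quotients. The central extension
\[
1 \to \Z/n\Z \to M_g(1)_n \to M_g \to 1,
\]
whose kernel is generated by the image of the central generator $c$ of $M_g(1)$, naturally splits the task into two cases according to whether a given nontrivial element $x \in M_g(1)_n$ projects trivially or nontrivially in $M_g$.

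For elements $x$ that project nontrivially to $M_g$, I would invoke the classical residual finiteness of the mapping class group (due to Grossman). This immediately produces a finite quotient of $M_g$, and hence of $M_g(1)_n$ by pre-composition with the projection $M_g(1)_n \to M_g$, that does not kill the image of $x$.

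The heart of the argument is the case where $x$ lies in the kernel $\Z/n\Z$, so $x = c^k$ for some $0 < k < n$. Here I would pick a prime power $p^a$ such that $p^a \mid n$ but $p^a \nmid k$, which exists because $k$ is not a multiple of $n$. Since $p^a \mid n$, there is a natural surjection $M_g(1)_n \to M_g(1)_{p^a}$, and the image of $x$ in $M_g(1)_{p^a}$ is a nontrivial element of its center $\Z/p^a\Z$. Lemma \ref{lem detect} now supplies a quantum representation $\widetilde{\rho}_q \colon M_g(1) \to GL(N(q,g), \mathcal O_q)$ that factors through $M_g(1)_{p^a}$ and is injective on its center, so the image of $x$ under the composition $M_g(1)_n \to GL(N(q,g), \mathcal O_q)$ is a non-identity matrix.

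Finally, this image is a finitely generated subgroup of a linear group over the cyclotomic ring of integers $\mathcal O_q$, so Malcev's classical theorem on residual finiteness of finitely generated linear groups yields a finite quotient of the image separating $\widetilde{\rho}_q(x)$ from the identity. Composition then gives the desired finite quotient of $M_g(1)_n$ in which $x$ survives. The main obstacle anticipated is verifying that Lemma \ref{lem detect} indeed covers every prime power $p^a$ that can divide $n$, which is already provided by the statement of that lemma; the remaining work is routine bookkeeping.
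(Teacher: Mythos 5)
Your proof is correct and follows essentially the same route as the paper: Grossman's theorem handles elements with nontrivial image in $M_g$, and for central elements one passes to a prime-power quotient of the center, invokes Lemma~\ref{lem detect} to get a linear representation that detects the center, and applies Malcev. The only cosmetic difference is packaging: the paper first embeds $M_g(1)_n$ into the product $\prod_i M_g(1)_{p_i^{r_i}}$ via the Chinese Remainder Theorem and thereby reduces to prime-power $n$, whereas you separate nontrivial elements directly and surject onto the appropriate $M_g(1)_{p^a}$; these are two formulations of the same reduction.
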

\begin{proof}
Write $\Z/n\Z$ as a finite product of cyclic groups of prime power order 
$\Z/n\Z =\Z/p_1^{r_1}\Z \times \cdots \times \Z/p_s^{r_s}\Z$. 
Then this isomorphism induces an embedding 
$M_g(1)_n \hookrightarrow M_g(1)_{p_1^{r_1}} \times \cdots \times M_g(1)_{p_s^{r_s}}$, 
and it suffices to prove Proposition 
\ref{prop modnisRF} when $n$ is a prime power. Since  $M_g$ is known to be residually finite~\cite{Gro},
by Malcev's result on the residual finiteness of finitely generated linear groups, 
it is enough to find for each prime power $q^s$ a linear 
representation of the universal 
central extension $M_g(1)$ that factors through 
$M_g(1)_{q^s}$ and detects its center, and this is what Lemma \ref{lem detect} provides.

\end{proof}

\begin{proof}[Proof of Proposition \ref{thm centextresfini}]
We will use below that a group is residually finite if and only if 
finite index subgroups are residually finite. 
 Given our central extension
\[
 1 \rightarrow A \rightarrow E \rightarrow M_g \rightarrow 1
\]
the five term exact sequence in homology reduces to:
\[
 H_2(M_g; \Z) \rightarrow A \rightarrow H_1(E, \Z) \rightarrow 0
\]
because the mapping class group is perfect.
Therefore, any element $f \in E$ that is not in $A$ 
projects non-trivially in the mapping 
class group and is therefore detected by  a finite quotient of 
this group. If $f \in A$ but 
is not in the image of $H_2(M_g;\Z)$, then it projects 
non-trivially into the finitely 
generated abelian group $H_1(E;\Z)$, and is therefore 
detected by a finite abelian 
quotient of $E$. It remains to detect the 
elements in the image of $H_2(M_g;\Z)$. 
Recall the following result:
\begin{lemma}
Let $A$ be a finitely generated abelian group, $B$ a subgroup of $A$.
Then there exists a direct factor  $C$ of $A$ that  
contains $B$ as a subgroup of finite index.
\end{lemma}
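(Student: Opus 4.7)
The plan is to reduce the statement to the corresponding fact for finitely generated free abelian groups via the standard structure theorem, and then to exhibit the desired direct summand as a \emph{saturation} of $B$ up to torsion. Concretely, write $A = F \oplus T$, where $F$ is a finitely generated free abelian group and $T$ is the (finite) torsion subgroup of $A$. Let $\pi\colon A \to F$ denote the associated projection, and set $\overline{B} = \pi(B) \subseteq F$.

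The main construction is to define $C_F \subseteq F$ to be the preimage under $F \to F/\overline{B}$ of the torsion subgroup $T(F/\overline{B})$. By construction, $F/C_F \cong (F/\overline{B})/T(F/\overline{B})$ is finitely generated and torsion-free, hence free abelian; thus the short exact sequence $0 \to C_F \to F \to F/C_F \to 0$ splits, so $C_F$ is a direct summand of $F$. Moreover $C_F/\overline{B} = T(F/\overline{B})$ is finite, so $\overline{B}$ has finite index in $C_F$. I would then set $C := \pi^{-1}(C_F)$; under the decomposition $A = F \oplus T$ this is simply $C_F \oplus T$.

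It then remains to verify the three required properties. For $B \subseteq C$: since $\pi(B) = \overline{B} \subseteq C_F$, we have $B \subseteq \pi^{-1}(C_F) = C$. For $C$ being a direct summand of $A$: the quotient $A/C \cong F/C_F$ is free abelian, so the surjection $A \to A/C$ splits, giving $A = C \oplus A'$ with $A' \cong F/C_F$. For the finite index claim, I would estimate
\[
  [C:B] \;=\; [C : B+T]\cdot[B+T : B] \;=\; [C_F : \overline{B}]\cdot[T : B\cap T],
\]
both factors being finite (the first by construction of $C_F$, the second because $T$ itself is finite).

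There is essentially no serious obstacle: this is a routine structural argument about finitely generated abelian groups. The only point requiring a bit of care is the identification $A/C \cong F/C_F$ and the verification that the splitting of this quotient lifts to a splitting at the level of $A$; both follow at once from the decomposition $A = F \oplus T$ and the fact that $C$ contains $T$. The argument works verbatim whether one thinks of $A$ additively or multiplicatively, and no hypothesis beyond finite generation is used.
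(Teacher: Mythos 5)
Your proof is correct. Note, however, that the paper states this lemma as a recalled classical fact inside the proof of Proposition~\ref{thm centextresfini} and gives no argument for it, so there is no proof in the source to compare against. Your route — decompose $A = F \oplus T$, take $C_F$ to be the saturation of $\overline{B}=\pi(B)$ in $F$ (the preimage of $T(F/\overline{B})$), and set $C = C_F \oplus T$ — is the standard coordinate-free way to see it: $F/C_F$ is torsion-free finitely generated, hence free, so $C_F$ splits off; and $[C_F:\overline{B}]$ is finite because $T(F/\overline{B})$ is the torsion subgroup of a finitely generated group. The index computation $[C:B] = [C_F:\overline{B}]\cdot[T:B\cap T]$ via the second isomorphism theorem is also correct. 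An equivalent, more hands-on alternative would be to invoke the elementary divisor theorem to choose a basis $e_1,\dots,e_n$ of $F$ with $\overline{B}$ generated by $d_1 e_1,\dots,d_r e_r$ and take $C_F = \bigoplus_{i\le r}\Z e_i$; your saturation argument avoids any choice of basis and is cleaner.
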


\vspace{0.2cm}\noindent 
Apply this lemma to the image $B$ of $H_2(M_{g};\Z)$ into $A$,  
let $p_C$ be the projection onto the subgroup $C$ and consider the 
push-out diagram:

\[
\xymatrix{
1 \ar[r]  & A  \ar[r] \ar^{p_C}[d] & E \ar[r] \ar[d] &  M_g \ar[r]\ar@{=}[d] &  1 \\
1 \ar[r]  & C  \ar[r]  & E_C \ar[r]  &  M_g \ar[r] & 1
}
\]
Then it is sufficient to prove that $E_C$ is residually finite 
in order to show that $E$ is residually finite.

\vspace{0.2cm}\noindent 
Now, the   mapping class group $M_g$ is perfect, and 
therefore we have a push-out diagram:
 
\[
\xymatrix{
1 \ar[r]  & H_2(M_g;\Z)  \ar[r] \ar[d] & M_g(1) \ar[r] \ar[d] &  M_g \ar[r] \ar@{=}[d] &  1 \\
1 \ar[r]  & C  \ar[r]  & E_C \ar[r]  &  M_g \ar[r] & 1,
}
\]
\vspace{0.2cm}\noindent 
where the first row is the universal central  extension, and the 
arrow $ H_2(M_g;\Z) \rightarrow C$ is the one appearing in 
the five term exact sequence of the bottom extension.  Recall that for $g \geq 4$, 
$H_2(M_g; \Z) = \Z$.  Two cases could occur:
 \begin{enumerate}
  \item Either $ H_2(M_g;\Z) \rightarrow C$ is injective and in this case $E_C$ 
contains the residually finite group $M_g(1)$ as a subgroup of finite index, and 
this is known to be residually finite (see \cite{Fu3}).
  \item Or the image of $ H_2(M_g;\Z) \rightarrow C$ is a cyclic group $\Z/k \Z$ and 
$E_C$ contains as a finite index subgroup the reduction mod $k$ of the universal central 
extension, and we  conclude by applying Proposition \ref{prop modnisRF}.
 \end{enumerate}
\end{proof}

\subsubsection{Proof of Proposition \ref{extresfinite} (1)}
Assume that every  extension   of $G$ by a finite  abelian group is residually finite. 
Let $x \in H^2(G;A)$ be represented by the extension:
\[
\xymatrix{
 1 \ar[r]  & A  \ar[r]  & E \ar[r]  &  G \ar[r] & 1.
}
\]
By the equivalent property $D_2$ (see the next subsection), 
it is enough to find a finite index subgroup $H \subseteq G$ 
such that $x$ is zero in $H^2(H;A)$. 
Observe that property $A_1$, and thus $D_1$, is automatic. 
Since $E$ is residually finite, 
for each  non-trivial 
element $a \in A$ choose a finite quotient $F_a$ of $E$ in which 
the image of $a$ is not identity. 
Let $B_a$ be 
the image of $A$ in $F_a$, and $Q_a  = F_a/B_a$. Denote by $F_A, B_A$ and 
$Q_A$ the products of 
these finitely many groups over the non-trivial elements in $A$.   Then 
the diagonal map  
$E \rightarrow F_A$  fits into a commutative diagram:
\[
\xymatrix{
 1 \ar[r]  & A  \ar[r] \ar[d] & E \ar[r] \ar[d]  &  G \ar[r] \ar[d] & 1 \\ 
 1 \ar[r]  & B_A  \ar[r]  & F_A \ar[r]  &  Q_A \ar[r] & 1
 }
\]
Let $K$ be the kernel $\ker(G \rightarrow Q_A)$.
Then $K$ is a finite index normal subgroup 
and the pull back of $x$ to $H^2(K; A)$ is trivial.

\vspace{0.2cm}\noindent 
Conversely, assume that the residually finite group  $G$ has property $A_2$ and let 
\[
\xymatrix{
 1 \ar[r]  & A  \ar[r]  & E \ar[r]  &  G \ar[r] & 1
}
\]
be an extension of $G$ by the finite abelian group $A$.  Then, by \cite[Ex. 2 Ch. I.2.6]{Serre}, we have a natural short exact sequence of profinite completions:
\[
\xymatrix{
 1 \ar[r]  & \widehat{A}  \ar[r]  & \widehat{E} \ar[r]  &  \widehat{G} \ar[r] & 1
}
 \]
that fits into a commutative diagram 
\[
\xymatrix{
 1 \ar[r]  & A  \ar[r] \ar[d] & E \ar[r] \ar[d]  &  G \ar[r] \ar[d] & 1 \\
 1 \ar[r]  & \widehat{A}  \ar[r]  & \widehat{E} \ar[r]  &  \widehat{G} \ar[r] & 1
}
 \]
Since $A$ is finite $A \simeq \widehat{A} $, and since $G$ is residually finite 
the rightmost downward arrow is an injection. 
By the five lemma the homomorphism  
$E \hookrightarrow \widehat{E}$ is also injective, 
and hence $E$ is  residually finite as it is a subgroup of a 
profinite group.

\subsubsection{Proof of proposition \ref{extresfinite} (2) and property $E_n$}\label{propertyE}
It would be nice, but probably difficult, to understand under which 
assumptions  property $A_2$ for a residually finite, 
finitely presented group $G$ and all finite trivial $\widehat{G}$-modules 
implies property $A_2$. However, there exists a stronger related 
condition on groups for which this kind of statement will hold. 
Serre introduced several  properties in \cite[Ex. 1, I.2.6]{Serre} 
as follows. One says that a residually finite group $G$ has property: 

\begin{enumerate}
\item $(A_n)$ if $H^j(\widehat{G}; M)\to H^j(G;M)$ is bijective for 
$j\leq n$ and injective for $j=n+1$, for all finite $\widehat{G}$-modules $M$. 

\item $(B_n)$ if $H^j(\widehat{G}; M)\to H^j(G;M)$ is surjective for 
$j\leq n$ and  for all finite $\widehat{G}$-modules $M$. 

\item $(C_n)$ if for each  finite $\widehat{G}$-module $M$ and $x\in H^j(G; M)$, $1\leq j\leq n$, there exists 
a discrete $\widehat{G}$-module $M'$  containing $M$ such that 
the image of $x$ in $H^j(G;M')$ is zero.

\item $(D_n)$ if for each finite $\widehat{G}$-module $M$ and $x\in H^j(G; M)$, $1\leq j\leq n$, there exists a 
subgroup $H\subseteq G$ of finite index in $G$ such that 
the image of $x$ in $H^j(H;M)$ is zero.

 \item $(E_n)$ if $\widehat{H}^j(G;M)=0$, for $1\leq j\leq n$ and 
for all finite $\widehat{G}$-modules $M$.
\end{enumerate}
Then Serre stated that properties $A_n$, $B_n$, $C_n$ and $D_n$ are 
equivalent. It is easy to see that these properties are also equivalent when 
we fix the $\widehat{G}$-module $M$, or we let it run over the 
finite trivial  $\widehat{G}$-modules.  

\vspace{0.2cm}\noindent 
Denote by $\widehat{H}^n(G;M)={\lim}_{H \subset_f G} H^n(H;M)$, 
where the {\em direct} limit is taken with respect with the 
directed set of $H\subset_fG$, meaning that 
$H$ is a finite index subgroup of $G$. The  directed set of 
inclusions homomorphisms induces a homomorphism 
\[ H^n(G;M)\to \widehat{H}^n(G;M)\]
Note that if the homomorphisms 
$H^j(G;M)\to \widehat{H}^j(G;M)$ have zero image, for $1\leq j\leq n$, then 
 condition $(D_n)$ is satisfied.  Conversely, if condition $(D_n)$ is verified, 
 and $H^j(G;M)$ are finite for $1\leq j\leq n$, then 
 there exists some finite index subgroup $H\subset_fG$ such that 
 the image of the restriction  homomorphism $H^j(G;M)\to H^j(H;M)$ is trivial. 
By the universality of the direct limit the homomorphism 
$H^j(G;M)\to \widehat{H}^j(G;M)$ factors through $H^j(H;M)$ and hence its image is zero. 
In any case property $(D_n)$ is 
a consequence of property $(E_n)$. 
An interesting fact 
concerning the latter is the following:

\begin{proposition}\label{nontrivialE}
If a residually finite group $G$ has property $E_n$ for 
all finite trivial $\widehat{G}$-modules then it has property $E_n$. 
\end{proposition}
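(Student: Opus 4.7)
The plan is to reduce to the case of a trivial module by passing to a suitable finite index subgroup of $G$ and exploiting cofinality of the colimit defining $\widehat{H}^j(G;-)$.

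First, I would use continuity of the $\widehat{G}$-action on $M$: since $M$ is finite and discrete, the kernel of the action homomorphism $\widehat{G} \to \operatorname{Aut}(M)$ is an open normal subgroup $U \trianglelefteq \widehat{G}$ of finite index. Since $G$ is residually finite, the natural map $G \hookrightarrow \widehat{G}$ is injective, and $H_0 := G \cap U$ is then a finite index normal subgroup of $G$ acting trivially on $M$.

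Second, I would carry out a cofinality argument. The finite index subgroups $K \subseteq H_0$ form a cofinal subsystem of the directed set of all finite index subgroups of $G$: given any $K' \subset_f G$, the intersection $K' \cap H_0$ lies in $H_0$ and still has finite index in $G$. For each such $K$ the restricted $K$-action on $M$ is trivial, so the cohomology group $H^j(K;M)$ coincides with $H^j(K;M_{\mathrm{triv}})$, where $M_{\mathrm{triv}}$ denotes the same underlying finite abelian group endowed with the trivial $\widehat{G}$-action. Moreover the restriction maps $H^j(K;M)\to H^j(K';M)$ for $K'\subseteq K\subseteq H_0$ are identical under both interpretations, since they depend only on the $K$-module structure. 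Passing to the colimit over the cofinal subsystem yields $\widehat{H}^j(G;M)=\widehat{H}^j(G;M_{\mathrm{triv}})$.

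Third, I would invoke the hypothesis: since $M_{\mathrm{triv}}$ is a finite trivial $\widehat{G}$-module, property $E_n$ for trivial modules gives $\widehat{H}^j(G;M_{\mathrm{triv}})=0$ for $1\leq j\leq n$, and hence $\widehat{H}^j(G;M)=0$ for $1 \leq j \leq n$ as required.

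The only point that demands some care is the cofinality identification $\widehat{H}^j(G;M)=\widehat{H}^j(G;M_{\mathrm{triv}})$, and in particular the verification that the transition maps in the two directed systems agree once one descends below $H_0$. I do not expect a genuine obstacle here: both $\widehat{G}$-module structures become indistinguishable on $H_0$ and on every $K\subseteq H_0$, so the two direct systems literally coincide on the cofinal part of the index set, and the argument reduces to the continuity of the $\widehat{G}$-action on a finite module.
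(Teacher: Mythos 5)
Your proof is correct and follows essentially the same route as the paper's key lemma (Lemma~\ref{arbit}): identify a finite index subgroup of $G$ acting trivially on $M$ (your $H_0 = G\cap U$ is the same subgroup as the paper's $K_M=\ker(G\to\operatorname{Aut}(M))$), then use cofinality of the finite index subgroups contained in it to identify $\widehat{H}^j(G;M)$ with $\widehat{H}^j(G;M_{\mathrm{triv}})$ and invoke the hypothesis. The only cosmetic difference is that you arrive at the relevant subgroup via the open kernel in $\widehat{G}$, whereas the paper takes the kernel of the $G$-action directly and invokes a cofinality lemma twice (once to pass to the subsystem, once to return); these are the same argument.
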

\begin{proof}
First we can easily step from $\mathbb F_p$-coefficients to any trivial $G$-module. 
\begin{lemma}\label{fp}
Condition $(D_n)$ for $G$ and all finite trivial $\widehat{G}$-modules $\mathbb F_p$ 
implies  $(D_n)$ for $G$ and all finite trivial $\widehat{G}$-modules $M$. 
\end{lemma}

\vspace{0.2cm}\noindent 
The second ingredient allows us to pass from all trivial coefficients 
to arbitrary coefficients:
\begin{lemma}\label{arbit}
Condition $(E_n)$ for $G$ and all finite trivial $\widehat{G}$-modules $M$ 
implies  $(E_n)$ for $G$ and all finite trivial $\widehat{G}$-modules $M$. 
\end{lemma}

\vspace{0.2cm}\noindent 
This proves the claim of Proposition \ref{nontrivialE}.
\end{proof}

\vspace{0.2cm}\noindent 
In the proof of Lemma \ref{arbit} we will make use of the following 
rather well-known result:
\begin{lemma}\label{limits}
If $J\subseteq I$ are two directed sets such that 
$J$ is cofinal in $I$, then for any direct system of abelian groups 
$(A_i, f_{ij})$ indexed by $I$ we have 
\[ \lim_{\alpha,\beta\in J} (A_{\alpha}, f_{\alpha\beta})=0, \;\;\; 
{\rm if \; and \;  only \;  if }  \;\;
\lim_{i,j\in I} (A_{i}, f_{ij})=0.\]
\end{lemma}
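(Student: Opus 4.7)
The plan is to prove the slightly stronger statement that the natural homomorphism $\varphi\colon \mathrm{colim}_{\alpha \in J} A_\alpha \to \mathrm{colim}_{i \in I} A_i$, induced by the inclusion $J \hookrightarrow I$ of directed sets, is an isomorphism whenever $J$ is cofinal in $I$. The lemma then follows at once, since an abelian group is zero if and only if any group isomorphic to it is zero.

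First I would construct the homomorphism $\varphi$. Every element $[a]\in\mathrm{colim}_J A_\alpha$ is represented by some $a\in A_\alpha$ with $\alpha\in J\subseteq I$, and we set $\varphi[a]$ to be the class of $a$ in $\mathrm{colim}_I A_i$. This is well-defined and a homomorphism because the transition maps $f_{\alpha\beta}$ for $\alpha,\beta\in J$ coincide with the corresponding transition maps in the larger system.

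Next I would check surjectivity of $\varphi$ using cofinality. Any class in $\mathrm{colim}_I A_i$ is represented by some $b\in A_i$ with $i\in I$. By cofinality of $J$ in $I$, there exists $\alpha\in J$ with $i\leq\alpha$. Then $f_{i\alpha}(b)\in A_\alpha$ represents the same class in the colimit over $I$, and it visibly lies in the image of $\varphi$.

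Injectivity is the step that requires the most care, but it is again a direct application of cofinality together with the standard description of elements of a directed colimit. Suppose $a\in A_\alpha$ with $\alpha\in J$ satisfies $\varphi[a]=0$ in $\mathrm{colim}_I A_i$. Then there exists $i\in I$ with $\alpha\leq i$ such that $f_{\alpha i}(a)=0$ in $A_i$. By cofinality, choose $\beta\in J$ with $i\leq\beta$; then
\[
f_{\alpha\beta}(a)=f_{i\beta}\bigl(f_{\alpha i}(a)\bigr)=f_{i\beta}(0)=0,
\]
so $[a]=0$ already in $\mathrm{colim}_J A_\alpha$. Hence $\varphi$ is an isomorphism, and the equivalence stated in the lemma follows. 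I do not expect any genuine obstacle: the whole argument is the standard cofinality lemma for filtered colimits, specialised here to directed systems of abelian groups.
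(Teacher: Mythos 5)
Your proof is correct, and it is the standard cofinality argument for directed colimits: you establish the stronger claim that the induced map $\mathrm{colim}_J A_\alpha \to \mathrm{colim}_I A_i$ is an isomorphism, from which the vanishing equivalence follows trivially. The paper itself offers no proof of this lemma; it is explicitly labelled a ``rather well-known result'' and used without argument, so there is nothing in the paper to compare against --- your write-up simply supplies the standard justification the authors chose to omit.
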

\begin{proof}[Proof of Lemma \ref{fp}]
This follows from decomposing the finite trivial $\widehat{G}$-module, i.e. a finite abelian group 
$A$, into $p$-primary components and then use induction on the rank of 
the composition series of $A$ and the 5-lemma.
\end{proof} 
\begin{proof}[Proof of Lemma \ref{arbit}]
Let now $M$ be an arbitrary finite $G$-module. 
Let $K_M$ be the kernel of the $G$-action on $M$. 
We denote by $A$ the trivial $G$-module which is isomorphic as an abelian 
group to $M$. 
By hypothesis condition $(E_n)$ is satisfied for the group $G$ 
and the trivial module $A$, so that $\widehat{H}^j(G; A)=0$, $1\leq j\leq n$.
Since $M$ is finite $K_M$ is of finite index in $G$.
Consider the set  $J_M=\{H\subset_f K_M\}$ of finite index subgroups of 
$K_M\subseteq G$. This is a subset of the  directed set $I_G$
of finite index subgroups of $G$. Further $J_M$ is cofinal in $I_G$ 
with respect to the inclusion as any subgroup $H\in I_G$ 
contains the subgroup $H\cap K_M\subset_f K_M$. 
 According to Lemma \ref{limits} we have then  
$\lim_{H\subset_f K_M} H^j(H; A)=0$.
Furthermore, for each $H\subseteq K_M$ the  $H$-modules $A$ and $M$ are 
isomorphic as both are trivial. Thus there exists a canonical family of 
isomorphisms $i_H:H^j(H; A)\cong H^j(H;  M)$ which is compatible with 
the direct structures on  the cohomology groups 
indexed by $J_M=\{H\subset_f K_M\}$.
We have therefore $\lim_{H\subset_f K_M} H^j(H;  M)=0$. 
However using again Lemma \ref{limits} for the sets $J_M$ and $I_G$  
in the reverse direction we obtain $\widehat{H}^j(G; M)=0$, $1\leq j\leq n$.
 \end{proof}

\vspace{0.2cm}\noindent 
\begin{proof}[End of proof of Proposition \ref{extresfinite} (2)]
Let $G$ be a residually finite group. Every $x\in H^2(G;\mathbb F_p)$ 
is represented by a central extension $E$ of $G$ by $\Z/p\Z$. 
By the proof of Proposition  \ref{extresfinite} (1) $E$ is residually finite if and 
only if  there exists a finite index subgroup $H\subseteq G$ such that 
the image of $x$ in $H^2(H;\mathbb F_p)$ is zero. 
By Lemma \ref{fp} this is equivalent to the group $G$ having property $D_2$ for all 
trivial $\widehat{G}$-modules $M$. 
Therefore, $G$ has  property $A_2$  for all trivial 
$\widehat{G}$-modules $M$ if all central extensions by $\Z/p\Z$ are residually finite, 
for all primes $p$. 
\end{proof}

\begin{remark}
The analog of Lemma~\ref{fp} holds also for property $A_n$, with the same proof. 
Hovewer, this is not clear for Lemma \ref{arbit}.
\end{remark}

\begin{remark}
The discussion about property $E_n$ clarifies some of the statements 
in \cite{GJZ}. Specifically, Lemmas 3.1. and 3.2. 
from \cite{GJZ} concern only property $E_n$ instead of property $A_n$. 
Nevertheless, the main result of  \cite{GJZ} is valid with the same proof. 
\end{remark}

\subsection{Proof of Corollary \ref{vtrivialmcg}}

\begin{lemma}\label{vtrivial}
An extension $E$ of the residually finite group $G$ by a finite group $A$ is residually finite if and only if it is virtually trivial. 
\end{lemma}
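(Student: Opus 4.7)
My plan is to prove both directions directly, exploiting the finiteness of $A$ and standard residual finiteness gymnastics.

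For the easier direction, I will assume $E$ is virtually trivial, so there is a finite-index subgroup $\widetilde{H}\subseteq E$ with $\widetilde{H}\cap A=\{1\}$; equivalently, the quotient map $p\colon E\to G$ restricts to an injection on $\widetilde{H}$, and its image $p(\widetilde{H})$ is of finite index in $G$ because $\widetilde{H} A$ contains $\widetilde{H}$ and therefore has finite index in $E$. As a subgroup of the residually finite group $G$, $p(\widetilde{H})$ is residually finite, and this property transfers to $\widetilde{H}$ via the isomorphism $\widetilde{H}\cong p(\widetilde{H})$. I will then invoke the standard fact that a group containing a residually finite subgroup of finite index is itself residually finite: this can be verified by replacing $\widetilde{H}$ by its normal core in $E$ (still of finite index) and, for any non-identity $e$ in this core, intersecting the finitely many $E$-conjugates of a finite-index subgroup of the core that separates the finitely many $E$-conjugates of $e$, producing a finite-index normal subgroup of $E$ not containing $e$.

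For the converse, assume $E$ is residually finite. Since $A$ is finite, for every non-trivial $a\in A$ I will choose a homomorphism $\phi_a\colon E\to F_a$ onto a finite group with $\phi_a(a)\neq 1$, and assemble them into the product map
\[
 \phi \;=\; \prod_{a\in A\setminus\{1\}}\phi_a \colon E \longrightarrow \prod_{a\in A\setminus\{1\}} F_a ,
\]
whose target is finite. Its kernel $K$ is then a finite-index normal subgroup of $E$, and by construction $\phi|_A$ is injective, so $K\cap A=\{1\}$. Consequently $p|_K\colon K\to G$ is injective with image of index $[G:p(K)]=[E:KA]\leq [E:K]<\infty$, so $K$ exhibits $E$ as virtually trivial.

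The only non-formal ingredient is the standard implication \emph{virtually residually finite $\Rightarrow$ residually finite}, and I do not foresee any further difficulty; the rest is pure bookkeeping with the exact sequence and the finiteness of $A$.
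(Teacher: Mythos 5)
Your proof is correct and follows essentially the same route as the paper: in the direction "residually finite $\Rightarrow$ virtually trivial" one uses finiteness of $A$ to produce a single finite quotient of $E$ injective on $A$ and takes its kernel, and in the converse one observes that a finite-index subgroup of $E$ meeting $A$ trivially is isomorphic to a finite-index subgroup of the residually finite group $G$, then invokes the standard fact that a group with a residually finite finite-index subgroup is itself residually finite. Your write-up simply expands the details the paper leaves implicit (including a sketch of that standard fact).
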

\begin{proof}
If $E$ is residually finite then there is a finite index 
subgroup $\Delta\subset E$ with $\Delta\cap A=\{1\}$. Then the projection $E\to G$ 
restricts to an isomorphism on $\Delta$ and hence the extension splits over the image of $\Delta$ in $G$.  Conversely, if the extension $E$ splits over a finite index subgroup $\Delta\subseteq G$ 
then $\Delta\subset E$ is residually finite and of finite index in $E$ and hence 
$E$ must be residually finite, as well.   
\end{proof}

\begin{lemma}\label{finitepullback}
An extension $E$ of the group $G$ by a finite group $A$ is the pull-back 
of some extension of a finite group $F$ by some surjective homomorphism 
$f:G\to F$ if and only if it is virtually trivial. 
\end{lemma}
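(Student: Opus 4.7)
The plan is to rephrase the pull-back condition intrinsically inside $E$: the extension $1 \to A \to E \xrightarrow{\pi} G \to 1$ is the pull-back along some surjection $f\colon G \to F$ onto a finite group of an extension $1 \to A \to E_F \to F \to 1$ if and only if there exists a normal subgroup $\widetilde N \triangleleft E$ with $\widetilde N \cap A = \{1\}$ and $\pi(\widetilde N)$ of finite index in $G$. Indeed, given such an $\widetilde N$, I would set $F = G/\pi(\widetilde N)$ and $E_F = E/\widetilde N$; the condition $\widetilde N \cap A = \{1\}$ guarantees that $A$ injects into $E_F$, and an easy verification shows that the natural map $E \to G \times_F E_F$, $e \mapsto (\pi(e), e \bmod \widetilde N)$, is an isomorphism of extensions. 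Conversely, in any pull-back decomposition $E = G \times_F E_F$ the kernel of the projection $E \to E_F$ provides precisely such a subgroup $\widetilde N$.

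The easy direction $(\Leftarrow)$ will follow from this equivalence: if $E = G \times_F E_F$ with $F$ finite, then $\ker f \subseteq G$ has finite index and $n \mapsto (n, 1_{E_F})$ is an explicit section of $\pi$ over $\ker f$, so $E$ is virtually trivial.

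For the converse, starting from a finite index subgroup $\Delta \subseteq G$ together with a section $s \colon \Delta \to E$ of $\pi$, I would observe that $s(\Delta) \subseteq E$ is a subgroup of finite index $|A| \cdot [G:\Delta]$ (finite because $A$ is finite) which meets $A$ trivially by construction of $s$. I then take its normal core
\[
\widetilde N := \bigcap_{e \in E} e\, s(\Delta)\, e^{-1},
\]
which is a finite index normal subgroup of $E$. The crucial step is verifying that $\widetilde N \cap A = \{1\}$: since $A$ is normal in $E$, any element $a$ in a conjugate $e\, s(\Delta)\, e^{-1} \cap A$ satisfies $e^{-1} a e \in s(\Delta) \cap A = \{1\}$, forcing $a = 1$; applying this to each conjugate yields the claim. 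Setting $N = \pi(\widetilde N)$ produces a finite index normal subgroup of $G$ (normal because $\widetilde N$ is normal in $E$ and $\pi$ is surjective), and $F = G/N$ together with $E_F = E/\widetilde N$ exhibits $E$ as the required pull-back via the equivalence of the first paragraph.

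The main obstacle, such as it is, is the disjointness check $\widetilde N \cap A = \{1\}$; once that is in hand the rest is a formal diagram chase. I note that the finiteness of $A$ intervenes only to ensure that $s(\Delta)$ is of finite index in $E$, and that the argument applies to arbitrary (not necessarily central) extensions of $G$ by the finite group $A$.
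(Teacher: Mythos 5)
Your proof is correct and follows essentially the same route as the paper's: split over a finite index subgroup $\Delta\subseteq G$, replace the image $s(\Delta)\subseteq E$ by its normal core $\widetilde N$ in $E$, and present $E$ as the pull-back of $1\to A\to E/\widetilde N\to G/\pi(\widetilde N)\to 1$. You merely make explicit the normal core step that the paper compresses into the phrase ``passing to a finite index subgroup of $\Delta$'' and the verification $\widetilde N\cap A=\{1\}$ (which can be shortened: since $\widetilde N\subseteq s(\Delta)$, already $\widetilde N\cap A\subseteq s(\Delta)\cap A=\{1\}$ without appealing to normality of $A$).
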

\begin{proof}
If $E$ splits over the finite index subgroup $\Delta\subseteq G$, then the image 
$\Delta\subseteq E$ of a section intersects $A$ trivially. By passing to a finite index 
subgroup of $\Delta$  we can assume that $\Delta$ is a normal subgroup of $E$. 
Then the extension $E$ is the pull-back of the extension 
\[ 1\to A \to E/\Delta\to G/\Delta\to 1\] 
In the reverse direction, a 
pull-back of an extension by $f:G\to F$ is split over the finite index subgroup $\ker f$.  
\end{proof}

\vspace{0.2cm}\noindent 
Now, Proposition \ref{thm centextresfini} along with Lemmas \ref{vtrivial} and \ref{finitepullback} imply   Corollary \ref{vtrivialmcg}.

\section{Towards an inductive proof of Theorem \ref{tors-sympl0}}\label{computed}
\subsection{Motivation}\label{motiv} 
For a prime $p$, an integer $k \geq 1$ we have two fundamental extensions:
\[
1 \rightarrow Sp(2g,\Z,p^k) \rightarrow Sp(2g,\Z) \rightarrow Sp(2g,\Z/p^k\Z) \rightarrow 1,  
\]
and
\begin{equation}\label{fdt2}
1 \rightarrow \mathfrak{sp}_{2g}(p) \rightarrow Sp(2g, \Z/p^{k+1} \Z) \rightarrow Sp(2g, \Z/p^k\Z) \rightarrow 1. 
\end{equation}
In particular, every element in 
$Sp(2g,\Z,p^k)$ can be written as $\mathbf 1_{2g} + p ^kA$, for some matrix 
$A$ with integer entries. If the symplectic form is written 
as $J_g  = \left( \begin{matrix} 
0 & \mathbf 1_g \\ 
-\mathbf 1_g & 0      
\end{matrix}
\right)$ 
then the matrix $A$ satisfies the equation 
$A^{\top} J_g + J_g A \equiv 0 ({\rm mod }\; p)$. 
Then  we set $\mathfrak{sp}_{2g}(p)$ for  the additive group of those  
matrices with entries in $\Z/p\Z$ that satisfy the 
equation $M^{\top} J_g + J_g M \equiv 0 ({\rm mod } \; p)$. In particular this subgroup  is  independent of the integer $k$.

\vspace{0.2cm}
\noindent 
The  homomorphism $j_q: Sp(2g,\Z, p ^k) \to \mathfrak{sp}_{2g}(p)$ sending  
$\mathbf 1_{2g} + p^k A$ onto  $A \; ({\rm  mod }\; p)$ 
is surjective (see \cite{Sato}).

\vspace{0.2cm}
\noindent 
The different actions of the symplectic group $Sp(2g,\Z)$ that 
$\mathfrak{sp}_{2g}(p)$ inherits from these descriptions  coincide.  
We will use in this text the action that is induced by 
the conjugation action on $Sp(2g,\Z,p)$ via the surjective map $j_q$. 
Notice that clearly this action factors through $Sp(2g,\Z/p\Z)$.

\vspace{0.2cm}\noindent 
The second page of the Hochschild-Serre spectral sequence associated to the exact sequence (\ref{fdt2}) 
in low degrees is as follows:
\begin{equation}
\begin{array}{|c|c|c|}
 H_2(\mathfrak{sp}_{2g}(p))_{Sp(2g,\Z/p^k\Z)} & & \\
0 & H_1(Sp(2g,\Z/p^k\Z); \mathfrak{sp}_{2g}(p)) &  \\
 \Z& 0 &H_2(Sp(2g,\Z/p^k\Z))  \\
 \hline
\end{array}
\end{equation}

\vspace{0.2cm}\noindent 
In fact from \cite[Lemma 3.5]{FP} we know that
 \[
H_1(\mathfrak{sp}_{2g}(p))_{Sp(2g,\Z/p^k\Z)} = 
H_1(\mathfrak{sp}_{2g}(p))_{Sp(2g,\Z/p\Z)}=0.
\]
as the action of $Sp(2g,\Z/p^k\Z)$ on $\mathfrak{sp}_{2g}(p)$ factors 
through the action of  $Sp(2g,\Z/p\Z)$. 

\vspace{0.2cm}\noindent 
Thus the calculations needed for an inductive 
computation of $H_2(Sp(2g,\Z/2^k\Z))$  are  
the result in Theorem \ref{lem H2sp(p)} and the following 
theorem of Putman, see \cite[Thm. G]{Pu1}:

\begin{theorem}\label{lem annulhomol}
For any odd prime $p$, any integer $k \geq 1$ and any $g \geq 3$ we have:
\begin{equation}
H_1(Sp(2g,\Z/p^{k}\Z), \mathfrak{sp}_{2g}(p))= 0.
\end{equation}
Moreover, this holds true also when $p=2$ and $k=1$. 
\end{theorem}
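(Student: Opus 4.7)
The plan is to proceed by induction on $k$, handling the base case $k=1$ by representation theory of Chevalley groups and the inductive step by the Lyndon--Hochschild--Serre spectral sequence attached to the congruence filtration. Note that only odd $p$ requires the induction; the statement for $p=2$ is asserted only at $k=1$.

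For the base case we need $H_1(Sp(2g,\mathbb F_p), \mathfrak{sp}_{2g}(\mathbb F_p))=0$, where the coefficients carry the adjoint action. For $p$ odd, $\mathfrak{sp}_{2g}$ is the adjoint module of the simply connected Chevalley group $Sp_{2g}$ and is irreducible in good characteristic (all cases with $g\geq 3$ and $p$ odd, up to a handful of small-prime exceptions to be checked by hand). The vanishing of $H^1$ of a simply connected Chevalley group over its prime field with coefficients in the adjoint representation is classical (Cline--Parshall--Scott type statements), and gives $H_1$ as well by finiteness and duality; alternatively a transfer to a Sylow $p$-subgroup combined with a direct computation along its root filtration works. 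For $p=2$, the adjoint module ceases to be irreducible, and I would argue directly: use the presentation of $Sp(2g,\mathbb F_2)$ by symplectic transvections and check that every crossed homomorphism into $\mathfrak{sp}_{2g}(\mathbb F_2)$ is principal, which reduces to a finite verification on the braid and commutation relations.

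For the inductive step ($p$ odd, $k\geq 2$) I would apply LHS to the short exact sequence
\[
1 \to V \to Sp(2g,\Z/p^k\Z) \to Sp(2g,\Z/p^{k-1}\Z) \to 1,
\]
with $V$ the congruence kernel, isomorphic as an abelian group to $\mathfrak{sp}_{2g}(p)$ and carrying the adjoint action of the quotient through $Sp(2g,\mathbb F_p)$. Since $V$ is an abelian $p$-group acting trivially on the coefficients $M=\mathfrak{sp}_{2g}(p)$, we have $H_1(V,M)=V\otimes_{\mathbb F_p} M$, and the five-term exact sequence reads
\[
H_2(Sp(2g,\Z/p^{k-1}\Z), M) \xrightarrow{\partial} \bigl(\mathfrak{sp}_{2g}(p)\otimes \mathfrak{sp}_{2g}(p)\bigr)_{Sp(2g,\mathbb F_p)} \to H_1(Sp(2g,\Z/p^k\Z), M) \to H_1(Sp(2g,\Z/p^{k-1}\Z), M) \to 0.
\]
The rightmost term vanishes by the inductive hypothesis, so the claim reduces to showing that $\partial$ is surjective.

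The main obstacle is precisely this surjectivity. For $p$ odd and $g\geq 3$, the adjoint module is self-dual via the Killing form, so $\bigl(\mathfrak{sp}_{2g}(p)^{\otimes 2}\bigr)_{Sp(2g,\mathbb F_p)}$ is at most one-dimensional, generated by the class dual to the Killing form. To show $\partial$ hits it, one would exhibit an explicit $2$-cycle: a natural candidate is the one associated with a commuting pair of transvections $(T_u, T_v)$ with $\langle u, v\rangle$ a unit mod $p$, whose commutator relation, when lifted to $Sp(2g,\Z/p^k\Z)$, produces a definite element of $V$ that, paired against the adjoint-valued coefficient, recovers the Killing pairing up to units. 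A perhaps cleaner alternative is to use a transfer: since $M$ is a $p$-group, $H_1(Sp(2g,\Z/p^k\Z),M)$ is a direct summand of $H_1(P,M)$ for a Sylow $p$-subgroup $P$, whose congruence filtration is explicit enough to compute the homology directly. I expect identifying and killing the Killing-form class to be the delicate step; Putman's original proof almost certainly short-circuits this via a generators-and-relations calculation tailored to symplectic transvections rather than a spectral-sequence argument.
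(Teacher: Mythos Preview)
The paper does not prove this theorem; it is quoted as Putman's result \cite[Thm.~G]{Pu1}, so there is no in-paper argument to compare against. Your outline, however, has a genuine gap at the inductive step, and the base case is only asserted.

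Your inductive step correctly reduces $H_1(Sp(2g,\Z/p^k\Z),\mathfrak{sp}_{2g}(p))=0$ to the surjectivity of the transgression
\[
d_2\colon H_2\bigl(Sp(2g,\Z/p^{k-1}\Z),\mathfrak{sp}_{2g}(p)\bigr)\longrightarrow \bigl(\mathfrak{sp}_{2g}(p)\otimes\mathfrak{sp}_{2g}(p)\bigr)_{Sp(2g,\mathbb F_p)},
\]
and for odd $p$ the target is indeed one-dimensional (self-duality plus irreducibility of the adjoint module; the paper's Theorem~\ref{lem H2sp(p)} confirms the alternating part vanishes). But you do not establish this surjectivity, and neither suggested route is close to a proof. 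The ``explicit $2$-cycle'' approach requires first producing a nonzero class in $H_2(Q,\mathfrak{sp}_{2g}(p))$ for $Q=Sp(2g,\Z/p^{k-1}\Z)$ and then computing $d_2$ on it; you give no evidence that this $H_2$ is nonzero, and controlling it is a priori harder than the $H_1$ you started from, so the induction does not gain ground. The transfer-to-Sylow remark is orthogonal to the spectral-sequence reduction and would amount to an independent computation. The base case is likewise only sketched: the Cline--Parshall--Scott results you invoke concern rational cohomology of the algebraic group, and passing to $H^1$ of the finite group $Sp(2g,\mathbb F_p)$ in the adjoint module requires additional comparison arguments and case-checks at small primes that you have not supplied; for $p=2$, ``check every crossed homomorphism is principal'' is a restatement of the goal rather than a proof. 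As you yourself anticipate, Putman's argument bypasses the transgression entirely via a direct computation with symplectic elementary matrices.
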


\noindent 
Unfortunately we do not know whether 
$H_1(Sp(2g,\Z/2^k\Z); \mathfrak{sp}_{2g}(2))=0$ or not for $k\geq 2$. 
However this is true when $k=1$ and we derive:  

\begin{corollary}\label{inductive}
Assume Theorem \ref{lem H2sp(p)} holds. 
Then $H_2(Sp(2g,\Z/4\Z))\in \{0,\Z/2\Z\}$, for all $g\geq 4$. 
\end{corollary}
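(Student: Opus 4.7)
The plan is to apply the Hochschild--Serre spectral sequence to the extension (\ref{fdt2}) specialized to $p=2$ and $k=1$,
\[
1 \to \mathfrak{sp}_{2g}(2) \to Sp(2g, \Z/4\Z) \to Sp(2g, \Z/2\Z) \to 1,
\]
and to read off $H_2(Sp(2g,\Z/4\Z))$ from the three entries on the anti-diagonal $p+q=2$ of the $E^2$-page displayed in Subsection~\ref{motiv}. Note that the action of the quotient $Sp(2g,\Z/4\Z)$ on the kernel $\mathfrak{sp}_{2g}(2)$ factors through $Sp(2g,\Z/2\Z)$, so the relevant coefficient systems on the base of the spectral sequence are the ones we have information about.

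The first step is to evaluate those three $E^2$-terms using the inputs at hand. The entry $E^2_{2,0} = H_2(Sp(2g, \Z/2\Z))$ vanishes by Stein's theorem, since $2\not\equiv 0\pmod 4$. The entry $E^2_{1,1} = H_1(Sp(2g, \Z/2\Z); \mathfrak{sp}_{2g}(2))$ vanishes by Theorem~\ref{lem annulhomol}, whose last sentence explicitly covers the case $p=2$, $k=1$. Finally, the entry $E^2_{0,2} = H_2(\mathfrak{sp}_{2g}(2))_{Sp(2g, \Z/2\Z)}$ equals $\Z/2\Z$ by the assumed Theorem~\ref{lem H2sp(p)} with $p=2$, $k=1$; this is where the hypothesis $g\geq 4$ enters.

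The second step is to observe that every differential leaving the $(0,2)$-position targets a column of negative index and therefore vanishes, so $E^\infty_{0,2}$ is a quotient of $E^2_{0,2}=\Z/2\Z$. Since $E^\infty_{2,0}$ and $E^\infty_{1,1}$ are subquotients of $0$, the filtration on $H_2(Sp(2g, \Z/4\Z))$ has trivial associated graded pieces outside position $(0,2)$, and the remaining piece is either $0$ or $\Z/2\Z$. This forces $H_2(Sp(2g, \Z/4\Z))\in\{0,\Z/2\Z\}$.

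Beyond invoking Theorem~\ref{lem H2sp(p)} and Theorem~\ref{lem annulhomol}, this argument meets no genuine obstacle: its content is essentially a formal assembly on the $E^2$-page. The point where one might have feared a difficulty, namely controlling the incoming differentials $d_2\colon E^2_{2,1}\to E^2_{0,2}$ and $d_3\colon E^3_{3,0}\to E^3_{0,2}$, does not obstruct us here because we only claim the dichotomy $0$ versus $\Z/2\Z$ and are not attempting to decide which case actually occurs.
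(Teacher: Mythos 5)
Your proof is correct and follows essentially the same route as the paper: both read off the three anti-diagonal $E^2$-terms of the Hochschild--Serre spectral sequence for the extension $\mathfrak{sp}_{2g}(2)\to Sp(2g,\Z/4\Z)\to Sp(2g,\Z/2\Z)$, using Stein's vanishing for $E^2_{2,0}$, Theorem~\ref{lem annulhomol} for $E^2_{1,1}$, and the assumed Theorem~\ref{lem H2sp(p)} for $E^2_{0,2}$. Your conclusion step is in fact a bit cleaner than the paper's, which additionally invokes Proposition~\ref{lem H2epi} to get cyclicity of $H_2(Sp(2g,\Z/4\Z))$; as you observe, once the only nonvanishing piece of the associated graded is a subquotient of $\Z/2\Z$, the dichotomy follows directly without that extra input.
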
  
\begin{proof}
Proposition \ref{lem H2epi} implies that $H_2(Sp(2g,\Z/4\Z))$ is cyclic, 
for $g\geq 4$. Since $H_2(Sp(2g,\Z/2\Z))=0$, and 
$H_1(Sp(2g,\Z/2\Z), \mathfrak{sp}_{2g}(2))= 0$ from  
Putman's theorem \ref{lem annulhomol}, the only non-zero  
term of the second page of the Hochschild-Serre spectral sequence above 
computing the cohomology of $Sp(2g,\Z/4\Z)$ is 
$H_2(\mathfrak{sp}_{2g}(2))_{Sp(2g,\Z/2\Z)}$. 
Then, by Theorem \ref{lem H2sp(p)} the rank of  $H_2(Sp(2g,\Z/4\Z))$ 
is at most $1$, which proves the claim.  
\end{proof}

\subsection{Generators for the module $\mathfrak{sp}_{2g}(p)$}
We describe first a small set of generators of $\mathfrak{sp}_{2g}(p)$ 
as an $Sp(2g,\Z)$-module. Denote by $\mathfrak M_g(R)$ the $R$-module of  
$g$-by-$g$ matrices with entries from the ring $R$. 
A direct computation shows that a matrix 
$
\left( 
\begin{matrix} 
A & B \\
C & D
\end{matrix}
\right) 
\in \mathfrak M_{2g}(\Z/p\Z)$ written by blocks is in $\mathfrak{sp}_{2g}(p)$ if 
and only if $A + D^{\top}\equiv 0 \; ({\rm mod}\; p)$ and both $B$ and 
$C$ are symmetric matrices. 
It will be important for our future computations to keep in mind 
that the subgroup $GL(g, \Z) \subset Sp(2g,\Z)$ 
preserves this decomposition into blocks. 
From this description we immediately get a set of generators 
of $\mathfrak{sp}_{2g}(p)$ as an additive group. 
Recall that $e_{ij}\in \mathfrak M_g(R)$  denotes the elementary  
matrix whose only non-zero coefficient is $1$ at the place $ij$.  
Define now the following matrices in $\mathfrak{sp}_{2g}(p)$ for $i, j\in\{1,2,\ldots,g\}$:
\begin{equation}
 u_{ij} = \left( \begin{matrix}
                  0 & e_{ij}+e_{ji} \\
                  0 & 0
                 \end{matrix}
 \right), \quad u_{ii}  = \left( \begin{matrix}
                 0 & e_{ii} \\
                  0 & 0
                 \end{matrix}
 \right), \quad l_{ij} = \left( \begin{matrix}
                 0 & 0 \\
                  e_{ij}+e_{ji}  & 0
                 \end{matrix}
 \right), \quad l_{ii}
= \left( \begin{matrix}
                  0 & 0 \\
                  e_{ii}  & 0
                 \end{matrix}
 \right),
\end{equation}
\begin{equation}
r_{ij} = \left( \begin{matrix}
                 e_{ij} & 0 \\
                  0  & -e_{ji}
                 \end{matrix}
 \right), \quad n_{ii} 
= \left( \begin{matrix}
                  e_{ii} & 0 \\
                  0  & -e_{ii}
                 \end{matrix}
 \right).
\end{equation}
 
\vspace{0.2cm}\noindent 
Therefore we have: 

\begin{proposition}\label{prop genspmod}
As an $Sp(2g,\Z/p\Z)$-module, $\mathfrak{sp}_{2g}(p)$ is generated 
by $r_{ij}, n_{ii}, u_{ij}$, $u_{ii}$, $l_{ij}$ and $l_{ii}$, where $i, j\in \{1,2,\ldots,g\}$.
\end{proposition}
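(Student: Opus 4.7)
The plan is to establish the proposition by direct inspection, showing in fact the stronger statement that the listed matrices already additively generate $\mathfrak{sp}_{2g}(p)$ as a $\Z/p\Z$-module, so that a fortiori they generate it as an $Sp(2g,\Z/p\Z)$-module.

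First I would invoke the block characterization of $\mathfrak{sp}_{2g}(p)$ recalled at the beginning of this subsection: writing a $2g$-by-$2g$ matrix in $g$-by-$g$ blocks, $M=\left(\begin{smallmatrix} A & B \\ C & D \end{smallmatrix}\right)$ lies in $\mathfrak{sp}_{2g}(p)$ if and only if $A+D^{\top}\equiv 0\,({\rm mod}\; p)$ and both $B,C$ are symmetric mod $p$. This exhibits $\mathfrak{sp}_{2g}(p)$ as a direct sum of three $\Z/p\Z$-submodules, parameterizing respectively an arbitrary $g$-by-$g$ matrix $A$ (the block $D=-A^{\top}$ being then forced), a symmetric matrix $B$ placed in the upper-right block, and a symmetric matrix $C$ placed in the lower-left block.

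Next I would identify each listed family of matrices with the standard additive generators of one such summand. The matrices $r_{ij}$ for $i\neq j$ together with the $n_{ii}$ realize the standard basis $(e_{ij})_{i,j}$ of $g$-by-$g$ matrices in the upper-left block (with the companion $-e_{ji}$ in the lower-right block automatic); the $u_{ij}$ for $i<j$ and the $u_{ii}$ realize the standard additive basis $\{e_{ij}+e_{ji},\ e_{ii}\}$ of the symmetric $g$-by-$g$ matrices in the upper-right block; and analogously $l_{ij}$, $l_{ii}$ handle the lower-left symmetric block. Combining the three cases, every element of $\mathfrak{sp}_{2g}(p)$ is a $\Z/p\Z$-linear combination of the listed matrices, which finishes the argument.

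There is no real obstacle here: the proposition is essentially an unpacking of the block-matrix description, and its content lies entirely in selecting a combinatorially manageable family of generators. The substantive work will take place later, when the $Sp(2g,\Z/p\Z)$-action is used to relate and reduce these generators in order to pin down the coinvariants $H_2(\mathfrak{sp}_{2g}(p))_{Sp(2g,\Z/p^k\Z)}$ of Theorem \ref{lem H2sp(p)}.
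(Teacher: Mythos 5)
Your proof is correct and takes essentially the same route as the paper: the paper also observes that the block characterization of $\mathfrak{sp}_{2g}(p)$ (arbitrary $A$ with $D=-A^{\top}$, symmetric $B$, symmetric $C$) makes the listed matrices an additive generating set, from which the $Sp(2g,\Z/p\Z)$-module statement is immediate. The only cosmetic difference is that the paper states this in a single "Therefore we have" rather than spelling out the identification of each family with the basis of its block summand.
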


\vspace{0.2cm}\noindent 
And as $GL({g}, \Z/p\Z)$-module we have:
\begin{lemma}\label{lem glgmodule}
Let ${\rm Sym}_g(\Z/p\Z)\subset\mathfrak M_g(\Z/p\Z)$ denote the submodule 
of symmetric matrices.  
We have an identification of  $GL(g, \Z)$-modules:
\[
 \mathfrak{sp}_{2g}(p) = \mathfrak M_{g}(\Z/p \Z) \oplus {\rm Sym}_g(\Z/p\Z ) \oplus 
{\rm Sym}_g(\Z/p\Z). 
\]
The action of  $GL(g, \Z/p\Z)$ on $\mathfrak M_g(\Z/p\Z)$ is by conjugation, 
the action on the first copy ${\rm Sym}_g(\Z/ p\Z)$ 
is given by $x\cdot A = x A x^{\top}$ and on the second copy ${\rm Sym}_g(\Z/ p\Z)$
is given by $x\cdot A = (x^{\top})^{-1} A x^{-1}$. 

\vspace{0.2cm}
\noindent 
A set of generators for $\mathfrak M_{g}(\Z/p \Z)$ is given by the set of elements $r_{ij}$ and $n_{ii}$, $1\leq i, j \leq g$, $i \neq j$. 
The two copies of ${\rm Sym}_g(\Z/p\Z )$ are generated by 
the matrices $l_{ij}$ and $u_{ij}$ respectively, where $1\leq i, j\leq g$.
\end{lemma}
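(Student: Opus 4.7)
The plan is to reduce the lemma to an explicit matrix calculation based on the block description of $\mathfrak{sp}_{2g}(p)$ recalled just above. A block matrix $M = \bigl(\begin{smallmatrix} A & B \\ C & D \end{smallmatrix}\bigr)$ lies in $\mathfrak{sp}_{2g}(p)$ exactly when $A + D^\top \equiv 0 \pmod p$ and $B, C$ are symmetric mod $p$. Since $D \equiv -A^\top \pmod p$ is determined by $A$, the assignment $M \mapsto (A, B, C)$ provides an obvious additive isomorphism $\mathfrak{sp}_{2g}(p) \xrightarrow{\cong} \mathfrak M_g(\Z/p\Z) \oplus {\rm Sym}_g(\Z/p\Z) \oplus {\rm Sym}_g(\Z/p\Z)$, where the first symmetric summand records the upper-right block $B$ and the second records the lower-left block $C$.

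The next step is to verify that this decomposition is $GL(g,\Z/p\Z)$-equivariant, with the claimed actions on each summand. The standard embedding $GL(g,\Z) \hookrightarrow Sp(2g,\Z)$ sends $x$ to the block-diagonal matrix $g_x = {\rm diag}(x, (x^\top)^{-1})$, and the action on $\mathfrak{sp}_{2g}(p)$ is induced (via the surjection $j_q$) from conjugation on $Sp(2g,\Z,p)$, hence coincides with ordinary matrix conjugation of the ``infinitesimal'' representative modulo $p$. A direct block multiplication then gives
\[
 g_x \begin{pmatrix} A & B \\ C & D \end{pmatrix} g_x^{-1} = \begin{pmatrix} xAx^{-1} & xBx^\top \\ (x^\top)^{-1} C x^{-1} & (x^\top)^{-1} D x^\top \end{pmatrix},
\]
which yields conjugation on the $A$-block, the twist $B \mapsto xBx^\top$ on the upper-right block and the twist $C \mapsto (x^\top)^{-1} C x^{-1}$ on the lower-left block, matching exactly the three actions stated in the lemma.

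Finally, identifying the additive generators is immediate by reading off the blocks of the matrices $r_{ij}, n_{ii}, u_{ij}, u_{ii}, l_{ij}, l_{ii}$ defined just above Proposition \ref{prop genspmod}: the $A$-blocks of $r_{ij}$ for $i\neq j$ and of $n_{ii}$ are respectively the elementary matrices $e_{ij}$ and $e_{ii}$, so together they span $\mathfrak M_g(\Z/p\Z)$; the $B$-blocks of $u_{ij}$ ($i < j$) and $u_{ii}$ form the standard $\Z/p\Z$-basis $\{e_{ij}+e_{ji}\} \cup \{e_{ii}\}$ of ${\rm Sym}_g(\Z/p\Z)$; and the analogous statement for the $C$-blocks realizes the other copy via the $l_{ij}, l_{ii}$. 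There is no genuine obstacle here beyond bookkeeping with block matrices; the only mild care required is to attach each family of generators to the correct copy of ${\rm Sym}_g(\Z/p\Z)$, since the two copies carry distinct twisted $GL(g,\Z/p\Z)$-actions.
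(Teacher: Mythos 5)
Your proof is correct and is essentially the only sensible verification: read off the block structure, observe that the $D$-block is determined by the $A$-block, check $GL(g,\Z)$-equivariance by the displayed block conjugation, and match generators to blocks. The paper itself records this lemma without a written proof (treating it as an immediate consequence of the block description and the observation that $GL(g,\Z)\subset Sp(2g,\Z)$ preserves the block decomposition), so your argument supplies exactly the expected details; the one subtlety you correctly flag is which copy of ${\rm Sym}_g(\Z/p\Z)$ carries which twisted action, where the lemma's ``respectively'' is arguably stated with the two copies in the opposite order to the actions given.
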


\subsection{Proof of Theorem \ref{lem H2sp(p)} }

\vspace{0.2cm}\noindent 
Notice first that as the action of $Sp(2g,\Z/p^k\Z)$ 
factors through $Sp(2g,\Z/p\Z)$ via the mod $p^{k-1}$ reduction map, 
for any $k \geq 2$ we have 
$H_{2}(\mathfrak{sp}_{2g}(p);\Z)_{Sp(2g,\Z/p^k\Z)} \simeq H_{2}(\mathfrak{sp}_{2g}(p))_{Sp(2g,\Z/p\Z)}$.  Also, as $\mathfrak{sp}_{2g}(p)$ 
is an abelian group there is a canonical isomorphism:

\begin{equation}
H_{2}(\mathfrak{sp}_{2g}(p))= \wedge^2 \mathfrak{sp}_{2g}(p).
\end{equation}
Let $M$  denote the set of elements $r_{ij}$ and $n_{ii}$, $1\leq i, j \leq g$, $i \neq j$ and 
$S$ denote the set of elements $l_{ij}$ and $u_{ij}$, where $1\leq i, j\leq g$.
The group $\wedge^2 \mathfrak{sp}_{2g}(p)$ is generated by  the set 
of exterior powers of pairs of generators given in Proposition~\ref{prop genspmod}, which we split naturally into three  
disjoint subsets:
\begin{enumerate}
\item The subset $S \wedge S$ of exterior powers $u_{ij} \wedge l_{kl}$, $l_{ij} \wedge l_{kl}$, $u_{ij} \wedge u_{kl}$.
\item The subset $S \wedge M$ of exterior power $l_{ij} \wedge n_{kk}$, $u_{ij} \wedge n_{kk}$, $l_{ij}\wedge r_{kl}$ and $u_{ij}\wedge r_{kl}$.
\item The subset $M \wedge M$ of exterior powers  
$r_{ij} \wedge r_{kl}$, $r_{ij} \wedge n_{kk}$ and $n_{jj}\wedge n_{ii}$.  
\end{enumerate}

\vspace{0.2cm}\noindent
We will first show that the image of $S \wedge S$ and $S \wedge M$ is $0$ in 
$\wedge^2 \mathfrak{sp}_{2g}(p)$, and in a second time we will show how 
the $\Z/2\Z$ factor appears in the image of $M \wedge M$. It will also be clear from the proof why there is no such non-trivial element in odd characteristic.

\vspace{0.2cm}\noindent
 We use constantly the trivial fact  that the action of $Sp(2g,\Z/p\Z)$ is trivial on the 
coinvariants module. So to show the nullity of the image of a generator it is enough to show that in each orbit of 
a generating element of $S \wedge S$ or $S \wedge M$ there is the $0$ element. We will in particular heavily 
use the fact that the symmetric group 
$\mathfrak{S}_g \subset GL(g,\Z/p\Z) \subset Sp(2g,\Z/p\Z)$ 
acts on the basis elements by permuting the indices. 
Notice that the symmetric group action 
respects the above partition into three sets of elements. 
To emphasize when we use such a permutation to identify 
two elements in the co-invariant module we will use the notation 
$\doteq$ instead of $=$.

\paragraph{Nullity of the generators in $S \wedge S$.}
Picking one representative in each $\mathfrak{S}_g$-orbit we are left 
with the following elements. Here ``Type'' refers to the number of 
distinct indexes that appear in the wedge, as this is the only thing 
that really matters. Of course type $IV$ elements appear only for 
$g \geq 4$. 

\begin{enumerate}
	\item Type I:  $u_{11} \wedge l_{11}$.
	\item Type II: $u_{11} \wedge u_{22}$, $l_{11} \wedge l_{22}$, $u_{11} \wedge l_{22}$, $u_{11} \wedge u_{12}$, $l_{11} \wedge l_{12}$, $u_{12} \wedge l_{12}$, $u_{12} \wedge l_{22}$, $u_{11} \wedge l_{12}$.
	\item Type III: $u_{11} \wedge u_{23}$, $l_{11} \wedge l_{23}$, $u_{12} \wedge u_{23}$, $l_{12} \wedge l_{23}$, $u_{11} \wedge l_{23}$, $l_{11} \wedge u_{23}$, $u_{12} \wedge l_{23}$.
	\item Type IV: $u_{12} \wedge u_{34}$, $l_{12} \wedge l_{34}$, $u_{12} \wedge l_{34}$.
\end{enumerate}

\vspace{0.2cm}\noindent 
Using the fact that $J_g \cdot u_{ij} = -l_{ij}$, one can identify 
some generators in $S\wedge S$, for instance $u_{11} \wedge u_{22} = l_{11} \wedge l_{22}$, and we are left with:

\begin{enumerate}
	\item Type I:  $u_{11} \wedge l_{11}$.
	\item Type II: $u_{11} \wedge u_{22}$ , $u_{11} \wedge l_{22}$, $u_{11} \wedge u_{12}$, $u_{11} \wedge l_{12}$, $u_{12} \wedge l_{12}$, $u_{12} \wedge l_{22}$.
	\item Type III: $u_{11} \wedge u_{23}$, , $u_{12} \wedge u_{23}$, $u_{11} \wedge l_{23}$, $u_{12} \wedge l_{23}$.
	\item Type IV: $u_{12} \wedge u_{34}$, $u_{12} \wedge l_{34}$.
\end{enumerate}

\vspace{0.2cm}\noindent 
We consider now two families of elements in the symplectic group, for $1\leq i\neq j\leq g$:
\[
\tau^u_{ij} = 
\left( 
\begin{matrix}
1_g & e_{ii} + e_{jj} \\
0 & 1_{g}
\end{matrix}
\right), \quad
\tau^l_{ij} = 
\left( 
\begin{matrix}
1_g & 0 \\
e_{ii} + e_{jj}  & 1_{g}
\end{matrix} 
\right).
\]
A direct computation shows that:
\begin{equation}
\tau^u_{ij}\cdot u_{k\ell} = u_{k\ell},  \quad \tau^u_{ij} \cdot r_{ij} = r_{ij} - u_{ij}, \: {\rm for \: all } \:  i, j, k, \ell
\end{equation}
\begin{equation}
\tau^l_{ij}\cdot l_{k\ell} = l_{k\ell},  \quad \tau^l_{ij}\cdot r_{ij} = r_{ij} + l_{ij}, \: {\rm for \: all } \: i, j, k, \ell
\end{equation}
In particular, we obtain: 
\begin{equation}
\tau^u_{ij}\cdot (u_{k\ell} \wedge r_{ij}) = u_{k\ell}\wedge r_{ij} - u_{k\ell} \wedge u_{ij}
\end{equation}
and
\begin{equation}
\tau^l_{ij}\cdot (l_{k\ell} \wedge r_{ij}) = l_{k\ell}\wedge r_{ij} + l_{k\ell} \wedge l_{ij}
\end{equation}
This shows that all elements of the form $u \wedge u$ or 
$l \wedge l$ vanish in the module of coinvariants, except possibly for 
$u_{11} \wedge u_{22}$  and $l_{11} \wedge l_{22}$. We are left then  with:

\begin{enumerate}
	\item Type I:  $u_{11} \wedge l_{11}$.
	\item Type II: $u_{11} \wedge u_{22}$ , $u_{11} \wedge l_{22}$, $u_{11} \wedge l_{12}$, $u_{12} \wedge l_{12}$, $u_{12} \wedge l_{22}$.
	\item Type III: $u_{11} \wedge l_{23}$, $u_{12} \wedge l_{23}$.
	\item Type IV: $u_{12} \wedge l_{34}$.
\end{enumerate}

\vspace{0.2cm}\noindent 
Consider the exchange map $E_{ij}$, $i \neq j$, defined by $a_i \rightarrow -b_i$, $b_i \rightarrow a_i$, $a_j \rightarrow -b_j$, $b_j \rightarrow a_j$, and all other basis elements fixed. By construction, these maps act as follows: 
$E_{ij}\cdot u_{ij} = - l_{ij}$, $E_{ij}\cdot u_{ii} = -l_{ii}$, 
$E_{ij} \cdot u_{jj} = -l_{jj}$ and $E_{ij} \cdot u_{kl} = u_{kl}$ 
if $\{i,j\} \cap \{k,l \}=\emptyset$.   

\vspace{0.2cm}\noindent 
Denoting in the same way the (trivial!) action on the  quotient module of  coinvariants $(\wedge^2 \mathfrak{sp}_{2g}(p))_{Sp_2(2g,\Z/p\Z)}$, we find:
\begin{enumerate}
	\item $u_{11} \wedge l_{23} = E_{23}\cdot (u_{11} \wedge l_{23}) =- u_{11} \wedge u_{23} =0$. 
	\item $u_{12} \wedge l_{22} = E_{12}\cdot (u_{12} \wedge l_{22}) 
	= -l_{12} \wedge -u_{22} = u_{11} \wedge l_{12}$.
	\item $u_{11} \wedge u_{22} = E_{23}\cdot (u_{11} \wedge u_{22}) = - u_{11} \wedge l_{22}$.
	\item $u_{12} \wedge l_{34} = E_{34} \cdot (u_{12} \wedge l_{34}) = u_{12} \wedge -u_{34} = 0$.
\end{enumerate}

\vspace{0.2cm}\noindent 
Thus the coinvariants module is generated by the classes of the following elements:

\begin{enumerate}
	\item Type I: $u_{11} \wedge l_{11}$.
	\item Type II: $u_{11} \wedge l_{22}$, $u_{11} \wedge l_{12}$, $u_{12} \wedge l_{12}$.
	\item Type III: $u_{12} \wedge l_{23}$.
\end{enumerate}

\vspace{0.2cm}\noindent 
We now use the action of the following symplectic maps, for $1\leq i\neq j\leq g$:
\[
A_{ij} = \left(
\begin{matrix}
1_g -e_{ji} & 0  \\
0 & 1_g + e_{ij}
\end{matrix}
\right). 
\]
\vspace{0.2cm}\noindent 
By direct computation we obtain: 
\[A_{ij} \cdot u_{ii} = u_{ii} + u_{jj} - u_{ij}, \, A_{ij} \cdot u_{jj} = u_{jj}, \,A_{ik} \cdot u_{ij} = u_{ij} - u_{jk},\, A_{ki} \cdot u_{ij} = u_{ij}\]
and
\[
A_{ij} \cdot l_{ii} = l_{ii} + l_{jj} - l_{ij}, \, A_{ij} \cdot l_{jj} = l_{jj}, \, A_{ik} \cdot l_{ij} = l_{ij} - l_{jk}, \, A_{ki} \cdot l_{ij} = l_{ij}
\]
for pairwise distinct values of the  indices $i,j,k$. Further, we derive the following 
equalities that hold in the  quotient module of coinvariants:
\begin{enumerate}
	\item $u_{12} \wedge l_{23} = A_{13} \cdot (u_{12} \wedge l_{23}) = (u_{12} - u_{23}) \wedge l_{23}$, so $0= u_{23} \wedge l_{23} \doteq u_{12} \wedge l_{12}$.
	\item $ u_{22} \wedge l_{32}= A_{13} \cdot (u_{22} \wedge l_{32}) = u_{22} \wedge (l_{32} + l _{12})$, so $u_{11} \wedge l_{12} \doteq u_{22} \wedge l_{12} = 0$. 
	\item $u_{12} \wedge l_{34} = A_{23} \cdot (u_{12} \wedge l_{34}) = u_{12}  \wedge(l_{34} + l_{24})$ , so $u_{12} \wedge l_{23} \doteq u_{12} \wedge l_{24} = 0$.
	\item $u_{11} \wedge l_{11} = A_{12} \cdot (u_{11} \wedge l_{11}) = (u_{11} + u_{22} - u_{12}) \wedge l_{11}$, so \\ $u_{22} \wedge l_{11} = u_{12} \wedge l_{11}  = E_{12}\cdot (l_{12} \wedge u_{11}) = 0$.
	\item $u_{22} \wedge l_{11} = A_{12} \cdot (u_{22} \wedge l_{11}) = u_{22} \wedge (l_{11} + l_{12} + l_{22})$ so \\
	$u_{11} \wedge l_{11} \doteq u_{22} \wedge l_{22} = -u_{22} \wedge l_{12} \doteq  - u_{11} \wedge l_{12} = 0$.  
\end{enumerate}
This finishes the computation.

\paragraph{Nullity of the generators in $S \wedge M$.}
Here we separate between two types of generators:
\begin{enumerate}
\item Generators of the form $u_{ks} \wedge n_{ii}$ and $l_{ks} \wedge n_{ii}$ for arbitray $k,s,i$.

We let the matrix $\tau_{ij}^\ell$ of the previous section act on these generators. A direct computation shows that, for arbitrary values of $k,s,i,j$ with $i \neq j$, we have:
\[
\tau_{ij}^\ell \cdot l_{ks} = l_{ks} \text{ and } \tau_{ij}^\ell \cdot u_{jj} = u_{jj} - l_{jj} - n_{jj}.
\]

Therefore, relying on our previous computations:
\[
0 = \tau_{ij}^\ell \cdot (\ell_{ks} \wedge u_{jj}) = l_{ks} \wedge (u_{jj}- l_{jj} - n_{j}) = - l_{ks} \wedge n_{jj}
\]

To get the nullity for elements of the form $u \wedge n$, we apply $J_g$ to the previous element, and use the fact that, up to a sign, $J_g$ exchanges the elements $u_{ij}$ and  $l_{ij}$  while fixing $n_{ii}$.
\item For the elements of the form  $u_{ks} \wedge r_{ij}$ and $l_{ks} \wedge r_{ij}$ for arbitrary $k,s,i,j$ and $i \neq j$, we apply the element
\[
\tau_i^\ell = \left( \begin{matrix}
1_g & 0 \\e_{ii} & 1_g
\end{matrix}\right).
\]
By direct computation we obtain:
\[
\tau_i^ \ell \cdot u_{ij} = u_{ij} - r_{ij} + l_{ii} \text{ and } \tau_i^\ell \cdot l_{ks} = l_{ks}.
\]
Therefore, for $i \neq j$ and $k,s$ arbitrary we find:
\[
0 = \tau_i^\ell \cdot (u_{ij} \wedge l_{ks}) = (u_{ij} - r_{ij} + l_{ii}) \wedge l_{ks} = -r_{ij} \wedge l_{ks}.
\]
By applying $J_g$ and using that $J_g \cdot r_{ij} = -r_{ji}$ we get the nullity for $u_{ks} \wedge r_{ij}$.
\end{enumerate}

\paragraph{Image of $M \wedge M$.}
First we will do a small detour through  bilinear forms on matrices. 
Until the very end we work on an arbitrary field $\mathbb{K}$.  
Recall that $\mathfrak M_n(\mathbb K)$ denotes the $\mathbb K$-vector space of $n$-by-$n$ matrices with entries in 
$\mathbb K$ and $\mathbf 1_n\in \mathfrak M_n(\mathbb K)$ the identity matrix. 
Note that, if  $i \neq j$, 
the inverse of $\mathbf 1_n + e_{ij}$ is $\mathbf 1_n - e_{ij}$ and 
that elementary matrices multiply according to the rule  
$e_{ij}e_{st} = \delta_{js}e_{it}$. We start by a very  classical result:

\begin{lemma}\label{lem duality}
Let ${\rm tr}$ denote the trace map. 
Then for any field $\mathbb K$ and any integer $n$ the homomorphism:
\[
\begin{array}{rcl}
 \mathfrak M_n(\mathbb K) & \rightarrow & {\rm Hom}(\mathfrak M_n(\mathbb K), \mathbb K) \\
 A & \mapsto & B \leadsto {\rm tr}(AB)
\end{array}
\]
is an isomorphism.
\end{lemma}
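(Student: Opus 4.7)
The plan is essentially a dimension count plus a single computation on elementary matrices. Both the domain $\mathfrak{M}_n(\mathbb{K})$ and the codomain $\mathrm{Hom}(\mathfrak{M}_n(\mathbb{K}), \mathbb{K})$ are $\mathbb{K}$-vector spaces of dimension $n^2$, so it suffices to check that the map $\varphi : A \mapsto \bigl(B \mapsto \mathrm{tr}(AB)\bigr)$ is $\mathbb{K}$-linear and injective; surjectivity will then be automatic.

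Linearity of $\varphi$ is immediate from bilinearity of matrix multiplication combined with linearity of the trace, so I would only record it in one line. For injectivity, I would evaluate the linear form $\varphi(A)$ at the basis of elementary matrices $e_{ji}$. Using the multiplication rule $e_{pq}e_{rs} = \delta_{qr}e_{ps}$, the product $A e_{ji}$ has $(k,\ell)$-entry $A_{kj}\delta_{i\ell}$, and hence $\mathrm{tr}(A e_{ji}) = A_{ij}$. Consequently, if $\varphi(A) = 0$, then $A_{ij} = \mathrm{tr}(Ae_{ji}) = 0$ for all $i,j$, forcing $A = 0$.

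Since $\varphi$ is an injective linear map between finite-dimensional $\mathbb{K}$-vector spaces of the same dimension $n^2$, it is an isomorphism, which is what the lemma claims.

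There is really no obstacle here: the argument is a textbook verification that the trace pairing is non-degenerate on $\mathfrak{M}_n(\mathbb{K})$. The only minor point worth being careful about is the order of the indices — one must take $B = e_{ji}$ (not $e_{ij}$) to recover $A_{ij}$, so that the statement of the isomorphism is correctly identified with the standard trace duality rather than its transpose.
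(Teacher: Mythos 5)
Your proof is correct: linearity is immediate, the computation $\mathrm{tr}(Ae_{ji}) = A_{ij}$ gives injectivity, and equality of dimensions ($n^2$) forces surjectivity. The paper states this lemma without proof, simply calling it "a very classical result," so there is no argument in the paper to compare against; your dimension-count-plus-nondegeneracy verification is the standard one and is exactly what would be expected here, including the careful index observation that one must test against $e_{ji}$ rather than $e_{ij}$.
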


\vspace{0.2cm}\noindent
A little less classical is:
\begin{theorem} Let $n \geq 2$.
The $\mathbb{K}$-vector space    
$Hom_{GL(n, \mathbb{K})}(\mathfrak M_n(\mathbb{K}),\mathfrak M_n(\mathbb{K}))$  
has dimension $2$. It is generated by  the identity map $Id_{\mathfrak M_n(\mathbb{K})}$ 
and by the map $\Psi(M) = tr(M)\mathbf 1_n$. 
\end{theorem}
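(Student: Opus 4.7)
The decomposition $\mathrm{Id}(e_{11}) = e_{11}$ and $\Psi(e_{11}) = \mathbf 1_n$ shows that $\mathrm{Id}$ and $\Psi$ have linearly independent values at $e_{11}$ (since $n \geq 2$), so they already span a two-dimensional subspace of $\mathrm{Hom}_{GL(n,\mathbb{K})}(\mathfrak M_n(\mathbb{K}),\mathfrak M_n(\mathbb{K}))$. The plan is therefore to prove the reverse inequality by showing that any equivariant $\phi$ is completely determined by $\phi(e_{11})$, and that this value must itself have the shape $\alpha\,e_{11}+\beta\,\mathbf 1_n$.

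First I would check that the $GL_n(\mathbb{K})$-submodule of $\mathfrak M_n(\mathbb{K})$ generated by $e_{11}$ under conjugation is the whole space: permutation matrices produce every diagonal unit $e_{ii}$ from $e_{11}$, and the transvection $\mathbf 1_n+e_{ji}$ applied to $e_{ii}$ by conjugation gives a matrix of the form $e_{ii}-e_{ji}$, which together with the already-reached diagonal units yields every matrix unit $e_{ji}$. Hence $\phi$ is determined by $\phi(e_{11})$.

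Second I would constrain $\phi(e_{11})$ using its stabilizer. A direct computation shows that the pointwise stabilizer of $e_{11}$ under conjugation is the block subgroup $GL_1(\mathbb{K})\times GL_{n-1}(\mathbb{K})$. Writing $\phi(e_{11})$ in the matching block form $\begin{pmatrix} a & w\\ v & D\end{pmatrix}$ with $a\in\mathbb{K}$, $v\in\mathbb{K}^{n-1}$, $w\in(\mathbb{K}^{n-1})^{\ast}$ and $D\in\mathfrak M_{n-1}(\mathbb{K})$, the invariance of $\phi(e_{11})$ under every $g\in GL_{n-1}(\mathbb{K})$ forces $gv=v$, $wg^{-1}=w$ and $gDg^{-1}=D$ simultaneously. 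For $n\geq 3$ the first two identities force $v=0$ and $w=0$, since $GL_{n-1}(\mathbb{K})$ admits no nonzero invariant vector in its standard representation on $\mathbb{K}^{n-1}$; the third forces $D$ to commute with every elementary transvection $\mathbf 1_{n-1}+e_{ij}$, a standard computation then giving $D=\lambda\,\mathbf 1_{n-1}$ for some $\lambda\in\mathbb{K}$. Hence $\phi(e_{11})=\alpha\,e_{11}+\beta\,\mathbf 1_n$ with $\alpha=a-\lambda$ and $\beta=\lambda$, so $\phi$ and $\alpha\,\mathrm{Id}+\beta\,\Psi$ are equivariant maps that coincide on the generator $e_{11}$ and therefore everywhere.

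The main obstacle is the corner case $n=2$: here the stabilizer collapses to $(\mathbb{K}^\ast)^2$ and is even trivial when $\mathbb{K}=\mathbb F_2$, so the preceding argument fails. In that situation one has to replace the stabilizer step by an explicit equivariance computation using the transvections $\mathbf 1_2+e_{12}$ and $\mathbf 1_2+e_{21}$ applied directly to $e_{11}$, combined with the fact that $\phi(\mathbf 1_2)$ must be central, and then separately handle the smallest fields. However, since the only application of this theorem in the present paper is to Theorem \ref{lem H2sp(p)} with $n=g\geq 4$, the stabilizer argument outlined above suffices for all uses made of this result.
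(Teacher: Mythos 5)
Your argument is essentially the paper's: both reduce to showing that an equivariant $\phi$ is determined by $\phi(e_{11})$, whose conjugacy orbit spans $\mathfrak M_n(\mathbb{K})$, and both constrain $\phi(e_{11})$ by invariance under the centralizer of $e_{11}$. The paper does this concretely, conjugating by the transvections $\mathbf 1_n + e_{st}$ for $1 < s \neq t \leq n$ (all of which stabilize $e_{11}$) and reading off linear relations among the entries of $\phi(e_{11})$; you identify the full stabilizer $GL_1(\mathbb{K}) \times GL_{n-1}(\mathbb{K})$ and invoke its lack of nonzero fixed vectors on $\mathbb{K}^{n-1}$ together with the fact that its centralizer in $\mathfrak M_{n-1}(\mathbb{K})$ consists of scalars. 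This is a cleaner packaging of the same underlying idea.

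Your caution about $n = 2$ is well placed, and the issue is in fact deeper than you suggest. The paper's proof tacitly requires $n \geq 3$ for exactly the same reason yours does: there is no pair of integers $1 < s \neq t \leq n$ when $n = 2$. Moreover, the statement is actually false for $n = 2$ and $\mathbb{K} = \mathbb{F}_2$, so the alternative argument you sketch cannot succeed in that case. Here $GL_2(\mathbb{F}_2) \cong S_3$, and $\mathfrak{sl}_2(\mathbb{F}_2)$ splits $GL_2(\mathbb{F}_2)$-equivariantly as $\mathbb{F}_2\mathbf 1_2 \oplus V$ where $V = \mathrm{span}\{\mathbf 1_2 + e_{12},\, \mathbf 1_2 + e_{21}\}$ is the irreducible two-dimensional $S_3$-module. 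The linear map $\phi$ defined by $\phi(\mathbf 1_2) = 0$, $\phi|_V = \mathrm{Id}_V$ and $\phi(e_{11}) = e_{12} + e_{21}$ is $GL_2(\mathbb{F}_2)$-equivariant (one verifies this directly on the two generators $\left(\begin{smallmatrix}0&1\\1&0\end{smallmatrix}\right)$ and $\left(\begin{smallmatrix}1&1\\0&1\end{smallmatrix}\right)$), yet $\phi(e_{12}) = \mathbf 1_2 + e_{12}$, while every $\alpha\,\mathrm{Id} + \beta\Psi$ sends the traceless matrix $e_{12}$ to $\alpha e_{12}$; so $\phi$ is a third, linearly independent equivariant endomorphism. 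The correct hypothesis for the theorem is therefore $n \geq 3$ (your stabilizer argument also covers $n = 2$ when $|\mathbb{K}| > 2$). As you observe, the only application in the paper is $n = g \geq 4$, so nothing downstream is affected.
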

\begin{proof}
It is easy to check that the two equivariant maps    
$Id_{\mathfrak M_n(\mathbb{K})}$ and $\Psi$ are linearly independent. 
Indeed, evaluating a linear dependence relation   
$\alpha Id_{\mathfrak M_n(\mathbb{K})} + \beta \Psi =0$ on 
$e_{12}$ one gets $\alpha = 0 = \beta$.

\vspace{0.2cm}\noindent 
Fix an arbitrary $\phi \in {\rm Hom}_{GL(n, \mathbb{K})}(\mathfrak M_n(\mathbb{K}),\mathfrak M_n(\mathbb{K}))$. Denote by $A = (a_{ij})$ the matrix $\phi(e_{11})$.
Consider two integers $ 1 < s\neq t \leq n$. 
From the equality $(\mathbf 1_n + e_{st})e_{11}(\mathbf 1_n - e_{st}) = e_{11}$ we deduce that:
\begin{eqnarray}
 \phi(e_{11}) & = & (\mathbf 1_n + e_{st})\phi(e_{11})(\mathbf 1_n - e_{st}) \\
                       & = & \phi(e_{11}) + e_{st}\phi(e_{11}) - \phi(e_{11})e_{st} - e_{st}\phi(e_{11})e_{st} \\
                       & = & \phi(e_{11}) + \sum_{1 \leq j \leq n} a_{tj}e_{sj} - \sum_{1 \leq i \leq n} a_{is}e_{it} - a_{ts}e_{st}
\end{eqnarray}
Therefore, for $ 1 < s\neq t \leq n$ we have:
\begin{eqnarray}
 \sum_{1 \leq j \leq n} a_{tj}e_{sj} - \sum_{1 \leq i \leq n} a_{is}e_{it} - a_{ts}e_{st} & = & 0
\end{eqnarray}
The first term in this sum is a matrix with only one non-zero row, 
the second a matrix with only one non-zero column and the third  a matrix 
with a single (possibly) non-zero entry. The only common entry 
for this three matrices appears for  $j = t$ and $i =s$, where 
we get the equation  $a_{tt} - a_{ss} - a_{ts} = 0$. Otherwise, 
$a_{tj} = 0$, for all $j\neq t$, and $a_{is} = 0$, for all $i\neq s$. 
Observe that, in particular,  $a_{ts} = 0$. Summing up, 
either  in the column $s$ or in the row 
$t$ of the matrix $A$, the only possible non-zero 
elements are those that appear in the equation $a_{tt} - a_{ss} = 0$.

\vspace{0.2cm}\noindent 
Letting  $s$ and $t$ vary, one deduces  that  
$a_{ts}=0$, and 
$A= \phi(e_{11})$ is of the form  
$\lambda e_{11} + \mu \sum_{i= 2}^n e_{ii}$ 
for two scalars $\lambda, \mu \in \mathbb{K}$.

\vspace{0.2cm}\noindent 
Let $T_{ij}$ be the invertible matrix that interchanges the basis vectors $i$ and $j$. Then $T_{ij}e_{ii}T_{ij} = e_{jj}$, $T_{ij}e_{jj}T_{ij}=e_{ii}$ and $T_{ij}e_{kk}T_{ij} = e_{kk}$ for $k  \neq i$ and $k \neq j$. Therefore, 
$\phi(e_{jj}) = \phi(T_{1j}e_{11}T_{1j}) =  T_{1j}\phi(e_{11})T_{1j}.$ 
And from the description of  $\phi(e_{11})$ one gets: 
\[
\phi(e_{jj}) = \lambda e_{jj} + \sum_{i \neq j} \mu e_{ii}, \; {\rm for \; all} \; 1 \leq j \leq n.
\]

\vspace{0.2cm}\noindent 
From the relation $(\mathbf 1_n + e_{ij})e_{ii}(\mathbf 1_n - e_{ij}) = e_{ii} - e_{ij}$ we get:
\begin{eqnarray}
 \phi((\mathbf 1_n + e_{ij})e_{ii}(\mathbf 1_n - e_{ij})) & = & \phi(e_{ii}) - \phi(e_{ij}) \\
 & =& (\mathbf 1_n + e_{ij})\phi(e_{ii})(\mathbf 1_n - e_{ij}) \\
 & = & \phi(e_{ii}) + e_{ij}\phi(e_{ii})   - \phi(e_{ii})e_{ij} -e_{ij}\phi(e_{ii})e_{ij},
\end{eqnarray}
and in particular:
\begin{eqnarray}
 \phi(e_{ij}) & = & -  e_{ij}\phi(e_{ii})   +  \phi(e_{ii})e_{ij}  + e_{ij}\phi(e_{ii})e_{ij} \\
 & = & - \mu e_{ij} + \lambda e_{ij} \\
 & = & (\lambda - \mu) e_{ij}
\end{eqnarray}
This shows that   $\phi$ is completely determined by  $\phi(e_{ii})$ and that    ${\rm Hom}_{GL_n(\mathbb{K})}(\mathfrak M_n(\mathbb{K}),\mathfrak M_n(\mathbb{K}))$ 
has dimension at most  $2$.  Observe that the identity map  
$Id_{\mathfrak M_n(\mathbb{K})}$ corresponds to $\lambda  = 1, \mu=0$ 
and that  $\Psi$ corresponds to $\lambda = \mu = 1$.
\end{proof}

\vspace{0.2cm}\noindent 
From these two results we deduce the result that will save us from lengthy computations:
\begin{proposition}\label{prop equivbilform}
Let $n \geq 2$.
For any field $\mathbb{K}$ the vector space of bilinear forms on 
$\mathfrak M_n(\mathbb{K})$ invariant under conjugation by 
$GL(n, \mathbb{K})$ has as basis the bilinear maps 
$(A,B) \mapsto tr(A) tr(B)$ and $(A,B) \mapsto tr(AB)$. 
If $char (\mathbb{K}) \neq 2$, the subspace of alternating 
bilinear forms is trivial. If  $char (\mathbb{K}) = 2$ the 
space of bilinear alternating forms is generated by the 
form $(A,B) \mapsto tr(A)tr(B) + tr(AB)$.
\end{proposition}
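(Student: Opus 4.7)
The plan is to reduce the classification of invariant bilinear forms to the equivariant endomorphism result just established, and then to carve out the alternating subspace by a short test.

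First I will use Lemma \ref{lem duality} to rewrite an arbitrary bilinear form on $\mathfrak M_n(\mathbb K)$ as $B(A,C) = \mathrm{tr}(\Phi(A)C)$ for a unique linear endomorphism $\Phi$ of $\mathfrak M_n(\mathbb K)$. Cyclicity of the trace shows that $B$ is invariant under simultaneous conjugation by $GL(n,\mathbb K)$ if and only if $\Phi$ is $GL(n,\mathbb K)$-equivariant. The preceding theorem then forces $\Phi = \alpha \, \mathrm{Id} + \beta \, \Psi$ with $\Psi(M) = \mathrm{tr}(M)\mathbf 1_n$, which corresponds exactly to
\[
B(A,C) = \alpha\,\mathrm{tr}(AC) + \beta\,\mathrm{tr}(A)\mathrm{tr}(C).
\]
Linear independence of the two basis forms is immediate by evaluating on the pairs $(e_{11},e_{11})$ and $(e_{12},e_{21})$.

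The second step is to identify the alternating subspace inside this two-dimensional space. Both generators are patently symmetric, so in characteristic $\neq 2$ the polarisation identity forces any symmetric alternating form to vanish, giving the claimed triviality. In characteristic $2$, I will test the condition $B(A,A) = \alpha\,\mathrm{tr}(A^2) + \beta\,\mathrm{tr}(A)^2 = 0$: substituting $A = e_{11}$ gives $\alpha + \beta = 0$, i.e.\ $\alpha = \beta$, which pins down the candidate form $(A,C)\mapsto\mathrm{tr}(AC) + \mathrm{tr}(A)\mathrm{tr}(C)$. That this form is genuinely alternating reduces to the identity $\mathrm{tr}(A^2) = \mathrm{tr}(A)^2$ in characteristic $2$, which in turn follows because the cross terms $a_{ij}a_{ji}$ (for $i<j$) in $\mathrm{tr}(A^2)$ and $a_{ii}a_{jj}$ (for $i<j$) in $\mathrm{tr}(A)^2$ both appear with coefficient $2$ and vanish modulo $2$, leaving only the diagonal contributions $\sum_i a_{ii}^2$ in both expressions.

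I do not expect a genuine obstacle here: the whole representation-theoretic content is already captured by the preceding endomorphism theorem, and the passage through trace duality together with the polarisation/characteristic $2$ dichotomy handles the alternating question with a single substitution. The only point requiring minimal care is to keep track of the direction of the equivariance condition when transporting it from $B$ to $\Phi$, which is exactly what cyclicity of the trace ensures.
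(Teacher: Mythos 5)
Your argument is exactly the deduction the paper intends: the paper itself presents the proposition as a direct consequence of Lemma~\ref{lem duality} and the equivariant-endomorphism theorem, and your fleshed-out version (trace duality, equivariance transfer, and the characteristic-$2$ check via $\mathrm{tr}(A^2)=\mathrm{tr}(A)^2$) is correct in every step.
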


\vspace{0.2cm}\noindent 
Consider now the  canonical map
\begin{equation}
 \wedge^2 (\mathfrak M_{g}(\Z/p\Z)) \rightarrow \wedge^2 (\mathfrak M_{g}(\Z/p\Z))_{GL_g(\Z/p\Z)} \rightarrow \wedge^2(\mathfrak{sp}_{2g}(p))_{GL(g, \Z/p\Z)} \rightarrow \wedge^2(\mathfrak{sp}_{2g}(p))_{Sp(2g,\Z/p\Z)}.
\end{equation}
By construction its image is the span of the image of $M \wedge M$ 
in $\wedge^2(\mathfrak{sp}_{2g}(p))_{Sp(2g,\Z/p\Z)}$. By 
Proposition \ref{prop equivbilform}, it is $0$ if $p$ is odd and it is at 
most $\Z/2\Z$ if $p=2$. 

\vspace{0.2cm}\noindent 
Further, the unique $GL(g, \Z/2\Z)$-invariant 
alternating form on $\mathfrak M_g(\Z/2\Z)$ does not vanish on the element 
$e_{11}\wedge e_{22}$ and hence we have a nontrivial element of 
$\wedge^2 (\mathfrak M_{g}(\Z/2\Z))_{GL(g, \Z/2\Z)}$, which is a 
submodule of $\wedge^2(\mathfrak{sp}_{2g}(2))_{GL(g, \Z/2\Z)}$.  
Note that the image of $e_{11}\wedge e_{22}$ in $\wedge^2(\mathfrak{sp}_{2g}(2))_{GL(g, \Z/2\Z)}$
is the class of the element $n_{11} \wedge n_{22}$.

\vspace{0.2cm}\noindent 
Furthermore, fix a symplectic basis $\{a_i,b_i\}_{1 \leq i \leq g}$ of $\Z^{2g}$. 
Then, the $2g+1$ transvections along the elements $a_{i},b_{j}-b_{j+1}$ 
for $1 \leq i \leq g$ and $1 \leq j \leq g-1$, $b_{g-1}$ and $b_g$ 
generate $Sp(2g,\Z/2\Z)$, for instance because they are the canonical 
images of the set of Dehn twists 
generators of the mapping class group considered by Humphries in \cite{Hump}. One checks 
directly, using all elements we know they vanish in 
$\wedge^2(\mathfrak{sp}_{2g}(2))_{Sp(2g,\Z/2\Z)}$, that the action 
of these generators on $n_{11} \wedge n_{22}$ is trivial and hence it defines an 
element of $\wedge^2(\mathfrak{sp}_{2g}(2))_{Sp(2g,\Z/2\Z)}$. 
This finishes our proof.

\begin{remark} It is clear from the proof that the copy $\Z/2\Z$ we have 
detected is stable, in the sense that the homomorphism 
$ \wedge^2(\mathfrak{sp}_{2g}(2))_{Sp(2g,\Z/2\Z)} \rightarrow \wedge^2(\mathfrak{sp}_{2g+2}(2))_{Sp(2g+2,\Z/2\Z)}$ is an isomorphism, for all $g\geq 3$, since 
both are detected by the obvious stable element $n_{11} \wedge n_{22}.$
\end{remark}


\appendix
\section{Appendix: Weil representations using theta functions}
\subsection{Weil representations at level $k$, for even $k$ following \cite{F1,F2,Go}} 
Let ${\cal S}_{g}$ be the Siegel space of $g\times g$ symmetric matrices $\Omega$
of complex entries having the imaginary part ${\rm Im}\, \Omega$ positive defined.
We represent any element $\gamma \in Sp(2g,{\Z})$ as
\begin{math}
\left ( \begin{array}{cc}
   A & B\\
   C & D
\end{array}  \right)
\end{math}
where  $A, B, C, D$ are $g\times g$ matrices.
There is a natural $Sp(2g,{\Z})$ action on ${\C}^{g}\times {\cal S}_{g}$ given by
\begin{equation}
\gamma \cdot (z,\Omega)=((((C\Omega+D)^{\top})^{-1})z,(A\Omega+B)(C\Omega+D)^{-1}).
\end{equation} 
The dependence of the classical theta function
$\theta(z,\Omega)$ on $\Omega$ is expressed by a functional equation
which describes its behavior under the
action of $Sp(2g,{\Z})$.
Let $\Gamma(1,2)$
be the so-called theta group consisting of elements
$\gamma\in Sp(2g,{\Z})$
which preserve the quadratic form
\[ Q(n_1,n_2,...,n_{2g})= \sum_{i=1}^gn_in_{i+g} \in {\Z}/2{\Z},\]
which means that $Q(\gamma(x))=Q(x)({\rm mod} ~ 2)$.
Then $\Gamma(1,2)$  may be alternatively described as the set
of those elements $\gamma$ having the  property that the diagonals of $A^{\top}C$ and
$B^{\top}D$  are even.
Let  $\langle , \rangle $ denote the standard hermitian product on  ${\C}^{2g}$. The functional
equation, as stated in \cite{Mum3} is:
\[ \theta({(C\Omega+D)^{\top}}^{-1}z, (A\Omega+B)(C\Omega+D)^{-1})=
        \zeta_{\gamma}  {\det} (C\Omega+D)^{1/2}\exp(\pi\sqrt{-1} \langle z,(C\Omega+D)^{-1}Cz \rangle )\theta(z,\Omega), \] 
for $\gamma \in \Gamma(1,2)$,   where $\zeta_{\gamma}$ is a certain 
 $8^{th}$ root of unity.

\vspace{0.2cm}\noindent
If $g=1$ we may suppose that $C>0$ or $C=0$ and $D>0$ so we have 
${\rm Im}(C\Omega +D)\geq 0$ for $\Omega$ in the upper half plane. Then we will 
choose
the square root $(C\Omega +D)^{1/2}$ in the first quadrant. Now we can express
the dependence of $\zeta_{\gamma}$ on $\gamma$ as follows:
\begin{enumerate}
  \item   for even $C$  and  odd $D$,  
$\zeta_{\gamma}=\sqrt{-1}^{(D-1)/2}(\frac{C}{\mid D\mid})$, 
  \item   for odd $C$ and  even $D$,  $\zeta_{\gamma}=\exp(-\pi \sqrt{-1}C/4)(\frac{D}{C})$,

\end{enumerate}
where $(\frac{x}{y})$ is the usual Jacobi symbol, see \cite{Har-Wr}.

\vspace{0.2cm}\noindent
For $g>1$ it is less obvious to describe this dependence.
 We fix first the choice of the square root
of $ {\det} (C\Omega+D)$ in the following manner: let $ {\det} ^{\frac{1}{2}}\left(\frac{Z}{\sqrt{-1}}\right)$ be the
unique holomorphic function on ${\cal S}_g$ satisfying
\[ \left( {\det} ^{\frac{1}{2}}\left(\frac{Z}{\sqrt{-1}}\right)\right)^2= {\det} \left(\frac{Z}{\sqrt{-1}}\right), \]
and taking in $\sqrt{-1}{\bf 1}_g$ the value 1. Next define
\[  {\det} ^{\frac{1}{2}}\left(C\Omega+D\right)= {\det} ^{\frac{1}{2}}(D) {\det} ^{\frac{1}{2}}\left(\frac{\Omega}{\sqrt{-1}}\right)  {\det} ^{\frac{1}{2}}\left(\frac{-\Omega^{-1}-D^{-1}C}{\sqrt{-1}}\right), \]
where the square root of $ {\det} (D)$ is taken to lie in the first
quadrant. Using this convention we may express $\zeta_{\gamma}$ as a
Gauss sum for invertible $D$, see \cite[pp. 26-27]{Frei}: 
\begin{equation}
\zeta_{\gamma}= {\det} ^{-\frac{1}{2}}(D)\sum_{\ell\in {\Z}^g/\!D{\Z}^g}\exp(\pi \sqrt{-1} \langle \ell,BD^{-1}\ell \rangle), \\
\end{equation}
and in particular we recover the formula from above for $g=1$.
On the other hand  for
$
\gamma=
\left ( \begin{array}{cc}
    A & 0 \\
    0 & (A^{\top})^{-1}
\end{array}  \right)
$
we have  $\zeta_{\gamma}=( {\det}  A)^{-1/2}$.
We recall that a multiplier system (\cite{Frei}) for a subgroup $\Gamma\subset
\mbox{\rm Sp}(2g,{\R})$ is a map $m:\Gamma\longrightarrow {\C}^*$ such that
\[ m(\gamma_1\gamma_2)=s(\gamma_1,\gamma_2)m(\gamma_1)m(\gamma_2).\]
An easy remark is that, once a multiplier system $m$ is chosen,  the
product $A(\gamma,\Omega)=m(\gamma)j(\gamma,\Omega)$ verifies the
cocycle condition
\[
A(\gamma_1\gamma_2,\Omega)=A(\gamma_1,\gamma_2\Omega)A(\gamma_2,\Omega),
\]
for $\gamma_i\in \Gamma$. Then another formulation of the 
dependence of $\zeta_{\gamma}$ on $\gamma$ is to say that it is the 
multiplier system defined on $\Gamma(1,2)$. Remark that using the
congruence subgroup property due to Mennicke  (\cite{Me1,Me2}) and 
Bass, Milnor and Serre (\cite{BMS}) any two multiplier systems defined on a subgroup
of the theta group are identical on some congruence subgroup. 

\vspace{0.2cm}\noindent
 When 
$
\gamma=
\left ( \begin{array}{cc}
    1_g & B \\
    0 & 1_g
\end{array}  \right)
$
then the multiplier system is trivial, $\zeta_{\gamma}=1$, and eventually for
$
\gamma=
\left ( \begin{array}{cc}
    0 & -1_g \\
    1_g &  0
\end{array}  \right)
$
we have $\zeta_{\gamma}=\exp(\pi \sqrt{-1}g/4)$. 
Actually  this data determines completely $\zeta_{\gamma}$.

\vspace{0.2cm}\noindent
Denote  
$ {\det} ^{\frac{1}{2}}(C\Omega+D)=j(\gamma,\Omega)$. Then there exists  a map 

\[ s:Sp(2g,{\R})\times Sp(2g,{\R})\longrightarrow \{-1,1\}\]
satisfying 
\[ j(\gamma_1\gamma_2, \Omega)=s(\gamma_1,\gamma_2)j(\gamma_1,\gamma_2
\Omega) j(\gamma_2,\Omega). \]

\vspace{0.2cm}\noindent
Consider now the level $k$ theta functions. For $m \in ({\Z}/k{\Z})^g$   these are defined by
\begin{equation}
  \theta_{m}(z,\Omega)=\sum_{\ell\in m+k{\Z}^g} \exp\left(\frac{\pi
      \sqrt{-1}}{k}
\left( \langle \ell,\Omega \ell \rangle +2 \langle \ell,z \rangle \right)\right)
\end{equation}
or, equivalently, by
\[ \theta_{m}(z,\Omega)= \theta (m/k,0)(kz,k\Omega). \]
where $\theta(*,*)$ are the theta functions with rational characteristics
(\cite{Mum3}) given by
\begin{equation}
\theta(a,b)(z,\Omega) = \sum_{\ell\in {\Z}^g}\exp\left(\frac{\pi
    \sqrt{-1}}{k}
\left( \langle \ell+a,\Omega(\ell+a) \rangle +2 \langle \ell+a,z+b \rangle \right)\right)
\end{equation}
for $a,b\in {\Q}^g$.
Obviously $\theta(0,0)$  is the usual theta function.

\vspace{0.2cm}\noindent
Let us  denote by $R_{8}\subset {\C}^\ast$ the group of $8^{th}$ roots of unity. Then
$R_{8}$ becomes also a subgroup of the unitary group $U(n)$ acting by scalar
multiplication.
Consider also the  theta vector of level $k$:
\[ \Theta_{k}(z,\Omega)=(\theta_{m}(z,\Omega))_{m\in({\Z}/k{\Z})^{g}}. \]

\begin{proposition}[\cite{F1,F2,Go}]
The theta vector satisfies the following functional equation:
\begin{equation}
\Theta_k (\gamma\cdot(z,\Omega))  = \zeta_{\gamma}  {\det} (C\Omega+D)^{1/2}
\exp(k\pi \sqrt{-1}\langle z,(C\Omega+D)^{-1}Cz \rangle) \rho_{g}(\gamma)(\Theta_{k}(z,\Omega))
\end{equation}
where
\begin{enumerate}      
 \item  $\gamma$ belongs to the theta group $\Gamma(1,2)$ if $k$ is odd
and to $\mbox{\rm Sp}(2g,{\Z})$ elsewhere.
 \item   $\zeta_{\gamma} \in R_{8}$ is the (fixed) multiplier system 
described above.
 \item   $\rho_{g}: \Gamma(1,2)\longrightarrow U(\C^{(\Z/k\Z)^g})$ is a group 
homomorphism. For even $k$ the corresponding
map $\rho_{g}:\mbox{\rm Sp}(2g,{\Z}) \longrightarrow U(\C^{(\Z/k\Z)^g}))$ 
becomes a group homomorphism (denoted also by $\rho_g$ when no confusion 
arises) when passing to the quotient $U(\C^{(\Z/k\Z)^g})/R_8$. 
 \item  $\rho_g$ is determined by the points {\rm (1-3)} above. 
\end{enumerate}
\end{proposition}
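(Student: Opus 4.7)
The plan is to verify the functional equation directly on a set of generators of the acting group and then extend by the cocycle formalism introduced just above the statement. For $Sp(2g,\Z)$ (resp.\ $\Gamma(1,2)$ when $k$ is odd) I will use the three Igusa-type families already used in Section \ref{weilrep}: the unipotent translations $\left(\begin{smallmatrix} \mathbf 1_g & B \\ 0 & \mathbf 1_g\end{smallmatrix}\right)$ with $B=B^\top$ integral (with even diagonal if $k$ is odd, which is exactly the theta-group hypothesis), the Levi elements $\left(\begin{smallmatrix} A & 0 \\ 0 & (A^\top)^{-1}\end{smallmatrix}\right)$ with $A\in GL(g,\Z)$, and the symplectic involution $S=\left(\begin{smallmatrix} 0 & -\mathbf 1_g \\ \mathbf 1_g & 0\end{smallmatrix}\right)$.

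For the first two families the verification is essentially a bookkeeping exercise on the series defining $\theta_m$. For the $B$-matrix, expanding $\theta_m(z,\Omega+B)$ yields an extra factor $\exp(\pi\sqrt{-1}\langle \ell,B\ell\rangle/k)$ on the $\ell$-th summand; the parity condition on the diagonal of $B$ ensures that $\langle \ell,B\ell\rangle\pmod{2k}$ depends only on the class $m\in(\Z/k\Z)^g$, so the factor pulls out as the diagonal entry $\exp(\pi\sqrt{-1}\langle m,Bm\rangle/k)$ of $\rho_{g}$ given in Section \ref{weilrep}. Here both $j(\gamma,\Omega)=1$ and $\zeta_\gamma=1$, in agreement with the asserted formula. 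For the Levi case, the substitution $\ell\mapsto A^\top \ell$ in the summation identifies $\theta_m((A^\top)^{-1}z,A\Omega A^\top)$ with $\theta_{A^{-\top}m}(z,\Omega)$, producing the permutation matrix $(\delta_{A^\top m,n})$ of Section \ref{weilrep}; the square-root convention for $\det(C\Omega+D)^{1/2}$ is compatible with $\zeta_\gamma=\det(A)^{-1/2}$.

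The main obstacle is the case of $S$, which is the analytic heart of the proof. Here one has to apply the multidimensional Poisson summation formula to evaluate $\theta_m((-\Omega^{-1})^\top z,-\Omega^{-1})$, recognize the Fourier transform of a Gaussian along the lattice $m+k\Z^g$, and collect both the automorphy factor $\det(\Omega/\sqrt{-1})^{1/2}\exp(k\pi\sqrt{-1}\langle z,(-\Omega^{-1})z\rangle)$ and the Gauss sum $k^{-g/2}\exp(-2\pi\sqrt{-1}\langle m,n\rangle/k)$ that yields the matrix entries of $\rho_g(S)$. The delicate point is identifying the residual $8$-th root of unity with the normalized multiplier $\zeta_S=\exp(\pi\sqrt{-1}g/4)$; this is forced by the choice of holomorphic square root $\det^{1/2}(Z/\sqrt{-1})$ fixed in the preceding subsection and by its value at $\sqrt{-1}\mathbf 1_g$.

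Granting the three generator verifications, item (3) is a formal consequence. Writing the functional equation at $\gamma_1\gamma_2\cdot(z,\Omega)$ in two ways and using the cocycle relation $j(\gamma_1\gamma_2,\Omega)=s(\gamma_1,\gamma_2)\,j(\gamma_1,\gamma_2\Omega)\,j(\gamma_2,\Omega)$ together with multiplicativity of $\zeta_\gamma$ modulo an $R_8$-factor, one obtains the identity $\rho_g(\gamma_1\gamma_2)=\eta(\gamma_1,\gamma_2)\rho_g(\gamma_1)\rho_g(\gamma_2)$ with $\eta(\gamma_1,\gamma_2)\in R_8$. On $\Gamma(1,2)$ the multiplier system is genuine and the sign $s$ cancels, so $\rho_g$ is an honest homomorphism; for even $k$ the ambiguity is absorbed in $U(\C^{(\Z/k\Z)^g})/R_8$. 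Finally, item (4) follows because the $\theta_m$ are linearly independent as holomorphic functions on $\C^g\times\mathcal S_g$ (they form a basis of the classical space of level-$k$ theta functions, see \cite{Mum3}), so the matrix $\rho_g(\gamma)$ is uniquely determined by the functional equation once the scalar $\zeta_\gamma$ is fixed; in particular the formulas obtained on the three generator families coincide with those already displayed in Section \ref{weilrep}.
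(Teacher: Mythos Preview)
The paper does not supply its own proof of this proposition: it is stated in the appendix as a result quoted from \cite{F1,F2,Go}, with only the remark that an equivalent statement appears in \cite{Igusa}. Your sketch is therefore not competing with any argument in the paper, and the strategy you outline---verify the functional equation on the three Igusa generator families by direct expansion and Poisson summation, then propagate to the whole group via the cocycle identities for $j(\gamma,\Omega)$ and the multiplier $\zeta_\gamma$---is exactly the standard route taken in those references.

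One small slip to fix: for the Levi element $\gamma=\left(\begin{smallmatrix}A&0\\0&(A^\top)^{-1}\end{smallmatrix}\right)$ the action is $\gamma\cdot(z,\Omega)=(Az,\,A\Omega A^\top)$, not $((A^\top)^{-1}z,\,A\Omega A^\top)$; the substitution $\ell\mapsto A^\top\ell$ then gives $\theta_m(Az,A\Omega A^\top)=\theta_{A^\top m}(z,\Omega)$, which is what matches the permutation matrix $(\delta_{A^\top m,n})$ displayed in Section \ref{weilrep}. This is a typo-level issue and does not affect the validity of your argument.
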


\begin{remark}
This result is stated also in \cite{Igusa} for some modified theta functions
but  in  less  explicit  form. 
\end{remark}

\subsection{Linearizability of Weil representations for odd level $k$}
The proof for the linearizability of  the 
Weil representation associated to $\Z/k\Z$ 
for odd $k$ was first given by A. Andler 
(see \cite{AR}, Appendix A III) and then extended 
to other local rings in \cite{CMS}. 
Let $\eta: Sp(2g, \Z)\times Sp(2g,\Z)\to R_8\subset U(1)$
be the cocycle determined by the Weil representation associated to 
$\Z/k\Z$. The image $S^2$  of  $\left ( \begin{array}{cc}
    -1_g & 0  \\
    0  &  -1_g 
\end{array} \right )$ is the involution   
\[S^2 \theta_m = \theta_{-m},\; m\in (\Z/k\Z)^g.\]
Thus $S^4=1$ and $S^2$ has eigenvalues $+1$ and $-1$. 
Moreover $S^2$ is central and hence the Weil representation 
splits according to the eigenspaces decomposition.  
Further, the determinant of each factor representation 
is a homogeneous function  whose degree is the respective 
dimension of the factor. Therefore we could express,  
for each one of the two factors,  
$\eta$ to the power the dimension of the respective factor 
as a determinant cocycle.  
The difference between the two factors' 
dimensions is the trace of $S^2$, namely $1$ 
for odd $k$ and $2^g$ for even $k$. This implies that 
$\eta$, for odd $k$, and $\eta^{2^g}$, for even $k$ is 
a boundary cocycle. However $\eta^8=1$ and hence 
for even $k$ and $g\geq 3$ this method could not give any 
non-trivial information about $\eta$.

{
\small

\bibliographystyle{plain}

\begin{thebibliography}{30}      





\bibitem{AR}
A. Andler and S. Ramanan, 
Moduli of abelian varieties, Lect. Notes. Math. 1644, 1996. 



\bibitem{BaLa}
J. Barge and J. Lannes, Suites de Sturm, indice de Maslov et p\'eriodicit\'e 
de Bott, 
{\em Progress in Mathematics}, 267, Birkh\"auser Verlag, Basel, 2008. 




\bibitem{BMS}
H. Bass, J. Milnor and J.-P. Serre, 
{\em Solution of the congruence subgroup problem 
for ${\rm SL}\sb{n}\,(n\geq 3)$ and ${\rm Sp}\sb{2n}\,(n\geq 2)$}, 
Inst. Hautes \'Etudes Sci. Publ. Math. No. 33, 1967, 59--137. 



\bibitem{BCRR}
D. Benson, C. Campagnolo, A. Ranicki and C. Rovi, {\em 
Cohomology of symplectic groups and Meyer's signature theorem}, Algebr. Geom. Topol. 18 (2018), 4069--4091. 


\bibitem{Be}
D. Benson, {\em Theta functions and a presentation of $2^{1+(2g+1)}Sp(2g,2)$}, 
J. Algebra 527 (2019), 204--240. 


\bibitem{Beyl}
R. Beyl, {\em The Schur multiplicator of $SL(2,\Z/m\Z)$ and the 
congruence subgroup property}, Math. Z. 91(1986), 23--42. 

\bibitem{Bo}
A. Borel, {\em Stable real cohomology of arithmetic groups}, 
Ann. Sci. \'Ecole Norm. Sup. (4) 7 (1974), 235--272. 

\bibitem{BoSe}
A. Borel and J.-P. Serre, {\em Cohomologie \`a 
supports compacts des immeubles 
de Bruhat-Tits: applications \`a la cohomologie 
des groupes $S$-arithm\'etiques}, C. R. Acad. Sci. Paris 
272 (1971), 110--113. 

\bibitem{Bo2}
A. Borel, {\em Cohomologie r\'eelle stable de 
groupes $S$-arithm\'etiques classiques},  
C. R. Acad. Sci. Paris S\'er. A-B 274 (1972), A1700--A1702. 

\bibitem{Bo4}
A. Borel, {\em Stable real cohomology of arithmetic groups II}, 
Manifolds and Lie groups, Papers in honor of Yoz\^o Matsushima, 
J. Hanno, A. Morimoto, S. Murakami, K. Okamoto, H. Ozeki, editors, 
Progress Math. vol. 14, Springer-Verlag, 1981, 21--56. 





\bibitem{BM1}
A. Brownstein and R. Lee, {\em Cohomology of the symplectic group ${\rm Sp}_4({\Z})$. I. The odd torsion case}, Trans. Amer. Math. Soc. 334 (1992), 575--596.



\bibitem{BM2}
A. Brownstein and R. Lee, {\em 
Cohomology of the symplectic group ${\rm Sp}(4,{\Z})$. II.
Computations at the prime $2$}, 
Michigan Math. J. 41 (1994), 181--208.



\bibitem{CMS}
G. Cliff, D. McNeilly and F. Szechtman, {\em  Weil representations of symplectic groups over rings},  
J. London Math. Soc. (2)  62  (2000),  423--436.

\bibitem{De}
P. Deligne, {\em  Extensions centrales non r\'esiduellement finies de groupes 
arithm\'etiques}, 
C. R. Acad. Sci. Paris S\'er. A-B 287 (1978), no. 4, A203--A208. 





\bibitem{DS1}
R. K. Dennis and M. R. Stein, {\em The functor K: a survey of computations and problems}, Algebraic K-Theory II, Lecture Notes in Math. 342, Springer-Verlag, Berlin and New York, 1973, 243--280.   



\bibitem{DS2}
R. K. Dennis and M. R. Stein,
{\em $K_{2}$ of discrete valuation rings}, 
Advances  Math. 18 (1975),  182--238. 





















\bibitem{Frei}
J. Freitag,
\newblock {Theta functions}, 
\newblock Lect. Notes in Math. 1487, Springer Verlag, 1987.






\bibitem{F1}
L. Funar,  {\em Repr\'esentations du groupe symplectique et vari\'et\'es 
de dimension $3$},   
C. R. Acad. Sci. Paris S\'er. I Math. 316 (1993), 1067--1072.

\bibitem{F2}
L. Funar,  {\em 
Some abelian invariants of 3-manifolds},  
Rev. Roumaine Math. Pures Appl.  45 (2000),  825--861.



\bibitem{Fu3}
L. Funar, {\em Two questions on mapping class groups},  
Proc. Amer. Math. Soc. 139 (2011), 375--382.

\bibitem{F4}
L. Funar, {\em Zariski density and finite quotients of mapping class groups}, Int. Math. Res. Not. 2013, no.~9, 2078--2096.

\bibitem{FK}
L. Funar and R. Kashaev, {\em  
Centrally extended mapping class groups from quantum Teichm\"uller theory},   Advances Math. 252 (2014), 260--291.


\bibitem{FP}
L. Funar and W. Pitsch, 
{\em The Schur multiplier of the finite symplectic groups}, 
Bull. Soc. Math. France 148 (2020), 515--527. 




\bibitem{Ge}
S. Gervais, {\em 
Presentation and central extensions of mapping class groups},  
Trans. Amer. Math. Soc.  348 (1996), 3097--3132. 





\bibitem{GM}
P. Gilmer and G. Masbaum, {\em  
Integral lattices in TQFT},   
Ann. Sci. \'Ecole Norm. Sup. (4)  40 (2007),  815--844.


\bibitem{Go}
T. Gocho, {\em The topological invariant of three-manifolds based on the ${\rm U}(1)$  gauge theory},   
J. Fac. Sci. Univ. Tokyo Sect. IA Math.  39 (1992),  169--184.

\bibitem{Gro}
E. Grossman,{\em On the residual finiteness of certain mapping class groups},
J. London Math. Soc. (2) 9 (1974/75), 160--164.

\bibitem{GJZ}
F. Grunewald, A. Jaikin-Zapirain and P.A. Zalesskii, {\em Cohomological goodness and the profinite completion of Bianchi groups}, Duke Math. J. 144 (2008), no. 1, 53--72.

\bibitem{GH}
S. Gurevich and R. Hadani, {\em The Weil representation in characteristic two}, 
Advances Math.  230 (2012),  894--926. 


\bibitem{GHH}
S. Gurevich, R. Hadani and R. Howe, {\em Quadratic reciprocity and the sign of 
the Gauss sum via the finite Weil representation},  Int. Math. Res. Not.  
2010,  no. 19, 3729--3745.

\bibitem{HB}
J.H. Hannay and M.V. Berry, {\em Quantization of linear maps on a torus-Fresnel diffraction by a periodic grating} Phys. D 1 (1980), no. 3, 267--290.

\bibitem{Har-Wr}
G. Hardy and E. Wright,
{An introduction to the theory of numbers}, 
{\em Oxford at the Claredon Press}, 1979.

\bibitem{Harer}
J. Harer,  {\em The second homology group of the mapping 0
class group of an orientable surface},
Invent. Math. 72 (1983),  221--239.





\bibitem{HO}
A. J. Hahn and O. T. O'Meara, The classical groups and K-theory, 
{\em Grund. math. Wissenschaften} 291, Springer Verlag, 1989. 


\bibitem{Hump}
S. P. Humphries, 
{\em Generators for the mapping class group}, 
Topology of low-dimensional manifolds, Proc. 2nd Sussex Conf. 1977, 
Lect. Notes Math. 722, 1979,  44--47. 



\bibitem{Han}
J.-I. Hano, {\em On theta functions and Weil's generalized 
Poisson summation formula}, Trans. Amer. Math. Soc. 141  (1969),  195--210. 


\bibitem{Igu}
J.-I. Igusa, 
{\em On the Graded Ring of Theta-Constants}, Amer. J. Math. 86 (1964), 219--246. 

\bibitem{Igusa}
J.-I. Igusa, Theta functions, {\em Springer}, New York, 1972. 


\bibitem{KN}
N. Kaiblinger and M. Neuhauser, {\em 
Metaplectic operators for finite abelian groups and ${\mathbb R}^d$},  
Indag. Math. (N.S.)  20 (2009),  233--246. 

\bibitem{Klo}
H. D. Kloosterman, {\em 
The behaviour of general theta functions under the modular group and the characters of binary modular congruence groups. I.}, Ann. of Math. (2) 47 (1946), 317--375.




\bibitem{KS}
M. Korkmaz and A. I. Stipsicz, {\em
The second homology groups of mapping class groups of oriented surfaces},
Math. Proc. Cambridge Philos. Soc.  134 (2003),  479--489.


\bibitem{Lang}
S. Lang,{ Algebraic number theory}, {\em Addison-Wesley Comp.}, 1970.

\bibitem{Lam}
T. Lam, Introduction to quadratic forms over fields, vol. 67,
{\em Graduate Stud. Math.}, Amer. Math. Soc., 2004.

\bibitem{LW}
M. Larsen, Z. Wang, {\em Density of the SO(3) TQFT representation of mapping class groups},  Commun. Math. Phys. 260 (2005), 641--658. 

\bibitem{Mal40}
A. I. Malcev, {\em On the faithful representation of infinite groups by matrices}, Mat. Sbornik  8(50) (1940), 405--422.

\bibitem{MR}
G. Masbaum and J. D. Roberts, {\em 
On central extensions of mapping class groups},  
Math. Ann.  302 (1995), 131--150.




\bibitem{M}
G. Masbaum, {\em On representations of mapping class groups in integral 
TQFT},  Oberwolfach Reports, vol.5, issue 2, 2008, 1157--1232.


\bibitem{MaR}
G. Masbaum and A. Reid, {\em 
All finite groups are involved in the Mapping Class Group}, 
Geom. Topol. 16 (2012) 1393 -- 1411.  

\bibitem{Mat}
H. Matsumoto, {\em Sur les groupes arithm\'etiques des groupes semi-simples d\'eploy\'es}, Ann. Sci. \'Ecole Norm. Sup. (4) 2 (1969), 1--62. 




\bibitem{Me1}
J. L. Mennicke, {\em Finite factor groups of the unimodular group},  
Ann. of Math. (2)  81 (1965), 31--37.



\bibitem{Me2}
J. L. Mennicke, {\em  Zur Theorie der Siegelschen Modulgruppe},   
Math. Ann.  159 (1965), 115--129. 




\bibitem{Milnor}
J. Milnor, {Introduction to algebraic K-theory}, 
{\em Annals Math. Studies} 
72, Princeton Univ. Press, 1971. 

\bibitem{MH}
J. Milnor and D. Husemoller, { Symmetric bilinear forms}, 
{\em Ergeb. Math. Grenz.} 73, Springer, 1973. 

\bibitem{Moore}
C.C. Moore, {\em 
Group extensions of $p$-adic and adelic linear groups}, 
Inst. Hautes Etudes Sci. Publ. Math. No. 35, 1968, 157--222. 

\bibitem{Mum3}
D. Mumford, Tata lectures on theta {I}, {II}, {\em Progress in Math.}, 1984.


\bibitem{NS}
M. Newman and J. R. Smart, {\em Symplectic modulary groups}, 
Acta Arith. 9 (1964), 83--89. 





\bibitem{P}
W. Pitsch, {\em Un calcul \'el\'ementaire de $H_2({\mathcal M}_{g,1},Z)$ pour $g\geq4$},   C. R. Acad. Sci. Paris S\'er. I Math. 329 (1999), no. 8, 667--670. 




\bibitem{Pu1}
A. Putman, {\em The Picard group of the moduli space of curves with level structures}, Duke Math. J. 161 (2012), 623--674. 



\bibitem{Rag}
M. S. Raghunathan, {\em Torsion in cocompact lattices in coverings of ${\rm Spin}(2,\,n)$},  
Math. Ann.  266  (1984),  403--419. 

\bibitem{RR}
R. Ranga Rao, {\em On some explicit formulas in the theory of Weil representation}, Pacific J. Math. 157 (1993), 335--371.



\bibitem{Sak}
T. Sakasai, {\em Lagrangian mapping class groups from group homological point of view}, Algebr. Geom. Topol. 12 (2012),  267--291. 

\bibitem{Sato}
M. Sato, {\em The abelianization of the level d mapping class group}, 
J. Topology 3 (2010),  847--882.  

\bibitem{Serre}
J. -P. Serre, {Cohomologie galoisienne}, 5-th ed., 
Lect. Notes Math. 5, Springer-Verlag, Berlin, 1994. 
 


\bibitem{Shi}
G. Shimura, {\em Moduli and fibre systems of abelian varieties}, 
Ann. of Math. 83 (1966), 294--338. 


\bibitem{Stein}
Michael R. Stein, {\em Surjective stability in dimension $0$ for $K_{2}$ and related functors},  Trans. Amer. Math. Soc.  178  (1973), 165--191. 

\bibitem{Stein3}
Michael R. Stein, {\em The Schur multipliers of $Sp_6(\Z)$, 
$Spin_8(\Z)$, $Spin_7(\Z)$ and $F_4(\Z)$}, Math. Annalen 215 (1975), 165--172.


\bibitem{Stein2}
Michael R. Stein, {\em Stability theorems for $K_{1}$, $K_{2}$ and related functors modeled on Chevalley groups},  Japan. J. Math. (N.S.)  4(1978), 77--108.

\bibitem{Sus}
A. Suslin, {\em Torsion in $K_2$ of fields}, $K$-theory
1(1987), 5--29. 

\bibitem{Tu}
V. G. Turaev, Quantum invariants of knots and 3-manifolds.
{\em de Gruyter Studies in Mathematics} 18, Walter de Gruyter $\&$ Co., Berlin, 1994. 


\bibitem{Weil}
A. Weil, {\em Sur certains groupes d'op\'erateurs unitaires}, 
Acta Math. 111 (1964), 143--211.




\end{thebibliography}

}

\end{document}